\numberwithin{equation}{section}
\newtheorem{lemma}{Lemma}[section]
\newtheorem{thm}[lemma]{Theorem}
\newtheorem{corollary}[lemma]{Corollary}
\newtheorem{proposition}[lemma]{Proposition}
\newtheorem{open problem}[lemma]{Open problem}
\newtheorem*{fact*}{Fact}
\newtheorem{fact}[lemma]{Fact}
\newtheorem*{claim*}{Claim}
\theoremstyle{definition}
\newtheorem{definition}[lemma]{Definition}
\newtheorem{remark}[lemma]{Remark}
\newtheorem{final remark}[lemma]{Final remark}
\newtheorem{example}[lemma]{Example}
\newtheorem{question}[lemma]{Question}
\DeclareMathOperator{\an}{an}
\DeclareMathOperator{\anexp}{an,exp}
\DeclareMathOperator{\cl}{c\ell}
\DeclareMathOperator{\CODF}{CODF}
\DeclareMathOperator{\dcl}{dcl}
\DeclareMathOperator{\dense}{dense}
\DeclareMathOperator{\fr}{fr}
\DeclareMathOperator{\id}{id}
\DeclareMathOperator{\ord}{ord}
\DeclareMathOperator{\pred}{Pr}
\DeclareMathOperator{\RCF}{RCF}
\DeclareMathOperator{\ring}{ring}
\DeclareMathOperator{\rk}{rk}
\DeclareMathOperator{\tcl}{cl}
\DeclareMathOperator{\tp}{tp}
\newcommand{\M}{\mathbb{M}}
\newcommand{\N}{\mathbb{N}}
\newcommand{\bP}{\mathbb{P}}
\newcommand{\Q}{\mathbb{Q}}
\newcommand{\R}{\mathbb{R}}
\newcommand{\Z}{\mathbb{Z}}
\newcommand{\cC}{\mathcal C}
\newcommand{\cD}{\mathcal D}
\newcommand{\cM}{\mathcal M}
\newcommand{\cN}{\mathcal N}
\newcommand{\cP}{\mathcal P}
\newcommand{\cR}{\mathcal R}
\newcommand{\cV}{\mathcal V}
\newcommand{\av}{{\bar a}}
\newcommand{\bv}{{\bar b}}
\newcommand{\cv}{{\bar c}}
\newcommand{\sv}{{\bar s}}
\newcommand{\uv}{{\bar u}}
\newcommand{\vv}{{\bar v}}
\newcommand{\x}{{\bar x}}
\newcommand{\y}{{\bar y}}
\newcommand{\zero}{{\bar 0}}
\newcommand{\0}{\emptyset}
\newcommand{\bi}{\boldsymbol{i}}
\newcommand{\cld}{\cl^\delta}
\newcommand{\clD}{\cl^\Delta}
\newcommand{\clDz}{\cl^{\Delta_0}}
\newcommand{\D}{\operatorname D\!}
\newcommand{\dclL}{\dcl_L}
\newcommand{\Der}{\mathfrak{Der}}
\newcommand{\dif}{\operatorname d\!}
\newcommand{\dimd}{\dim^\delta}
\newcommand{\dimD}{\dim^\Delta}
\newcommand{\dimL}{\dim_L}
\newcommand{\Jac}{\mathbf{J}}
\newcommand{\jet}{\mathscr{J}_\delta}
\newcommand{\Ld}{L^\delta}
\newcommand{\LD}{L^\Delta}
\newcommand{\rkd}{\rk^\delta}
\newcommand{\rkD}{\rk^\Delta}
\newcommand{\rkL}{\rk_L}
\newcommand{\Td}{T^\delta}
\newcommand{\TD}{T^\Delta}
\newcommand{\TdG}{\Td_G}
\newcommand{\TDG}{\TD_G}
\renewcommand{\preceq}{\preccurlyeq}
\renewcommand{\succeq}{\succcurlyeq}
\renewcommand{\epsilon}{\varepsilon}
\providecommand{\noopsort}[1]{}
\providecommand{\bysame}{\leavevmode\hbox to3em{\hrulefill}\thinspace}
\providecommand{\MR}{\relax\ifhmode\unskip\space\fi MR }
\providecommand{\href}[2]{#2}
\author{Antongiulio Fornasiero}
\author{Elliot Kaplan}
\email{antongiulio.fornasiero@gmail.com}
\email{eakapla2@illinois.edu}
\title{Generic derivations on o-minimal structures}
\keywords{differential field, o-minimality, model completion, distality, open core}
\subjclass[2010]{Primary 03C64, Secondary 03C10, 12H05}
\address{Dipartimento di Matematica e Informatica ``Ulisse Dini,'' Viale Morgagni, 67/a, 50134 Firenze, Italy}
\address{Department of Mathematics, University of Illinois at Urbana-Champaign, Urbana, IL 61801}
\date{\today}
\begin{document}

\begin{abstract}
Let $T$ be a complete, model complete o-minimal theory
extending the theory $\RCF$ of real closed ordered fields in some appropriate
language $L$. We study derivations $\delta$ on models $\cM\models T$. We introduce the notion of a $T$-derivation: a derivation which is compatible with the $L(\0)$-definable $\mathcal{C}^1$-functions on $\cM$. We show that the theory of $T$-models with a $T$-derivation has a model completion $T^\delta_G$. The derivation in models $(\cM,\delta)\models T^\delta_G$ behaves ``generically,'' it is wildly discontinuous and its kernel is a dense elementary $L$-substructure of $\cM$. If $T = \RCF$, then $T^\delta_G$ is the theory of closed ordered differential fields (CODF) as introduced by Michael Singer. We are able to recover many of the known facts about CODF in our setting. Among other things, we show that $T^\delta_G$ has $T$ as its open core, that $T^\delta_G$ is distal, and that $T^\delta_G$ eliminates imaginaries. We also show that the theory of $T$-models with finitely many commuting $T$-derivations has a model completion.
\end{abstract}

\maketitle

\setcounter{tocdepth}{1}
\tableofcontents
%----------------------------------------------------------------------------------%
\section{Introduction}
%----------------------------------------------------------------------------------%

Let $\cM = (M; +, \cdot, 0,1, \dots)$ be a structure expanding a field. A \textbf{derivation} on $\cM$ is a function $\delta: M \to M$ such that:
\[
\delta(x + y)\ =\ \delta x + \delta y,\qquad \delta(xy)\ =\ x \delta y+ y \delta x.
\]
If $\cM$ has extra structure besides the field operations, the derivation may have nothing to do with this extra structure. For instance, if $\cM = \R_{\exp}:=(\R; +, \cdot, 0, 1, \exp)$ then it may not be the case that $\delta\exp(x) \neq\exp(x) \delta x$.

\medskip

In this paper, we consider the case when $\cM$ is an o-minimal structure (expanding a real closed ordered field) and we study derivations on $\cM$ which are \emph{compatible} with the structure $\cM$ in the following sense: for every $\cC^1$-function $f: M^{n} \to M$ which definable in $\cM$ without parameters, we require that:
\[
\delta f(\x)\ =\ \sum_{i=1}^{n} \frac{\partial f}{\partial x_{i}}(\x)\, \delta x_{i}.
\]
For instance, when $\cM = \R_{\exp}$, we impose, among other conditions, that $\delta \exp(x) =\exp(x) \delta x$.
We show that if $\cM$ is a pure ordered field then every derivation on $\cM$ is
already compatible with $\cM$ (Proposition~\ref{semialg}). 

\medskip

There are many natural examples where the above compatibility condition is met. For instance, it is well known that the germs at infinity of unary functions definable in any o-minimal expansion $\widetilde{\R}$ of the real field form a Hardy field $H(\widetilde{\R})$ with a natural derivation $\frac{\dif}{\dif x}$. There is a natural way to expand $H(\widetilde{\R})$ so that $H(\widetilde{\R}) \equiv \widetilde{\R}$, and $\frac{\dif}{\dif x}$ is compatible with this expansion. Another natural example is the ordered field $\mathbb{T}^{LE}$ of logarithmic-exponential transseries (see~\cite{ADH17}). There is a natural expansion of $\mathbb{T}^{LE}$ which makes it an elementary extension of the real field with restricted analytic functions and an exponential function. The natural derivation on $\mathbb{T}^{LE}$ is compatible with this expansion.

\medskip

In~\cite{Si78}, Singer showed that the theory of ordered differential fields has a model completion: the theory of closed ordered differential fields ($\CODF$). He provided an axiomatization of $\CODF$ and proved that it has quantifier elimination. Since then, many others have contributed to the model theory of closed ordered differential fields. Among these contributions is a cell-decomposition theorem and a corresponding dimension function~\cite{BMR09} as well as a proof that $\CODF$ has o-minimal open core and eliminates imaginaries~\cite{Po11}.

\medskip

Let $T$ be a model-complete o-minimal theory in some language $L$ and set $\Ld := L\cup\{\delta\}$. Let $\Td$ be the $\Ld$-theory which extends $T$ by axioms asserting that $\delta$ is compatible in the way defined above. In \S\ref{sec:TdG}, we show that $\Td$ has a model completion which we denote by $\TdG$. A simple axiomatization for $\TdG$ extends $\Td$ by the following axiom scheme:
\begin{itemize}
\item[(G)] If $X \subseteq M^{n \times n}$ is $L(M)$-definable and the projection onto the first $n$ coordinates has nonempty interior, then there exists $\av \in M^{n}$ such that $(\av, \delta \av) \in X$.
\end{itemize}
We go on to explore the properties of $\TdG$: we show that it does not have prime models in general, it is NIP and distal, and it is not strongly dependent. We end \S\ref{sec:TdG} by showing $\RCF^\delta_G = \CODF$ and that $\TdG$ can be seen as a distal extension of the theory of dense pairs of models of $T$, which is not itself distal~\cite{HN17}. The fact that $\CODF$ itself is a distal extension of the theory of dense pairs of real closed ordered fields was first established by Cubides Kovacsics and Point~\cite{CP19}.

\medskip

In \S\ref{sec:geom}, we show that any model $(\cM, \delta) \models \TdG$ has a (unique) dimension function in the sense of~\cite{vdD89} and a kind of cell decomposition in the sense of~\cite{BMR09}. We go on to show that $T$ is the open core of $\TdG$, (that is, every open $\Ld(M)$-definable subset of $M^{n}$ is already $L(M)$-definable). We use this open core result to show that $\TdG$ eliminates imaginaries and that it eliminates the quantifier $\exists^\infty$.

\medskip

The final section, \S\ref{sec:multi} is dedicated to the study of several commuting derivations which are compatible with models of $T$. The model completion of the theory of fields of characteristic zero with several commuting derivations was axiomatized by McGrail~\cite{McG00} and the model completion of the theory of ordered fields
%% of characteristic zero 
with several commuting derivations was axiomatized by Rivi\`ere~\cite{Ri06} (see also~\cite{Tr05}). Let $\Delta = \{\delta_{1}, \dots, \delta_{p}\}$ be a finite set of derivations and let $\TD$ be the $L \cup\Delta$-theory which asserts that each $\delta_i$ is compatible and that each $\delta_i$ and $\delta_j$ commute. We show that $\TD$ has a model completion $\TDG$. The main difficulty is giving an axiomatization for $\TDG$: while the axiom scheme (G) for one derivation is quite simple, an axiomatization for $\TDG$ is quite complicated when $n \ge 2$. We go on to define a dimension function in models of $\TDG$ and we show that $\TDG$ has $T$ as its open core.

\medskip

In Appendix~\ref{sec:Ck}, we use work of Loi~\cite{Lo98} to prove a result about $\cC^{k}$-functions definable in o-minimal structures. This result, Corollary~\ref{Ckfiber}, generalizes a known fact about definable continuous functions and it may be of independent interest. We initially proved our main results without this corollary, but it does simplify things significantly, especially in \S\ref{sec:multi}.
Also of independent interest may be \S\ref{subsec:quasi-endo} on ``quasi-endomorphisms'' of a finitary matroid.

%----------------------------------------------------------------------------------%
\subsection{Notation and conventions}\label{sec:notation}
In this article, $T$ denotes a complete, model complete o-minimal theory
extending the theory $\RCF$ of real closed ordered fields in some appropriate
language $L$. We always use $\cM$ and $\cN$ to denote models of $T$ and we use
$M$ and $N$ to denote the underlying sets of $\cM$ and $\cN$.

\medskip

We will always use $k,m,$ and $n$ to denote non-negative integers. We view tuples in $M^n$ as column vectors and if $\av \in M^n$ and $\bv \in M^m$, we use $(\av,\bv)$ to denote the column vector $\binom{\av}{\bv} \in M^{n+m}$. We view elements of $M^{m\times n}$ as matrices and if $A \in M^{m\times n}$ and $\bv \in M^{n}$, we let $A\bv \in M^{m}$ be the usual product of the matrix $A$ and the vector $\bv$.

\medskip

Let $A \subseteq M$ and $D \subseteq M^n$. We say that $D$ is \textbf{$L(A)$-definable} if there is some $(m+n)$-ary $L$-formula $\varphi(\x,\y)$ and some tuple $\av \in A^m$ such that 
\[
D\ =\ \big\{\y \in M^n:\cM \models \varphi(\av,\y)\big\}.
\]
Given a function $f:D \to M$, we let $\Gamma(f)\subseteq M^{n+1}$ denote the graph of $f$ and we say that $f$ is $L(A)$-definable if $\Gamma(f)$ is. Note that this means that $D$ is also $L(A)$-definable. Given $k \leq n$, we denote the projection of $D$ onto the first $k$ coordinates by $\pi_k(D)$. For $\bv \in M^k$, we let $D_{\bv}$ denote the set $\big\{\y \in M^{n-k}:(\bv,\y) \in D\big\}$ and we let $f_{\bv}:D_{\bv} \to M$ denote the function $\y \mapsto f(\bv,\y)$.

\medskip

We let $\dclL(A)$ be the the definable closure of $A$ (in $\cM$, implicitly, but this doesn't change if we pass to elementary extensions of $\cM$). If $b \in \dclL(A)$, then there is an $L(\0)$-definable function $f:M^n \to M$ and a tuple $\av \in A^n$ such that $b = f(\av)$. A set $B \subseteq M$ is said to be \textbf{$\dclL(A)$-independent} or \textbf{$\dclL$-independent over $A$} if $b\not\in \dclL\!\big(A\cup( B \setminus \{b\})\big)$ for all $b \in B$. A tuple $(b_i)_{i \in I}$ is said to be $\dclL(A)$-independent if its set of components $\{b_i:i \in I\}$ is $\dclL(A)$-independent and if no components are repeated. It is well-known that $(\cM,\dclL)$ is a finitary matroid (also called a \emph{pregeometry}). We let $\rkL$ be the cardinal-valued rank function associated to this finitary matroid. 

\medskip

The cell decomposition theorem gives rise to a well-behaved dimension function on $L(M)$-definable sets: for an $L(M)$-definable set $A\subseteq M^n$, we let
\[
\dimL(A)\ :=\ \max\big\{i_1+\cdots+i_n: A\text{ contains a cell of type }(i_1,\ldots,i_n)\big\}.
\]
This dimension interacts nicely with the rank $\rkL$: let $\av \in M^n$ and $B \subseteq M$. If $\rkL(\av|B) = m\leq n$ then $\av$ is contained in some $L(B)$-definable set of dimension $m$ and $\av$ is not contained in any $L(B)$-definable set of dimension $<m$.

\medskip

We say that $\cM \preceq_L \cN$ if $\cM$ is an elementary $L$-substructure of $\cN$. For a subset $A \subseteq N$, we denote by $\cM\langle A\rangle$ the substructure of $\cN$ with underlying set $\dclL(M\cup A)$. As $T$ has definable Skolem functions, $\cM\langle A \rangle$ is an elementary substructure of $\cN$. We say that $A$ is a \textbf{basis for $\cN$ over $\cM$} if $A$ is $\dclL(M)$-independent and $\cN = \cM\langle A \rangle$. If $A = \{a_1,\ldots,a_n\}$, we write $\cM\langle a_1,\ldots,a_n\rangle$ instead of $\cM\langle A \rangle$. Given an $L(M)$-definable set $D \subseteq M^n$, we let $D^{\cN}$ denote the subset of $N^n$ defined by the same formula as $D$. Since $\cM \preceq_L\cN$, the set $D^{\cN}$ does not depend on the choice of defining formula. We sometimes refer to this as the \emph{natural extension} of $D$ to $\cN$ and we drop the superscript when it is clear from context. If $f:D \to M$ is an $L(M)$-definable function, then we let $f^{\cN}:D^{\cN}\to N$ be the $L(M)$-definable function with graph $\Gamma(f^{\cN}) = \Gamma(f)^{\cN}$.

\medskip

Since $T$ has definable Skolem functions, $T$ has a prime model which we denote by $\bP$. This prime model is always canonically isomorphic to $\dclL(\0)$ in any model of $T$. By a \textbf{monster model} of $T$, we mean a $\kappa$-saturated and strongly $\kappa$-homogeneous model of $T$ for some $\kappa> |T|:= \max\{|L|,\omega\}$. When working in a monster model, we use \emph{small} to mean of cardinality $<\kappa$.

\medskip

If $L'$ is another language, then we use the conventions above where they make sense. For example, $\dcl_{L'}$ will be the definable closure operator in a given $L'$-structure.

\medskip

An $L(M)$-definable function $f:D\to M$ with $D \subseteq M^n$ is said to be a \textbf{$\cC^k$-function} if there is an $L(M)$-definable open $U \supseteq D$ and an $L(M)$-definable $\cC^k$-function $F:U \to M$ with $F|_D = f$. This extension is not unique. A map $g= (g_1,\ldots,g_m):D \to M^m$ is said to be a $\cC^k$-map if each $g_i$ is a $\cC^k$-function. If $g$ is a $\cC^1$-function of $\y = (y_1,\ldots,y_n)$, then we let $\Jac_{g}$ denote the Jacobian matrix
\[
\Jac_{g}\ :=\ \left(\frac{\partial g_i}{\partial y_j}\right)_{1\leq i \leq m,1\leq j\leq n}
\]
viewed as a function from $D$ to $M^{m\times n}$. We also denote this function by $\frac{\partial g}{\partial \y}$ if we want to indicate the dependence on the variables.

%----------------------------------------------------------------------------------%
\subsection{Acknowledgments}
We would like to thank Marcus Tressl, Lou van den Dries, Erik Walsberg and Itay Kaplan for helpful conversations and suggestions. 
We would also like to thank Angela Borrata for pointing out an error in an earlier version of this paper, and the anonymous referee for suggesting improvements.

\medskip

The first author began working on this project after attending the Workshop on Tame Expansions of O-minimal Structures at Konstanz from October 1-4, 2018. He is supported in part by GNSAGA of INdAM and by PRIN 2017, project 2017NWTM8R, Mathematical Logic, models, sets, computability.
%%
%----------------------------------------------------------------------------------%
\section{\texorpdfstring{$T$}{T}-derivations} \label{sec:Td}
%----------------------------------------------------------------------------------%
In this section, we fix a map $\delta:M \to M$. Given a tuple $\x = (x_1, \dotsc, x_n) \in M^n$, we denote by $\delta(\x)$ the tuple $\big(\delta(x_1), \dotsc, \delta(x_n)\big)$. We often use $\delta \x$ instead of instead of $\delta(\x)$. We let $\Ld$ be the language $L \cup \{ \delta\}$ and we view $(\cM,\delta)$ as an $\Ld$-structure.

\subsection{$T$-derivations}
Given an $L(\0)$-definable $\cC^1$-function $f:U \to M$ with $U \subseteq M^n$ open, we say that $\delta$ is \textbf{compatible with $f$} if we have
\[
\delta f(\uv)\ =\ \Jac_f(\uv) \delta \uv
\]
for each $\uv\in U$.
If $g:U \to M^m$ is an $L(\0)$-definable $\cC^1$-map, we say that $\delta$ is compatible with $g$ if 
\[
\delta g(\uv) \ =\ \Jac_{g}(\uv)\delta \uv
\]
for each $\uv\in U$ or, equivalently, if $\delta$ is compatible with each component function $g_i$.

\begin{definition}
We say that $\delta$ is a \textbf{$T$-derivation} (on $\cM$) if $\delta$ is compatible with every $L(\0)$-definable $\cC^1$-function with open domain. Let $\Td$ be the $\Ld$-theory which extends $T$ by axioms stating that $\delta$ is a $T$-derivation. That is, $(\cM,\delta)\models \Td$ if and only if $\cM \models T$ and $\delta$ is a $T$-derivation on $\cM$.
\end{definition}

To justify the use of the name \emph{$T$-derivation}, recall that $\delta$ is a \textbf{derivation} (on $\cM$) if $\delta(x+y) = \delta x+\delta y$ and if $\delta (xy) = x\delta y +y\delta x$ for all $x,y \in M$.

\begin{lemma}\label{isader}
Any $T$-derivation is a derivation.
\end{lemma}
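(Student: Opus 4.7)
The plan is to specialize the compatibility condition in the definition of a $T$-derivation to two particular $L(\emptyset)$-definable $\mathcal{C}^1$-functions: addition and multiplication. Since $T$ extends $\RCF$, the language $L$ contains the ring operations, so the maps
\[
s: M^2 \to M,\quad s(x,y) = x+y,\qquad p:M^2 \to M,\quad p(x,y) = xy
\]
are $L(\emptyset)$-definable, and they are of course $\mathcal{C}^1$ (indeed polynomial) with everywhere-defined Jacobians
\[
\Jac_s(x,y)\ =\ (1,\ 1),\qquad \Jac_p(x,y)\ =\ (y,\ x).
\]

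Applying compatibility of $\delta$ with $s$ at an arbitrary point $(x,y)\in M^2$ gives
\[
\delta(x+y)\ =\ \delta s(x,y)\ =\ \Jac_s(x,y)\,\delta(x,y)\ =\ 1\cdot \delta x + 1\cdot \delta y,
\]
which is additivity. Applying compatibility of $\delta$ with $p$ at $(x,y)$ gives
\[
\delta(xy)\ =\ \delta p(x,y)\ =\ \Jac_p(x,y)\,\delta(x,y)\ =\ y\,\delta x + x\,\delta y,
\]
which is the Leibniz rule. Together these say exactly that $\delta$ is a derivation in the usual algebraic sense.

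There is essentially no obstacle here: the lemma is the immediate consistency check that the abstract compatibility axiom recovers the classical notion when specialized to the two definable operations that define the field structure. The only thing worth noting is that the definition of $T$-derivation was phrased for functions with \emph{open} domain, which is why it is convenient that $s$ and $p$ are defined on all of $M^2$.
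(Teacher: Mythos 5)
Your proof is correct and is exactly the paper's argument: the paper's one-line proof also says to use compatibility of $\delta$ with $(x,y)\mapsto x+y$ and $(x,y)\mapsto xy$. You have simply written out the Jacobian computations that the paper leaves implicit.
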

\begin{proof}
Use that $\delta$ is compatible with the functions $(x,y) \mapsto x+y$ and $(x,y) \mapsto xy$.
\end{proof}

It is a well-known fact that if $(K,\delta)$ is a differential field, then $\ker(\delta)= \big\{a \in K:\delta(a) = 0\big\}$ is a subfield of $K$, known as the \textbf{constant field}. The constant field of $K$ is algebraically closed in $K$. In the case of $T$-derivations, more is true:

\begin{lemma}
\label{constantelem}
Suppose that $(\cM,\delta)\models \Td$ and let $C$ be the constant field of $(\cM,\delta)$. Then $C$ is the underlying set of an elementary $L$-substructure of $\cM$.
\end{lemma}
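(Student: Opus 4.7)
The plan is to establish that $C$ is closed under all $L(\0)$-definable functions; since $T$ is o-minimal and extends $\RCF$, it has definable Skolem functions, and by the Tarski--Vaught test any nonempty $\dclL$-closed subset of $M$ is then the underlying set of an elementary $L$-substructure of $\cM$. As a preliminary, $\dclL(\0) \subseteq C$: for $a \in \dclL(\0)$, apply compatibility to the $L(\0)$-definable $\cC^1$-function $f(x) := a$ to obtain $\delta a = f'(x)\,\delta x = 0$, which also shows $C \ne \0$.

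The main closure statement, that $f(\bar c) \in C$ for every $L(\0)$-definable $f\colon M^n \to M$ and every $\bar c \in C^n$, is proved by induction on $n$. For $n \geq 1$, apply the $\cC^1$ cell decomposition theorem to obtain an $L(\0)$-definable cell decomposition of $M^n$ compatible with $f$, on each cell of which $f$ is $\cC^1$ and each ``graph'' function defining a cell is also $\cC^1$. Let $D$ be the cell containing $\bar c$, of type $(i_1, \ldots, i_n)$. If $i_j = 1$ for all $j$, then $D$ is open and $f|_D \colon D \to M$ is an $L(\0)$-definable $\cC^1$-function with open domain, so the $T$-derivation axiom gives
\[
\delta\bigl(f(\bar c)\bigr)\ =\ \Jac_{f|_D}(\bar c)\, \delta \bar c\ =\ 0,
\]
since $\delta \bar c = 0$.

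Otherwise, let $k$ be the smallest index with $i_k = 0$. Then $\pi_{k-1}(D)$ is an open cell in $M^{k-1}$ and there is an $L(\0)$-definable $\cC^1$-function $g$ on $\pi_{k-1}(D)$ such that the $k$-th coordinate of each point of $D$ equals $g$ evaluated at the first $k-1$; in particular $c_k = g(c_1, \ldots, c_{k-1})$. Let $h\colon M^{n-1} \to M$ be the $L(\0)$-definable function sending $(x_1, \ldots, x_{n-1})$ to $f(x_1, \ldots, x_{k-1}, g(x_1, \ldots, x_{k-1}), x_k, \ldots, x_{n-1})$ when $(x_1,\ldots,x_{k-1}) \in \pi_{k-1}(D)$ and to $0$ otherwise. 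Then $h(c_1, \ldots, c_{k-1}, c_{k+1}, \ldots, c_n) = f(\bar c)$ with the argument in $C^{n-1}$, so the inductive hypothesis gives $f(\bar c) \in C$. The main obstacle is this non-open cell case: it is handled by using the graph structure of $D$ to eliminate the dependent coordinate $c_k$ and reduce the arity.
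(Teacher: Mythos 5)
Your proof is correct, and it reaches the same core step as the paper (apply the compatibility axiom to a point where the function is $\cC^1$ with open domain, so that $\delta f(\cv) = \Jac_f(\cv)\,\delta\cv = 0$), but the reduction to that step is genuinely different. The paper handles a possibly non-generic tuple $\cv \in C^n$ in one stroke: it passes to a maximal $\dclL(\0)$-independent subtuple $\cv'$ of $\cv$, writes $f(\cv) = g(\cv')$ for an $L(\0)$-definable $g$, and invokes the standard fact that an $L(\0)$-definable function is $\cC^1$ on an open neighborhood of any $\dclL(\0)$-independent point. You instead take an $L(\0)$-definable $\cC^1$-cell decomposition for $f$ and induct on the arity, eliminating one definably dependent coordinate at a time by substituting the graph function $g$ of the cell through $\cv$; your open-cell case is exactly the paper's generic case. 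The trade-off: the paper's subtuple trick is shorter but leans on the genericity fact (itself a consequence of cell decomposition), while your argument is more self-contained, using only the $\cC^1$-cell decomposition theorem quoted in Appendix~\ref{sec:Ck} and the compatibility axiom, at the price of an induction. Two details worth stating explicitly if you write this up: the base case of the induction is $n=0$, i.e.\ constants, which is precisely your preliminary observation $\dclL(\0)\subseteq C$ (this also absorbs the point-cell case when $n=1$); and the graph function $g$ is $L(\0)$-definable because its graph is the $L(\0)$-definable set $\pi_k(D)$ --- its $\cC^1$-smoothness is never actually used, since the inductive hypothesis applies to arbitrary $L(\0)$-definable functions $M^{n-1}\to M$.
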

\begin{proof}
Since $T$ has definable Skolem functions, it suffices to show that $C$ is $\dclL$-closed in $M$.
Given an $L(\0)$-definable function $f:M^n \to M$ be a tuple $\cv \in C^n$, we need to show that $f(\cv) \in C$. By passing to a subtuple, we may assume that $\cv$ is $\dclL(\0)$-independent, so $f$ is $\cC^1$ on some $L(\0)$-definable open neighborhood of $\cv$. Then $\delta f(\cv) = \Jac_f(\cv) \delta \cv =0$, so $f(\cv)\in C$.
\end{proof}

The following is a useful test to see if $\delta$ is a $T$-derivation:

\begin{lemma}\label{generic-point}
The following are equivalent:
\begin{enumerate}[(1)]
\item $(\cM,\delta)\models \Td$,
\item $\delta c = 0$ for all $c \in\dclL(\0)$ and $\delta f(\uv) = \Jac_f(\uv)\delta \uv$
for all $\dclL(\0)$-independent tuples $\uv$ and all $L(\0)$-definable functions $f$ which are $\cC^1$ in a neighborhood of $\uv$.
\end{enumerate}
\end{lemma}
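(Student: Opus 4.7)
The statement has an easy direction and a substantive direction. For $(1) \Rightarrow (2)$, the second clause is a direct specialization of (1), while the first clause follows by applying (1) to the $L(\0)$-definable $\cC^1$ constant function $h(x) = c$, whose derivative is identically zero: compatibility at any point forces $\delta c = 0$.

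For $(2) \Rightarrow (1)$, the plan is to reduce to the $\dclL(\0)$-independent case via the chain rule. Fix an $L(\0)$-definable $\cC^1$-function $f\colon U \to M$ with $U \subseteq M^n$ open and $\uv \in U$. Set $m := \rkL(\uv\mid\0)$; after permuting coordinates (and the corresponding columns of $\Jac_f$), I would assume $\vv := (u_1, \dots, u_m)$ is $\dclL(\0)$-independent and each $u_j$ with $j > m$ lies in $\dclL(\vv)$. Using $L(\0)$-definable cell decomposition, I would choose $L(\0)$-definable $\cC^1$-functions $g_{m+1}, \dots, g_n$ on a common open neighborhood $V \subseteq M^m$ of $\vv$ with $g_j(\vv) = u_j$, and shrink $V$ so that the map $G \colon V \to M^n$, $G(\w) := (\w, g_{m+1}(\w), \dots, g_n(\w))$, lands in $U$; note $G(\vv) = \uv$. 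Set $F := f \circ G$. Applying (2) to each $g_j$ at $\vv$ yields $\delta g_j(\vv) = \Jac_{g_j}(\vv)\,\delta\vv = \delta u_j$, so $\delta G(\vv) = \delta\uv$; applying (2) to $F$ at $\vv$ and invoking the chain rule gives
\[
\delta f(\uv) = \delta F(\vv) = \Jac_F(\vv)\,\delta\vv = \Jac_f(\uv)\,\Jac_G(\vv)\,\delta\vv = \Jac_f(\uv)\,\delta G(\vv) = \Jac_f(\uv)\,\delta\uv,
\]
as required. The degenerate case $m = 0$ is handled directly: $\uv \in \dclL(\0)^n$ and $f(\uv) \in \dclL(\0)$, so both sides vanish by the first clause of (2).

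The main step requiring justification is the selection of the $g_j$ on a common open domain. Given any $L(\0)$-definable function representing $u_j$ as a function of $\vv$, cell decomposition stratifies its domain into finitely many $L(\0)$-definable cells of dimension at most $m$ on which it is $\cC^1$. Since $\rkL(\vv\mid\0) = m$, the tuple $\vv$ avoids every $L(\0)$-definable set of dimension $<m$ and so lies in an open ($m$-dimensional) cell, on which the function extends to a $\cC^1$-function defined on an $L(\0)$-definable open neighborhood. Intersecting the finitely many resulting neighborhoods, then shrinking once more using continuity of $G$ and openness of $U$, produces the required common domain $V$.
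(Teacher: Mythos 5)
Your proof is correct and follows essentially the same route as the paper's: reduce to the $\dclL(\0)$-independent case by expressing $\uv$ as an $L(\0)$-definable $\cC^1$-image of a maximal $\dclL(\0)$-independent subtuple, then apply (2) and the chain rule. The only cosmetic differences are that the paper works with a general $L(\0)$-definable map $g$ with $g(\uv') = \uv$ rather than your coordinate-normalized $G = (\w, g_{m+1}(\w),\dots,g_n(\w))$, and your closing paragraph spells out the cell-decomposition justification that the paper leaves implicit.
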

\begin{proof}
Clearly (1) implies (2), as the constant function $x \mapsto c$ is $L(\0)$-definable for each $c \in \dclL(\0)$. Now suppose (2) holds, fix an $L(\0)$-definable $\cC^1$-function $f:U\to M$ with $U$ open and fix a tuple $\uv \in U$. If each component of $\uv$ is in $\dclL(\0)$, then 
$f(\uv) \in \dclL(\0)$, so $\delta f(\uv)= \Jac_f(\uv) \delta \uv = 0$ by (2). If there is some component of $\uv$ which is not in $\dclL(\0)$, then let $\uv'$ be a maximal $\dclL(\0)$-independent subtuple of $\uv$ and fix an $L(\0)$-definable map $g$ such that $g(\uv') = \uv$. As $\uv'$ is $\dclL(\0)$-independent, there is an open set $V$ containing $\uv'$ such that $g$ is $\cC^1$ on $V$ and such that $g(V) \subseteq U$. We have
\[
\delta f(\uv)\ =\ \delta (f\circ g)(\uv') \ =\ \Jac_{f\circ g}(\uv')\delta \uv'\ =\ \Jac_f\!\big(g(\uv')\big)\Jac_{g}(\uv')\delta \uv'\ =\ \Jac_f(\uv)\delta g(\uv')\ =\ \Jac_f(\uv)\delta \uv
\]
as required, where the second and fourth equality use (2) and the $\dclL(\0)$-independence of $\uv'$.
\end{proof}

By Lemma~\ref{generic-point}, the zero map (denoted by 0) is the only $T$-derivation on $\bP$. Thus, we have the following:

\begin{corollary}\label{primesubstructure}
$(\bP,0)$ is the prime substructure for $\Td$.
\end{corollary}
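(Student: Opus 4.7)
My plan is to unpack what ``prime substructure for $\Td$'' means and reduce everything to Lemma~\ref{generic-point}. Concretely, I need to verify two things: that $(\bP,0)$ is itself an $\Ld$-substructure satisfying $\Td$, and that it embeds (uniquely) as an $\Ld$-substructure into every model of $\Td$.

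First, I would check that $(\bP,0)\models\Td$ via the criterion of Lemma~\ref{generic-point}. The condition $\delta c = 0$ for all $c\in\dclL(\0)$ is trivial since $\delta$ is the zero map. The second condition, $\delta f(\uv) = \Jac_f(\uv)\,\delta\uv$ for $\dclL(\0)$-independent tuples $\uv$, is vacuous: because $\bP = \dclL(\0)$, the only $\dclL(\0)$-independent tuple in $\bP$ is the empty tuple, for which there is nothing to check. Hence $(\bP,0)$ satisfies (2) of Lemma~\ref{generic-point}, and so $(\bP,0)\models\Td$.

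Next, I would show that $(\bP,0)$ embeds (canonically and uniquely) into every $(\cM,\delta)\models\Td$. Since $T$ has definable Skolem functions, $\bP$ embeds as an $L$-structure into $\cM$ as $\dclL^{\cM}(\0)$, and this embedding is forced, so the underlying $L$-embedding is unique. To promote it to an $\Ld$-embedding, I need $\delta$ to vanish on the image. But Lemma~\ref{generic-point}(2) applied in $(\cM,\delta)$ gives $\delta c = 0$ for every $c\in\dclL^{\cM}(\0)$, so the zero derivation on $\bP$ agrees with the restriction of $\delta$ to the image. Thus the $L$-embedding is automatically an $\Ld$-embedding.

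There is no real obstacle here; the corollary is an immediate bookkeeping consequence of Lemma~\ref{generic-point} and the fact that $\bP = \dclL(\0)$. The only mild subtlety is recognizing that uniqueness of the derivation on $\bP$ (remarked in the paragraph preceding the corollary) is exactly what promotes the $L$-prime embedding to an $\Ld$-prime embedding.
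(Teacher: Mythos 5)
Your argument is correct and follows the paper's own route: the corollary is derived from Lemma~\ref{generic-point}, which forces $\delta$ to vanish on $\dclL(\0)$ in any model of $\Td$, so that $(\bP,0)$, identified with $\dclL(\0)$ with the zero derivation, is canonically an $\Ld$-substructure of every model. Your explicit check that $(\bP,0)\models\Td$ and that the $L$-embedding upgrades to an $\Ld$-embedding is just the bookkeeping the paper leaves implicit.
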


It is not true in general that any derivation on $\cM$ is a $T$-derivation, see Lemma~\ref{notalldelta}. However, this is true when $T= \RCF$. To prove this, we first need to establish two preservation results for compatibility.

\begin{lemma}
\label{basicchainrule} Let $U \subseteq M^n$, $V\subseteq M^m$ be $L(\0)$-definable, open sets. Let $f:U\to M$ be a $\cC^1$, $L(\0)$-definable function and let $g:V \to U$ be a $\cC^1$, $L(\0)$-definable map. If $\delta$ is compatible with $f$ and $g$, then $\delta$ is compatible with the composition $f \circ g$.
\end{lemma}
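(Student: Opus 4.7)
The plan is to verify compatibility of $\delta$ with $f \circ g$ pointwise, directly from the definitions, using the classical multivariable chain rule for $\cC^1$-maps (which holds in any model of $T$ since $T$ extends $\RCF$ and is o-minimal).

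Fix an arbitrary $\vv \in V$. I want to show
\[
\delta(f\circ g)(\vv)\ =\ \Jac_{f\circ g}(\vv)\,\delta \vv.
\]
Since $\delta$ is compatible with $g$, applied at $\vv \in V$, we have $\delta g(\vv) = \Jac_g(\vv)\,\delta \vv$. Since $\delta$ is compatible with $f$, applied at the point $g(\vv) \in U$, we have $\delta f(g(\vv)) = \Jac_f(g(\vv))\,\delta g(\vv)$. Chaining these two identities and using the chain rule $\Jac_{f\circ g}(\vv) = \Jac_f(g(\vv))\,\Jac_g(\vv)$ yields
\[
\delta(f\circ g)(\vv)\ =\ \delta f(g(\vv))\ =\ \Jac_f(g(\vv))\,\delta g(\vv)\ =\ \Jac_f(g(\vv))\,\Jac_g(\vv)\,\delta \vv\ =\ \Jac_{f\circ g}(\vv)\,\delta \vv.
\]

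There is essentially no obstacle here: the statement is a formal consequence of the definition of compatibility and the chain rule, and does \emph{not} require $\delta$ to be a $T$-derivation — only pointwise compatibility with the specific maps $f$ and $g$. The only thing worth noting is that $f\circ g$ is itself $\cC^1$ and $L(\0)$-definable on the open set $V$, so the expression $\Jac_{f\circ g}(\vv)$ makes sense. This lemma will presumably be used in tandem with a companion closure result (for sums, products, or implicit definitions) to reduce checking that $\delta$ is a $T$-derivation to checking compatibility on a generating family of functions, as hinted at by Lemma~\ref{generic-point}.
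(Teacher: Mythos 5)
Your proof is correct and is essentially identical to the paper's: both apply compatibility of $\delta$ with $g$ at $\vv$, compatibility with $f$ at $g(\vv)$, and the chain rule $\Jac_{f\circ g}(\vv)=\Jac_f(g(\vv))\Jac_g(\vv)$ to conclude. No gaps.
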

\begin{proof}
For $\uv \in V$, we have
\[
\delta f\big(g(\uv)\big)\ =\ \Jac_f\!\big(g(\uv)\big) \delta\big( g(\uv)\big)\ =\ \Jac_f\!\big(g(\uv)\big) \big(\Jac_{g}(\uv) \delta \uv\big)\ =\ \Jac_{f\circ g}(\uv) \delta \uv.\qedhere
\]
\end{proof}

\begin{lemma}
\label{implicitfunctions}
Let $f :V \to M^n$ be an $L(M)$-definable $\cC^1$-map on $V \subseteq M^{m+n}$ in variables $(\x,\y)$ and suppose that $\delta$ is compatible with $f$.
Let $g:U \to M^n$ be an $L(M)$-definable map on an open set $U\subseteq M^m$ such that $\Gamma(g) \subseteq V$.
Suppose that for all $\uv \in U$ we have $f\big(\uv,g(\uv)\big) = 0$ and that the determinant of the matrix $\frac{\partial f}{\partial \y}\big(\uv,g(\uv)\big)$ is nonzero. 
Then $g$ is $\cC^1$ and $\delta$ is compatible with $g$.
\end{lemma}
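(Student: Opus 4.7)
The plan is to combine the o-minimal implicit function theorem with the compatibility hypothesis on $f$, applied to the defining identity $f(\uv, g(\uv)) = 0$. First I would establish that $g$ is $\cC^1$. Fix $\uv_0 \in U$; since $f(\uv_0, g(\uv_0)) = 0$ and $\frac{\partial f}{\partial \y}(\uv_0, g(\uv_0))$ is invertible, the o-minimal implicit function theorem produces an open neighborhood $U_0 \ni \uv_0$ and a unique $\cC^1$-map $\tilde g : U_0 \to M^n$ with $\tilde g(\uv_0) = g(\uv_0)$ and $f(\uv, \tilde g(\uv)) = 0$ for all $\uv \in U_0$. Shrinking $U_0$ if necessary and using uniqueness, one forces $\tilde g = g|_{U_0}$, so $g$ is $\cC^1$.

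Next I would differentiate the identity $f(\uv, g(\uv)) = 0$ with respect to $\uv$ via the chain rule, obtaining
\[
\frac{\partial f}{\partial \x}(\uv, g(\uv)) + \frac{\partial f}{\partial \y}(\uv, g(\uv))\,\Jac_g(\uv)\ =\ 0,
\]
so $\Jac_g(\uv) = -\bigl(\frac{\partial f}{\partial \y}(\uv, g(\uv))\bigr)^{-1}\frac{\partial f}{\partial \x}(\uv, g(\uv))$. The key step is then to apply $\delta$ to both sides of $f(\uv, g(\uv)) = 0$ and invoke compatibility of $\delta$ with $f$, which yields
\[
0\ =\ \delta f(\uv, g(\uv))\ =\ \Jac_f(\uv, g(\uv))\,\delta\bigl(\uv, g(\uv)\bigr)\ =\ \frac{\partial f}{\partial \x}(\uv, g(\uv))\,\delta \uv + \frac{\partial f}{\partial \y}(\uv, g(\uv))\,\delta g(\uv).
\]
Solving for $\delta g(\uv)$ and comparing with the formula for $\Jac_g(\uv)$ above gives $\delta g(\uv) = \Jac_g(\uv)\,\delta \uv$, which is precisely compatibility of $\delta$ with $g$.

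I do not anticipate a serious obstacle: $\delta$ acts pointwise, so applying it to the scalar identity $f(\uv, g(\uv)) = 0$ is immediate, and the invertibility of $\frac{\partial f}{\partial \y}$ is exactly what permits one to isolate $\delta g(\uv)$. The only mild interpretive subtlety is that ``$\delta$ is compatible with $f$'' refers here to an $L(M)$-definable $f$, whereas earlier compatibility was defined only for $L(\0)$-definable functions; the defining equation $\delta f(\uv) = \Jac_f(\uv)\,\delta \uv$ nevertheless makes sense verbatim and is presumably the intended reading.
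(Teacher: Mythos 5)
Your proof is correct and follows essentially the same route as the paper's: both apply $\delta$ to the identity $f(\uv,g(\uv))=0$ using compatibility with $f$, compare with the ordinary chain-rule identity $\frac{\partial f}{\partial \x}+\frac{\partial f}{\partial \y}\Jac_g=0$, and cancel the invertible factor $\frac{\partial f}{\partial \y}$. The paper packages this via an auxiliary function $h(\x)=f(\x,g(\x))$ and equates the two resulting right-hand sides rather than explicitly inverting, but that is a presentational difference only; your closing remark on the $L(\0)$-versus-$L(M)$ reading of ``compatible'' is the intended interpretation.
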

\begin{proof}
The map $g$ is $\cC^1$ by the implicit function theorem. Define the map $h:U \to K^n$ by $h(\x)= f(\x,g(\x))$, so $h(u)$ is identically zero on $U$. We have
\[
\frac{\partial f}{\partial \x}\big(\uv,g(\uv)\big) + \frac{\partial f}{\partial \y}\big(\uv,g(\uv)\big) \Jac_{g}(\uv)\ =\ \Jac_{h}(\uv)\ =\ \zero,
\]
thus,
\[
\frac{\partial f}{\partial \y}\big(\uv,g(\uv)\big)\Jac_{g}(\uv) \delta \uv\ =\ - \frac{\partial f}{\partial \x}\big(\uv,g(\uv)\big) \delta \uv.
\]
We also have
\[
 \frac{\partial f}{\partial \x}\big(\uv,g(\uv)\big)\delta \uv + \frac{\partial f}{\partial \y}\big(\uv,g(\uv)\big)\delta g(\uv)\ =\ \delta h(\uv)\ =\ \zero.
\]
We therefore have
\[
\frac{\partial f}{\partial \y}\big(\uv,g(\uv)\big)\delta g(\uv)\ =\ \frac{\partial f}{\partial \y}\big(\uv,g(\uv)\big)\Jac_{g}(\uv) \delta \uv.
\]
It remains to use the invertibility of $\frac{\partial f}{\partial \y}\big(\uv,g(\uv)\big)$.
\end{proof}

\begin{proposition}\label{semialg}
If $\delta$ is a derivation on $\cM$, then $\delta$ is an $\RCF$-derivation on $\cM$.
\end{proposition}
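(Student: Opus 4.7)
The plan is to invoke the generic-point test of Lemma~\ref{generic-point}: it suffices to show (a) $\delta c = 0$ for every $c \in \dclL(\0)$, and (b) $\delta f(\uv) = \Jac_f(\uv)\,\delta \uv$ for every $L(\0)$-definable $\cC^1$-function $f$ and every $\dclL(\0)$-independent tuple $\uv$ in its domain. Over $T = \RCF$, $\dclL(A)$ is the real closure of $\Q(A)$ in $\cM$, so every element of $\dclL(A)$ is algebraic over $\Q(A)$. Note also that $\delta$, being a derivation that automatically kills $\Z$, is already compatible with every polynomial map $P \in \Z[x_1,\dotsc,x_n]$ by the Leibniz rule: $\delta P(\av) = \sum_i \frac{\partial P}{\partial x_i}(\av)\,\delta a_i$ for every $\av \in M^n$. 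For (a), if $c \in \dclL(\0)$ has minimal polynomial $P(y) \in \Z[y]$ over $\Q$, then $P'(c) \neq 0$ by characteristic zero, so applying $\delta$ to $P(c) = 0$ yields $\delta c = 0$.

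For (b), fix a $\dclL(\0)$-independent tuple $\uv \in U \subseteq M^n$ and the function $f$. Since $f(\uv) \in \dclL(\uv)$ is algebraic over $\Q(\uv)$ with some minimal polynomial $m(y) \in \Q(\uv)[y]$, clearing denominators (first in $\uv$, then in $\Z$) yields $Q(\x,y) \in \Z[\x,y]$ satisfying $Q(\uv, f(\uv)) = 0$; because $m$ is separable in characteristic zero, $\frac{\partial Q}{\partial y}(\uv, f(\uv)) \neq 0$. To apply Lemma~\ref{implicitfunctions}, I need $Q(\x, f(\x)) = 0$ to hold on a full open neighborhood of $\uv$, not just at the point itself. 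The set $E := \{\x \in U : Q(\x, f(\x)) = 0\}$ is $L(\0)$-definable and contains $\uv$, which has $\rkL$-rank $n$ over $\0$; hence $\dim E = n$. The frontier of $E$ is $L(\0)$-definable of strictly smaller dimension and therefore cannot contain $\uv$, placing $\uv$ in the interior of $E$. Lemma~\ref{implicitfunctions} applied to $Q$ and $f$ on this neighborhood, using compatibility of $\delta$ with the polynomial $Q$, now delivers the required identity at $\uv$.

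The main obstacle is arranging $Q$ with $\frac{\partial Q}{\partial y}(\uv, f(\uv)) \neq 0$, which rests on separability of the minimal polynomial in characteristic zero together with the algebraic independence of $\uv$ over $\Q$; the rest is bookkeeping with the Leibniz rule and the implicit-function-theorem packaging already provided by Lemma~\ref{implicitfunctions}.
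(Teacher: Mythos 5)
Your proof is correct and takes essentially the same approach as the paper: reduce to compatibility with $\Z$-coefficient polynomials via Lemma~\ref{implicitfunctions} and dispatch those by the Leibniz rule, with Lemma~\ref{generic-point} and the algebraic description of $\dclL$ in $\RCF$ supplying the details that the paper leaves to quantifier elimination. One small wording slip: the $L(\0)$-definable set of dimension $<n$ that must miss $\uv$ is $E\setminus\operatorname{int}(E)$ rather than the frontier $\overline{E}\setminus E$ (which trivially avoids $\uv$ since $\uv\in E$), but this does not affect the argument.
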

\begin{proof}
By quantifier elimination for $\RCF$ (in the language $L_{\ring}$ of ordered rings) and by Lemma~\ref{implicitfunctions}, it suffices to show that $\delta$ is compatible with every polynomial in $\Z[X]$. By repeated applications of Lemma~\ref{basicchainrule}, this amounts to showing that $\delta$ is compatible with addition, multiplication and the maps $x \mapsto nx$ for $n \in \Z$. These facts all follow readily from the definition of a derivative.
\end{proof}

The proof of Proposition~\ref{semialg} can often be adapted to check whether a derivation on $\cM$ is a $T$-derivation, at least when $T$ admits quantifier elimination in some natural language. If $T$ also has a universal axiomatization then checking whether or not a derivation is a $T$-derivation is even more simple since each $L(\0)$-definable function is given piecewise by $L$-terms. We give two examples below:

\begin{lemma}\label{Tanexp}\
\begin{enumerate}[(1)]
\item Let $\R_{\an}$ be the expansion of the real field by restricted analytic functions and let
$T_{\an}$ be its theory. Let $\cM \models T_{\an}$ and let $\delta$ be a derivation on $\cM$. Then $(\cM,\delta)\models \Td_{\an}$ if and only if $\delta$ is compatible with every restricted analytic function (restricted to the open unit disk).
\item Let $\R_{\anexp}$ be the expansion of $\R_{\an}$ by the total exponential function and let $T_{\anexp}$ be its theory. Let $\cM \models T_{\anexp}$ and let $\delta$ be a derivation on $\cM$. Then $(\cM,\delta)\models \Td_{\anexp}$ if and only if $\delta$ is compatible with every restricted analytic function and with the exponential function.
\end{enumerate}
Note that in (1) and (2) above, the model $\cM$ necessarily contains $\R$.
\end{lemma}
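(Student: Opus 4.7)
The forward directions of (1) and (2) are immediate from the definition of a $T$-derivation: on the interior of the unit cube (resp.\ on all of $M$), restricted analytic functions (resp.\ $\exp$) are $L(\0)$-definable $\cC^1$-functions with open domain, so any $T$-derivation must be compatible with them. For the backward directions I would follow the template of Proposition~\ref{semialg}: use quantifier elimination to reduce compatibility with an arbitrary $L(\0)$-definable $\cC^1$-function to compatibility with a small generating set of basic function symbols, and then invoke Lemmas~\ref{basicchainrule} and~\ref{implicitfunctions} to close things up.

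For (1), I would appeal to the theorem of Denef--van den Dries that $T_{\an}$ eliminates quantifiers in the language $L_{\an}$ consisting of $L_{\ring}$, the multiplicative inverse $^{-1}$ (extended by $0^{-1} := 0$), and a function symbol $\tilde f$ for each restricted analytic function $f$. Let $\delta$ be a derivation compatible with every restricted analytic function; by Lemma~\ref{generic-point} it suffices to verify $\delta g(\uv) = \Jac_g(\uv)\,\delta \uv$ for $L(\0)$-definable $\cC^1$-functions $g$ at $\dclL(\0)$-independent tuples $\uv$. At such a generic $\uv$, cell decomposition together with quantifier elimination provides an open neighborhood on which $g$ is given by a single $L_{\an}$-term, so Lemma~\ref{basicchainrule} reduces the task to checking compatibility with each basic symbol: the ring operations are handled by Proposition~\ref{semialg}; compatibility with $^{-1}$ on $M \setminus \{0\}$ follows from Lemma~\ref{implicitfunctions} applied to $F(x,y) = xy - 1$; and compatibility with the restricted analytic functions is the standing hypothesis.

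For (2), I would use the analogous theorem of van den Dries--Macintyre--Marker that $T_{\anexp}$ eliminates quantifiers in $L_{\an} \cup \{\exp, \log\}$ (with $\log$ extended by $0$ on $M^{\leq 0}$) and argue exactly as in (1). The two new basic symbols are $\exp$, covered by hypothesis, and $\log$, which on $M^{>0}$ is the implicit function defined by $F(x,y) = \exp(y) - x$ and so is handled by Lemma~\ref{implicitfunctions}. The closing remark that $\cM$ contains $\R$ then follows because every real number is the value of a constant restricted analytic function, hence lies in $\dclL(\0) \subseteq M$. The only mildly delicate step in the whole plan is extracting the local term representative at a generic $\uv$; this is routine from cell decomposition, since the quantifier-free formula defining $\Gamma(g)$ on a cell containing $\uv$ in its interior must pin $g$ down to a single term equation.
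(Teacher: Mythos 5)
The forward directions, the treatment of $^{-1}$ and $\log$ via Lemma~\ref{implicitfunctions}, and the closing remark about $\R\subseteq M$ are all fine. The gap is in the step you yourself flag as ``mildly delicate,'' and it is not routine: quantifier elimination for $T_{\an}$ in $L_{\an}\cup\{^{-1}\}$ tells you that $\Gamma(g)$ is defined by a quantifier-free formula, i.e.\ by a Boolean combination of term \emph{equations and inequalities in $(\x,y)$}; it does not tell you that $g$ agrees with a single explicit term $t(\x)$ on a neighborhood of a generic point. In fact that explicit claim is false in your language: take $g(x)=x^{1/3}$ (which is $L(\0)$-definable, being semialgebraic) and a positive infinitesimal $a$ in a nonstandard model of $T_{\an}$; since $\dclL(\0)=\R$, the point $a$ is generic, but no $L_{\an}\cup\{^{-1}\}$-term represents $x^{1/3}$ near $a$ (the value of any such term at $a$ has valuation in $\Z v(a)$, never $v(a)/3$). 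This is precisely why the paper's source result, \cite[Corollary 2.15]{DMM94}, works in the larger language $L^*_{\an}$ obtained by adding $n$th-root functions as well as inversion: the statement ``every $L_{\an}(\0)$-definable function is given piecewise by terms'' is a theorem in its own right in that richer language, not a formal consequence of quantifier elimination, and it (together with the observation that any derivation is automatically compatible with $^{-1}$ and $n$th roots) is the substantive input in the paper's proof of (1). Similarly for (2) the paper cites \cite[Corollary 4.7]{DMM94} for $L^*_{\anexp}=L_{\anexp}\cup\{\log\}$ and derives compatibility with $\log$ from compatibility with $\exp$.

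If instead you intend the weaker reading --- that the quantifier-free definition pins $g$ down by an \emph{implicit} term equation $t(\x,y)=0$ near the generic point --- then Lemma~\ref{basicchainrule} alone no longer closes the argument; you would need Lemma~\ref{implicitfunctions}, whose hypothesis $\frac{\partial t}{\partial y}\big(\uv,g(\uv)\big)\neq 0$ can fail for the terms actually occurring in the formula (e.g.\ the pinning equation may vanish to higher order along the graph, as with $t=(y-h(\x))^2$), so an additional argument (passing to a suitable $y$-derivative of $t$ and checking that the vanishing order is constant near the generic point, or similar) is required and is not supplied. So the proposal as written has a genuine gap at its central reduction; the repair is essentially to replace ``QE plus cell decomposition gives a local term representation'' by the piecewise-term corollaries of \cite{DMM94} in the languages $L^*_{\an}$ and $L^*_{\anexp}$, which is the route the paper takes.
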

\begin{proof}
For (1), let $L_{\an}$ be the language of ordered fields extended by function symbols for
each restricted analytic function. Let $L_{\an}^*$ extend $L_{\an}$ by function symbols for multiplicative inversion and each $n$th root. 
As $\delta$ is a derivation, it is compatible with addition, multiplication, multiplicative inversion, and $n$th roots. Therefore, if $\delta$ is compatible with every restricted analytic function then $\delta$ is compatible with every $L_{\an}^*$-term by repeated applications of Lemma~\ref{basicchainrule}. By~\cite[Corollary 2.15]{DMM94}, each $L_{\an}(\0)$-definable function is given piecewise by $L_{\an}^*$-terms.

\medskip

For (2), let $L_{\anexp}$ be the language of ordered fields extended by function symbols for the exponential function and each restricted analytic function. Let $L_{\anexp}^*$ extend $L_{\anexp}$ by a function symbol for the logarithm function.
If $\delta$ is compatible with $\exp$, then
\[
\frac{\delta x}{x}\ =\ \frac{\delta \exp\big(\!\log(x)\big)}{x}\ =\ \frac{\exp\big(\!\log(x)\big)\delta \log(x)}{x}\ =\ \delta \log(x)
\]
for all $x>0$, so $\delta$ is also compatible with $\log$. By~\cite[Corollary 4.7]{DMM94}, each $L_{\anexp}(\0)$-definable function is given piecewise by $L_{\anexp}^*$-terms. By the same reasoning as above, if $\delta$ is compatible with every restricted analytic function and with the exponential function then $\delta$ is a $T_{\anexp}$-derivation.
\end{proof}

\begin{lemma}\label{notalldelta}
There is an o-minimal theory $T \supsetneq \RCF$, a model $\cM \models T$ and a derivation $\delta$ on $\cM$ such that $\delta$ is not a $T$-derivation.
\end{lemma}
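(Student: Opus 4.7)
The approach is to take $T = T_{\anexp}$ (a complete, model-complete, o-minimal theory strictly extending $\RCF$) and to construct a field derivation on a suitable model of $T$ that fails to be compatible with the total exponential function. By compactness there is an $\cM \succeq_L \R_{\anexp}$ containing an element $a$ with $a > \R$.

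The first step is to verify that $\{a, \exp(a)\}$ is algebraically independent over $\Q$. Given any nonzero $P \in \Q[X,Y]$, the fact that $\exp$ eventually dominates every polynomial on $\R$ implies that the $L(\0)$-definable set $\{x : P(x, \exp(x)) \neq 0\}$ contains a final segment $(r,\infty)$ of $\R$. By elementarity this definable set contains a final segment of $\cM$ too, and since $a > r$ we conclude $P(a, \exp(a)) \neq 0$.

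The second step is to extend $\{a, \exp(a)\}$ to a transcendence basis $B$ of $M$ over $\Q$ and define $\delta_0 \colon B \to M$ by $\delta_0(a) := 1$, $\delta_0(\exp(a)) := 0$, and $\delta_0(b) := 0$ for every other $b \in B$. By the standard extension theorem for derivations, $\delta_0$ extends uniquely to a derivation on the purely transcendental extension $\Q(B)$, and then, since $M/\Q(B)$ is algebraic and separable (characteristic zero), it extends uniquely to a field derivation $\delta$ on $\cM$.

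It remains to observe that $\delta$ cannot be a $T$-derivation: if it were, compatibility with the $L(\0)$-definable $\cC^1$-function $\exp$ at $a$ would give
\[
\delta \exp(a)\ =\ \exp(a)\cdot \delta a\ =\ \exp(a)\ \neq\ 0,
\]
contradicting our choice $\delta \exp(a) = 0$. The only mildly subtle step is the first one: one needs o-minimality together with the rapid growth of $\exp$ to guarantee algebraic independence of $a$ and $\exp(a)$; everything else is routine commutative algebra.
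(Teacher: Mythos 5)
Your proof is correct, but it takes a genuinely different route from the paper. The paper stays as close to $\RCF$ as possible: it names a single transcendental real $\xi$ by a constant symbol, takes $T$ to be the theory of the real closure of $\Q(\xi)$ in $L_{\ring}\cup\{\xi\}$, and observes that since $\xi\in\dclL(\0)$, Lemma~\ref{generic-point} forces $\delta\xi=0$ for any $T$-derivation, while elementary differential algebra provides a derivation with $\delta\xi=1$; the incompatibility is thus with a $0$-ary symbol and the argument is a one-liner. You instead work in the richer theory $T_{\anexp}$ and produce a field derivation on an elementary extension of $\R_{\anexp}$ that is incompatible with the genuine new function $\exp$: you pick $a>\R$, check that $a$ and $\exp(a)$ are algebraically independent over $\Q$ (your growth argument is fine -- writing $P(x,\exp x)=\sum_j q_j(x)e^{jx}$ and letting the top term dominate makes it precise, and the relevant final-segment statement transfers with the real threshold as a parameter since $\cM\succeq_L\R_{\anexp}$ and $a>\R$), extend $\{a,\exp(a)\}$ to a transcendence basis, and set $\delta a=1$, $\delta\exp(a)=0$; compatibility with $\exp$ would force $\delta\exp(a)=\exp(a)\neq 0$. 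What the paper's version buys is brevity and minimality (the theory differs from $\RCF$ only by a constant); what yours buys is a more substantive example, showing failure of compatibility with an honest definable function in a standard o-minimal theory, which sits naturally alongside Lemma~\ref{Tanexp} and Question~\ref{isitaTexp}. Both establish the lemma.
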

\begin{proof}
Let $\xi\in \R$ be transcendental over $\Q$ and let $M$ be the smallest real-closed subfield of $\R$ containing $\xi$. Set $L= L_{\ring}\cup \{\xi\}$ where $\xi$ is a new constant symbol, so $M$ admits a natural expansion to an $L$-structure $\cM$ where $\xi$ is interpreted in the obvious way. Let $T$ be the complete $L$-theory of $\cM$, so $T$ is o-minimal. Basic facts about derivations tell us that there is a derivation $\delta$ on $\cM$ with $\delta \xi = 1$. However $\xi\in\dclL(\0)$, so if $\delta$ were a $T$-derivation, we would have $\delta \xi = 0$ by Lemma~\ref{generic-point}.
\end{proof}

Lemmas~\ref{Tanexp} and~\ref{notalldelta} raise a natural question (suggested by the anonymous referee), which is open at this time:

\begin{question}\label{isitaTexp}
Let $\R_{\exp}$ be the expansion of the real field by the total exponential function and let $T_{\exp}$ be its theory. Let $\cM \models T_{\exp}$ and let $\delta$ be a derivation on $\cM$ which is compatible with the exponential function. Is $\delta$ necessarily a $T_{\exp}$-derivation on $\cM$?
\end{question}

An anwer to Question~\ref{isitaTexp} likely requires an understanding of the definable closure in models of $T_{\exp}$.

\begin{lemma}\label{basicTderivation}
Suppose that $(\cM,\delta)\models \Td$. Let $k>0$ and let $f$ be an $L(M)$-definable $\cC^k$-function on an open set $U\subseteq M^n$. Then there is a unique $L(M)$-definable $\cC^{k-1}$-function $f^{[\delta]}:U \to M$ such that
\[
\delta f(\uv)\ =\ f^{[\delta]}(\uv)+\Jac_f(\uv) \delta \uv
\]
for all $\uv \in U$. Moreover, if $f$ is $L(A)$-definable where $A \subseteq \ker(\delta)$, then $f^{[\delta]} = 0$
\end{lemma}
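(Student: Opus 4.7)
The plan is to start from a representation $f(\uv) = g(\av,\uv)$ where $g$ is an $L(\0)$-definable function and $\av \in M^m$ is a parameter tuple (chosen inside $A$ if $f$ is $L(A)$-definable), and then apply $T$-derivation compatibility to $g$. The hypothesis that $f$ is $\cC^k$ on $U$ does not automatically make $g$ smooth near $\{\av\}\times U$, so a priori the compatibility axiom cannot be invoked. This is the main obstacle, and it is resolved by Corollary~\ref{Ckfiber} from the appendix: we can choose the representation so that there is an $L(\0)$-definable open set $V \supseteq \{\av\}\times U$ on which $g$ is $\cC^k$.

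Once $g$ is $\cC^k$ on $V$, compatibility of $\delta$ with $g$ yields, for each $\uv\in U$,
\[
\delta f(\uv)\ =\ \delta g(\av,\uv)\ =\ \frac{\partial g}{\partial \x}(\av,\uv)\,\delta \av\,+\,\frac{\partial g}{\partial \y}(\av,\uv)\,\delta \uv.
\]
Since $f(\uv)=g(\av,\uv)$, we have $\Jac_f(\uv)=\frac{\partial g}{\partial \y}(\av,\uv)$, so setting
\[
f^{[\delta]}(\uv)\ :=\ \frac{\partial g}{\partial \x}(\av,\uv)\,\delta \av
\]
produces the required decomposition. This function is $L(M)$-definable (with parameters $\av$ and $\delta\av$), and because $g$ is $\cC^k$ on $V$, each component $\frac{\partial g}{\partial x_i}$ is $\cC^{k-1}$ on $V$, so $f^{[\delta]}$ is $\cC^{k-1}$ on $U$.

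For uniqueness, the defining equation forces $f^{[\delta]}(\uv)=\delta f(\uv)-\Jac_f(\uv)\delta \uv$ pointwise on $U$, which uniquely determines $f^{[\delta]}$ and also shows it is independent of the chosen representation $(g,\av)$. For the moreover clause, if $f$ is $L(A)$-definable with $A\subseteq \ker(\delta)$, take $\av\in A^m$; then $\delta\av=0$, and the explicit formula for $f^{[\delta]}$ gives $f^{[\delta]}\equiv 0$.
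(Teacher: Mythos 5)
Your proposal is correct and matches the paper's own argument: both fix an $L(\0)$-definable representation $f=F_{\av}$, invoke Corollary~\ref{Ckfiber} (plus the definition of a definable $\cC^k$-function) to replace it by an $L(\0)$-definable $\cC^k$-function on an open set containing $\{\av\}\times U$, apply compatibility to get $f^{[\delta]}(\uv)=\frac{\partial G}{\partial \x}(\av,\uv)\,\delta\av$, and read off uniqueness pointwise and the ``moreover'' clause from $\delta\av=0$ when $\av$ can be taken in $A$. No substantive differences.
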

\begin{proof}
If such a function $f^{[\delta]}$ exists, then it is uniquely determined by $f^{[\delta]}(\uv) = \delta f(\uv) -\Jac_f(\uv) \delta \uv$. We show existence:
fix $\av \in M^m$, an $L(\0)$-definable function $F$ such that $f = F_\av$ and an $L(\0)$-definable set $W$ such that $W_{\av} = U$. By replacing $W$ with the subset
\[
\big\{\x \in W: W_\x\text{ is open and }F_\x\text{ is }\cC^k\text{ on }W_\x\text{ for all }\x \in \pi_m(W)\big\},
\]
we may assume that $F$ and $W$ satisfy the hypotheses of Corollary~\ref{Ckfiber}. Thus, we can conclude that $\av$ is contained in an $L(\0)$-definable $\cC^k$-cell $D \subseteq M^m$ such that $F|_{W \cap (D\times M^n)}$ is $\cC^k$. By replacing $W$ with $W \cap (D\times M^n)$, we may assume that $F$ is $\cC^k$ on $W$. Take an open set $\tilde{W} \supseteq W$ and a $\cC^k$-function $G = G(\x,\y):\tilde{W} \to M$ such that $G|_W = F|_W$, so $G_\av(\uv) = f(\uv)$ for all $\uv \in U$. Since $G$ is $L(\0)$-definable, we have for $\uv \in U$ that
\[
\delta f(\uv)\ =\ \delta G(\av,\uv)\ =\ \Jac_G(\av,\uv) (\delta \av,\delta \uv)\ =\ f^{[\delta]}(\uv)+\Jac_f(\uv) \delta \uv,
\]
where $f^{[\delta]}(\y ):= \frac{\partial G}{\partial \x}(\av,\y) \delta \av$. Clearly, $f^{[\delta]}$ is a $\cC^{k-1}$-function and if $\delta \av = 0$ then $f^{[\delta]} = 0$.
\end{proof}

As is the case for derivations, one can extend a $T$-derivation $\delta$ on $\cM$ to $\cN \succeq_L \cM$ by specifying the values of $\delta$ on a $\dclL$-basis for $\cN$ over $\cM$.

\begin{lemma}
\label{transext}
Suppose that $(\cM,\delta)\models \Td$.
Let $\cN \succeq_L \cM$ and suppose that $\cN = \cM\langle A\rangle$ where $A$ is a $\dclL(M)$-independent subset of $N$.
Then for any map $s:A \to N$, there is a unique extension of $\delta$ to a $T$-derivation on $\cN$ (also denoted by $\delta$) such that $\delta a= s(a)$ for all $a \in A$.
\end{lemma}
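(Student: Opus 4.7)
Both parts flow from Lemma~\ref{generic-point}. For \emph{uniqueness}, note that every $b \in N = \dclL(M \cup A)$ may be written as $b = F(\bv, \av)$ with $F$ an $L(\0)$-definable function, $\bv$ a $\dclL(\0)$-independent tuple from $M$, and $\av$ a tuple of distinct elements of $A$: because $A$ is $\dclL(M)$-independent, $(\bv, \av)$ is then $\dclL(\0)$-independent, so $F$ is $\cC^1$ on an $L(\0)$-definable open neighborhood of $(\bv, \av)$ by o-minimal $\cC^1$-cell decomposition. Lemma~\ref{generic-point} then forces any $T$-derivation $\tilde\delta$ extending $\delta$ with $\tilde\delta a = s(a)$ for $a \in A$ to satisfy
\[
\tilde\delta b\ =\ \Jac_F(\bv, \av) \binom{\delta \bv}{s(\av)},
\]
pinning down $\tilde\delta$ uniquely. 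For \emph{existence}, we take this formula as the definition of $\delta b$ and check it works.

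The crux is \textbf{well-definedness}. Suppose $b = F(\bv, \av) = G(\cv, \av')$ are two admissible representations. Pick a $\dclL(\0)$-independent tuple $\uv$ in $M$ whose components form a $\dclL(\0)$-basis for the components of $\bv$ and $\cv$, and write $\bv = B(\uv)$, $\cv = C(\uv)$ for $L(\0)$-definable $\cC^1$-maps $B, C$; let $\vv$ enumerate the union of the components of $\av$ and $\av'$ (automatically $\dclL(M)$-independent, as a subset of $A$), so that $\av = P\vv$ and $\av' = P'\vv$ for coordinate projections $P, P'$. Then $(\uv, \vv)$ is $\dclL(\0)$-independent and
\[
H(\uv, \vv)\ :=\ F\!\big(B(\uv), P\vv\big) - G\!\big(C(\uv), P'\vv\big)
\]
is $L(\0)$-definable and vanishes at $(\uv, \vv)$, hence on an $L(\0)$-definable open neighborhood of it (else the zero set of $H$ would sit inside a lower-dimensional $L(\0)$-definable set containing the generic tuple $(\uv, \vv)$). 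In particular $\Jac_H(\uv, \vv) = 0$, and combining this with the chain rule, with the identities $\delta\bv = \Jac_B(\uv)\delta\uv$ and $\delta\cv = \Jac_C(\uv)\delta\uv$ (which hold because $\delta$ is already a $T$-derivation on $\cM$), and with $P\,s(\vv) = s(\av)$, $P's(\vv) = s(\av')$, yields equality of the two candidate values of $\delta b$.

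The remaining verifications are immediate. The extended $\delta$ satisfies the conditions of Lemma~\ref{generic-point}: constants in $\dclL(\0)$ get value $0$ via the trivial representation, and the chain rule for an $L(\0)$-definable $\cC^1$-function on a $\dclL(\0)$-independent tuple from $N$ follows by the same bookkeeping as in the well-definedness step. Similarly $\delta|_M = \delta$ and $\delta a = s(a)$ for $a \in A$ are read off from the trivial representations $b = F(b)$ and $a = \pi_i(\av)$. The main obstacle is the combinatorial bookkeeping in well-definedness — choosing a common refinement $(\uv, \vv)$ that cleanly separates the $\dclL(\0)$-independent $M$-part from the $\dclL(M)$-independent $A$-part — but once it is in place, the remainder is standard chain-rule calculus.
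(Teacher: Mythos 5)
Your proof is correct and arrives at the same defining formula as the paper, but the packaging is genuinely different in a way worth pointing out. The paper parameterizes the construction by $L(M)$-definable functions $f$ and a tuple $\av$ of distinct elements of $A$, setting $\delta f(\av) := f^{[\delta]}(\av) + \Jac_f(\av)\,s(\av)$, where $f^{[\delta]}$ is the ``inhomogeneous part'' supplied by Lemma~\ref{basicTderivation}. That lemma in turn rests on Corollary~\ref{Ckfiber} (the Verdier-stratification result from Appendix~\ref{sec:Ck}), which is needed there precisely because the $M$-parameters inside $f$ need not be $\dclL(\0)$-independent. Your version keeps the $M$-parameters visible: you write $b = F(\bv,\av)$ with $F$ $L(\0)$-definable and $(\bv,\av)$ $\dclL(\0)$-independent, which forces $F$ to be $\cC^1$ at $(\bv,\av)$ by ordinary $\cC^1$-cell decomposition alone, so no appeal to Appendix~\ref{sec:Ck} is required. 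Unpacking $f^{[\delta]}(\av) = \frac{\partial F}{\partial \x}(\bv,\av)\,\delta\bv$, your formula $\Jac_F(\bv,\av)\binom{\delta\bv}{s(\av)}$ is literally the paper's. The price you pay is that well-definedness must be argued by hand via the common-refinement tuple $(\uv,\vv)$ instead of by citing the uniqueness clause of Lemma~\ref{basicTderivation}; the benefit is a more elementary, self-contained argument. One further stylistic divergence: you verify the $T$-derivation property by reducing to $\dclL(\0)$-independent tuples through Lemma~\ref{generic-point}, whereas the paper verifies the defining condition directly for arbitrary tuples via the chain rule for $h^{[\delta]}$. Your reduction is cleaner, though your final paragraph compresses the verification quite a bit; for a submission you should spell out the chain-rule computation for a $\dclL(\0)$-independent tuple $\uv\in N^m$ written as $\uv = F(\bv,\av)$, exactly parallel to your well-definedness step. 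Two small points: the ``trivial representation $b = F(b)$'' used to check $\delta|_M = \delta$ should be stated with a $\dclL(\0)$-independent tuple from $M$ rather than with $b$ itself, and when you invoke ``hence on an $L(\0)$-definable open neighborhood'' you are implicitly using that a $\dclL(\0)$-independent tuple lying in an $L(\0)$-definable set must lie in its interior (since the boundary is $L(\0)$-definable of smaller dimension) --- worth saying explicitly.
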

\begin{proof}
We need to determine the value of $\delta f(\av)$ for each $n$, each $L(M)$-definable function $f:N^n\to N$ and each $n$-tuple $\av$ of distinct elements from $A$. Since $\av$ is a $\dclL(M)$-independent tuple, $f$ is $\cC^1$ on some open $L(M)$-definable open neighborhood $U$ of $\av$.
By Lemma~\ref{basicTderivation}, there is an $L(M)$-definable function $f^{[\delta]}: U \to M$ such that $\delta f(\uv) = f^{[\delta]}(\uv)+\Jac_f(\uv) \delta \uv$ for all $\uv \in U^\cM$.
For $\delta$ to satisfy the uniqueness condition in this lemma, we only have one choice, so we set
\[
\delta f(\av)\ :=\ f^{[\delta]}(\av)+\Jac_f(\av) s(\av).
\]
Clearly, $\delta a = s(a)$ for each $a \in A$. This assignment is also well-defined, for if $f$ and $g$ are $L(M)$-definable functions such that $f(\av) = g(\av)$ for some tuple $\av$ of distinct elements from $A$, then by $L(M)$-independence of $\av$, there is some $L(M)$-definable open neighborhood of $\av$ on which $f= g$ and thus, on which $f^{[\delta]} = g^{[\delta]}$ and $\Jac_f = \Jac_g$.

\medskip

We claim that $\delta$ is a $T$-derivation. We need to show that $\delta g(\uv) = \Jac_g(\uv) \delta u$ for all $\uv \in N^m$ and all $L(\0)$-definable $\cC^1$-functions $g:U \to N$ where $U$ is an open neighborhood of $\uv$.
Take an $n$-tuple $\av$ of distinct elements from $A$ and an $L(M)$-definable map $f:N^n \to N^m$ such that $\uv := f(\av)$ and take an open $L(M)$-definable set $V$ containing $\av$ such that $f$ is $\cC^1$ on $V$ and such that $f(V)\subseteq U$. Set $h:= g \circ f$ and set $\sv:=s(\av)$.
By definition, we have
\begin{equation}\label{transexteq}
\delta g(\uv)\ =\ \delta h(\av)\ =\ h^{[\delta]}(\av)+\Jac_h(\av) \sv, \qquad \delta \uv\ =\ \delta f(\av)\ =\ f^{[\delta]}(\av)+\Jac_{f}(\av) \sv.
\end{equation}
For all $\x \in V^\cM$, we have that
\[
h^{[\delta]}(\x)+\Jac_h(\x) \delta \x\ =\ \delta h(\x)\ =\ \delta g(f(\x))\ =\ \Jac_g\!\big(f(\x)\big)\big(f^{[\delta]}(\x)+\Jac_{f}(\x) \delta \x\big).
\]
Using also that $\Jac_h(\x) = \Jac_{g \circ f}(\x) = \Jac_g\!\big(f(\x)\big)\Jac_{f}(\x)$, we see that $h^{[\delta]}(\x) = \Jac_g\!\big(f(\x)\big)f^{[\delta]}(\x)$ for all $\x \in V^\cM$. As every object we've considered since (\ref{transexteq}) has been $L(M)$-definable, we have by elementarity that
\[
h^{[\delta]}(\av)\ =\ \Jac_g\!\big(f(\av)\big)f^{[\delta]}(\av)\ =\ \Jac_g(\uv)f^{[\delta]}(\av),\qquad \Jac_h(\av)\ =\ \Jac_g\!\big(f(\av)\big)\Jac_{f}(\av)\ =\ \Jac_g(\uv)\Jac_{f}(\av).
\]
This along with the identities in (\ref{transexteq}) gives that
\[
\delta g(\uv)\ =\ \Jac_g(\uv)f^{[\delta]}(\av)+\Jac_g(\uv)\Jac_{f}(\av) \sv\ =\ \Jac_g(\uv) \delta \uv. \qedhere
\]
\end{proof}

%----------------------------------------------------------------------------------%
\subsection{Examples of $T$-derivations} Given any $\cM \models T$, the map $\delta: M \to M$ which takes constant value 0 is a $T$-derivation. In this subsection, we explore some nontrivial $T$-derivations.

\begin{example}[Power series]
Let $\R((t^\Q))$ be the field of formal power series with coefficients in
$\R$, exponents in $\Q$ and well-ordered support.
By~\cite[Corollary 2.1]{DMM94}, $\R((t^\Q))$ admits a canonical expansion to a model of $T_{\an}$ where restricted analytic functions are defined via Taylor series expansion. Let $\frac{\dif}{\dif t}$ be the usual formal derivation on $\R((t^\Q))$. Since $\frac{\dif}{\dif t}$ commutes with infinite sums, it is compatible with restricted analytic functions. By Lemma~\ref{Tanexp}, $\frac{\dif}{\dif t}$ is a $T_{\an}$-derivation on $\R((t^\Q))$.
\end{example}

\begin{example}[Surreal numbers]
Let $\mathbf{No}$ be the class-sized ordered field of surreal numbers. Then $\mathbf{No}$ admits a canonical expansion to a model of $T_{\anexp}$; see~\cite[Theorem 2.1]{DE01}. Berarducci and Mantova have defined a derivation $D$ on the surreal numbers which is compatible with the exponential function and commutes with infinite sums~\cite[Theorem 6.30]{BM18}. Thus, $D$ is a $T_{\anexp}$-derivation, again by Lemma~\ref{Tanexp}.
\end{example}

\begin{example}[Transseries]
Let $\mathbb{T}$ be the ordered field of logarithmic-exponential transseries and let $\frac{\dif}{\dif x}$ be the usual derivation on $\mathbb{T}$ (see~\cite{ADH17} for a detailed definition). By~\cite[Corollary 2.8]{DMM97}, $\mathbb{T}$ admits a canonical expansion to a model of $T_{\anexp}$ and by Lemma~\ref{Tanexp}, $\frac{\dif}{\dif x}$ is a $T_{\anexp}$-derivation on $\mathbb{T}$.
\end{example}

\begin{example}[Hardy fields]
Let $\overline{\R}$ be an arbitrary o-minimal expansion of the real field in a language $L$. Let $T$ be the elementary $L$-theory of $\overline{\R}$. We define an equivalence relation on $L(\R)$-definable functions $f,g:\R\to \R$ by setting $f \sim g$ if there is some $a \in \R$ such that $f|_{(a,\infty)} = g|_{(a,\infty)}$. If $f \sim g$, we say that $f$ and $g$ have the same \emph{germ at infinity}. We let $[f]_\sim$ be the equivalence class of $f$ and we set
\[
M\ :=\ \big\{[f]_{\sim}:f:\R\to \R \text{ is }L(\R)\text{-definable}\big\}.
\]
There is a natural expansion of $M$ to an $L$-structure $\cM$. For example, if $R$ is an $n$-ary relation symbol in $L$, we interpret $R$ in $\cM$ by
\[
\cM\models R\big([f_1]_\sim,\ldots,[f_n]_\sim\big)\ :\Longleftrightarrow\ \overline{\R} \models R\big(f_1(x),\ldots,f_n(x)\big) \text{ for all sufficiently large }x
\]
where this is well-defined by o-minimality. Under this expansion, we have $\overline{\R} \preceq_L\cM$, where $\overline{\R}$ is identified with the germs of constant functions, see~\cite{DMM94}. Now we define $\delta:M\to M$ by setting
\[
\delta[f]_\sim\ :=\ [f']_\sim.
\]
We note that above, $f$ may not be everywhere differentiable, but it is differentiable at all sufficiently large $x$ so it makes sense to talk about the germ of $f'$. Then $(\cM,\delta)$ is a Hardy field and it is easy to check that $\delta$ is a $T$-derivation on $\cM$ (just use the chain rule from elementary calculus).
\end{example}

\begin{remark}
Let $(\cM,\delta)$ be as in any of the four examples above. Then the constant field of $(\cM,\delta)$ is $\R$ and the pair $(\cM,\R)$ is a \textbf{tame pair}, as defined in~\cite[Section 5]{DL95}. In~\cite[Section 2]{BKW10}, the authors construct derivations on the real exponential field.
These derivations can be taken to be $T_{\exp}$-derivations, but we are not sure if these derivations are \emph{necessarily} $T_{\exp}$-derivations, see Question~\ref{isitaTexp}. Also relevant to Question~\ref{isitaTexp} are the ``$E$-derivations'' used by Kirby to study exponential algebraicity in~\cite{Ki10}.
\end{remark}
%%

%----------------------------------------------------------------------------------%
\subsection{The Lie algebra of $T$-derivations}\label{subsec:Lie}
Let $\Der_T(\cM)$ be the set of $T$-derivations on $\cM$. Given $\delta,\epsilon \in \Der_T(\cM)$ and $a_1,a_2 \in M$, one can easily check that $a_1\delta+a_2\epsilon \in \Der_T(\cM)$, so $\Der_T(\cM)$ naturally has the structure of an $M$-vector space. In this subsection, we show that it has the structure of a Lie algebra. We define a Lie bracket on $\Der_T(\cM)$ by 
\[
[\delta, \epsilon]\ :=\ \delta\epsilon - \epsilon\delta
\]
(where $\delta\epsilon$ is the composition of $\delta$ with~$\epsilon$).
It is routine to verify that this operation formally satisfies the Lie bracket axioms, so we only have to ensure that $[\delta, \epsilon]$ is indeed a $T$-derivation.

\begin{lemma}\label{twoderivs}
If $\delta,\epsilon \in \Der_T(\cM)$ then $[\delta,\epsilon]\in\Der_T(\cM)$.
\end{lemma}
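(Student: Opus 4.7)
The plan is to apply the generic-point test of Lemma~\ref{generic-point}. Since $\delta$ and $\epsilon$ are both $T$-derivations, they annihilate $\dclL(\0)$, so $[\delta,\epsilon]c = \delta\epsilon c - \epsilon \delta c = 0$ for every $c \in \dclL(\0)$. The real content is to verify that for every $L(\0)$-definable function $f$ that is $\cC^1$ in a neighborhood of a $\dclL(\0)$-independent tuple $\uv = (u_1,\dotsc,u_n)$, one has
\[
[\delta,\epsilon]f(\uv)\ =\ \Jac_f(\uv)\,[\delta,\epsilon]\uv.
\]

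The key preparatory step is to upgrade $\cC^1$ to $\cC^2$. Since $\uv$ is $\dclL(\0)$-independent, Corollary~\ref{Ckfiber} (applied just as in the proof of Lemma~\ref{basicTderivation}) furnishes an $L(\0)$-definable open neighborhood $U$ of $\uv$ on which $f$ is of class $\cC^2$. In particular, each partial $\partial f/\partial x_i$ is $L(\0)$-definable and $\cC^1$ on $U$, and the mixed partials $\partial^2 f/\partial x_i \partial x_j$ are continuous and symmetric in $i,j$ (the standard Clairaut argument goes through in any o-minimal expansion of a real closed field).

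With this in place, I would compute by applying the Leibniz rule and using compatibility of $\delta$ with the $\cC^1$-functions $\partial f/\partial x_i$:
\[
\delta\epsilon f(\uv)\ =\ \sum_{i,j}\frac{\partial^2 f}{\partial x_j\,\partial x_i}(\uv)\,(\delta u_j)(\epsilon u_i)\ +\ \sum_i\frac{\partial f}{\partial x_i}(\uv)\,\delta\epsilon u_i,
\]
and symmetrically for $\epsilon\delta f(\uv)$ with the roles of $\delta$ and $\epsilon$ swapped. Subtracting, the genuinely second-order contribution is
\[
\sum_{i,j}\frac{\partial^2 f}{\partial x_i\,\partial x_j}(\uv)\bigl((\delta u_j)(\epsilon u_i) - (\epsilon u_j)(\delta u_i)\bigr),
\]
in which the coefficient matrix is symmetric in $(i,j)$ while the bracket matrix is antisymmetric, so this double sum vanishes. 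What survives is exactly $\sum_i (\partial f/\partial x_i)(\uv)\,[\delta,\epsilon]u_i = \Jac_f(\uv)\,[\delta,\epsilon]\uv$, which is the desired identity.

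The only potential obstacle is ensuring the $\cC^2$ upgrade at a generic point so that the second application of compatibility and the symmetry of mixed partials are both legitimate; once Corollary~\ref{Ckfiber} supplies this, the cancellation of the symmetric/antisymmetric contraction is automatic and nothing further is needed.
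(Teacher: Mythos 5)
Your proof is correct and takes essentially the same route as the paper: reduce via the test in Lemma~\ref{generic-point}, upgrade $f$ to $\cC^2$ near the $\dclL(\0)$-independent tuple, expand $\delta\epsilon f(\uv)$ and $\epsilon\delta f(\uv)$ by the Leibniz rule together with compatibility of each derivation with the $\cC^1$ partials, and cancel the second-order terms by symmetry of mixed partials. The only cosmetic difference is in how the $\cC^2$ upgrade is justified: the paper gets it directly from $\dclL(\0)$-independence (i.e.\ an $L(\0)$-definable $\cC^2$-cell decomposition, since $f$ is parameter-free), whereas you invoke Corollary~\ref{Ckfiber}, the parametric-family version, which is more than is needed here but does no harm.
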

\begin{proof}
Set $\gamma := [\delta,\epsilon]$. By Lemma~\ref{generic-point}, we only need to check that $\gamma(c) = 0$ for all $c \in \dclL(\0)$ and that $\gamma f(\uv) = \Jac_f(\uv)\gamma \uv$
for all $\dclL(\0)$-independent tuples $\uv\in M^n$ and all $L(\0)$-definable functions $f$ which are $\cC^1$ in a neighborhood of $\uv$. The fact that $\gamma c = 0$ for all $c \in \dclL(\0)$ follows from the fact that $\delta c= \epsilon c = 0$. Fix $f$ and a $\dclL(\0)$-independent tuple $\uv$. By $\dclL$-independence, we may assume that $f = f(\y)$ is $\cC^2$ in an open neighborhood $U$ of $\uv$.
We have
\[
\delta\epsilon f(\uv)\ =\ \delta\big(\Jac_f(\uv) \epsilon \uv\big)\ =\ \sum_{j=1}^n\delta\left(\frac{\partial f}{\partial y_j}(\uv)\right)\epsilon u_j+\Jac_f(\uv) \delta\epsilon \uv\ =\ \sum_{i,j=1}^n \frac{\partial^2 f}{\partial y_i\partial y_j}(\uv)\delta u_i\epsilon u_j+\Jac_f(\uv) \delta\epsilon \uv.
\]
Likewise, we have
\[
\epsilon\delta f(\uv)\ =\ \sum_{i,j=1}^n \frac{\partial^2 f}{\partial y_i\partial y_j}(\uv)\epsilon u_i\delta u_j+\Jac_f(\uv) \epsilon\delta \uv.
\]
so by symmetry of second derivatives, we have
\[
\gamma f(\uv)\ =\ \delta\epsilon f(\uv)-\epsilon\delta f(\uv) \ =\ \Jac_f(\uv) \delta\epsilon \uv-\Jac_f(\uv) \epsilon\delta \uv\ =\ \Jac_f(\uv) \gamma \uv. \qedhere
\]
\end{proof}

Let $K$ be a subfield of $M$. We say that $\delta$ is a \textbf{$T$-derivation over $K$} if $\delta(c) = 0$ for all $c \in K$. 

\begin{lemma}
The set of $T$-derivations over $K$ is a Lie sub-algebra of $\Der_T(\cM)$.
\end{lemma}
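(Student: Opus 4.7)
The plan is a short direct verification: the set $\mathfrak{g}_K$ of $T$-derivations over $K$ is cut out of the Lie algebra $\Der_T(\cM)$ (established in the preceding remarks and in Lemma~\ref{twoderivs}) by the extra linear condition ``vanishes pointwise on $K$''. So I only need to check that this vanishing condition is preserved under the three operations that make $\Der_T(\cM)$ a Lie algebra: $M$-scalar multiplication, addition, and the bracket $[\delta,\epsilon] = \delta\epsilon - \epsilon\delta$.

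For $M$-linearity: given $\delta,\epsilon \in \mathfrak{g}_K$ and $a_1,a_2 \in M$, the combination $a_1\delta+a_2\epsilon$ is already a $T$-derivation by the opening remarks of this subsection, and for $c\in K$ it evaluates to $a_1\delta(c)+a_2\epsilon(c)=a_1\cdot 0+a_2\cdot 0=0$, so it lies in $\mathfrak{g}_K$. For the bracket: given $\delta,\epsilon\in\mathfrak{g}_K$, Lemma~\ref{twoderivs} already provides $[\delta,\epsilon]\in\Der_T(\cM)$, and for $c\in K$ one computes $[\delta,\epsilon](c) = \delta(\epsilon(c))-\epsilon(\delta(c)) = \delta(0)-\epsilon(0) = 0$, using only that $\delta$ and $\epsilon$ annihilate $K$.

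There is no real obstacle; the substantive content is packaged into Lemma~\ref{twoderivs} and the $M$-vector space structure discussed above. The only mildly noteworthy feature is that $\mathfrak{g}_K$ is an $M$-subspace of $\Der_T(\cM)$, not merely a $K$-subspace: scaling a $T$-derivation that vanishes on $K$ by an arbitrary element of $M$ cannot disturb its vanishing on $K$, so the $M$-linear structure automatically restricts to $\mathfrak{g}_K$.
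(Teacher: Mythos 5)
Your proof is correct, and it is a genuinely different (and more elementary) route than the paper's. You verify directly that the vanishing-on-$K$ condition is preserved by $M$-linear combinations and by the bracket, the latter via the one-line computation $[\delta,\epsilon](c)=\delta(\epsilon c)-\epsilon(\delta c)=\delta(0)-\epsilon(0)=0$ for $c\in K$; together with Lemma~\ref{twoderivs} and the $M$-vector space structure of $\Der_T(\cM)$, this settles the lemma with no further input. The paper instead proceeds by expanding the language: it lets $T_K$ be the complete $L(K)$-theory of $\cM$ and shows that the $T$-derivations over $K$ are \emph{exactly} the $T_K$-derivations, the nontrivial direction resting on the ``moreover'' clause of Lemma~\ref{basicTderivation} (if $f$ is $L(A)$-definable with $A\subseteq\ker(\delta)$ then $f^{[\delta]}=0$); the conclusion then follows because $\Der_{T_K}(\cM)$ is a Lie algebra by the earlier results applied to $T_K$. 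Your argument buys brevity and uses only the pointwise vanishing condition; the paper's argument buys a strictly stronger fact of independent interest, namely that a $T$-derivation killing $K$ is automatically compatible with every $L(K)$-definable $\cC^1$-function, not merely the $L(\0)$-definable ones, which is the conceptual content packaged into the identification with $\Der_{T_K}(\cM)$.
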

\begin{proof}
Set $L(K)$ extend $L$ by constant symbols for each $c\in K$ and let $T_K$ be the complete $L(K)$-theory of $\cM$. Clearly, any $T_K$-derivation on $\cM$ is a $T$-derivation over $K$. Conversely, if $\delta$ is a $T$-derivation over $K$ then $\delta$ is a $T_K$-derivation by the ``moreover'' part of Lemma~\ref{basicTderivation}. Thus, the set of $T$-derivations over $K$ is exactly the $\Der_{T_K}(\cM)$, which is a Lie sub-algebra of $\Der_T(\cM)$.
\end{proof}

%----------------------------------------------------------------------------------%
\section{The \texorpdfstring{$\delta$}{δ}-closure operator} \label{sec:dims}
%----------------------------------------------------------------------------------%
Let $(\cM,\delta) \models \Td$. In this section, we develop a $\delta$-closure operator on
$M$. First, some notation: given $a \in M$, we define the \textbf{jets} of $a$:
\[
\jet^n(a)\ :=\ (a,\delta a,\ldots, \delta^n a),\qquad \jet^\infty(a) := (\delta^ia)_{i < \omega}.
\]
Given $\av \in M^m$, $B \subseteq M^m$ and $\alpha \in \N \cup \{\infty\}$, we set
\[
\jet^\alpha(\av)\ :=\ \big(\jet^\alpha(a_1),\ldots,\jet^\alpha(a_m)\big),\qquad \jet^\alpha(B)\ :=\ \big\{\jet^\alpha(\bv):\bv \in B\big\}.
\]
For simplicity of notation, we let $\jet^{-1}(\av)$ be the empty tuple and we let $\jet^{-1}(B)$ be the empty set. %Given $n$ and $\av \in M^m$, we see that $\jet^n(\av) \in M^{m(n+1)}$, so we may view $\jet^n$ as a map from $M^m$ to $M^{m(n+1)}$. There is a corresponding coordinate projection map, which we denote by $\vec{\pi}: M^{m(n+1)} \to M^m$ such that \[ \vec{\pi}\big(\jet^n(\av)\big)\ =\ \av. \]
\begin{definition}\label{delclosure}
Given $a \in M$ and $B \subseteq M$, we say that \textbf{$a$ is in the $\delta$-closure of $B$}, written
$a \in \cld(B)$, if 
\[
\rkL\big(\jet^\infty(a)|\jet^\infty(B)\big)\ <\ \aleph_0.
\]
\end{definition}

This section is devoted to showing that $(\cM,\cld)$ is a finitary matroid and exploring the corresponding rank function.

%----------------------------------------------------------------------------------%
\subsection{Quasi-endomorphisms}\label{subsec:quasi-endo}
In this subsection, we fix a set $X$ and a closure operator $\cl:\cP(X) \to \cP(X)$ such that $(X,\cl)$ is a finitary matroid. Let $\rk$ denote the associated cardinal-valued rank function. We say that a map $\delta:X \to X$ is a \textbf{quasi-endomorphism of $(X,\cl)$} if
\[
\rk(\delta A | AB\delta B)\ \leq\ \rk(A|B)
\]
for all $A,B \subseteq X$. Fix a quasi-endomorphism $\delta$. Throughout this subsection, $A$, $B$ and $C$ denote subsets of $X$ and $a$, $b$ and $c$ denote elements of $X$. We continue to use the $\jet^n$ and $\jet^\infty$ notation introduced in the beginning of this section. Though we are working with an abstract finitary matroid, the example to keep in mind is of course the case where $(X,\cl)= (\cM,\dclL)$ and where $\delta$ is a $T$-derivation on $\cM$.

\medskip

We define $\cld: \cP (X) \to \cP (X)$ as in Definition~\ref{delclosure}:
\[
a \in \cld(B)\ :\Longleftrightarrow\ \rk\big(\jet^\infty (a) | \jet^\infty (B)\big) < \aleph_0.
\]
Note that $\cl(B) \subseteq \cld(B)$ and that $\jet^\infty(B) \subseteq \cld(B)$.

\begin{lemma}\label{dcleq}
The following are equivalent:
\begin{enumerate}[(1)]
\item $a \in \cld(B)$,
\item $\rk\big(\jet^n(a)|\jet^\infty(B)\big) \leq n$ for some $n$,
\item $\delta^{n}a \in \cl\!\big(\jet^{n-1}(a)\jet^\infty(B)\big)$ for some $n$
\item there are $n$ and $m$ such that $\delta^{k}a \in \cl\!\big(\jet^{n-1}(a)\jet^{m+k}(B)\big)$ for all $k \geq n$.
\end{enumerate}
\end{lemma}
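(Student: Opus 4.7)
The plan is to prove the cycle $(1) \Rightarrow (2) \Rightarrow (3) \Rightarrow (4) \Rightarrow (1)$. The implications $(1)\Rightarrow(2)$ and $(4)\Rightarrow(1)$ are immediate: in the first case, if $\rk(\jet^\infty(a)|\jet^\infty(B)) = r < \aleph_0$, then any $n \geq r$ satisfies $\rk(\jet^n(a)|\jet^\infty(B)) \leq r \leq n$; in the second case, $(4)$ says $\jet^\infty(a) \subseteq \cl(\jet^{n-1}(a)\jet^\infty(B))$, so $\rk(\jet^\infty(a)|\jet^\infty(B)) \leq n < \aleph_0$. For $(2)\Rightarrow(3)$, note that $\jet^n(a)$ has $n+1$ components, so a rank bound of $n$ over $\jet^\infty(B)$ forces a nontrivial dependence; taking the least index $k \leq n$ with $\delta^k a \in \cl(\jet^{k-1}(a)\jet^\infty(B))$ gives $(3)$.

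The one step that actually uses the quasi-endomorphism hypothesis is $(3)\Rightarrow(4)$, and this is where the real work lies. Starting from $\delta^n a \in \cl(\jet^{n-1}(a)\jet^\infty(B))$, the finitary character of $\cl$ supplies some $m$ with $\delta^n a \in \cl(\jet^{n-1}(a)\jet^{m+n}(B))$; this is the base case $k = n$ of an induction on $k$. For the inductive step, I apply the quasi-endomorphism inequality to $\alpha := \{\delta^k a\}$ and $\beta := \jet^{n-1}(a) \cup \jet^{m+k}(B)$. Since the induction hypothesis gives $\rk(\alpha|\beta) = 0$, the defining inequality yields $\rk(\delta\alpha|\alpha\beta\delta\beta) = 0$, i.e., $\delta^{k+1} a \in \cl(\{\delta^k a\} \cup \beta \cup \delta\beta)$. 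The key set-theoretic identities $\jet^{n-1}(a) \cup \delta\jet^{n-1}(a) = \jet^n(a)$ and $\jet^{m+k}(B) \cup \delta\jet^{m+k}(B) \subseteq \jet^{m+k+1}(B)$ then rewrite this as $\delta^{k+1} a \in \cl(\{\delta^k a\} \cup \jet^n(a) \cup \jet^{m+k+1}(B))$.

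To finish the induction, I use the base case and the induction hypothesis to absorb the stray terms: both $\delta^n a$ (from the base case, via $\jet^{m+n}(B) \subseteq \jet^{m+k+1}(B)$ once $k \geq n-1$) and $\delta^k a$ (from the induction hypothesis) lie in $\cl(\jet^{n-1}(a)\jet^{m+k+1}(B))$, hence so does all of $\jet^n(a) \cup \{\delta^k a\}$. Transitivity of $\cl$ then delivers $\delta^{k+1} a \in \cl(\jet^{n-1}(a)\jet^{m+k+1}(B))$, closing the induction.

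The main obstacle I anticipate is purely bookkeeping in $(3) \Rightarrow (4)$: one has to notice that the quasi-endomorphism inequality produces exactly one extra level of jets on the $B$-side and bumps $\jet^{n-1}(a)$ up to $\jet^n(a)$ on the $a$-side, and then that both $\delta^n a$ and the inductively controlled $\delta^k a$ can be folded back into $\jet^{n-1}(a)$ via the hypotheses already in hand. No additional model-theoretic input beyond the quasi-endomorphism axiom and the matroid axioms for $(X,\cl)$ is required.
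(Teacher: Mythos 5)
Your proof is correct and follows the same route as the paper: the cycle $(1)\Rightarrow(2)\Rightarrow(3)\Rightarrow(4)\Rightarrow(1)$, with the finitary character of $\cl$ and the quasi-endomorphism inequality driving the induction in $(3)\Rightarrow(4)$. The only cosmetic difference is that you write out the inductive step in full (absorbing both $\delta^n a$ and $\delta^k a$ back into $\cl\big(\jet^{n-1}(a)\jet^{m+k+1}(B)\big)$) where the paper carries out one step explicitly and then invokes induction.
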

\begin{proof}
Suppose that (1) holds and set $n:= \rk\big(\jet^\infty(a)|\jet^\infty(B)\big)$. Then $\rk\big(\jet^n(a)|\jet^\infty(B)\big) \leq n$.

\medskip

Now suppose that (2) holds and let $n$ be least such that $\rk\big(\jet^n(a)|\jet^\infty(B)\big) \leq n$. We have that 
\[
\rk\big(\jet^{n-1}(a)|\jet^\infty(B)\big)\ =\ n
\]
 by minimality of $n$. Thus, $\delta^na\in \cl\!\big(\jet^{n-1}(a)|\jet^\infty(B)\big)$. 

\medskip

Suppose that (3) holds. As $\cl$ is finitary, there is some $m$ such that $\delta^{n}a \in \cl\!\big(\jet^{n-1}(a)\jet^{m+n}(B)\big)$. Set $B':=\jet^{n-1}(a)\jet^{m+n}(B)$, so $\delta^na \in \cl(B')$. Since $\delta$ is a quasi-endomorphism, we have that 
\[
\rk(\delta^{n+1}a | \delta^naB'\delta B')\leq \rk(\delta^na|B')\ =\ 0
\]
so $\delta^{n+1}a \in \cl( \delta^naB'\delta B')$. Since $\delta^na \in \cl(B')$, we have that $\delta^naB'\delta B' \subseteq \cl(\jet^{n-1}(a)\jet^{m+n+1}(B))$, so $\delta^{n+1} \in \cl(\jet^{n-1}(a)\jet^{m+n+1}(B))$. By induction, we have that $\delta^ka\in \cl(\jet^{n-1}(a)\jet^{m+k}(B))$ for all $k \geq n$.

\medskip 

The final implication, (4) implies (1), is clear.
\end{proof}

We will use the following fact frequently, often without mentioning it. It follows from (3) of Lemma~\ref{dcleq}.
\begin{fact}\label{notincl}
$a \not\in \cld(B)$ if and only if $\jet^\infty(a)$ is $\cl\big(\jet^\infty(B)\big)$-independent.
\end{fact}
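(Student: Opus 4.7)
The plan is to derive this straight from Lemma~\ref{dcleq}(3) together with a standard ``chain independence'' fact for matroids. Negating (3) gives: $a \notin \cld(B)$ iff for every $n \ge 0$,
\[
\delta^n a\ \notin\ \cl\!\big(\jet^{n-1}(a)\,\jet^\infty(B)\big). \qquad (\star)
\]
Writing $c_n := \delta^n a$ and $C := \cl(\jet^\infty(B))$, condition $(\star)$ says each $c_n$ lies outside $\cl(C \cup \{c_0,\dots,c_{n-1}\})$.

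I then claim the purely matroid-theoretic statement that, for any sequence $(c_n)_{n \ge 0}$ in a finitary matroid and any set $C$, the chain condition $(\star)$ is equivalent to $(c_n)_{n \ge 0}$ being $C$-independent as a tuple in the sense of \S\ref{sec:notation} (no repeated components, and the underlying set is $C$-independent). The direction ``tuple-independent $\Rightarrow (\star)$'' is immediate from the definitions. For the converse I first rule out repeats: if $c_n = c_m$ with $m < n$, then $c_n \in \cl\{c_m\} \subseteq \cl(C \cup \{c_0,\dots,c_{n-1}\})$, contradicting $(\star)$ at $n$. To establish set independence I assume for contradiction that some $c_n \in \cl(C \cup \{c_i : i \ne n\})$, use finitarity to pick a finite $F \subseteq \{c_i : i \ne n\}$ of minimal cardinality with $c_n \in \cl(C \cup F)$, and let $M$ be the largest index appearing in $F$. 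If $F = \emptyset$ or $M < n$, then $(\star)$ already fails at $n$. If $M > n$, minimality of $F$ gives $c_n \notin \cl(C \cup (F \setminus \{c_M\}))$, and exchange then yields
\[
c_M\ \in\ \cl\!\big(C \cup (F \setminus \{c_M\}) \cup \{c_n\}\big)\ \subseteq\ \cl(C \cup \{c_0,\dots,c_{M-1}\}),
\]
contradicting $(\star)$ at $M$.

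The only nontrivial ingredient is this exchange step; the rest is bookkeeping. Since the abstract claim applies in any finitary matroid, instantiating it with the sequence $\jet^\infty(a)$ and $C = \cl(\jet^\infty(B))$ gives the Fact.
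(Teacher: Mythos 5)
Your proposal is correct and takes essentially the same route as the paper: the authors state that the Fact ``follows from (3) of Lemma~\ref{dcleq}'' and leave the details to the reader, and what you have supplied is precisely the standard finitary-matroid argument (chain condition $\Leftrightarrow$ tuple independence, via exchange and a minimal-witness choice) that makes that remark rigorous.
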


\begin{proposition}\label{ispregeom}
$(X,\cld)$ is a finitary matroid. 
\end{proposition}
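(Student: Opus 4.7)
The plan is to verify the five pregeometry axioms for $\cld$. Reflexivity and monotonicity are immediate from the definition: if $a \in A$ then $\jet^\infty(a) \subseteq \jet^\infty(A)$, and if $A \subseteq B$ then $\jet^\infty(A) \subseteq \jet^\infty(B)$. For finite character, starting from $a \in \cld(B)$ I would invoke Lemma~\ref{dcleq}(3) to obtain some $n$ with $\delta^n a \in \cl\!\big(\jet^{n-1}(a)\jet^\infty(B)\big)$, then use the finite character of $\cl$ to reduce to a finite $F \subseteq \jet^\infty(B)$; the elements of $F$ lie in $\jet^M(B_0)$ for a finite $B_0 \subseteq B$ and some $M$, and another application of Lemma~\ref{dcleq} yields $a \in \cld(B_0)$.

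For idempotence, with finite character already in hand, I would reduce to the case $a \in \cld(\{c_1, \dots, c_k\})$ with each $c_i \in \cld(A)$ and apply subadditivity of rank in the matroid $(X, \cl)$:
\[
\rk\!\big(\jet^\infty(a) \mid \jet^\infty(A)\big)\ \leq\ \rk\!\big(\jet^\infty(a) \mid \jet^\infty(c_1, \dots, c_k)\big) + \sum_{i=1}^k \rk\!\big(\jet^\infty(c_i) \mid \jet^\infty(A)\big).
\]
Each term on the right is finite by hypothesis, so the left side is finite and $a \in \cld(A)$.

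The main obstacle is the exchange axiom, where the characterizations in Lemma~\ref{dcleq} and Fact~\ref{notincl} do the essential work. Suppose $a \in \cld(Bb) \setminus \cld(B)$; the goal is $b \in \cld(Ba)$. From $a \notin \cld(B)$ and Fact~\ref{notincl}, the tuple $\jet^\infty(a)$ is $\cl(\jet^\infty(B))$-independent, hence $\delta^k a \notin \cl\!\big(\jet^{k-1}(a) \jet^\infty(B)\big)$ for every $k \geq 0$. From $a \in \cld(Bb)$, Lemma~\ref{dcleq}(3) supplies some $n$ with $\delta^n a \in \cl\!\big(\jet^{n-1}(a) \jet^\infty(Bb)\big)$; the finite character of $\cl$ then lets me choose the least non-negative integer $N$ with $\delta^n a \in \cl\!\big(\jet^{n-1}(a) \jet^N(b) \jet^\infty(B)\big)$. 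Applying the exchange property of $\cl$ to swap $\delta^n a$ with $\delta^N b$ yields
\[
\delta^N b\ \in\ \cl\!\big(\jet^n(a) \jet^{N-1}(b) \jet^\infty(B)\big)\ \subseteq\ \cl\!\big(\jet^{N-1}(b) \jet^\infty(Ba)\big),
\]
and Lemma~\ref{dcleq}(3), applied now to $b$ over $Ba$, finishes the argument. Notably, the quasi-endomorphism hypothesis on $\delta$ is already packaged inside the equivalence of the four conditions in Lemma~\ref{dcleq}, so it does not reappear explicitly in the exchange step.
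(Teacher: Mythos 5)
Your proof is correct, and apart from the exchange axiom it follows the paper's route: monotonicity and reflexivity are immediate, finite character comes from Lemma~\ref{dcleq}(3) plus the finitary nature of $\cl$, and transitivity is the same rank subadditivity estimate over a finite intermediate set $C\subseteq\cld(A)$ (your splitting of $\rk\big(\jet^\infty(C)\mid\jet^\infty(A)\big)$ into a sum over the $c_i$ is harmless). Where you genuinely diverge is exchange. The paper argues by rank bookkeeping: from $a\in\cld(Bb)$ it gets $\rk\big(\jet^n(a)\mid\jet^m(b)\jet^\infty(B)\big)\leq n$, computes $\rk\big(\jet^m(b)\jet^n(a)\mid\jet^\infty(B)\big)$ in two ways using additivity and the fact that $\rk\big(\jet^n(a)\mid\jet^\infty(B)\big)=n+1$, and concludes $\rk\big(\jet^m(b)\mid\jet^n(a)\jet^\infty(B)\big)\leq m$. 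You instead pick the least $N$ with $\delta^n a\in\cl\big(\jet^{n-1}(a)\jet^N(b)\jet^\infty(B)\big)$ and apply the single-element exchange property of $(X,\cl)$ directly to trade $\delta^n a$ for $\delta^N b$; the hypothesis $a\notin\cld(B)$ (via Fact~\ref{notincl}) is exactly what guarantees the base case $N=0$ is excluded from the closure, so minimality gives the required non-membership at every $N\geq 0$. Both arguments reduce to the matroid structure of $(X,\cl)$ together with Lemma~\ref{dcleq}, and as you note the quasi-endomorphism hypothesis enters only through that lemma (specifically through the implication from condition (3) back to (1)); your version is somewhat more element-wise and avoids the two displayed rank identities, while the paper's rank computation packages the same exchange into arithmetic of $\rk$ and is marginally shorter to write down.
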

\begin{proof}
It is clear that if $A \subseteq B$ then $A \subseteq \cld(A) \subseteq \cld(B)$. The fact that $\cld$ is finitary follows from (3) of Lemma~\ref{dcleq} and the fact that $\cl$ is finitary. We will show that $\cld\!\big(\!\cld(B)\big) = \cld(B)$. Fix $a \in \cld\!\big(\!\cld(B)\big)$ and fix a finite set $C \subseteq \cld(B)$ such that $a \in \cld(C)$. Then 
\[
\rk\big(\jet^\infty(a)|\jet^\infty(B)\big)\ \leq\ \rk\big(\jet^\infty a|\jet^\infty(C)\big) + \rk\big(\jet^\infty (C)|\jet^\infty(B)\big).
\]
Since $C$ is finite, both summands on the right side of the above inequality are finite.

\medskip

It remains to show that $\cld$ satisfies the exchange property. Fix $a$, $b$ and $B$ such that $a \in \cld(Bb) \setminus \cld(B)$. By (2) of Lemma~\ref{dcleq}, there is a natural number $n$ such that $\rk\big(\jet^n(a)|\jet^\infty(Bb)\big) \leq n$. Since $\cl$ is finitary, we may find a natural number $m$ such that $\rk\big(\jet^n(a)|\jet^\infty(B)\jet^m(b)\big) \leq n$. We have
\begin{equation}\label{matroideq1}
 \rk\big(\jet^m(b)\jet^n(a)|\jet^\infty(B)\big)\ =\ \rk\big(\jet^n(a)|\jet^m(b)\jet^\infty(B)\big) + \rk\big(\jet^m(b)|\jet^\infty(B)\big)\ \leq\ n+m+1.
\end{equation}
On the other hand,
\[
\rk\big(\jet^m(b)\jet^n(a)|\jet^\infty(B)\big)\ =\ \rk\big(\jet^m(b)|\jet^n(a)\jet^\infty(B)\big) + \rk\big(\jet^n(a)|\jet^\infty(B)\big)
\]
Since $a \not\in \cld(B)$, we have $\rk\big(\jet^n(a)|\jet^\infty(B)\big) = n+1$. This gives us
\begin{equation}\label{matroideq2}
 \rk\big(\jet^m(b)\jet^n(a)|\jet^\infty(B)\big)\ =\ \rk\big(\jet^m(b)|\jet^n(a)\jet^\infty(B)\big)+n+1.
\end{equation}
Combining (\ref{matroideq1}) and (\ref{matroideq2}), we get
\[
\rk\big(\jet^m(b)|\jet^n(a)\jet^\infty(B)\big)\ \leq\ m,
\]
so $b \in \cld(Ba)$, again by (2) of Lemma~\ref{dcleq}.
\end{proof}

As $(X,\cld)$ is a finitary matroid, it has an associated rank function which we call the $\delta$-rank and which we denote by $\rkd$.
The next proposition gives a method of computing the $\delta$-rank of finite sets:

\begin{proposition}\label{rkcompute}
Let $A$ be finite and suppose that $\delta B \subseteq B$. Then 
\[
\rkd(A|B)\ =\ \lim\limits_{k\to \infty}\frac{\rk\big(\jet^k(A)|B\big)}{k+1} .
\]
In particular, this limit exists.
\end{proposition}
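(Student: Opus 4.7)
The plan is to sandwich $f(k) := \rk(\jet^k(A)|B)$ between two estimates that both grow like $(k+1)n$, where $n := \rkd(A|B)$. Since $(X,\cld)$ is a finitary matroid (Proposition~\ref{ispregeom}) and $A$ is finite, I can extract a $\cld$-basis $A' \subseteq A$ of $A$ over $B$ with $|A'| = n$. The hypothesis $\delta B \subseteq B$ iterates to give $\jet^\infty(B) \subseteq B$, which I will use repeatedly to simplify jets of sets that contain $B$.

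For the lower bound I would show that $\jet^\infty(A')$ is $\cl(B)$-independent. Each $a \in A'$ satisfies $a \notin \cld\big((A' \setminus \{a\}) \cup B\big)$; by Fact~\ref{notincl} combined with $\jet^\infty(B) \subseteq B$, this says that $\jet^\infty(a)$ is $\cl\!\big(\jet^\infty(A' \setminus \{a\}) \cup B\big)$-independent. A hypothetical dependency of some $\delta^i a \in \jet^\infty(A')$ on $(\jet^\infty(A') \setminus \{\delta^i a\}) \cup B$ would rewrite as a dependency of $\delta^i a$ on $(\jet^\infty(a) \setminus \{\delta^i a\}) \cup \jet^\infty(A' \setminus \{a\}) \cup B$, contradicting the previous line. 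Thus $\rk(\jet^k(A')|B) = (k+1)n$, and since $\jet^k(A) \supseteq \jet^k(A')$, we obtain $f(k) \geq (k+1)n$ for every $k$.

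For the upper bound, each $a \in A \setminus A'$ lies in $\cld(A' \cup B)$, so Lemma~\ref{dcleq}(4) produces integers $n_a, m_a$ with
\[
\delta^k a \in \cl\!\big(\jet^{n_a - 1}(a) \cup \jet^{m_a + k}(A' \cup B)\big) \quad \text{for all } k \geq n_a.
\]
Using $\jet^\infty(B) \subseteq B$, the right-hand side is contained in $\cl\!\big(\jet^{n_a - 1}(a) \cup \jet^{m_a + k}(A') \cup B\big)$. Setting $N := \max_{a \in A \setminus A'} n_a$ and $M := \max_{a \in A \setminus A'} m_a$, for every $k \geq N$ I get
\[
\jet^k(A) \subseteq \cl\!\Big(\jet^{M+k}(A') \cup \bigcup_{a \in A \setminus A'} \jet^{N-1}(a) \cup B\Big),
\]
so $f(k) \leq (M+k+1)\,n + N\,|A \setminus A'|$. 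Dividing by $k+1$ and combining with the lower bound yields $\lim_{k \to \infty} f(k)/(k+1) = n$, which simultaneously establishes that the limit exists.

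The main technical step is this upper estimate: the quantifier swap hidden in Lemma~\ref{dcleq}(4), namely that a single pair $(n_a, m_a)$ works uniformly in $k$, is what lets the jets of $A \setminus A'$ be absorbed into boundedly many extra terms plus a slightly longer jet of $A'$. Without the hypothesis $\delta B \subseteq B$, the jets of $B$ would themselves contribute rank $\sim (k+1)\rk(B)$ on top of $B$ and spoil the limit, so that hypothesis is doing real work here.
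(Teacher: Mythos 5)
Your proof is correct, and it takes a genuinely different route from the paper's. The paper proceeds by induction on $|A|$, splitting off one element at a time and tracking the auxiliary quantity $r(a|A) := \lim_k \rk\big(\jet^k(a)\,|\,\jet^k(A)B\big)/(k+1)$, showing it is $1$ when $a \notin \cld(AB)$ (via Lemma~\ref{dcleq}(2)) and $0$ when $a \in \cld(AB)$ (via Lemma~\ref{dcleq}(4)); the total is then additive over the elements of $A$. You instead extract a $\cld$-basis $A'$ of $A$ over $B$ at the outset and sandwich $\rk(\jet^k(A)|B)$ between two explicit bounds, both asymptotically $(k+1)|A'|$: the lower bound comes from the $\cl(B)$-independence of $\jet^\infty(A')$, which you derive cleanly from Fact~\ref{notincl}, and the upper bound from a single uniform application of Lemma~\ref{dcleq}(4) over the finitely many $a \in A\setminus A'$. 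Both proofs lean on Lemma~\ref{dcleq}(4) as the engine; the paper's version is a cleaner-looking induction but tacitly leans on additivity of limits across the inductive step, whereas yours avoids any induction and gives explicit error terms ($\leq Mn + N|A\setminus A'|$) at each finite stage, which makes the existence of the limit completely manifest. One small remark: the hypothesis $\delta B \subseteq B$ is used in the same place in both proofs --- to collapse $\jet^{m+k}(B)$ back to $B$ when invoking Lemma~\ref{dcleq}(4) --- so your closing heuristic about why it matters is apt, though the precise "$(k+1)\rk(B)$" count is only a rough intuition when $B$ is infinite.
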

\begin{proof}
Given a finite set $A$ and an element $a$, set
\[
r(A)\ :=\ \lim\limits_{k\to \infty}\frac{\rk\big(\jet^k(A)|B\big)}{k+1},\qquad r(a|A)\ :=\ \lim\limits_{k\to \infty}\frac{\rk\big(\jet^k(a)|\jet^k(A)B\big)}{k+1}
\]
 (assuming that these limits exist). We prove this proposition by induction on $|A|$. Clearly, $\rkd(\0|B) = r(\0) = 0$. Fix $A$ and suppose that $r(A) = \rkd(A|B)$. We want to show that $r(Aa) = \rkd(Aa|B)$ for some arbitrary $a \in X\setminus A$. Note that $r(Aa) = r(A) +r(a|A) = \rkd(A|B)+r(a|A)$ 
by our induction hypothesis, so it suffices to show that $r(a|A) = \rkd(a|AB)$.

\medskip

If $a \not\in \cld(AB)$, then $\rk\big(\jet^k(a)|\jet^k(A)B\big) =k+1$ for each $k$ by (2) of Lemma~\ref{dcleq}, so $r(a|A) = 1 = \rkd(a|AB)$. 
Suppose that $a \in \cld(AB)$. By (4) of Lemma~\ref{dcleq}, there are $m$ and $n$ such that $\delta^{k}a \in \cl\!\big(\jet^{n-1}(a)\jet^{m+k}(A)B\big)$ for all $k \geq n$. For these $k$, we have
\[
\rk\big(\jet^{m+k}(a)|\jet^{m+k}(A)B\big)\ \leq\ \rk\big(\jet^{n-1}(a)|\jet^{m+k}(A)B\big) + \rk\big(\{\delta^{k+1}a,\ldots,\delta^{k+m}a\}|\jet^{m+k}(A)B\big)\ \leq\ n+m.
\]
Therefore, we have that
\[
r(a|A)\ =\ \lim\limits_{k \to \infty}\frac{\rk\big(\jet^{m+k}(a)|\jet^{m+k}(A)B\big)}{m+k+1}\ \leq\ \lim\limits_{k \to \infty}\frac{n+m}{m+k+1}\ =\ 0\ =\ \rkd(a|AB).
\]
It remains to note that $r(a|A)\geq 0$.
\end{proof}
 
\begin{corollary}
If $A$ is finite and $\delta B \subseteq B$, then 
\[
\rkd(A|B)\ =\ \lim\limits_{k\rightarrow\infty}\rk\big(\delta^{k}A|\jet^{k-1}(A)B\big) .
\]
\end{corollary}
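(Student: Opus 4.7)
The plan is to combine Proposition~\ref{rkcompute} with a Cesàro averaging argument, after verifying that the sequence of ``marginal'' ranks is weakly decreasing. Set
\[
r_k\ :=\ \rk\bigl(\delta^{k}A\,\bigm|\,\jet^{k-1}(A)B\bigr).
\]
By additivity of rank in the finitary matroid $(X,\cl)$, the telescoping identity
\[
\rk\bigl(\jet^k(A)\bigm|B\bigr)\ =\ \sum_{i=0}^{k} r_i
\]
holds, and each $r_k$ lies in $\{0,1,\dots,|A|\}$.

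The crux is showing that $(r_k)$ is non-increasing. I would apply the quasi-endomorphism inequality with the roles of $A$ and $B$ played by $\delta^{k}A$ and $\jet^{k-1}(A)B$, respectively, to obtain
\[
\rk\bigl(\delta^{k+1}A\,\bigm|\,\delta^{k}A\cdot\jet^{k-1}(A)B\cdot\delta(\jet^{k-1}(A)B)\bigr)\ \leq\ r_k.
\]
The hypothesis $\delta B\subseteq B$ forces $\delta(\jet^{k-1}(A)B)\subseteq\jet^{k}(A)B$, and moreover $\delta^{k}A\cdot\jet^{k-1}(A)B=\jet^{k}(A)B$, so the left-hand side simplifies to $r_{k+1}$. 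Thus $r_{k+1}\leq r_k$, so $(r_k)$ is a non-increasing sequence of non-negative integers and hence eventually equal to some integer $L$.

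Once monotonicity is secured, a standard Cesàro argument gives $\frac{1}{k+1}\sum_{i=0}^{k}r_i\to L$, and combining with Proposition~\ref{rkcompute} yields
\[
\rkd(A|B)\ =\ \lim_{k}\frac{\rk(\jet^k(A)|B)}{k+1}\ =\ L\ =\ \lim_{k}r_k,
\]
as desired. The only real obstacle is the monotonicity step: it is exactly here that the assumption $\delta B\subseteq B$ is used, to prevent the ``new'' elements $\delta(\jet^{k-1}(A)B)\setminus\jet^{k-1}(A)B$ from contributing anything beyond $\jet^{k}(A)B$ in the conditioning. Everything else is routine matroid bookkeeping plus the Cesàro lemma.
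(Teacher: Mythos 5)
Your proposal is correct and follows essentially the same route as the paper: the telescoping identity $\rk\big(\jet^k(A)|B\big)=\sum_{i=0}^k r_i$, monotonicity of the marginal ranks $r_k$ via the quasi-endomorphism inequality (using $\delta B\subseteq B$ to identify the conditioning set with $\jet^k(A)B$), and then the Ces\`aro argument combined with Proposition~\ref{rkcompute}. Your write-up merely makes explicit the monotonicity step that the paper states more briefly.
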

\begin{proof}
Note that $\rk\big(\jet^k(A)gB\big) = \sum_{n=0}^k\rk\big(\delta^n A|\jet^{n-1}(A)B\big)$.
Since $\delta$ is a quasi-endomorphism, we have that $\rk\big(\delta^{n+1} A|\jet^{n}(A)B\big) \leq \rk\big(\delta^n A|\jet^{n-1}(A)B\big)$. This means that the map 
\[
n\ \mapsto\ \rk\big(\delta^{n+1} A|\jet^{n}(A)B\big):\N\ \to\ \N
\]
is decreasing, so it is eventually constant. From this, it easily follows that
\[
 \lim\limits_{k\to \infty}\frac{\rk\big(\jet^k(A)|B\big)}{k+1}\ =\ \lim\limits_{k\to \infty}\frac{\sum_{n=0}^k\rk\big(\delta^n A|\jet^{n-1}(A)B\big)}{k+1}\ =\ \lim\limits_{k\rightarrow\infty}\rk\big(\delta^{k}A|\jet^{k-1}(A)B\big) .
\]
The result then follows from Proposition~\ref{rkcompute}.
\end{proof}

%----------------------------------------------------------------------------------%
\subsection{The $\delta$-closure in models of $\Td$}
In order to apply the results to the previous subsection to $(\cM,\delta) \models \Td$, we need to show the following:

\begin{lemma}\label{isquasi}
If $(\cM,\delta) \models \Td$, then $\delta$ is a quasi-endomorphism of $(\cM,\dclL)$.
\end{lemma}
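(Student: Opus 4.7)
The plan is to show directly that $\delta A \subseteq \dclL(A \cup B \cup \delta A_0 \cup \delta B)$ for a suitably chosen finite-dimensional "core" $A_0 \subseteq A$, from which the matroid bookkeeping yields the desired inequality immediately. Specifically, let $A_0 \subseteq A$ be a maximal $\dclL(B)$-independent subset, so that $|A_0| = \rkL(A|B)$ and every $a \in A$ lies in $\dclL(A_0 \cup B)$. If I can verify $\delta A \subseteq \dclL(A \cup B \cup \delta A_0 \cup \delta B)$, then any subset of $\delta A$ that is $\dclL(AB\delta B)$-independent has size at most $|\delta A_0| \leq |A_0|$, giving $\rkL(\delta A \mid AB\delta B) \leq \rkL(A|B)$.

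The work is then to verify the claimed inclusion for an arbitrary $a \in A$. Writing $a = f_a(\bar a_0)$ for some finite subtuple $\bar a_0 \subseteq A_0$ and some $L(B)$-definable function $f_a$ (with parameters $\bar b \in B$), the $\dclL(B)$-independence of $\bar a_0$, together with $L$-cell decomposition, ensures that $f_a$ is $\cC^1$ on an $L(B)$-definable open neighborhood $U$ of $\bar a_0$. Now apply Lemma~\ref{basicTderivation} to $f_a$: there exists $f_a^{[\delta]} : U \to M$ with
\[
\delta a \ =\ \delta f_a(\bar a_0) \ =\ f_a^{[\delta]}(\bar a_0) + \Jac_{f_a}(\bar a_0)\, \delta \bar a_0.
\]
The explicit formula for $f_a^{[\delta]}$ given in the proof of Lemma~\ref{basicTderivation} shows that $f_a^{[\delta]}$ is built from $f_a$ together with the tuple $\delta \bar b$; in particular, $f_a^{[\delta]}$ is $L(B \cup \delta B)$-definable. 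Hence $\delta a \in \dclL(\bar a_0 \cup \delta \bar a_0 \cup B \cup \delta B) \subseteq \dclL(A \cup \delta A_0 \cup B \cup \delta B)$, as required.

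The main (mild) obstacle is precisely keeping track of parameters in the statement of Lemma~\ref{basicTderivation}: the lemma as stated only asserts $L(M)$-definability of $f_a^{[\delta]}$, so one must read into its proof to extract that its parameters come only from $B \cup \delta B$. Beyond this bookkeeping, the argument is essentially first-order linear algebra of the chain rule coupled with the matroid identity, and extends with no change to arbitrary (possibly infinite) cardinalities, since $\rkL$ is cardinal-valued and $|\delta A_0| \leq |A_0|$ holds without restriction.
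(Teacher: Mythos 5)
Your proof is correct and follows essentially the same strategy as the paper: reduce via a maximal $\dclL(B)$-independent subset of $A$, then use the chain rule to land $\delta a$ in the right definable closure. The one place the routes diverge is exactly the ``mild obstacle'' you flag: you keep $A_0$ and $B$ separate, express $a$ as an $L(B)$-definable function of $\bar a_0$, and therefore need to invoke Lemma~\ref{basicTderivation} and inspect its proof to see that $f_a^{[\delta]}$ is $L(B\,\delta B)$-definable. The paper avoids this indirection entirely by first replacing $B$ with $A' B$ (so it suffices to show $A\subseteq\dclL(B)$ implies $\delta A\subseteq\dclL(B\,\delta B)$) and then writing $a = f(\bar b)$ with $f$ an $L(\emptyset)$-definable function evaluated on a $\dclL(\emptyset)$-independent tuple $\bar b$ from $B$; the compatibility condition then gives $\delta a = \Jac_f(\bar b)\,\delta\bar b$ directly, with no correction term and no parameter chase. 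Your version is a correct variant, just slightly less economical.
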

\begin{proof}
 Fix $A,B\subseteq M$ and let $A' \subseteq A$ be a $\dclL(B)$-independent set such that $A \subseteq \dclL(A'B)$. If we can show that $\delta A \subseteq \dclL(A'B\delta A'\delta B)$, then we would have that 
 \[
\rkL(\delta A|AB\delta B)\ \leq\ |\delta A'|\ \leq\ |A'| \ =\ \rkL(A|B).
\]
Thus, by replacing $B$ with $A'B$, we assume that $A \subseteq \dclL(B)$ and we will show that $\delta A \subseteq \dclL(B\delta B)$.
Given $a \in A$, we may write $a = f(\bv)$ for some $\bv$ from $B$ and some $L(\0)$-definable function $f$. By passing to a subtuple, we may assume that $\bv$ is $\dclL(\0)$-independent. Therefore, there is an open, $L(\0)$-definable set $U$ such that $\bv \in U$ and such that $f|_U$ is $\cC^1$. Then we have $\delta a= \Jac_f(\bv)\delta\bv$, so $\delta a\in \dclL(B\delta B)$.
\end{proof}

We summarize the results from the previous subsection in this context below:

\begin{corollary}\label{practicalrk}
If $(\cM,\delta) \models \Td$, then $(\cM,\cld)$ is a finitary matroid and for any finite set $A \subseteq M$ and any $B \subseteq M$ with $\delta B \subseteq B$, we have
\[
\rkd(A|B)\ =\ \lim\limits_{k\to \infty}\frac{\rkL\big(\jet^k(A)|B\big)}{k+1}\ =\ \lim\limits_{k\rightarrow\infty}\rkL\big(\delta^{k}A|\jet^{k-1}(A)B\big) .
\]
where $\rkd$ is the rank function corresponding to $\cld$.
\end{corollary}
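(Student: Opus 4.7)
The plan is to assemble this corollary directly from the abstract results of Subsection~\ref{subsec:quasi-endo} together with Lemma~\ref{isquasi}. Specifically, I would instantiate the abstract setup of that subsection by taking $X := M$ and $\cl := \dclL$; this gives a finitary matroid $(M,\dclL)$ with associated rank function $\rkL$, which is exactly the standard o-minimal setup recalled in Section~\ref{sec:notation}.

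First I would invoke Lemma~\ref{isquasi}, which asserts that any $T$-derivation $\delta$ on $\cM$ is a quasi-endomorphism of $(\cM,\dclL)$ in the abstract sense. This verifies the single hypothesis needed to apply the machinery of Subsection~\ref{subsec:quasi-endo}: the operator $\cld$ defined via jets in Definition~\ref{delclosure} coincides with the abstract $\cld$ built from $\cl = \dclL$ and the quasi-endomorphism $\delta$.

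With that identification in hand, Proposition~\ref{ispregeom} yields at once that $(\cM,\cld)$ is a finitary matroid, and hence has a well-defined cardinal-valued rank function $\rkd$. For the two displayed equalities, the first is exactly the conclusion of Proposition~\ref{rkcompute} (applied with $A$ finite and $B$ satisfying $\delta B \subseteq B$, which are the hypotheses of that proposition). The second equality is the statement of the corollary to Proposition~\ref{rkcompute} stated immediately after it, so it also transfers directly.

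There is essentially no obstacle here beyond confirming that the abstract hypotheses match our concrete setting; the substantive work has already been done in Lemma~\ref{isquasi} (to identify $\delta$ as a quasi-endomorphism) and in Proposition~\ref{rkcompute} and its corollary (which give the limit formulas and their existence). The proof is therefore a one-line citation: apply Lemma~\ref{isquasi}, Proposition~\ref{ispregeom}, Proposition~\ref{rkcompute}, and the corollary following it.
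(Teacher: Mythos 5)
Your proposal is correct and matches the paper exactly: the paper states this corollary as a direct summary of Subsection~\ref{subsec:quasi-endo}, with Lemma~\ref{isquasi} supplying the quasi-endomorphism hypothesis and Proposition~\ref{ispregeom}, Proposition~\ref{rkcompute}, and its corollary giving the matroid property and the two limit formulas. Nothing further is needed.
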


%----------------------------------------------------------------------------------%
\section{Generic \texorpdfstring{$T$}{T}-derivations} \label{sec:TdG}
%----------------------------------------------------------------------------------%
In this section, we show that $\Td$ has a model completion and we study the properties of this model completion. For the remainder of this section, we fix a model $(\cM,\delta) \models \Td$.

\begin{definition}
We say that the $T$-derivation $\delta$ is \textbf{generic} if for every $n$ and every $L(M)$-definable set $A \subseteq M^{n+1}$, if $\dimL\big(\pi_n(A) \big) = n$ then there is some $a \in M$ such that $\jet^n(a) \in A$. Let $\TdG$ be the $\Ld$-theory extending $\Td$ by the axiom scheme which asserts that $\delta$ is generic.
\end{definition}

%----------------------------------------------------------------------------------%
%% This subsection used to be Subsection 4.2
\subsection{Expanding models of $T$ and $\Td$ to models of $\TdG$}
In this subsection, we show that $(\cM,\delta)$ extends to a model of $\TdG$. We also investigate which models of $T$ admit an expansion to a model of $\TdG$. 

\begin{lemma}\label{Tderextensions2}
Let $\cN$ be an elementary extension of $\cM$ with $\rkL(N|M)= n$. Let $A \subseteq N^{n+1}$ be an $L(M)$-definable set with $\dimL\big(\pi_n(A) \big) = n$. Then there is $b \in N$ and an extension of $\delta$ to a $T$-derivation on $\cN$ such that $\jet^n(b) \in A$.
\end{lemma}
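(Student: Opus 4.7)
The plan is to find a $\dclL(M)$-independent tuple in $\pi_n(A) \subseteq N^n$, use it as a basis of $\cN$ over $\cM$, and then extend $\delta$ via Lemma~\ref{transext} so that its successive applications to the first component reproduce the remaining components, with the final derivative pinned down by a Skolem function chosen to place the whole jet into $A$.

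First I would use cell decomposition to extract from $\pi_n(A)$ an $L(M)$-definable open cell $U$ of dimension $n$, and invoke definable Skolem functions to fix an $L(M)$-definable function $g:U \to N$ with $(\bv,g(\bv)) \in A$ for every $\bv \in U$. Next I would produce a tuple $\bv = (b_0,\ldots,b_{n-1}) \in U$ with $\rkL(\bv|M) = n$. Granting such a $\bv$, Lemma~\ref{transext} lets me extend $\delta$ to a $T$-derivation on $\cN$ by declaring
\[
\delta b_i\ :=\ b_{i+1}\quad \text{for } 0 \leq i \leq n-2, \qquad \delta b_{n-1}\ :=\ g(\bv).
\]
Writing $b := b_0$, a straightforward induction gives $\delta^i b = b_i$ for $0 \leq i \leq n-1$ and $\delta^n b = g(\bv)$, so $\jet^n(b) = (\bv,g(\bv)) \in A$ by the defining property of $g$.

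The step requiring the most care is producing a full-rank tuple in $U$, since $\cN$ is not assumed saturated and so one cannot simply realize a generic type over $M$. To handle it I would shrink $U$ to an $L(M)$-definable open box $I_1 \times \cdots \times I_n$ contained in $U$ (available because $U$ is open of dimension $n$), and use that since $T$ extends $\RCF$ there is, for each $i$, an $L(M)$-definable bijection $\varphi_i:M \to I_i$. Fixing any $\dclL(M)$-basis $a_1,\ldots,a_n$ of $\cN$ over $\cM$, which exists by the hypothesis $\rkL(N|M) = n$, and setting $b_i := \varphi_{i+1}^{\cN}(a_{i+1})$, each $b_i$ is interdefinable with $a_{i+1}$ over $M$, so $\bv$ remains a $\dclL(M)$-basis of $\cN$ and lies in $I_1 \times \cdots \times I_n \subseteq U$ by construction. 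This is the only real obstacle; the remainder of the argument is a direct application of Lemma~\ref{transext} and cell decomposition.
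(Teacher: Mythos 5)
Your proposal is correct and follows essentially the same route as the paper's proof: produce a $\dclL(M)$-independent $n$-tuple inside $\pi_n(A)$ (automatically a basis of $\cN$ over $\cM$ since $\rkL(N|M)=n$), choose a definable Skolem function into $A$, and apply Lemma~\ref{transext} with the shift assignment $\delta b_i = b_{i+1}$, $\delta b_{n-1} = g(\bv)$, so that $\jet^n(b_0)=(\bv,g(\bv))\in A$. The only divergence is how the full-rank tuple is found: the paper builds it coordinate-by-coordinate, compressing a fresh element into each one-dimensional fiber by an explicit semialgebraic formula, while you shrink to an $L(M)$-definable open box and transport a pre-chosen basis into it via coordinatewise semialgebraic bijections; both steps rest on the same RCF-definable squeezing trick and yours works equally well.
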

\begin{proof}
We claim that there is some $\dclL(M)$-independent tuple $\av \in N^n$ such that $\av \in \pi_n(A)$. We construct $\av$ coordinate by coordinate. Fix $i\in \{1,\ldots, n\}$ and suppose we have already chosen a $\dclL(M)$-independent tuple $\av'=(a_1,\ldots,a_{i-1}) \in \pi_{i-1}(A)$. We need to find $a_i \in N \setminus \dclL(M\av')$ with $(\av',a_i) \in \pi_i(A)$. We have that $\pi_i(A)_{\av'}$ is an open interval with endpoints $r_1<r_2\in \dclL(M\av')$. Take $b \in N \setminus \dclL(M\av')$ with $b>0$. Set
\[
a_i\ :=\ r_1+\frac{1}{\frac{1}{r_2-r_1}+b}.
\]
Then $a_i\not\in \dclL(M\av')$ and $r_1<a_i<r_2$, as required.

\medskip

With the claim proven, we may assume that $\cN = \cM\langle \av\rangle$ for some $\av \in N^n$ with $\av \in \pi_n(A)$. By definable choice, there is an $L(M)$-definable map $f:\pi_n(A) \to N$ such that $\Gamma(f) \subseteq A$.
By Lemma~\ref{transext}, there is a unique extension of $\delta$ to a $T$-derivation on $\cM\langle \av\rangle$ such that $\delta a_i = a_{i+1}$ for $i \in\{ 1,\ldots,n-1\}$ and such that $\delta a_n = f(\av)$. Then $\jet^n(a_1) \in A$.
\end{proof}

\begin{proposition}\label{extending}
Let $\cN$ be an elementary extension of $\cM$ and suppose that $\rkL(N|M) = |N| \geq |T|$. Then there is an extension of $\delta$ to a $T$-derivation on $\cN$ such that $(\cN,\delta) \models \TdG$.
\end{proposition}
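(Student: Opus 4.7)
The plan is to build a continuous chain $(\cM_\alpha, \delta_\alpha)_{\alpha \leq \kappa}$ of $T$-models with $T$-derivations extending $(\cM,\delta)$, with $\cM \preceq_L \cM_\alpha \preceq_L \cN$, such that $\cM_\kappa = \cN$ and $\delta_\kappa := \bigcup_{\alpha < \kappa} \delta_\alpha$ witnesses the genericity scheme. Here $\kappa := |N|$. At limit stages I take unions, which preserve both being a $T$-derivation (checked pointwise) and $\preceq_L \cN$ (union of an elementary chain). The successor stages are where the real work happens.

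The successor stages accomplish two kinds of tasks, which I interleave via a bijection $\kappa \cong \kappa \times \kappa$. Task (E), \emph{adding elements}: enumerate $N$ as $(c_\beta)_{\beta<\kappa}$; at a stage devoted to $c_\beta$, if $c_\beta \notin M_\alpha$, set $\cM_{\alpha+1} := \cM_\alpha\langle c_\beta\rangle$ and extend $\delta_\alpha$ to a $T$-derivation on $\cM_{\alpha+1}$ by Lemma~\ref{transext}, choosing values on a $\dclL(M_\alpha)$-basis of $M_{\alpha+1}$ arbitrarily (say $0$). Task (G), \emph{realizing genericity}: enumerate the set of all pairs $(n, A)$ where $A$ is an $L(\cN)$-definable subset of $N^{n+1}$ with $\dimL \pi_n(A) = n$; there are at most $|T|+|N| = \kappa$ such pairs. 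At a stage devoted to such a pair $(n,A)$, if all parameters of $A$ already lie in $\cM_\alpha$ and no $b \in M_\alpha$ yet satisfies $\jet^n(b) \in A$, I apply Lemma~\ref{Tderextensions2} \emph{relative to $\cM_\alpha$}: the hypothesis $\rkL(N|M) = \kappa = |N|$ together with $|M_\alpha| < \kappa$ forces $\rkL(N|M_\alpha) = \kappa \geq n$, so the proof of Lemma~\ref{Tderextensions2} goes through to produce $b \in N$ and a $T$-derivation on $\cM_\alpha\langle b, \delta b, \dots, \delta^{n-1}b\rangle \preceq_L \cN$ extending $\delta_\alpha$ with $\jet^n(b) \in A$.

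A standard bookkeeping argument shows everything gets done. Every element $c_\beta$ is handled at some (E)-stage, so $\cM_\kappa = \cN$. Every pair $(n,A)$ is attempted cofinally often, and once we have reached a stage at which all (finitely many) parameters of $A$ have been absorbed into $\cM_\alpha$, the next (G)-attempt at $(n,A)$ either finds it already satisfied or satisfies it; either way, some $b \in N$ with $\jet^n(b)\in A$ winds up in $\cM_\kappa = \cN$. This gives the generic axiom scheme. Using that $T$ has definable Skolem functions, each $\cM_\alpha$ is elementary in $\cN$, and $\delta_\kappa$ extends $\delta$ and is a $T$-derivation.

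The only delicate point is confirming that Lemma~\ref{Tderextensions2} can be applied at each (G)-step: its proof requires $\rkL$ of $N$ over the base to be at least $n$, and this is exactly where the cardinality/rank hypothesis is consumed. The standard cardinal-arithmetic bound $\rkL(N|M) \leq \rkL(N|M_\alpha) + \rkL(M_\alpha|M)$ combined with $|M_\alpha| \leq |T| + |\alpha| < \kappa = \rkL(N|M)$ yields $\rkL(N|M_\alpha) = \kappa$, which is all we need. The rest is routine assembly.
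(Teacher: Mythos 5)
Your construction is correct, and it reaches the same goal as the paper's proof by the same two key tools (Lemma~\ref{Tderextensions2}, or rather its proof, together with Lemma~\ref{transext}), but the bookkeeping is genuinely different. The paper uses the hypothesis $\rkL(N|M)=|N|=\kappa$ to fix at the outset a $\dclL(M)$-basis $B$ of $\cN$ over $\cM$ of size $\kappa$, splits it into countably many blocks $B_1,B_2,\ldots$, each of size $\kappa$, and builds an $\omega$-chain $\cM=\cN_0\preceq_L\cN_1\preceq_L\cdots$ with $\cN_{k+1}=\cN_k\langle B_{k+1}\rangle$: at step $k+1$ it enumerates \emph{all} requirements $(A,n)$ with $A$ an $L(N_k)$-definable set and meets each one using a fresh finite piece of $B_{k+1}$, so the pre-allocated generators are independent over the current model, Lemma~\ref{Tderextensions2} applies with its rank hypothesis met exactly, and genericity of the union is immediate because every $L(N)$-definable set has parameters in some $\cN_k$. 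You instead run a single chain of length $\kappa$ interleaving element-absorption with requirement-satisfaction, postpone a requirement until its parameters are absorbed, and invoke the \emph{proof} of Lemma~\ref{Tderextensions2} over the intermediate model $\cM_\alpha$ (as you acknowledge, since $\rkL(N|M_\alpha)$ is $\kappa$ rather than $n$, only the independent-tuple claim plus Lemma~\ref{transext} is used). The paper's block decomposition buys freedom from any rank accounting over intermediate models; your route is the more standard back-and-forth-style scheduling but pays for it with the counting argument at the end.

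That counting argument is the one place needing repair as written: the bound $|M_\alpha|\le|T|+|\alpha|<\kappa$ is false in general, since $M\subseteq M_\alpha$ and the hypotheses allow $|M|=|N|=\kappa$. The quantity your subadditivity inequality actually needs is $\rkL(M_\alpha|M)<\kappa$, and this does hold: each successor stage adjoins only finitely many new generators over the previous model, so $\rkL(M_\alpha|M)$ is finite when $\alpha$ is finite and at most $|\alpha|$ when $\alpha$ is infinite, hence $<\kappa$ in either case. With that correction, $\kappa=\rkL(N|M)\le\rkL(N|M_\alpha)+\rkL(M_\alpha|M)$ gives $\rkL(N|M_\alpha)=\kappa$, which is all your (G)-steps require, and the rest of your argument stands.
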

\begin{proof}
Set $\kappa := |N|$ and take a $\dclL(M)$-independent set $B\subseteq N$ such that $\cN = \cM\langle B\rangle$. Then $|B| = \kappa$ by assumption. Let $B_1,B_2,\ldots$ be disjoint subsets of $B$ of cardinality $\kappa$ such that $\bigcup_{k>0} B_k = B$.
We will construct $\Ld$-structures $(\cN_k,\delta)\models \Td$ such that
\begin{itemize}
\item $(\cN_0,\delta) = (\cM,\delta)$ and $\cN_{k+1} = \cN_k\langle B_{k+1}\rangle$ for $k \geq 0$;
\item $(\cN_k,\delta) \subseteq (\cN_{k+1},\delta)$ and $\bigcup_k (\cN_k,\delta) \models \TdG$.
\end{itemize}
Suppose that we have already constructed $(\cN_k,\delta)$. Let $\big((A_\rho,n_\rho)\big)_{\rho<\kappa}$ be an enumeration of all pairs $(A,n)$ such that $A\subseteq N_k^{n+1}$ is an $L(N_k)$-definable set with $\dimL\big(\pi_n(A)\big) = n$. Let $(B_\rho)_{\rho<\kappa}$ be an enumeration of pairwise disjoint finite subsets of $B_{k+1}$ such that $|B_\rho| = n_\rho$ and $\bigcup_{\rho<\kappa}B_\rho = B_{k+1}$. We define $\big((\cN_{k,\rho},\delta)\big)_{\rho<\kappa}$ as follows:
\begin{itemize}
\item set $(\cN_{k,0},\delta) := (\cN_k,\delta)$;
\item if $\rho$ is a limit ordinal, set $(\cN_{k,\rho},\delta) := \bigcup_{\gamma<\rho}(\cN_{k,\gamma},\delta)$;
\item set $\cN_{k,\rho+1}:= \cN_{k,\rho}\langle B_\rho\rangle$ and use Lemma~\ref{Tderextensions2} to extend $\delta$ to a $T$-derivation on $\cN_{k,\rho+1}$ such $\jet^{n_\rho}(b) \in A_\rho$ for some $b \in N_{k,\rho+1}$.
\end{itemize}
Finally, set $(\cN_{k+1},\delta):= \bigcup_{\rho<\kappa}(\cN_{k,\rho},\delta)$.

\medskip

We note that $\bigcup_k\cN_k = \cN$, so we define $\delta$ on $\cN$ by $(\cN,\delta) = \bigcup_k(\cN_k,\delta)$. We claim that $\delta$ is generic. Let $A\subseteq N^{n+1}$ be an $L(N)$-definable set with $\dimL\big(\pi_n(A)\big) = n$. Then $A$ is $L(N_k)$-definable for some $k$, so there is $b \in N_{k+1}$ such that $\jet^n(b) \in A$.
\end{proof}

\begin{corollary}\label{Tderextensions}
$(\cM,\delta)$ can be extended to a model of $\TdG$.
\end{corollary}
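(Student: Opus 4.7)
The corollary follows immediately from Proposition~\ref{extending} once we produce an elementary extension $\cN \succeq_L \cM$ satisfying the rank hypothesis $\rkL(N|M) = |N| \geq |T|$. So the plan is to construct such an $\cN$ by a routine compactness argument and then invoke the proposition.

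More concretely, fix a cardinal $\kappa \geq \max\{|M|, |T|\}$ and introduce a family $(c_i)_{i<\kappa}$ of fresh constant symbols. Consider the $L(M \cup \{c_i : i<\kappa\})$-theory consisting of the elementary diagram of $\cM$ together with, for each finite tuple of distinct indices $i_1,\dots,i_n<\kappa$, each $L(\0)$-definable function $f:M^{n-1}\to M$, and each tuple $\av \in M^{n-1}$, the axiom $c_{i_n} \neq f(c_{i_1},\dots,c_{i_{n-1}},\av)$ (and similarly for functions with more parameters from $M$). Every finite subset of this theory is satisfied in $\cM$ itself, since $\cM$ is infinite and only finitely many $c_i$'s appear, so a finite $\dclL(M)$-independent subset of $M$ can be chosen as witnesses. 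By compactness, the full theory has a model; applying the downward Löwenheim–Skolem theorem produces an elementary extension $\cN \succeq_L \cM$ of cardinality exactly $\kappa$ whose interpreted constants $c_i$ form a $\dclL(M)$-independent set of size $\kappa$. Hence $\rkL(N|M) \geq \kappa = |N|$, and the reverse inequality is automatic, so $\rkL(N|M) = |N| \geq |T|$.

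Proposition~\ref{extending} then extends $\delta$ to a $T$-derivation on $\cN$ such that $(\cN,\delta) \models \TdG$, which is precisely the desired expansion of $(\cM,\delta)$. There is no real obstacle here; the only point requiring care is making sure the compactness construction is carried out so that the new constants are genuinely $\dclL(M \cup \{c_j : j\neq i\})$-independent and not merely pairwise distinct, but this is handled by quantifying the non-equality axioms over all $L(\0)$-definable functions in finitely many of the $c_j$'s and finitely many parameters from $M$.
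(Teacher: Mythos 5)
Your overall strategy matches the paper's: produce an elementary extension $\cN\succeq_L\cM$ with $\rkL(N|M)=|N|\geq|T|$ and then invoke Proposition~\ref{extending}. The difference is only in how you obtain $\cN$. The paper observes that any elementary extension of cardinality $>\max\{|M|,|T|\}$ automatically satisfies $\rkL(N|M)=|N|$ (a one-line cardinal-arithmetic remark), whereas you build the large independent set in explicitly via compactness. Both routes work, but your finite-satisfiability step contains an error.

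You claim that every finite subset of your theory is satisfied \emph{in $\cM$ itself} because ``a finite $\dclL(M)$-independent subset of $M$ can be chosen as witnesses.'' No such set exists: if $B\subseteq M$, then for $b\in B$ one has $\dclL\big(M\cup(B\setminus\{b\})\big)=\dclL(M)=M\ni b$, so the only $\dclL(M)$-independent subset of $M$ is $\emptyset$. The witnesses for the independence axioms cannot live in $M$. The argument is easily repaired: realize the finitely many constants $c_{i_1},\dots,c_{i_n}$ in a proper elementary extension of $\cM$. For instance, take a chain $\cM=\cM_0\prec\cM_1\prec\cdots\prec\cM_n$ with each inclusion proper, pick $a_k\in M_k\setminus M_{k-1}$, and note that $\dclL(M\cup\{a_1,\dots,a_{k-1}\})\subseteq M_{k-1}$ for each $k$, so $(a_1,\dots,a_n)$ is $\dclL(M)$-independent and interprets the $c_{i_k}$. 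With this correction the compactness argument goes through and the rest of your proof (downward L\"owenheim--Skolem to get $|N|=\kappa$, then Proposition~\ref{extending}) is correct. Alternatively, you could skip the bespoke theory altogether and just take any $\cN\succeq_L\cM$ of cardinality $>\max\{|M|,|T|\}$, since then a basis $B$ of $\cN$ over $\cM$ satisfies $|N|=|\dclL(M\cup B)|\leq\max\{|M|,|B|,|T|\}$, forcing $|B|=|N|$.
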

\begin{proof}
Let $\cN$ be an elementary extension of $\cM$ with $\rkL(N|M) = |N| \geq |T|$ (such an extension exists, if $|N|>|M|$ then $\rkL(N|M) = |N|$). Now apply Proposition~\ref{extending}.
\end{proof}

\begin{corollary}\label{expansions}
Any $\cN \models T$ with $\rkL(N)\geq |T|$ admits an expansion to a model of $\TdG$. In particular, if $T$ is countable and has an Archimedean model then there is an expansion of $\R$ to a model of $\TdG$.
\end{corollary}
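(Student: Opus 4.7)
The plan is to apply Proposition~\ref{extending} directly to the elementary inclusion $\bP \preceq_L \cN$, which is automatic by model completeness of $T$, with $\bP$ carrying the zero derivation furnished by Corollary~\ref{primesubstructure}. The cardinal hypothesis $\rkL(N \mid \bP) = |N| \geq |T|$ is verified as follows. Since $\bP = \dclL(\0)$, we have $\rkL(N \mid \bP) = \rkL(N) \geq |T|$ by assumption. For the equality $|N| = \rkL(N)$, fix a $\dclL$-basis $B$ of $N$ over $\bP$; every element of $N$ lies in $\dclL(\bP \cup B')$ for some finite $B' \subseteq B$, and since $|\bP| \leq |T|$ and there are only $|T|$ many $L$-formulas, one obtains $|N| \leq |T| \cdot \rkL(N) = \rkL(N)$, with the reverse inequality obvious. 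Proposition~\ref{extending} then extends the zero $T$-derivation on $\bP$ to a generic $T$-derivation on $\cN$.

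For the ``in particular'' clause I must first produce an $L$-expansion of $\R$ to a model of $T$. Fix an Archimedean $\cM_0 \models T$. Since $T$ is countable, $\bP$ is countable, and the inclusion $\bP \preceq_L \cM_0$ makes $\bP$ Archimedean; as $\Q \subseteq \bP$, this forces $\bP$ to be order-dense in $\R$. I would then invoke the standard o-minimal fact (a consequence of the monotonicity theorem and cell decomposition) that the Dedekind completion of an Archimedean model of $T$ admits a unique $L$-expansion to a model of $T$ in which the original model is elementary; applied to $\bP$, the completion is $\R$ itself. With $\R$ so expanded, $|T| = \aleph_0 < |\R|$ while $|\bP| \leq \aleph_0$ give $\rkL(\R) = |\R| \geq |T|$, so the first half of the corollary produces the desired $\TdG$-expansion.

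The only non-mechanical ingredient is the Dedekind-completion claim, for which I would cite van den Dries' monograph on tame topology; everything else is bookkeeping around Proposition~\ref{extending} and the counting argument for $|N| = \rkL(N)$.
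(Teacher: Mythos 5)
Your proposal is correct and follows essentially the same route as the paper: apply Proposition~\ref{extending} to $(\bP,0)$ (the cardinality bookkeeping $\rkL(N)=|N|\geq|T|$ that you spell out is left implicit there), and then expand $\R$ to a model of $T$ via the Archimedean hypothesis. The only quibble is the reference for the unique expansion of $\R$: that fact is not in the tame topology monograph but is~\cite[Corollary 2.17]{DL95} (van den Dries--Lewenberg), which is exactly what the paper cites.
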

\begin{proof}
Apply Proposition~\ref{extending} with $(\bP,0)$ in place of $(\cM,\delta)$. If $T$ is countable and has an Archimedean model then by~\cite[Corollary 2.17]{DL95}, there is a unique expansion of $\R$ to a model of $T$. Since $T$ is countable, we have $\rkL(\R) = |\R|$.
\end{proof}

Corollary~\ref{expansions} generalizes a result of Brouette, who showed that $\R$ admits a derivation making it a closed ordered differential field~\cite{Br15}. 

\begin{remark}\label{converse}
We would conjecture that a partial converse to Corollary~\ref{expansions} holds as well: if a model $\cN\models T$ admits an expansion to a model of $\TdG$, then $\rkL(N)\geq \aleph_0$. This is true for $T = \RCF$, since by results of Rosenlicht~\cite{Ro74}, any sequence of distinct elements $(a_n)$ in a differential field of characteristic 0 with $a_n' = a_n^3-a_n^2\neq 0$ are necessarily algebraically independent. One can easily show that infinitely many such elements must exist in any model of $\TdG$. It remains to note that the $\dclL$-rank and the transcendence degree agree when $T = \RCF$.
\end{remark}

%----------------------------------------------------------------------------------%
\subsection{The model completion of \texorpdfstring{$\Td$}{Tδ}}\label{subsec:TdG-MC}
%% This subsection used to be Subsection 4.1
In this subsection, we show that $\TdG$ is the model completion of $\Td$. We have already shown in Corollary~\ref{Tderextensions} that every model of $\Td$ extends to a model of $\TdG$. It remains to establish an embedding lemma.

\begin{lemma}\label{Tderembeddings}
Let $(\cM,\delta)\subseteq (\cN,\delta)\models \Td$, let $ (\cM,\delta)\subseteq (\cM^*,\delta)\models \TdG$ and suppose that $ (\cM^*,\delta)$ is $|N|^+$-saturated.
Then there is an $\Ld$-embedding $\iota:(\cN,\delta) \to (\cM^*,\delta)$ over $(\cM,\delta)$.
\end{lemma}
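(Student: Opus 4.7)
The plan is to use Zorn's lemma on partial $\Ld$-embeddings and extend a maximal such embedding by one element at a time, invoking axiom scheme (G) together with saturation to realize the appropriate $L$-types of jets. Let $\cP$ be the poset of pairs $(\cM', \iota')$ where $\cM \subseteq \cM' \subseteq \cN$ is an $\Ld$-substructure and $\iota' \colon (\cM', \delta) \to (\cM^*, \delta)$ is an $\Ld$-embedding extending the inclusion $\cM \hookrightarrow \cM^*$. This poset has $(\cM, \id)$ as its least element and chains have upper bounds via unions, so Zorn yields a maximal element $(\cM', \iota')$. I will show $\cM' = \cN$ by contradiction: given $b \in N \setminus M'$, I will produce $b^* \in M^*$ such that $\jet^\infty(b^*)$ realizes over $\iota'(M')$ the same $L$-type that $\jet^\infty(b)$ realizes over $M'$. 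Granting this, the map $\delta^i b \mapsto \delta^i b^*$ extends $\iota'$ to an $L$-elementary map on $\cM'\langle b\rangle_{\Ld} = \cM'\langle \jet^\infty(b)\rangle$, and the uniqueness clause of Lemma~\ref{transext} forces this extension to preserve $\delta$, contradicting maximality.

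Case 1: $b \notin \cld(M')$, so $\jet^\infty(b)$ is $\dclL(M')$-independent. Let $p$ be the $L$-type of $\jet^\infty(b)$ over $\iota'(M')$. The partial $\Ld$-type $\{\varphi(y, \delta y, \ldots, \delta^n y) : \varphi(x_0,\ldots,x_n) \in p\}$ over $\iota'(M')$ is finitely satisfiable in $\cM^*$: any finite $\psi \in p$ defines a set $X \subseteq (M^*)^{n+1}$ whose projection $\pi_n(X)$ contains $(b, \delta b, \ldots, \delta^{n-1} b)$, a $\dclL(M')$-independent tuple, so $\dimL(\pi_n X) = n$ and axiom (G) yields $a \in M^*$ with $\jet^n(a) \in X$. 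By $|N|^+$-saturation of $\cM^*$ the full type is realized by the desired $b^*$.

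Case 2: $b \in \cld(M')$. Since $M'$ is closed under $\delta$, Lemma~\ref{dcleq}(3) yields a least $n_0$ with $\delta^{n_0} b \in \dclL(\jet^{n_0-1}(b) \cup M')$, and by minimality $\jet^{n_0-1}(b)$ is $\dclL(M')$-independent. Pick an $L(M')$-definable $\cC^1$-function $F$ on an open neighborhood of $\jet^{n_0-1}(b)$ with $\delta^{n_0} b = F(\jet^{n_0-1}(b))$. Iterating Lemma~\ref{basicTderivation}, each $\delta^k b$ for $k \geq n_0$ equals $G_k(\jet^{n_0-1}(b))$ for some $L(M')$-definable $G_k$, so the $L$-type of $\jet^\infty(b)$ over $M'$ is determined by the $L$-type $p$ of $\jet^{n_0-1}(b)$ together with the equation $\delta^{n_0} y = F(\jet^{n_0-1}(y))$. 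Finite satisfiability of the corresponding $\Ld$-type over $\iota'(M')$ again uses (G): for a finite $\psi \in p$, the set $X = \{(\x, x_{n_0}) : \psi(\x) \wedge x_{n_0} = F(\x)\}$ has $\pi_{n_0}(X) \ni \jet^{n_0-1}(b)$, so $\dimL(\pi_{n_0} X) = n_0$ and (G) produces a witness. Saturation then gives $b^* \in M^*$; since $\delta$ on $\cM^*$ is a $T$-derivation, reapplying Lemma~\ref{basicTderivation} forces $\delta^k b^* = G_k(\jet^{n_0-1}(b^*))$ for all $k \geq n_0$, so $\jet^\infty(b^*)$ has the required $L$-type.

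The main obstacle is Case 2: verifying that realizing a single algebraic equation at level $n_0$ together with the right $L$-type at lower levels forces the full $L$-type of $\jet^\infty(b^*)$ to match that of $\jet^\infty(b)$. This is where the ``$T$'' in ``$T$-derivation'' is essential---the compatibility of $\delta$ with $L(\0)$-definable $\cC^1$-functions, codified in Lemma~\ref{basicTderivation}, ensures that both $\delta^k b$ and $\delta^k b^*$ are computed by the same $L(M')$-definable formulas from their respective jets $\jet^{n_0-1}(b)$ and $\jet^{n_0-1}(b^*)$.
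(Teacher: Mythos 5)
Your overall strategy is the paper's: reduce to adjoining one differential element at a time, split according to whether the new element lies in $\cld(M')$, use the genericity axiom together with $|N|^+$-saturation to realize the $L$-type of the relevant jet, and use Lemmas~\ref{basicTderivation} and~\ref{transext} to see that the resulting map respects $\delta$. The Zorn formulation in place of the paper's transfinite induction is immaterial.

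There is, however, one genuine gap in the setup. Your poset consists of arbitrary $\Ld$-substructures $\cM'$ of $\cN$ together with $\Ld$-embeddings $\iota'$ into $\cM^*$. Since $T$ is only assumed model complete (not to have quantifier elimination in $L$), such an $\cM'$ need not be $\dclL$-closed, hence need not be a model of $T$, and an $\Ld$-embedding then need not be partial $L$-elementary. Yet every key step of your extension argument uses exactly this: transporting the $L$-type of $\jet^\infty(b)$ (resp.\ of $\jet^{n_0-1}(b)$) over $M'$ to a \emph{consistent} type over $\iota'(M')$; concluding that the set defined in $\cM^*$ over $\iota'(M')$ has projection of dimension $n$ because the corresponding set defined in $\cN$ over $M'$ contains the $\dclL(M')$-independent tuple $\jet^{n-1}(b)$ (as written you even place a tuple from $N$ inside a subset of $(M^*)^n$); identifying the $\Ld$-substructure generated by $b$ over $\cM'$ with $\dclL\big(M'\cup\jet^\infty(b)\big)$; and invoking Lemmas~\ref{dcleq}, \ref{basicTderivation} and~\ref{transext} over $\cM'$, which are stated for models of $\Td$. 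For a maximal element of your poset whose $\iota'$ fails to be $L$-elementary, the one-point extension step does not go through, so the desired contradiction is not obtained. The repair is routine and is implicit in the paper's induction: require in the poset that $\cM'$ be $\dclL$-closed and closed under $\delta$, i.e.\ $(\cM',\delta|_{M'})\models\Td$ and $\cM'\preceq_L\cN$; then $\iota'(\cM')$ is a model of $T$ inside $\cM^*$, hence an elementary $L$-substructure by model completeness of $T$, so $\iota'$ is partial $L$-elementary. Unions of chains and your one-step extension stay in this smaller poset, and with this change your two cases coincide with the paper's proof.
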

\begin{proof}
Take $a \in N\setminus M$. We note that $\cM\big\langle \jet^\infty(a)\big\rangle$ is closed under $\delta$ by Lemma~\ref{basicTderivation}, so it is a model of $\Td$. Without loss of generality, we assume that $\cN = \cM\big\langle \jet^\infty(a)\big\rangle$, as the general case follows by transfinite induction. 
We first consider the case that $a \in \cld(M)$. By Lemma~\ref{dcleq}, there is some minimal $n$ such that $\cN = \cM\big\langle \jet^{n-1}(a)\big\rangle$.
Let $f:M^n \to M$ be an $L(M)$-definable function such that $\delta^na = f\big( \jet^{n-1}(a)\big)$. We need to find $b \in M^*$ such that
\begin{enumerate}[(1)]
\item $\delta^nb = f\big( \jet^{n-1}(b)\big)$ and 
\item $\jet^{n-1}(b) \in B^{\cM^*}$ for every $L(M)$-definable set $B$ with $\jet^{n-1}(a) \in B^\cN$.
\end{enumerate}
If we can do this, then we can construct the embedding $\iota$ by sending $\jet^{n-1}(a)$ to $\jet^{n-1}(b)$.
By saturation, we may relax condition (2) and show that such a $b$ exists for an arbitrary $L(M)$-definable set $B$. Fix such a set $B$ and set $A:= \Gamma(f|_B)$. By minimality of $n$, the tuple $\jet^{n-1}(a)$ is $\dclL(M)$-independent, so $\dimL\big(\pi_n(A)\big) = \dimL(B) = n$. Since $(\cM^*,\delta) \models\TdG$, there is some $b \in M^*$ with $\jet^n(b)\in A$.

\medskip

Now consider the case that $a \not\in \cld(M)$. We need to find $b \in M^*$ such that
 $\jet^{n}(b) \in A^{\cM^*}$ for every $n$ and every $L(M)$-definable set $A$ with $\jet^{n}(a) \in A^\cN$. If we can do this, then we can construct the embedding $\iota$ by sending $\jet^{\infty}(a)$ to $\jet^{\infty}(b)$. Again by saturation, it suffices to do this for an arbitrary $n$ and an arbitrary $L(M)$-definable set $A$. If $\jet^{n}(a) \in A^\cN$, then $\dimL(A) = n+1$ since $\jet^n(a)$ is $\dclL(M)$-independent. Since $(\cM^*,\delta) \models\TdG$, there is some $b \in M^*$ with $\jet^n(b)\in A$.
\end{proof}

We can now prove our main theorem:

\begin{thm}\label{modelcompletion}
$\TdG$ is the model completion of $\Td$.
If $T$ has quantifier elimination and a universal axiomatization, then $\TdG$ has quantifier elimination.
\end{thm}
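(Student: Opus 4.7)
Both parts are obtained from Corollary~\ref{Tderextensions} (every $\Td$-model extends to a $\TdG$-model) and Lemma~\ref{Tderembeddings} (the embedding lemma) via standard model-theoretic arguments.

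First, I would verify Robinson's test for model completeness of $\TdG$. Let $(\cM,\delta) \subseteq (\cN,\delta)$ be an extension of $\TdG$-models, and let $(\cM^*,\delta)$ be a $|N|^+$-saturated $\Ld$-elementary extension of $(\cM,\delta)$. Since $(\cN,\delta)$ is in particular a $\Td$-model extending $(\cM,\delta)$, Lemma~\ref{Tderembeddings} supplies an $\Ld$-embedding $\iota : (\cN,\delta) \to (\cM^*,\delta)$ fixing $(\cM,\delta)$ pointwise. This is exactly the hypothesis of Robinson's test, so $\TdG$ is model complete.

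Next, I would establish that $\TdG \cup \operatorname{diag}(\cM,\delta)$ is complete for every $(\cM,\delta) \models \Td$. Given $(\cN_1,\delta)$ and $(\cN_2,\delta)$ in $\TdG$ both containing $(\cM,\delta)$, pick a $|N_1|^+$-saturated $\Ld$-elementary extension $(\cN_2^*,\delta)$ of $(\cN_2,\delta)$; this is again a $\TdG$-model. Applying Lemma~\ref{Tderembeddings} with source $(\cN_1,\delta)$ and target $(\cN_2^*,\delta)$, both viewed as extensions of the $\Td$-model $(\cM,\delta)$, yields an $\Ld$-embedding of $(\cN_1,\delta)$ into $(\cN_2^*,\delta)$ over $(\cM,\delta)$. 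By the model completeness just proved, this embedding is elementary, so $(\cN_1,\delta)$ and $(\cN_2,\delta)$ satisfy the same $\Ld(M)$-sentences. Together with Corollary~\ref{Tderextensions}, this shows that $\TdG$ is the model completion of $\Td$.

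For the quantifier elimination claim, assume $T$ has QE and a universal axiomatization. It suffices to show that any $\Ld$-substructure $\cA$ of a $\Td$-model is itself a $\Td$-model: granted that, the completion property implies that any two $\TdG$-models containing $\cA$ are elementarily equivalent over $\cA$, which is precisely QE for $\TdG$. Since $T$ has a universal axiomatization, the $L$-reduct of $\cA$ is a $T$-model. The remaining axioms of $\Td$ assert, for each $L(\0)$-definable $\cC^1$-function $f : U \to M$ with $U$ open, that $\delta f(\uv) = \Jac_f(\uv)\,\delta \uv$ for all $\uv \in U$; using QE of $T$, the set $U$, the graph of $f$, and the graphs of its partial derivatives are all definable by quantifier-free $L$-formulas, so the axiom can be phrased as a universal $\Ld$-sentence and is preserved under $\Ld$-substructures.

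The heart of the argument is the embedding lemma; the rest is essentially bookkeeping. The only nontrivial point beyond invoking Lemmas already proved is the reformulation of the $T$-derivation axioms as genuinely universal $\Ld$-sentences, which I expect to be the main (though mild) obstacle in the QE step.
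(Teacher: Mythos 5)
Your argument is correct and takes essentially the same route as the paper: the model completion is obtained from Corollary~\ref{Tderextensions} together with Lemma~\ref{Tderembeddings} (the paper simply cites Blum's criterion where you spell out Robinson's test and substructure completeness), and quantifier elimination is obtained by showing $\Td$ is preserved under $\Ld$-substructures, i.e.\ admits a universal axiomatization. The only cosmetic difference is in that last step: the paper notes that, under the hypotheses on $T$, every $L(\0)$-definable function is given piecewise by $L$-terms, so compatibility with each term is a universal $\Ld$-sentence, whereas you write the compatibility axioms using quantifier-free $L$-formulas defining $U$, $\Gamma(f)$ and the graphs of the partial derivatives; both versions yield the same conclusion.
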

\begin{proof}
The fact that $\TdG$ is the model completion of $\Td$ follows from Corollary~\ref{Tderextensions}, Lemma~\ref{Tderembeddings} and Blum's criterion
(see the proof of Theorem 17.2 in~\cite{Sa10}; Lemma~\ref{Tderembeddings} shows that $\TdG$ has the stronger embedding property in that proof, allowing us to bypass the assumption that $\Td$ is universal).
If $T$ has quantifier elimination and a universal axiomatization, then each $L(\0)$-definable function is given piecewise by $L$-terms. The statement that $\delta$ is compatible with a given $L$-term $t$ is universal, so $\Td$ has a universal axiomatization. Thus, $\TdG$ has elimination of quantifiers, by~\cite[Theorem 13.2]{Sa10}.
\end{proof}

Theorem~\ref{modelcompletion} allows us to prove that $\TdG$ has an alternative axiomatization:

\begin{corollary}\label{altaxioms}
The following are equivalent
\begin{enumerate}[(1)]
\item $(\cM,\delta) \models \TdG$.
\item For each $n$ and each $L(M)$-definable set $X \subseteq M^{2n}$, if $\dimL\big(\pi_n(X) \big) = n$, then there is $\av \in M^n$ with $(\av,\delta \av) \in X$.
\end{enumerate}
\end{corollary}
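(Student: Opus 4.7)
The plan is to prove the two implications separately. The direction (2) $\Rightarrow$ (1) is a direct encoding argument, while (1) $\Rightarrow$ (2) requires extending $\delta$ into a larger model and transferring back via model completeness.

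For (2) $\Rightarrow$ (1), given an $L(M)$-definable set $A \subseteq M^{n+1}$ with $\dimL(\pi_n(A)) = n$, I would define a set $X \subseteq M^{2n}$ which forces the first $n$ coordinates of any point $(\av,\delta \av) \in X$ to reconstruct a jet $\jet^{n-1}(a_1)$, namely
\[
X\ :=\ \big\{(\av,\bv) \in M^{2n}: b_i = a_{i+1} \text{ for } 1 \le i < n,\ \text{and}\ (a_1,\dots,a_n,b_n) \in A\big\}.
\]
A short check shows $\pi_n(X) = \pi_n(A)$, so $\dimL(\pi_n(X)) = n$. Applying (2) yields $\av \in M^n$ with $(\av,\delta \av) \in X$; the constraints $\delta a_i = a_{i+1}$ give $a_{i+1} = \delta^i a_1$ by induction on $i$, and the last-coordinate condition $(a_1,\dots,a_n,\delta a_n) \in A$ becomes $\jet^n(a_1) \in A$.

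For (1) $\Rightarrow$ (2), given $X \subseteq M^{2n}$ with $\dimL(\pi_n(X)) = n$, I would first reproduce the coordinate-by-coordinate construction from the opening paragraph of Lemma~\ref{Tderextensions2} to obtain, in some elementary extension $\cN \succeq_L \cM$, a $\dclL(M)$-independent tuple $\av \in \pi_n(X)^{\cN}$. After replacing $\cN$ by $\cM\langle \av\rangle$, definable choice supplies an $L(M)$-definable $f:\pi_n(X) \to M^n$ with $\Gamma(f) \subseteq X$. Lemma~\ref{transext} then extends $\delta$ to a $T$-derivation on $\cN$ with $\delta\av := f^{\cN}(\av)$, so that $(\av,\delta\av) \in X^{\cN}$. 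Enlarging $(\cN,\delta)$ further to a model $(\cN^*,\delta)$ of $\TdG$ by Corollary~\ref{Tderextensions}, and using that a model completion is model complete (Theorem~\ref{modelcompletion}) so $(\cM,\delta) \preceq_{\Ld} (\cN^*,\delta)$, the existential statement transfers back to $(\cM,\delta)$.

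The only delicate part is the forward direction, which strings together three previously established ingredients: finding an independent generic point over $\cM$ in an $L$-extension, compatibly extending the derivation to hit the prescribed value using Lemma~\ref{transext}, and pulling the resulting existence statement back via model completeness. Each step is by now routine, so the work is really in organizing the bookkeeping so that the $T$-derivation axioms are maintained while simultaneously realizing both the $L$-open condition on $\av$ and the prescribed value of $\delta\av$.
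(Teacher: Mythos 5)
Your proof is correct and takes essentially the same route as the paper: the identical jet-encoding set $X$ for (2)$\Rightarrow$(1), and the same generic-point/definable-choice/Lemma~\ref{transext} construction for (1)$\Rightarrow$(2). The only cosmetic difference is that where the paper invokes existential closedness of $(\cM,\delta)$ in $\cM\langle\bv\rangle$ directly from Theorem~\ref{modelcompletion}, you unpack that same fact by extending to a model of $\TdG$ via Corollary~\ref{Tderextensions} and then applying model completeness.
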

\begin{proof}
Suppose that $(\cM,\delta) \models \TdG$ and fix an $L(M)$-definable set $X \subseteq M^{2n}$ with $\dimL\big(\pi_n(X) \big) = n$. Since $\pi_n(X)$ has nonempty interior, there is an extension $\cN\succeq_L \cM$ which contains a $\dclL(M)$-independent tuple $\bv \in \pi_n(X)^\cN$. By definable choice, there is an $L(M)$-definable map $f:\pi_n(X) \to M^n$ such that $\Gamma(f) \subseteq X$. 
By Lemma~\ref{transext}, there is a unique extension of $\delta$ to a $T$-derivation on $\cM\langle \bv\rangle$ such that $\delta \bv = f(\bv)$. Since $(\cM,\delta)$ is existentially closed in $\cM\langle \bv\rangle$ by Theorem~\ref{modelcompletion} there is $\av \in \pi_n(X)$ such that $\delta \av = f(\av)$. Then $(\av,\delta\av)\in X$.

\medskip

For the other direction, fix an $L(M)$-definable set $A \subseteq M^{n+1}$ with $\dimL\big(\pi_n(A) \big) = n$. Define $X\subseteq M^{2n}$ by
\[
X\ :=\ \big\{(\x,\y) \in M^{2n}: y_i = x_{i+1}\text{ for }i=1,\ldots,n-1\text{ and } (\x,y_n) \in A\big\}.
\]
Then $\pi_n(X) = \pi_n(A)$ and $(\av,\delta \av) \in X$ if and only if $\jet^n(a_1) \in A$ for any $\av = (a_1,\ldots,a_n) \in M^n$.
\end{proof}

\begin{corollary}
$\TdG$ is complete.
\end{corollary}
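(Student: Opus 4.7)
The plan is to combine three facts already established: (i) by Corollary~\ref{primesubstructure}, every model of $\Td$—and thus every model of $\TdG$—contains the prime substructure $(\bP,0)$; (ii) by Theorem~\ref{modelcompletion}, $\TdG$ is a model completion, hence in particular model complete; and (iii) Lemma~\ref{Tderembeddings} gives an embedding between models of $\Td$ and saturated models of $\TdG$ over a common $\Td$-substructure. This is the standard route to completeness for a model completion whose companion theory admits a prime substructure, so there is no real obstacle.

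Concretely, given any two models $(\cM_1,\delta_1),(\cM_2,\delta_2)\models\TdG$, each contains $(\bP,0)$ as an $\Ld$-substructure. Choose an $|M_1|^+$-saturated elementary extension $(\cM_2^*,\delta_2^*)\succeq_{\Ld}(\cM_2,\delta_2)$, which still models $\TdG$ and still contains $(\bP,0)$. Apply Lemma~\ref{Tderembeddings} with $(\bP,0)$ playing the role of $(\cM,\delta)$, with $(\cM_1,\delta_1)$ playing the role of $(\cN,\delta)$, and with $(\cM_2^*,\delta_2^*)$ playing the role of the saturated target, to obtain an $\Ld$-embedding $\iota\colon(\cM_1,\delta_1)\hookrightarrow(\cM_2^*,\delta_2^*)$. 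Since $\TdG$ is model complete, $\iota$ is elementary, and therefore
\[
(\cM_1,\delta_1)\ \equiv\ (\cM_2^*,\delta_2^*)\ \equiv\ (\cM_2,\delta_2).
\]
As any two models of $\TdG$ are elementarily equivalent, $\TdG$ is complete.
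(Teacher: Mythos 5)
Your proof is correct, but it takes a different route from the paper. The paper first expands $L$ by symbols for all $L(\0)$-definable functions so that $T$ has quantifier elimination and a universal axiomatization; Theorem~\ref{modelcompletion} then gives quantifier elimination for $\TdG$, and completeness follows because a theory with quantifier elimination and a prime substructure (here $(\bP,0)$, via Corollary~\ref{primesubstructure}) is complete. You instead bypass the language expansion and quantifier elimination altogether: you amalgamate two arbitrary models of $\TdG$ over the common prime substructure $(\bP,0)$ by applying the embedding Lemma~\ref{Tderembeddings} into an $|M_1|^+$-saturated elementary extension of the second model, and then invoke model completeness (which $\TdG$ has as a model completion) to upgrade the embedding to an elementary one. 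Both arguments hinge on Corollary~\ref{primesubstructure}; yours is a direct instance of the standard fact that a model complete theory whose models all contain a common substructure is complete, and it has the advantage of not requiring the definitional expansion of the language (nor the implicit step that completeness transfers back from the expanded language), while the paper's version gets the statement almost for free once quantifier elimination is in hand. The one point you use implicitly, and which is worth making explicit, is that the copies of $(\bP,0)$ inside $(\cM_1,\delta_1)$ and $(\cM_2^*,\delta_2^*)$ are identified via the canonical isomorphism of $\bP$ with $\dclL(\0)$, on which $\delta$ vanishes by Lemma~\ref{generic-point}; this is exactly the content of Corollary~\ref{primesubstructure}, so there is no gap.
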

\begin{proof}
By extending $L$ by function symbols for all $L(\0)$-definable functions and by extending $T$ correspondingly, we may assume that $T$ has quantifier elimination and a universal axiomatization. Thus, $\TdG$ has quantifier elimination, so it suffices to show that it has a prime substructure. This follows from Corollary~\ref{primesubstructure}.
\end{proof}

Despite the fact that $\TdG$ has a prime substructure, it does not have a prime model. To prove this, we first show that each $\Ld$-formula is equivalent to a formula of a special form.

\begin{lemma}\label{formulaform}
For every $\Ld$-formula $\varphi$ there is some $m$ and some $L$-formula $\tilde{\varphi}$ such that
\[
\TdG\ \vdash\ \forall \x\big( \varphi(\x) \leftrightarrow \tilde{\varphi}\big(\jet^m(\x)\big)\big).
\]
\end{lemma}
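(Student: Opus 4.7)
The plan is to combine quantifier elimination for $\TdG$ in a suitably expanded language with a term-by-term reduction. First, by extending $L$ with function symbols for every $L(\0)$-definable function (a definitional extension), we may assume $T$ has quantifier elimination and a universal axiomatization. By Theorem~\ref{modelcompletion}, $\TdG$ then has quantifier elimination, so $\varphi$ is equivalent in $\TdG$ to a quantifier-free $\Ld$-formula, i.e., a Boolean combination of atomic formulas built from $\Ld$-terms. It therefore suffices to show that every $\Ld$-term $t(\x)$ is equivalent in $\Td$ to an $L$-term of the form $\hat t(\jet^m(\x))$ for some $m$.

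I would prove this term-reduction claim by induction on the complexity of $t$. The cases of variables, constants, and applications of $L$-function symbols are immediate (taking the maximum of the inductive $m_i$'s in the last case). The essential case is $t = \delta s$, where by induction $s(\x) = \hat s(\jet^m(\x))$ for some $L$-term $\hat s$ in $k := n(m+1)$ variables. This case reduces to the sub-claim: for every $L(\0)$-definable function $\hat s : M^k \to M$ there is an $L(\0)$-definable function $D : M^{2k} \to M$ such that for every $(\cN,\delta) \models \Td$ and every $\uv \in N^k$, $\delta \hat s(\uv) = D(\uv, \delta \uv)$. Granting this sub-claim, $\delta s(\x) = D(\jet^m(\x), \delta \jet^m(\x))$ is an $L$-term in $\jet^{m+1}(\x)$, since $\delta \jet^m(\x) = (\delta\x, \delta^2\x, \ldots, \delta^{m+1}\x)$ is a subtuple of $\jet^{m+1}(\x)$.

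To prove the sub-claim, apply Corollary~\ref{Ckfiber} to partition $M^k$ into finitely many $L(\0)$-definable $\cC^1$-cells $C_1, \ldots, C_r$ on each of which $\hat s$ is $\cC^1$. On each open cell, Lemma~\ref{basicTderivation} gives $\delta \hat s(\uv) = \Jac_{\hat s}(\uv)\,\delta \uv$, with the $\hat s^{[\delta]}$-term vanishing by the ``moreover'' clause of that lemma since $\hat s$ is $L(\0)$-definable. On a cell $C$ of dimension $d < k$, take an $L(\0)$-definable $\cC^1$-parametrization $\phi : U \to C$ with $U \subseteq M^d$ open; compatibility of $\delta$ with $\phi$ gives $\delta\uv = \Jac_\phi(\wv)\,\delta\wv$ for $\uv = \phi(\wv)$, and since $\Jac_\phi$ has full column rank $d$, the tuple $\delta\wv$ is $L(\0)$-definably recoverable from $\uv$ and $\delta\uv$. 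Applying Lemma~\ref{basicTderivation} to $\hat s \circ \phi$ then gives $\delta\hat s(\uv) = \Jac_{\hat s\circ\phi}(\wv)\,\delta\wv$, which is $L(\0)$-definable in $\uv$ and $\delta\uv$. Piecing together the cellwise expressions yields the desired $D$. The main obstacle is this non-open case: Lemma~\ref{basicTderivation} only applies on open domains, so the compatibility of $\delta$ with cell parametrizations is needed to reduce to the open setting.
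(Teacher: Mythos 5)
Your proof is correct and follows essentially the same route as the paper: pass to an extension of $L$ in which $T$ has quantifier elimination and a universal axiomatization, invoke quantifier elimination for $\TdG$, and then remove the offending occurrences of $\delta$ via an $L(\0)$-definable $\cC^1$-cell decomposition and the chain rule, so that $\delta$ applied to an $L(\0)$-definable function of $\jet^m(\x)$ becomes an $L$-definable function of $\jet^{m+1}(\x)$ (the paper phrases this as an induction on the number of such occurrences in the formula rather than on term complexity, which is only a bookkeeping difference). The one divergence is your handling of non-open cells: in the paper's conventions an $L(\0)$-definable $\cC^1$-function on a cell $D$ by definition extends to an $L(\0)$-definable $\cC^1$-function $f_D$ on an open neighborhood of $D$, so compatibility applies directly to $f_D$ and yields $\delta f(\uv)=\Jac_{f_D}(\uv)\,\delta\uv$ for $\uv\in D$; hence your parametrization-and-pseudoinverse argument for lower-dimensional cells is a correct but unnecessary detour (and the decomposition you need there is the $\cC^k$-cell decomposition proposition of the appendix rather than Corollary~\ref{Ckfiber}).
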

\begin{proof}
Again by extending $L$, we may assume that $T$ has quantifier elimination and a universal axiomatization.
Then $\TdG$ has quantifier elimination, so we may assume that $\varphi$ is quantifier-free. Let $e(\varphi)$ be the number of times in $\varphi$ that $\delta$ is applied to a term that is not of the form $\delta^kx_i$. We proceed by induction on $e(\varphi)$. If $e(\varphi) = 0$ then we are done. If $e(\varphi) > 0$, then $\varphi$ is of the form
\[
\varphi(\x)\ =\ \psi\big(\x,\delta f(\jet^n(\x))\big)
\]
for some $n$, where $f(\y)$ is an $L(\0)$-definable function and where $\psi$ is an $\Ld$-formula. Let $\cD$ be an $L(\0)$-definable $\cC^1$-cell decomposition for $f$ (see Appendix~\ref{sec:Ck} for a precise definition). Then for each $D \in \cD$, there is an $L(\0)$-definable $\cC^1$-function $f_D$, defined in an open neighborhood of $D$, such that the $f_D(\y) = f(\y)$ for all $\y \in D$. Define $\tilde{f}$ by setting $\tilde{f}(\y) := \Jac_{f_D}(\y)$ whenever $\y$ in $D$. Then $\tilde{f}$ is $L(\0)$-definable and $\delta f(\y) = \tilde{f}(\y)\delta\y$ in any model of $\Td$. Set $\varphi'(\x):= \psi\big(\x,\tilde{f}\big(\jet^n(\x)\big)\delta\big(\jet^n(\x)\big)\big)$. Then $e(\varphi') < e(\varphi)$ and 
\[
\Td\ \vdash\ \forall\x\big(\varphi(\x) \leftrightarrow \varphi'(\x)\big). \qedhere
\]
\end{proof}

Note that $m$ and $\tilde{\varphi}$ in the lemma above are not unique.
The following corollary was established for $T= \RCF$ by Singer in~\cite{Si78}. Our proof is essentially the same.

\begin{corollary}
Suppose that $T$ is countable. Then $\TdG$ does not have a prime model.
\end{corollary}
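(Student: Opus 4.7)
The plan is to exhibit an $\Ld$-formula that is consistent with $\TdG$ yet admits no isolated complete extension over $\0$. Since $|\TdG| = |T| = \aleph_0$, the existence of a prime model is equivalent to the existence of a countable atomic model, which in turn requires the isolated $\0$-types in $S_1$ to be dense. I take $\varphi(x) := (\delta x = 1)$; this is consistent with $\TdG$ by axiom scheme (G) applied to $X := \{(x,1) : x \in M\}$, whose first projection is all of $M$.

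Suppose, for contradiction, that some $\psi(x)$ isolates a completion $p \supseteq \varphi$. By Lemma~\ref{formulaform}, $\psi$ is $\TdG$-equivalent to $\tilde\psi(\jet^m(x))$ for some $m \geq 1$ and some $L$-formula $\tilde\psi$. Fix $(\cM,\delta) \models \TdG$ and $a \models p$. Since $\delta a = 1$ while $\delta$ vanishes on $\bP$ by Lemma~\ref{generic-point}, one has $a \notin \bP$ and $\delta^i a = 0$ for all $i \geq 2$, whence $\jet^m(a) = (a,1,0,\dots,0)$. Setting $\chi(y) := \tilde\psi(y,1,0,\dots,0)$ yields a unary $L(\0)$-formula satisfied by the transcendental $a$. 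By o-minimality, $\chi(\cM)$ is a finite union of $\bP$-points and intervals with endpoints in $\bP \cup \{\pm\infty\}$, and since $a$ is non-algebraic it lies in some open interval $(r_1,r_2) \subseteq \chi(\cM)$. Since $\mathbb{Q}^{rc} \subseteq \bP$ is dense in every real-closed field, $(r_1,r_2)$ meets $\bP$ infinitely often, so in a sufficiently saturated elementary $L$-extension of $\cM$ we find $a' \in (r_1,r_2)$ realizing a cut in $\bP$ distinct from that of $a$. In particular $a' \not\equiv_L a$ but $a' \models \chi$ and $a' \notin \bP$.

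Now Lemma~\ref{transext}, applied with base $(\bP,0)$, $A = \{a'\}$, and $s(a')=1$, equips $\bP\langle a'\rangle$ with a $T$-derivation sending $a'$ to $1$; by Corollary~\ref{Tderextensions} this extends to some $(\cN,\delta) \models \TdG$. In $\cN$ one has $\jet^m(a') = (a',1,0,\dots,0)$, and $\tilde\psi(a',1,0,\dots,0)$ holds because $a' \models \chi$, so $a' \models \psi$ and therefore $a' \models p$. But then $a' \equiv_L a$, contradicting the choice of $a'$. The central tool is Lemma~\ref{formulaform}, which collapses the putative isolating $\Ld$-formula to an $L$-formula evaluated on a jet, reducing the problem to o-minimal cut-counting; the main obstacle is manufacturing a witness $a'$ of prescribed $L$-type inside a model of $\TdG$ while still forcing $\delta a' = 1$, which is resolved by building the derivation from the prime substructure $(\bP,0)$ via Lemma~\ref{transext} rather than attempting to modify $\delta$ on the original $\cM$.
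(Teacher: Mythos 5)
Your proof is correct and follows the same overall strategy as the paper's: reduce the putative isolating formula $\psi$ to an $L$-formula in the jets via Lemma~\ref{formulaform}, observe that the constraint $\delta x = 1$ collapses $\jet^m(x)$ to $(x,1,0,\dots,0)$, and then exploit the o-minimal structure of the resulting $L(\0)$-definable unary set. Where the paper compresses the endgame into the one-line assertion ``since $[\psi]$ isolates a type, the set defined by $\psi$ is just one point'' (and then derives a contradiction from $\delta x = 0$), you instead note that the realization $a$ already forces $\chi(\cM)$ to contain an interval, and you explicitly manufacture a second realization $a'$ of a different $L$-type by building a derivation on $\bP\langle a'\rangle$ via Lemma~\ref{transext} and passing to a model of $\TdG$ via Corollary~\ref{Tderextensions}; this makes the step the paper glosses over fully rigorous. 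Both proofs rest on the same lemmas, and yours is the more carefully argued version of the same idea.
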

\begin{proof}
Note that $\TdG$ is also countable. We use the fact that if the isolated types are not dense in the unary type space $S_1(\TdG)$, then $\TdG$ does not have a prime model (see~\cite[Proposition 32.1]{Sa10}). Given an $\Ld$-formula $\varphi(x)$, we let $[\varphi(x)]$ denote the clopen subset of $S_1(\TdG)$ consisting of all unary types containing $\varphi$. We claim that $[\delta x =1]$ contains no isolated types. Suppose towards contradiction that $[\psi(x)]$ is a basic clopen set contained in $[\delta x = 1]$ which isolates a type. By Lemma~\ref{formulaform}, we may assume that $\psi(x)$ is of the form $\tilde{\psi}\big(\jet^m(x)\big)$ for some $m$, where $\tilde{\psi}$ is a quantifier-free $L$-formula. Since $\psi(x)$ implies that $\delta x = 1$, we may replace $\delta x$ by 1 and $\delta^k x$ by 0 for all $k >1$. Thus, we may assume that $\psi(x)$ is actually an $L$-formula, so $\psi$ defines a finite union of points and open intervals. Since $[\psi]$ is assumed to isolate a type, the set defined by $\psi$ is just one point. However, this point lies in $\dclL(\0)$ in any model of $T$, so $\psi(x)$ implies $\delta(x) = 0$, a contradiction.
\end{proof}

Using Lemma~\ref{formulaform}, we have a nice description of $\Ld(M)$-definable sets:

\begin{corollary}\label{setform}
Let $(\cM,\delta) \models \TdG$ and let $B \subseteq M$ with $\delta B \subseteq B$. Then for any $\Ld(B)$-definable set $A \subseteq M^n$, there is some $m$ and some $L(B)$-definable set $\tilde{A}\subseteq M^{n(m+1)}$ such that
\[
A\ =\ \big\{\x \in M^n:\jet^m(\x) \in \tilde{A}\big\}.
\]
\end{corollary}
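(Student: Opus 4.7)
The plan is to deduce this from Lemma~\ref{formulaform} by treating the parameters $\bv \in B^k$ defining $A$ as free variables and then reinstantiating them, using the hypothesis $\delta B \subseteq B$ to keep the resulting formula over $B$.

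First, write $A = \{\x \in M^n : \cM \models \varphi(\x, \bv)\}$ for some $\Ld$-formula $\varphi(\x, \y)$ (with $\y$ a $k$-tuple of parameter variables) and some tuple $\bv \in B^k$. Apply Lemma~\ref{formulaform} to $\varphi$, viewed as a formula in the combined variable $(\x, \y) \in M^{n+k}$: this gives an integer $m$ and an $L$-formula $\tilde{\varphi}$ such that
\[
\TdG\ \vdash\ \forall\x\,\forall\y\big(\varphi(\x,\y) \leftrightarrow \tilde{\varphi}\big(\jet^m(\x), \jet^m(\y)\big)\big).
\]

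Now use the assumption $\delta B \subseteq B$. Since $\bv \in B^k$ and $B$ is closed under $\delta$, every component of $\jet^m(\bv)$ lies in $B$, so $\jet^m(\bv) \in B^{k(m+1)}$. Define
\[
\tilde{A}\ :=\ \big\{\x' \in M^{n(m+1)} : \cM \models \tilde{\varphi}\big(\x', \jet^m(\bv)\big)\big\},
\]
which is $L(B)$-definable. For any $\x \in M^n$, instantiating the equivalence from Lemma~\ref{formulaform} at $\y = \bv$ yields
\[
\x \in A\ \Longleftrightarrow\ \cM \models \varphi(\x, \bv)\ \Longleftrightarrow\ \cM \models \tilde{\varphi}\big(\jet^m(\x), \jet^m(\bv)\big)\ \Longleftrightarrow\ \jet^m(\x) \in \tilde{A},
\]
which is exactly the desired description of $A$.

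There is no real obstacle here once Lemma~\ref{formulaform} is in hand; the only content is the observation that the hypothesis $\delta B \subseteq B$ is precisely what is needed to ensure that $\jet^m(\bv)$ stays inside $B$, so that $\tilde{A}$ is $L(B)$-definable (rather than merely $L(B \cup \delta B \cup \cdots \cup \delta^m B)$-definable).
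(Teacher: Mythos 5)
Your proposal is correct and takes essentially the same route as the paper: both apply Lemma~\ref{formulaform} to the formula $\varphi(\x,\y)$ with the parameter tuple treated as free variables, then instantiate at $\bv$ and use $\delta B \subseteq B$ to ensure $\jet^m(\bv)$ lies in $B$ so that $\tilde{A}$ is $L(B)$-definable. You simply make explicit the role of the hypothesis $\delta B \subseteq B$, which the paper leaves implicit.
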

\begin{proof}
Take a tuple $\bv$ from $B$ and an $\Ld$-formula $\psi(\x,\y)$ such that $A = \psi(\cM,\bv)$. By Lemma~\ref{formulaform}, we have some $m$ and some $L$-formula $\tilde{\psi}$ such that
\[
(\cM,\delta)\ \models\ \forall \x\big( \varphi(\x,\bv) \leftrightarrow \tilde{\psi}\big(\jet^m(\x),\jet^m(\bv)\big).
\]
Set $\tilde{A} := \tilde{\psi}\big(\cM,\jet^m(\bv)\big)$.\qedhere
\end{proof}

%----------------------------------------------------------------------------------%
\subsection{Distality and NIP}
Distal theories were introduced by Simon~\cite{Si13} as a subclass of NIP theories. The goal in this section is to show that $\TdG$ is distal.
Fix a monster model $(\M,\delta)\models \TdG$.

\begin{definition}
\label{distaldefformula}
$\TdG$ is \textbf{distal}
if whenever $\bv$ is a tuple from $\M$ and $(\av_i)_{i\in I}$ is an $\Ld(\0)$-indiscernible sequence from $\M$ such that
\begin{enumerate}[(1)]
\item $I = I_1+(c)+I_2$ where $I_1$ and $I_2$ are infinite without endpoints and
\item $(\av_i)_{i\in I_1+I_2}$ is $\Ld(\bv)$-indiscernible,
\end{enumerate}
then $(\av_i)_{i\in I}$ is $\Ld(\bv)$-indiscernible.
\end{definition}

\begin{thm}\label{distal}
$\TdG$ is distal.
\end{thm}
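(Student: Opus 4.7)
The plan is to reduce distality of $\TdG$ to distality of $T$, which holds since o-minimal theories are distal (a standard result of Simon). The reduction will go through Corollary~\ref{setform}, which allows one to replace $\Ld$-formulas by $L$-formulas at the cost of passing to jets.

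Fix a tuple $\bv$ and an $\Ld(\0)$-indiscernible sequence $(\av_i)_{i \in I}$ with $I = I_1 + (c) + I_2$ as in Definition~\ref{distaldefformula}. I would show that for every $\Ld(\bv)$-formula $\varphi(\x_1, \ldots, \x_n)$ and every increasing $n$-tuple of indices from $I$, the truth value of $\varphi$ applied to the corresponding subsequence depends only on the order type of the tuple. Setting $B := \jet^\infty(\bv)$, which is $\delta$-closed and contains $\bv$, Corollary~\ref{setform} yields integers $m, N$ and an $L$-formula $\tilde{\varphi}$ such that the $\Ld(\bv)$-definable set cut out by $\varphi$ coincides with the set of $(\x_1, \ldots, \x_n)$ satisfying $\tilde{\varphi}\bigl(\jet^m(\x_1), \ldots, \jet^m(\x_n), \jet^N(\bv)\bigr)$.

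Next, I would note that since $\delta$ is part of $\Ld$, $\Ld$-indiscernibility of $(\av_i)_{i \in I}$ propagates through the jet map: $(\jet^m(\av_i))_{i \in I}$ is $L(\0)$-indiscernible, and $(\jet^m(\av_i))_{i \in I_1 + I_2}$ is $L(\jet^N(\bv))$-indiscernible. Distality of $T$ then gives that the full sequence $(\jet^m(\av_i))_{i \in I}$ is $L(\jet^N(\bv))$-indiscernible. Combined with the formula reduction from the previous paragraph, this yields $\Ld(\bv)$-indiscernibility of $(\av_i)_{i \in I}$, as required.

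There should be no serious obstacle: the content is carried entirely by Corollary~\ref{setform}, which says that $\Ld(B)$-definability collapses to $L$-definability on jets when $B$ is $\delta$-closed, together with the (well-known) distality of o-minimal theories. The one minor verification is that $\Ld$-indiscernibility transfers to $L$-indiscernibility of the jet sequence, but this is immediate since $\delta$ (and hence $\jet^m$) is $\Ld$-definable, so applying $\jet^m$ coordinatewise to an $\Ld$-indiscernible sequence produces an $L$-indiscernible one.
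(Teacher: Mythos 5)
Your proposal is correct and follows essentially the same route as the paper: both reduce distality of $\TdG$ to distality of the o-minimal theory $T$ by replacing an $\Ld$-formula with an $L$-formula evaluated on jets, and then noting that applying $\jet^m$ to an $\Ld$-indiscernible sequence produces an $L$-indiscernible one. The only cosmetic difference is that you invoke Corollary~\ref{setform} (fixing $B = \jet^\infty(\bv)$, which is $\delta$-closed), whereas the paper applies Lemma~\ref{formulaform} directly to the formula $\varphi(\x_1,\ldots,\x_n,\y)$ with $\y$ still a free variable; since Corollary~\ref{setform} is itself an immediate consequence of Lemma~\ref{formulaform}, this is the same argument packaged slightly differently.
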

\begin{proof}
Fix an infinite linear order $I=I_1+(c)+I_2$ where $I_1$ and $I_2$ are infinite without endpoints and take an $\Ld(\0)$-indiscernible sequence $(\av_i)_{i \in I}$ from $\M^m$ and a tuple $\bv \in \M^n$ such that $(\av_i)_{i \in I_1+I_2}$ is $\Ld(\bv)$-indiscernible.
Let $\varphi(\x_1,\ldots,\x_n,\y)$ be an $\Ld$-formula. It suffices to show that
\[
\M\ \models\ \varphi(\av_{i_1},\ldots,\av_{i_n},\bv) \leftrightarrow \varphi(\av_{j_1},\ldots,\av_{j_n},\bv) 
\]
for any indices $i_1<\ldots<i_n$ and $j_1<\ldots<j_n$ in $I$.
 By Lemma~\ref{formulaform} there is some $m$ and some $L$-formula $\tilde{\varphi}$ such that
\[
\TdG\ \vdash\ \forall \x_1\ldots\forall \x_n\big( \varphi(\x_1,\ldots,\x_n,\y) \leftrightarrow \tilde{\varphi}\big(\jet^m(\x_1),\ldots,\jet^m(\x_n),\jet^m(\y)\big)\big).
\]
Since $(\av_i)_{i \in I}$ is $\Ld(\0)$-indiscernible, we have that $\big(\jet^m(\av_i)\big)_{i \in I}$ is also $\Ld(\0)$-indiscernible so, in particular, $\big(\jet^m(\av_i)\big)_{i \in I}$ is $L(\0)$-indiscernible. Likewise, since $(\av_i)_{i \in I_1+I_2}$ is $\Ld(\bv)$-indiscernible, we have that $\big(\jet^m(\av_i)\big)_{i \in I}$ is $L\big(\jet^m(\bv)\big)$-indiscernible. Since o-minimal theories are distal, we have that
\[
\M\ \models\ \tilde{\varphi}\big(\jet^m(\av_{i_1}),\ldots,\jet^m(\av_{i_n}),\jet^m(\bv)\big) \leftrightarrow \tilde{\varphi}\big(\jet^m(\av_{j_1}),\ldots,\jet^m(\av_{j_n}),\jet^m(\bv)\big).\qedhere
\]
\end{proof}

It is well-known that distality implies NIP (see~\cite[Remark 2.6]{CS16}).

\begin{corollary}\label{nip}
$\TdG$ has NIP.
\end{corollary}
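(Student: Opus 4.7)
The plan is to deduce this immediately from Theorem~\ref{distal}. Since distality is strictly stronger than NIP (a distal theory is by definition an NIP theory in which indiscernible sequences admit the ``splitting'' behavior described in Definition~\ref{distaldefformula}), the corollary is a one-line consequence of what has just been proved: $\TdG$ is distal by Theorem~\ref{distal}, hence NIP by the standard implication recorded in~\cite[Remark 2.6]{CS16}. There is no further work to do.

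As a sanity check, one can also give a self-contained direct argument using Lemma~\ref{formulaform}, which is in fact how Theorem~\ref{distal} was proved. Given an $\Ld$-formula $\varphi(\x,\y)$, choose $m$ and an $L$-formula $\tilde{\varphi}$ with
\[
\TdG\ \vdash\ \forall \x\,\forall \y\bigl(\varphi(\x,\y) \leftrightarrow \tilde{\varphi}\bigl(\jet^m(\x),\jet^m(\y)\bigr)\bigr).
\]
If $(\av_i)_{i<\omega}$ is a sequence of parameters in a monster model and $\bv$ is such that the truth value of $\varphi(\bv,\av_i)$ is not eventually constant along arbitrary subsequences (i.e.\ $\varphi$ has IP witnessed by $(\av_i)$ and $\bv$), then the sequence $\bigl(\jet^m(\av_i)\bigr)_{i<\omega}$ together with $\jet^m(\bv)$ witnesses IP for the $L$-formula $\tilde{\varphi}$, contradicting the fact that $T$, being o-minimal, is NIP. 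Hence $\varphi$ is NIP in $\TdG$, and since $\varphi$ was arbitrary, $\TdG$ is NIP.

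Either route is routine; I would favor the one-line citation-based proof, since Theorem~\ref{distal} has already done the real work and invoking~\cite[Remark 2.6]{CS16} is standard. No obstacle is anticipated.
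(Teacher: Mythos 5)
Your primary argument is exactly the paper's proof: Theorem~\ref{distal} gives distality, and NIP follows by the standard implication cited as \cite[Remark 2.6]{CS16}. The supplementary direct argument via Lemma~\ref{formulaform} (transferring an IP pattern for an $\Ld$-formula to the $L$-formula $\tilde{\varphi}$ applied to jets, contradicting NIP of the o-minimal theory $T$) is also sound, but it is not needed.
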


The following negative result was first shown by Brouette~\cite{Br15}, who constructs a type of $\operatorname{dp}$-rank $\geq \aleph_0$. Our proof, which was suggested to us by Itay Kaplan, differs from the proof in~\cite{Br15}.

\begin{proposition}\label{notstrong}
$\TdG$ is not strongly dependent.
\end{proposition}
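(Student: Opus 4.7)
The plan is to exhibit an array witnessing that the dp-rank of $\{x=x\}$ is at least $\aleph_0$, which is exactly the failure of strong dependence. Concretely, in a sufficiently saturated $(\M,\delta) \models \TdG$ I want mutually $\Ld$-indiscernible sequences $\bar{I}_\alpha = (b_j^\alpha)_{j<\omega}$ for $\alpha < \omega$, together with a single element $a \in \M$ such that each $\bar{I}_\alpha$ fails to be $\Ld$-indiscernible over $a$. The idea is that $\delta^\alpha a$ should cut the $\alpha$-th sequence for every $\alpha$ at once, which the genericity of $\delta$ will allow me to arrange simultaneously on a single element $a$ via its derivatives.

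For the array, I would locate everything inside $C := \ker(\delta)$, an elementary $L$-substructure of $\M$ by Lemma~\ref{constantelem}. Fix pairwise disjoint bounded open intervals $I_\alpha$ with $\bP$-definable endpoints. The partial $\Ld$-type over $\emptyset$ asserting that $(x_j^\alpha)_{\alpha,j<\omega}$ is a $\delta$-constant array with $x_j^\alpha \in I_\alpha$, with $x_{j}^\alpha < x_{j+1}^\alpha$, and with the $x_j^\alpha$'s $\dclL$-independent over one another, is finitely satisfiable: given finitely many prior choices, one can always produce a new element in $I_\alpha \cap C$ that avoids the (finite) $\dclL$-closure of the previous ones, because $C \preceq_L \M$ and $I_\alpha$ has positive $L$-dimension. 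Saturation of $\M$ therefore realizes this type by an array $(b_j^\alpha)$ lying inside $C$, and o-minimal cell decomposition then shows that the resulting array is mutually $L$-indiscernible over $\emptyset$. Because every $b_j^\alpha$ lies in $\ker(\delta)$, we have $\jet^m(\bar{c}) = (\bar{c},0,\dots,0)$ for any tuple $\bar{c}$ drawn from the array, so Lemma~\ref{formulaform} reduces every $\Ld$-formula in array entries to an $L$-formula in those entries together with zero parameters from $\bP$. Mutual $L$-indiscernibility therefore upgrades automatically to mutual $\Ld$-indiscernibility.

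To produce the cutting element I apply genericity of $\delta$ together with saturation. For each $N<\omega$ the open box
\[
A_N\ :=\ (b_0^0,b_1^0) \times (b_0^1,b_1^1) \times \cdots \times (b_0^N,b_1^N)\ \subseteq\ M^{N+1}
\]
is $L(M)$-definable with $\dimL \pi_N(A_N) = N$, so genericity gives $a_N \in \M$ with $\jet^N(a_N) \in A_N$, that is $\delta^\alpha a_N \in (b_0^\alpha,b_1^\alpha)$ for every $\alpha \leq N$. Saturation of $\M$ then combines these partial realizations into a single $a \in \M$ with $\delta^\alpha a \in (b_0^\alpha,b_1^\alpha)$ for all $\alpha<\omega$. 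For each $\alpha$ the $\Ld$-formula $y < \delta^\alpha x$ holds at $(b_0^\alpha,a)$ but fails at $(b_1^\alpha,a)$, so $\bar{I}_\alpha$ fails to be $\Ld$-indiscernible over $a$; together with the mutual $\Ld$-indiscernibility established above, this realizes an infinite dp-array and gives dp-rank $\geq \aleph_0$.

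The main obstacle is ensuring genuine mutual $\Ld$-indiscernibility in the presence of a wild derivation, since an $L$-indiscernible sequence of arbitrary elements will generically acquire nontrivial $\Ld$-structure through its $\delta$-iterates. Confining the whole array to $\ker(\delta)$ is precisely the device that neutralizes this difficulty, collapsing $\Ld$-types to $L$-types uniformly across the array, after which the genericity axiom~(G) effortlessly supplies the single element realizing infinitely many independent cuts.
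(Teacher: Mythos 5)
Your overall architecture is sound and in the same spirit as the paper's proof: both arguments produce an infinite-depth pattern using the formulas ``$\delta^{k}x$ lies in a prescribed interval'' and realize every path through the pattern by the genericity axiom plus saturation (the paper does this via Adler's criterion for ``not strong,'' with $\varphi_k(x,y):= y<\delta^k x<y+1$ and parameters spaced more than $1$ apart, which avoids indiscernible sequences altogether). The genuine gap in your write-up is the sentence ``o-minimal cell decomposition then shows that the resulting array is mutually $L$-indiscernible over $\emptyset$.'' A realization of the partial type you wrote down need not be mutually $L$-indiscernible, and neither $\dclL$-independence nor cell decomposition yields this: in an o-minimal theory the $L$-type of a tuple of $\dclL$-independent elements is not determined by order and interval constraints. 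Already for $T=\RCF$ one can realize your type with $b^0_0<b^0_1<b^0_2$ in $I_0=(0,1)$, algebraically independent, inside the constant field, and with $b^0_1-b^0_0<\tfrac18<b^0_2-b^0_1$; since $\tfrac18\in\dclL(\0)$, the formula $y-x<\tfrac18$ shows this row is not even $2$-indiscernible over $\0$. So as stated, the array produced by saturation may fail mutual indiscernibility, and the dp-rank witness collapses at that point.

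The gap is repairable by standard tools, but an extra step is needed. The cleanest fix is to extract from your array a mutually $\Ld$-indiscernible array finitely based on it (Ramsey plus compactness, the usual modeling lemma for arrays): the conditions $x^\alpha_j\in I_\alpha$, $x^\alpha_j<x^\alpha_{j+1}$ and $\delta x^\alpha_j=0$ are quantifier-free $\Ld(\0)$-conditions occurring in the EM-type, hence are inherited by the extracted array, and then your genericity-plus-saturation step produces a single $a$ with $b^\alpha_0<\delta^\alpha a<b^\alpha_1$ for all $\alpha$, completing the ict-pattern. (Once you extract with respect to $\Ld$-formulas, the detour through $\ker(\delta)$ and Lemma~\ref{formulaform} is no longer needed, though it remains a nice way to convert $L$- to $\Ld$-indiscernibility if you prefer to extract only in $L$.) Alternatively, since $\TdG$ is NIP, you could follow the paper and verify Adler's criterion directly with $\varphi_k(x,y):=y<\delta^k x<y+1$, which uses exactly the genericity and saturation computation you already have and requires no indiscernible sequences at all.
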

\begin{proof}
A NIP theory is strong if and only if it is strongly dependent, so we will show that $\TdG$ is not strong.
By~\cite{Ad07}, it is enough to find formulas $\varphi_k(x,y)$ and parameters $b_m$ from $\M$ such that
\begin{enumerate}[(i)]
\item $\varphi_k(\M,b_m)\cap \varphi_k(\M,b_n) = \0$ for all $k$ and all $m\neq n$ and
\item $\bigcap_k\varphi_k\big(\M,b_{f(k)}\big)\neq \0$ for any function $f:\N\to \N$.
\end{enumerate}
Let $\varphi_k$ be the formula 
\[
\varphi_k(x,y)\ :=\ y<\delta^k x <y+1
\]
and let $(b_m)$ be any increasing sequence such that $b_{m+1}-b_m>1$ for all $m$. Then clearly, $\varphi_k(x,b_m)$ and $\varphi_k(x,b_n)$ are incompatible for all $k$ and all $m\neq n$. Fix a function $f:\N\to \N$. By saturation, $\bigcap_k\varphi_k\big(\M,b_{f(k)}\big)$ is nonempty so long as any finite intersection $\bigcap_{k\leq n}\varphi_k\big(\M,b_{f(k)}\big)$ is nonempty. Set
\[
A\ :=\ \big(b_{f(0)},b_{f(0)}+1\big)\times \big(b_{f(1)},b_{f(1)}+1\big)\times \cdots\times \big(b_{f(n)},b_{f(n)}+1\big).
\]
Then $\pi_n(A)$ is open and so there is $b \in \M$ with $\jet^n(b) \in A$. Thus, $b \in \bigcap_{k\leq n}\varphi_k\big(\M,b_{f(k)}\big)$.
\end{proof}

\begin{remark}
When $T = \RCF$, Theorem~\ref{distal} was first noticed by Chernikov and has been employed to construct a distal extension of the theory of differentially closed fields of characteristic zero, see~\cite{ACGZ19}. Corollary~\ref{nip} was shown in~\cite{MR05}.
\end{remark}
%----------------------------------------------------------------------------------%
\subsection{Dense pairs and closed ordered differential fields}\label{subsec:CODF}
Let $\cR \models \RCF$. In~\cite{Si78}, Singer axiomatizes the theory of closed ordered differential fields as follows:
\begin{definition}
$(\cR,\delta)$ is a \textbf{closed ordered differential field} if $\delta$ is a derivation on $\cR$ and if the following holds for each $n> 0$ and each each $P,Q_1,\ldots,Q_k \in R[X_1,\ldots,X_n]$: if $X_n$ does not appear in any of the $Q_i$ and if there is $\av \in R^{n}$ such that 
\[
P(\av)\ =\ 0,\ \ \frac{\partial P}{\partial X_{n}}(\av)\ \neq 0,\ \ \text{and each }Q_i(\av)\ >\ 0,
\]
 then there is $b \in R$ such that $P\big(\jet^n(b)\big) = 0$ and $Q_i\big(\jet^{n}(b)\big) >0$.
 Let $\CODF$ be the $\Ld$-theory axiomatizing closed ordered differential fields.
\end{definition}
Singer goes on to show that $\CODF$ is the model completion of ordered differential fields and, thus, of real closed ordered differential fields. 
Note that by Lemmas~\ref{isader} and~\ref{semialg}, $\delta$ is an $\RCF$-derivation if and only if it is a derivation, so $\RCF^\delta_G$ is the model completion of real closed ordered differential fields as well. By the uniqueness of model completions, we have the following:
\begin{proposition}
$(\cR,\delta)\models \RCF^\delta_G$ if and only if $(\cR,\delta) \models \CODF$.
\end{proposition}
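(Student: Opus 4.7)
The plan is to derive the equivalence from uniqueness of model completions. First, I would observe that Lemma~\ref{isader} together with Proposition~\ref{semialg} identify $\RCF^\delta$, as an $\Ld$-theory, with the theory of real closed ordered differential fields: any map satisfying the compatibility axioms of $\RCF^\delta$ is a derivation (by Lemma~\ref{isader}) and, conversely, any derivation on a real closed ordered field is automatically compatible with every $L_{\ring}(\emptyset)$-definable $\cC^1$-function (by Proposition~\ref{semialg}). Thus, Theorem~\ref{modelcompletion} says that $\RCF^\delta_G$ is the model completion of the theory of real closed ordered differential fields.

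Next, I would recall Singer's result from~\cite{Si78}: $\CODF$ is the model completion of the theory of ordered differential fields, and every model of $\CODF$ is real closed. Since the class of models of $\CODF$ thus sits inside the class of real closed ordered differential fields, $\CODF$ extends $\RCF^\delta$. I would then verify the two defining properties of a model completion for $\CODF$ relative to $\RCF^\delta$: (i) every real closed ordered differential field embeds into a model of $\CODF$, which is immediate from Singer's result since any real closed ordered differential field is in particular an ordered differential field; and (ii) for every $(\cR,\delta)\models\RCF^\delta$, the theory $\CODF\cup\operatorname{Diag}(\cR,\delta)$ is complete, which again follows from the corresponding statement for arbitrary ordered differential fields. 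Hence $\CODF$ is also a model completion of $\RCF^\delta$.

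By uniqueness of model completions, the $\Ld$-theories $\RCF^\delta_G$ and $\CODF$ are equal, which gives the stated equivalence on models. No step here is subtle: the only small point to be careful about is observing that $\RCF^\delta$ really does agree with the conventional notion of a real closed ordered differential field, which is precisely what the combination of Lemmas~\ref{isader} and~\ref{semialg} provides.
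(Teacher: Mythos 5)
Your proposal is correct and follows essentially the same route as the paper: identify $\RCF^\delta$ with the theory of real closed ordered differential fields via Lemma~\ref{isader} and Proposition~\ref{semialg}, use Singer's theorem that $\CODF$ is the model completion of ordered differential fields (hence of real closed ordered differential fields), and conclude by uniqueness of model completions. The only difference is that you spell out explicitly the step the paper compresses into ``and, thus, of real closed ordered differential fields,'' which is a harmless elaboration.
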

The fact that Singer's axioms are equivalent to our more geometric axioms was first shown in~\cite{MR05}.

\medskip In~\cite{vdD98-2}, van den Dries introduced the theory of dense pairs of o-minimal structures:
\begin{definition}
$(\cN,P)$ is a \textbf{dense pair of models of $T$} if $\cN \models T$ and if $P(\cN)$ is the underlying set of a proper dense elementary substructure of $\cN$. Let $T^{\dense}$ be the $L\cup\{P\}$ theory axiomatizing dense pairs of models of $T$.
\end{definition}

If $(\cM,\delta) \models \TdG$, then the constant field $C$ is dense in $M$. To see this, let $I \subseteq M$ is an open interval. By the axioms of $\TdG$, there is some $c \in I$ with $\delta c = 0$. By Lemma~\ref{constantelem}, $C$ is the underlying set of an elementary $L$-substructure of $M$. Thus $(\cM,P)\models T^{\dense}$ where $P$ is interpreted to pick out $C$. Note that $(\cM,P)$ is a reduct of $(\cM,\delta)$ in the sense of definability. In~\cite{HN17}, Hieronymi and Nell show that dense pairs are not distal. However, since distality is not preserved under reducts, the question remains open as to whether models of $T^{\dense}$ have distal expansions. In light of Theorem~\ref{distal}, we are able to give a partial answer:

\begin{corollary}\label{distalexpansion}
There is a distal theory extending $T^{\dense}$.
\end{corollary}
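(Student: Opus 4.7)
The plan is to take $\TdG$ itself, suitably interpreted, as the required distal extension of $T^{\dense}$. Work in the language $L^+ := \Ld \cup \{P\}$ and let $T'$ be the $L^+$-theory obtained from $\TdG$ by adjoining the definitional axiom $\forall x\,(P(x) \leftrightarrow \delta x = 0)$. I claim that every model of $T'$, after forgetting $\delta$, is a model of $T^{\dense}$, and that $T'$ is distal.

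For the first claim, let $(\cM, \delta, P^{\cM}) \models T'$, so that $P^{\cM}$ is exactly the constant field $C = \ker \delta$. By Lemma~\ref{constantelem}, $C$ is the underlying set of an elementary $L$-substructure of $\cM$. Density is immediate from axiom (G) applied to $X = I \times \{0\} \subseteq M^2$ for any nonempty open interval $I \subseteq M$: this produces $a \in I$ with $\delta a = 0$, hence $a \in C \cap I$. Properness follows by applying (G) to $X = M \times \{1\}$, which produces $a \in M$ with $\delta a = 1$, so $C \neq M$. Hence the $L \cup \{P\}$-reduct $(\cM, C)$ is a model of $T^{\dense}$, so $T'$ does extend $T^{\dense}$ in the appropriate sense.

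For the second claim, $T'$ is a definitional expansion of $\TdG$, since the new predicate $P$ is introduced by the explicit $\Ld$-formula $\delta x = 0$. Distality is preserved under definitional expansions (it is a property of the collection of $\emptyset$-definable sets up to interdefinability), so Theorem~\ref{distal} gives that $T'$ is distal. There is no substantial obstacle here: the corollary is essentially a formal consequence of Theorem~\ref{distal}, together with the observation, noted just before the corollary, that the natural $T^{\dense}$-structure on $\cM$ (with $P$ picking out $C$) is a reduct in the sense of definability of $(\cM, \delta)$.
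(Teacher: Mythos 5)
Your proposal is correct and follows essentially the same route as the paper: the text preceding the corollary observes that for $(\cM,\delta)\models\TdG$ the constant field $C$ is a dense (and, implicitly, proper) elementary $L$-substructure by Lemma~\ref{constantelem} and genericity, so that $(\cM,P)\models T^{\dense}$ is a reduct in the sense of definability of the distal structure $(\cM,\delta)$, and the corollary is read off from Theorem~\ref{distal} exactly as you do via the definitional expansion $P(x)\leftrightarrow\delta x=0$. The only difference is cosmetic: you spell out properness (via an element with $\delta a=1$) and the preservation of distality under definitional expansions, both of which the paper leaves implicit.
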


It is worth noting that we do not have a method of expanding a given dense pair to a model of $\TdG$. Indeed, by Remark~\ref{converse} and the fact that there are models $(\cN,P)\models \RCF^P$ with $\rkL(N)<\aleph_0$, there are dense pairs which do not admit an expansion to a model of $\TdG$. Moreover, while dense pairs are defined for o-minimal theories extending the theory of divisible ordered abelian groups, Corollary~\ref{distalexpansion} is only a statement about o-minimal theories extending the theory of ordered fields.

\medskip

In the case that $T = \RCF$, Corollary~\ref{distalexpansion} was first observed by Cubides Kovacsics and Point~\cite{CP19}.
They study the expansions of dp-minimal fields by generic derivations and they show that distality is preserved in these expansions, using a method quite similar to ours.
All o-minimal theories are dp-minimal, but Cubides Kovacsics and Point do not require that their derivations are $T$-derivations and so the only common theory considered in this article and~\cite{CP19} is $\CODF$.

\medskip
In~\cite{vdD98-2}, van den Dries goes on to study the induced structure on $P(\cN)$ when $(\cN,P)\models T^{\dense}$. He shows that the only new sets introduced are the traces of $L(N)$-definable sets. 
We can show that if $(\cM,\delta) \models \TdG$ then the induced structure on the constant field is nothing more than this:
\begin{lemma}
Let $(\cM,\delta)$ be a model of $\TdG$ and let $C$ be its constant field.
For every $\Ld(M)$-definable set $A \subseteq C^n$, there is an $L(M)$-definable set $B\subseteq M^n$ such that $A = B\cap C^n$. 
\end{lemma}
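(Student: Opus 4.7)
The approach is to reduce the problem immediately to the structural description of $\Ld(M)$-definable sets provided by Corollary~\ref{setform}. Since $M$ itself satisfies $\delta M\subseteq M$, that corollary applied to $A$ (viewed as an $\Ld(M)$-definable subset of $M^n$) yields some natural number $m$ and some $L(M)$-definable set $\tilde{A}\subseteq M^{n(m+1)}$ with
\[
A\ =\ \bigl\{\x\in M^n:\jet^m(\x)\in\tilde{A}\bigr\}.
\]
The key observation is that every element of $C$ is annihilated by $\delta$, and hence by all positive powers of $\delta$, so for a tuple $\x=(x_1,\ldots,x_n)\in C^n$ the tuple $\jet^m(\x)$ reduces to
\[
\iota(\x)\ :=\ (x_1,0,\ldots,0,\,x_2,0,\ldots,0,\,\ldots,\,x_n,0,\ldots,0),
\]
which is an $L(\0)$-definable function of $\x$ that makes no reference to $\delta$.

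The natural candidate for $B$ is then
\[
B\ :=\ \bigl\{\x\in M^n:\iota(\x)\in\tilde{A}\bigr\},
\]
which is visibly $L(M)$-definable since $\tilde{A}$ is and $\iota$ is $L(\0)$-definable. For the verification: if $\x\in A$ then $\x\in C^n$ and $\jet^m(\x)=\iota(\x)\in\tilde{A}$, so $\x\in B\cap C^n$; conversely, if $\x\in B\cap C^n$ then $\jet^m(\x)=\iota(\x)\in\tilde{A}$ and hence $\x\in A$. This gives the desired equality $A=B\cap C^n$.

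There is essentially no obstacle beyond invoking Corollary~\ref{setform}; the argument is a one-step translation once the jet-expansion of $\Ld(M)$-formulas is available, combined with the trivial remark that jets of constants collapse. The only point requiring a modicum of care is keeping track of the coordinate ordering inside $\jet^m(\x)$, but this is purely cosmetic and absorbed into the definition of $\iota$.
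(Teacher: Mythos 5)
Your proof is correct and follows the paper's argument exactly: invoke Corollary~\ref{setform} to express $A$ via a jet-preimage of an $L(M)$-definable $\tilde{A}$, then observe that jets of constants collapse to an $L(\0)$-definable map $\iota$, so $B:=\iota^{-1}(\tilde{A})$ works. The only difference is that you spell out the two containments explicitly, which the paper leaves to the reader.
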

\begin{proof}
By Corollary~\ref{setform}, there is some $m$ and some $L(M)$-definable set $\tilde{A}\subseteq M^{n(m+1)}$ such that
\[
A\ =\ \big\{\x \in M^n:\jet^m(\x) \in \tilde{A}\big\}.
\]
Since $A \subseteq C^n$, $\delta^k a = 0$ for any $a \in A$ and any $k >0$. Thus,
\[
A\ =\ C^n\cap \big\{\x \in M^n:(x_1,0,0,\ldots;x_2,0,0,\ldots;\ldots;x_n,0,0,\ldots) \in \tilde{A}\big\}.\qedhere
\]
\end{proof}

%----------------------------------------------------------------------------------%
\section{Geometric and topological properties of \texorpdfstring{$\TdG$}{TδG}} \label{sec:geom}
%----------------------------------------------------------------------------------%
In this section, we establish a dimension theory for models of $\TdG$ and a cell decomposition result. We show that $\TdG$ has $T$ as its open core and we use this to analyze the definable closure in models of $\TdG$. We also show that $\TdG$ eliminates imaginaries. For the remainder of this section, let $(\M,\delta)$ be a monster model of $\TdG$ and let $(\cM,\delta)$ be a small elementary substructure of $(\M,\delta)$.

%----------------------------------------------------------------------------------%
\subsection{Dimension in models of $\TdG$} \label{subsec:dim}
In~\cite{Fo11}, the first author introduces the notion of an \textbf{existential matroid} and shows how these matroids induce a dimension function on definable sets. In this section, we apply these results.

\begin{lemma}\label{cldelem}
Let $B \subseteq \M$. If $\cld(B) = B$ then $(B,\delta|_B) \models \TdG$.
\end{lemma}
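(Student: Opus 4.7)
The plan is to verify in turn that $(B,\delta|_B)$ satisfies each of the three ingredients of $\TdG$: that its $L$-reduct is elementary in $\cM$, that $\delta|_B$ is a $T$-derivation, and that the genericity scheme holds.

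For the first two ingredients, the idea is that $\dclL\subseteq\cld$ (the $\delta$-closure extends the underlying matroid closure, by the remark after Definition~\ref{delclosure}) and $\jet^\infty(B)\subseteq\cld(B)$, so from $\cld(B)=B$ we get $\dclL(B)\subseteq B$ and $\delta(B)\subseteq B$. Since $T$ has definable Skolem functions, the first inclusion gives $B\preceq_L\M$; the $T$-derivation axioms, being universally quantified identities that hold in $\M$, then restrict to the $\delta$-invariant elementary subset $B$, so $(B,\delta|_B)\models \Td$.

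The main work is the genericity axiom. Given an $L(B)$-definable $A\subseteq B^{n+1}$ with $\dimL(\pi_n(A))=n$, the plan is first to use o-minimal definable choice to pick an $L(B)$-definable map $f\colon\pi_n(A)\to B$ with $\Gamma(f)\subseteq A$, and to replace $A$ by $\Gamma(f)$ (which has the same $n$-th projection, still of $L$-dimension $n$). Applying the genericity of $\delta$ on $\M$ then produces $b\in\M$ with $\jet^n(b)\in A^{\M}$, so by construction $\delta^n b = f(\jet^{n-1}(b))$. This identity places $\delta^n b \in \dclL\!\big(\jet^{n-1}(b)\cup B\big)$, and since $\delta(B)\subseteq B$ gives $\jet^\infty(B)\subseteq B$, condition (3) of Lemma~\ref{dcleq} forces $b\in\cld(B)=B$. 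Thus the required witness already lies in $B$, as needed.

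The sole subtlety is this graph reduction: by arranging the witness to lie in the graph of an $L(B)$-definable function, we bound $\rkL(\jet^n(b)|B)$ by $n$ and drag $b$ back into the $\delta$-closure of $B$ — that is, back into $B$. Once this trick is in place, no saturation, amalgamation, or appeal to the model-completion property of Theorem~\ref{modelcompletion} is required.
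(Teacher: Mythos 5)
Your proposal is correct and follows essentially the same route as the paper's proof: reduce the genericity instance to the graph of an $L(B)$-definable function via definable choice, find a witness $b$ in $\M$, and observe that $\delta^n b \in \dclL\!\big(\jet^{n-1}(b)B\big)$ forces $b \in \cld(B) = B$. No issues.
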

\begin{proof}
Since $\dclL(B) \subseteq \cld(B) = B$, we have that $B \preceq_L \M$. Since $\delta B \subseteq \cld(B) = B$ we also see that $B$ is closed under $\delta$, so $(B,\delta|_B)\models \Td$. Fix $n$ and some $L(B)$-definable set $A \subseteq \M^{n+1}$ with $\dimL\big(\pi_n(A) \big) = n$. We need to show that there is some $a \in B$ such that $\jet^n(a) \in A$. By definable choice, there is an $L(B)$-definable function $f: \pi_n(A) \to \M$ such that $\Gamma(f) \subseteq A$, so we may replace $A$ by $\Gamma(f)$. As $(\M,\delta) \models \TdG$, there is some $a \in \M$ such that $\jet^n(a) \in A$. Thus, $\delta^na \in \dclL\!\big(\jet^{n-1}(a)B\big)$ so $a \in \cld(B) = B$.
\end{proof}

The converse of Lemma~\ref{cldelem} does not hold as $\cld(M)\neq M$. To see this, let $C$ be the constant field of $\M$. Then $C \subseteq \cld(M)$, but by saturation, $|C|> |M|$, so $C$ is not contained in $M$. 

\begin{proposition}\label{existential}
$(\M,\cld)$ is an existential matroid.
\end{proposition}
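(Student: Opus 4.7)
The plan is to verify the defining axioms of an existential matroid from \cite{Fo11} for $(\M,\cld)$. Proposition~\ref{ispregeom} together with Lemma~\ref{isquasi} already yields that $(\M,\cld)$ is a finitary matroid, and invariance under $\Ld$-automorphisms of $\M$ is immediate from the definition. The inclusion $\dcl_\Ld \subseteq \cld$ I would derive from Lemma~\ref{formulaform}: if $a$ is $\Ld$-definable over a finite tuple $\bv$ from $B$, the lemma produces some $m$ and an $L$-formula $\tilde\psi$ such that $\tilde\psi\big(\jet^m(x),\jet^m(\bv)\big)$ defines $\{a\}$, placing $a$ in $\dclL\big(\jet^m(\bv)\big) \subseteq \cld(B)$.

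The core content is the definability clause: for every $\Ld$-formula $\phi(x,\y)$, the set $\{\bv:\phi(\M,\bv)\not\subseteq\cld(\bv)\}$ is $\Ld$-definable. I would first use Lemma~\ref{formulaform} to replace $\phi(x,\y)$ by an equivalent formula $\tilde\phi\big(\jet^m(x),\jet^m(\y)\big)$ with $\tilde\phi$ an $L$-formula. By Lemma~\ref{dcleq} together with Fact~\ref{notincl}, the existence of $a \in \phi(\M,\bv)\setminus\cld(\bv)$ is equivalent to the existence, for every $k$, of some $a$ satisfying $\tilde\phi\big(\jet^m(a),\jet^m(\bv)\big)$ with $\jet^{m+k}(a)$ $L$-independent over $\jet^m(\bv)$. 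Axiom~(G) of $\TdG$ allows this to be reformulated as the condition that an appropriate projection of the $L\big(\jet^m(\bv)\big)$-definable set cut out by $\tilde\phi$ has maximal $L$-dimension, a condition $\Ld$-definable in $\bv$ by uniform definability of dimension in the o-minimal theory $T$. A saturation argument reduces the collection of conditions indexed by $k$ to a single $\Ld$-formula $\theta(\y)$ as required; any additional witness/existentiality clause in the definition of existential matroid is then supplied directly by axiom~(G), which produces an element of $\M$ realizing the relevant type.

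The main obstacle will be the careful bookkeeping between $\cld$-independence of a putative witness $a$ over $\bv$ and $L$-dimension conditions on projections of the $L\big(\jet^m(\bv)\big)$-definable set obtained from $\tilde\phi$. In particular, the trickiest point is verifying both directions of the equivalence while keeping witnesses inside $\M$: one must ensure that axiom~(G) promotes an $L$-dimension condition on jets into an actual $a \in \M$ whose $\delta$-iterates realize the required $L$-independence, and conversely that such an $a$ forces the $L$-dimension of the jet projections to be maximal.
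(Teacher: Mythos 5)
Your proposal correctly reduces the first step to Proposition~\ref{ispregeom} (plus Lemma~\ref{isquasi}), and the invariance and $\dcl_{\Ld} \subseteq \cld$ remarks are fine. The trouble starts with the two harder clauses, and in both places you diverge from the paper in ways that leave genuine gaps.

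For the \emph{definability} clause, the paper uses a much more direct equivalent formulation: it suffices to show that whenever $a \in \cld(B)$, there is an $\Ld(B)$-definable set $A$ with $a \in A \subseteq \cld(B)$. Lemma~\ref{dcleq}(3) immediately produces a dependency $\delta^n a = f\big(\jet^{n-1}(a),\jet^m(\bv)\big)$ for an $L(\0)$-definable $f$ and tuple $\bv$ from $B$; the set $A := \{x : \delta^n x = f(\jet^{n-1}(x),\jet^m(\bv))\}$ is the required $\Ld(\bv)$-definable set. Your route instead attacks the ``for every formula $\phi(x,\y)$ the set $\{\bv : \phi(\M,\bv) \not\subseteq \cld(\bv)\}$ is definable'' form head-on, via Lemma~\ref{formulaform}, dimension conditions on jet projections, and axiom~(G). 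Even granting that the two formulations are equivalent, your sketch hides a nontrivial quantifier exchange: $a \notin \cld(\bv)$ is an infinite conjunction (over all $k$) of independence conditions, and ``there exists $a$ realizing $\phi$ together with this type'' is not a single first-order formula in $\bv$ by fiat. A compactness-style argument is needed to collapse the conditions, and you merely assert that saturation does it. The paper's phrasing avoids this entirely because it works with a single witness $a \in \cld(B)$ rather than with the complementary, type-defined condition.

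More seriously, you do not address the \emph{existence} axiom of an existential matroid, which is the substantive point here. The paper derives it from Lemma~\ref{cldelem} (that $\cld$-closed small subsets of $\M$ are elementary substructures modeling $\TdG$) combined with Lemma~3.23 of~\cite{Fo11}. Your concluding remark that ``any additional witness/existentiality clause\ldots{}is then supplied directly by axiom~(G)'' is not sufficient: axiom~(G) is a local statement about realizing jet conditions; the existence property of an existential matroid is a structural condition about the behavior of free types over small parameter sets, and the needed input is precisely that $\cld$-closed sets are models of $\TdG$, which is the content of Lemma~\ref{cldelem}. You also omit \emph{nontriviality}, which the paper handles with a short saturation argument ($\cld(\0) \neq \M$ since one can realize the type asserting that $\jet^\infty(a)$ is $\dclL(\0)$-independent). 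These are not cosmetic omissions: without existence and nontriviality the matroid is not existential.
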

\begin{proof}
$(\M,\cld)$ is a finitary matroid and, by~\cite[Lemma 3.23]{Fo11} and Lemma~\ref{cldelem}, $\cld$ satisfies \emph{existence}. It remains to show that $\cld$ is \emph{nontrivial} and that $\cld$ is \emph{definable}. To show that $\cld$ is nontrivial, we use saturation to find some $a \in \M$ such that $\jet^\infty(a)$ is $\dclL(\0)$-independent. Then $a \not\in \cld(\0)$, so $\cld(\0) \neq \M$. To show that $\cld$ is definable, it is enough to show for any $a \in \M$ and $B \subseteq \M$ that if $a \in \cld(B)$ then there is an $\Ld(B)$-definable set $A$ such that $a \in A \subseteq \cld(B)$. To see that this is true, we use (3) in~\ref{dcleq} to find some $L(\0)$-definable function $f$ such that $\delta^n a = f\big(\jet^n(a),\jet^m(\bv)\big)$ for some $n,m$ and some tuple $\bv$ from $B$. Then the $\Ld(\bv)$-definable set 
\[
A\ :=\ \big\{x \in \M:\delta^n x = f\big(\jet^n(x),\jet^m(\bv)\big)\big\}
\]
contains $a$ and is contained in $\cld(B)$.
\end{proof}

We now define a dimension function on the algebra of $\Ld(M)$-definable sets:
\begin{definition}\label{deltadim}
Let $A\subseteq \M^n$ be a nonempty $\Ld(M)$-definable set. We set
\[
\dimd(A)\ :=\ \max\big\{\rkd(\av|M):\av \in A\big\}
\]
and we call this the \textbf{$\delta$-dimension of $A$}. For completeness, we set $\dimd(\0) := -\infty$.
\end{definition}

By~\cite[Theorem 4.3]{Fo11}, this $\delta$-dimension satisfies the following axioms in~\cite{vdD89}:
\begin{enumerate}[(D1)]
\item $\dimd(\{a\}) = 0$ for each $a \in M$ and $\dimd(M) = 1$;
\item $\dimd(A\cup B) = \max\big\{\dimd(A),\dimd(B)\big\}$ for $\Ld(M)$-definable sets $A,B\subseteq M^n$;
\item $\dimd$ is invarient under permutation of coordinates.
\item If $A\subseteq M^{n+1}$ is $\Ld(M)$-definable, then the sets 
\[
A_i\ :=\ \big\{\x\in M^n:\dimd(A_{\x} )= i\big\}
\]
are $\Ld(M)$-definable for $i \in\{0,1\}$ and $\dimd\!\big(\{(\x,y)\in A: \x \in A_i\}\big) = \dimd(A_i)+i$.
\end{enumerate}

By~\cite[Proposition 1.7]{vdD89}, this dimension does not change if we pass to an elementary extension of $\M$, so this dimension does not depend on the choice of $\M$ and is invariant under elementary embeddings. We collect some consequences below, all of which are from~\cite{vdD89}:

\begin{corollary}
Let $A\subseteq M^m$ and $B\subseteq M^n$ be $\Ld(M)$-definable sets. The following hold:
\begin{enumerate}[(a)]
\item $\dimd(M^n) = n$;
\item $\dimd(A\times B) = \dimd(A) + \dimd(B)$;
\item If $m=n$ and $A\subseteq B$, then $\dimd(A)\leq \dimd(B)$;
%%\item If $m=n$, then $\dimd(A\cup B) = \max\big\{\dimd(A),\dimd(B)\big\}$;
\item If $A$ is finite and nonempty, then $\dimd(A) = 0$;
\item $f:A \to M^n$ is an $\Ld(M)$-definable map, then for each $i \in\{ 0,\ldots,m\}$, the set 
\[
B_i\ :=\ \big\{\x \in M^n: \dimd\!\big(f^{-1}(\x)\big)= i\big\}
\]
is $\Ld(M)$-definable and $\dimd\!\big(f^{-1}(B_i)\big) = \dimd(B_i)+i$. In particular, $\delta$-dimension is preserved under definable bijections.
\end{enumerate}
\end{corollary}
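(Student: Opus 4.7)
The plan is to deduce (a)--(e) as formal consequences of the axiom scheme (D1)--(D4), which has already been shown to hold for $\dimd$; all of these implications are worked out abstractly in~\cite[Chapter 1]{vdD89}, so the proof is essentially a list of citations plus a few easy reductions. I would dispose of the items roughly in order of increasing complexity: (c), (a), (d), (b), (e).

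Item (c) is immediate from (D2): $A\subseteq B$ forces $A\cup B = B$, so $\dimd(B) = \max\{\dimd(A),\dimd(B)\}\geq \dimd(A)$. Items (a) and (d) are routine inductions on $n$. For (a), the base case is (D1) and the inductive step applies (D4) to $M^{n+1} = M^n\times M$: every fiber equals $M$, so $A_1 = M^n$ and $A_0 = \0$, yielding $\dimd(M^{n+1}) = \dimd(M^n)+1$. For (d), I would first show by induction using (D4) that any singleton $\{(a_1,\ldots,a_n)\}\subseteq M^n$ has $\delta$-dimension $0$ (its unique nonempty fiber over the first $n-1$ coordinates has dimension $0$ by the inductive hypothesis), and then extend to arbitrary nonempty finite sets via (D2).

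For (b), I would view $C = A\times B\subseteq M^m\times M^n$ and iterate (D4) over the last $n$ coordinates: each fiber of $C$ over $\x\in A$ equals $B$, and the fibers over $M^m\setminus A$ are empty, so the calculation telescopes to $\dimd(A\times B) = \dimd(A) + \dimd(B)$. Item (e) is proved by the same iterated application of (D4) one coordinate at a time along $f$, which yields both the $\Ld(M)$-definability of each $B_i$ and the fiber-sum formula $\dimd(f^{-1}(B_i)) = \dimd(B_i)+i$, as in~\cite[Proposition 1.5]{vdD89}. The ``in particular'' clause for definable bijections is then immediate: singleton fibers have dimension $0$ by (d), so $B_0 = B$ and $B_i = \0$ for $i>0$, and the fiber-sum formula gives $\dimd(A) = \dimd(f^{-1}(B_0)) = \dimd(B_0) = \dimd(B)$.

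The main step requiring care is the iterated application of (D4) needed for (b) and (e), since (D4) itself only controls a single fiber coordinate. The required induction is straightforward but requires some bookkeeping; because~\cite{vdD89} performs it abstractly from (D1)--(D4) alone, I would simply invoke that reference rather than redo the calculation here.
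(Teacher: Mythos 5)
Your proposal is correct and matches the paper's approach: the paper gives no proof at all, simply noting that the corollary collects consequences of the axioms (D1)--(D4) ``all of which are from~\cite{vdD89},'' and your derivations (plus deferring the iterated-fiber bookkeeping for (b) and (e) to that reference) are exactly the intended argument.
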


Finite sets are not the only sets of $\delta$-dimension 0. For example, the constant field of $(\cM,\delta)$ has $\delta$-dimension 0. To see this, we fix $a \in \M$ with $\delta a = 0$. Then for all $k$, the $\dclL$-rank $\rkL\big(\jet^k(a)|M\big) = 0$ if $a \in M$ and $\rkL\big(\jet^k(a)|M\big)= 1$ otherwise. In either case, we have by Corollary~\ref{practicalrk} that
\[
\rkd(a|M)\ =\ \lim\limits_{k \to \infty}\frac{\rkL\big(\jet^k(a)|M\big)}{k+1}\ =\ 0.
\]

%----------------------------------------------------------------------------------%
\subsection{Cell decomposition}\label{subsec:cell-decomposition}
In~\cite{BMR09}, Brihaye, Michaux and Rivi\`{e}re prove a cell decomposition result for definable sets in closed ordered differential fields. As they remark in the final section of this paper, the only results that they use are quantifier elimination for $\CODF$, o-minimal cell decomposition for real closed ordered fields and the fact that the graph of $x\mapsto \jet^n(x)$ is dense in any model of $\CODF$. Thus, their results also apply to our case in light of the following lemma:

\begin{lemma}\label{density}
$\jet^m(M^n)$ is dense in $M^{n(m+1)}$ for any $(\cM,\delta) \models \TdG$.
\end{lemma}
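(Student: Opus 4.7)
The plan is to reduce the density claim to a single application of the alternative axiomatization of $\TdG$ from Corollary~\ref{altaxioms}. Since the coordinates of $\jet^m(\av)$ are a permutation of those of the tuple $(\av, \delta\av, \dots, \delta^m \av)$, and since any nonempty open subset of $M^{n(m+1)}$ contains an open box, it suffices to show that for all nonempty open $V_0, V_1, \dots, V_m \subseteq M^n$ there is $\av \in M^n$ with $\delta^k \av \in V_k$ for each $k = 0, \dots, m$. The cases $m = 0$ and $n = 0$ are trivial, so I would assume $m, n \geq 1$.

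The key idea is to encode the higher-order condition ``$\delta^k \av \in V_k$ for all $k \leq m$'' as a single first-order condition on a longer auxiliary tuple $\bv$. Set $N := nm$ and view elements of $M^N$ as blocks $(\bv_0, \dots, \bv_{m-1})$ with each $\bv_k \in M^n$. Consider the $L(M)$-definable set
\[
X\ :=\ \bigl\{ (\bv, \cv) \in M^N \times M^N :\ \bv_k \in V_k\ \text{for}\ k<m,\ \cv_k = \bv_{k+1}\ \text{for}\ k<m-1,\ \cv_{m-1} \in V_m \bigr\}.
\]
Then $\pi_N(X) = V_0 \times \cdots \times V_{m-1}$ is a nonempty open subset of $M^N$, so $\dimL(\pi_N(X)) = N$. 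By Corollary~\ref{altaxioms} applied with $N$ in place of $n$, there exists $\bv = (\bv_0, \dots, \bv_{m-1}) \in M^N$ such that $(\bv, \delta \bv) \in X$.

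To conclude, I would set $\av := \bv_0$ and unpack the conditions defining $X$. The ``overlap'' equations $\delta \bv_k = \bv_{k+1}$ for $k < m-1$ yield by an easy induction on $k$ that $\bv_k = \delta^k \av \in V_k$ for each $k = 0, \dots, m-1$, and the last condition gives $\delta^m \av = \delta \bv_{m-1} \in V_m$. Hence $(\av, \delta\av, \dots, \delta^m \av) \in V_0 \times \cdots \times V_m$, and after the coordinate permutation identifying this tuple with $\jet^m(\av)$, we get $\jet^m(\av) \in U$ for our arbitrary open box $U$. I do not anticipate any serious obstacle here; the only real idea is the overlap trick $\cv_k = \bv_{k+1}$, which lets one absorb all orders $0$ through $m$ into a single application of the first-order genericity principle.
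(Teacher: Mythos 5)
Your proof is correct, but it takes a genuinely different route from the paper's. The paper's argument is a two-line direct application of the defining axioms of $\TdG$: since $\jet^m(\av)=\big(\jet^m(a_1),\ldots,\jet^m(a_n)\big)$, a basic open box in $M^{n(m+1)}$ factors as $U_1\times\cdots\times U_n$ with each $U_i\subseteq M^{m+1}$ an open box, and one applies the genericity axiom $n$ times, once per coordinate of $\av$, to get $a_i$ with $\jet^m(a_i)\in U_i$; the $n$ choices are independent, so $\jet^m(\av)\in U_1\times\cdots\times U_n$. You instead permute coordinates so that the box factors by order of derivative, and then make a single application of Corollary~\ref{altaxioms} to a tuple of length $nm$, using the shift conditions $\cv_k=\bv_{k+1}$ to force $\bv_k=\delta^k\bv_0$ --- in effect the same prolongation trick that appears in the $(2)\Rightarrow(1)$ direction of the proof of Corollary~\ref{altaxioms}, now at the level of $n$-tuples. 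Your reductions all check out: the projection of your $X$ is the nonempty open set $V_0\times\cdots\times V_{m-1}$, hence of full dimension; the induction recovers $\delta^k\av\in V_k$ for all $k\le m$; and there is no circularity, since Corollary~\ref{altaxioms} precedes the density lemma in the paper and its proof does not use it. What the paper's route buys is brevity and reliance only on the original axiom scheme; what yours buys is a worked illustration that the single first-order genericity statement for tuples of Corollary~\ref{altaxioms} absorbs arbitrary higher-order jet conditions, a reusable observation, though it is slightly more machinery than this particular statement requires.
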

\begin{proof}
Let $U_1,\ldots,U_n \subseteq M^{m+1}$ be basic (hence definable) open sets. Then by the axioms of $\TdG$, there is some $a_i \in M$ such that $\jet^{m}(a_i) \in U_i$ for each $i=1,\ldots,n$. Thus, $\jet^m(\av) \in U_1\times\cdots\times U_n$ where $\av = (a_1,\ldots,a_n)$.
\end{proof}

\medskip 

Brihaye, Michaux and Rivi\`{e}re say that a set $D\subseteq M^n$ is a \textbf{$\delta$-cell} if there is $m$ and an $L(M)$-definable cell $\tilde{D}\subseteq M^{n(m+1)}$ such that
\[
D\ =\ \big\{\x \in M^n:\jet^m(\x) \in \tilde{D}\big\}.
\]
Note that $D$ is $\Ld(M)$-definable. They call $\tilde{D}$ as above a \textbf{source cell} for $D$.
They assign to $D$ a binary sequence $(\bi_1;\ldots;\bi_n) \in \{0,1\}^n$, which they call the \textbf{$\delta$-type of $D$}, as follows: let $\tilde{D}$ be a source cell for $D$ and let 
\[
(i_{1,0},i_{1,1},\ldots,i_{1,m};i_{2,0},\ldots,i_{2,m};\ldots;i_{n,0},\ldots,i_{n,m})
\]
be the binary sequence associated to $\tilde{D}$. For $k = 1,\ldots,n$, set $\bi_k := 1$ if $i_{k,j} = 1$ for all $j = 0,\ldots,m$ and set $\bi_k:= 0$ otherwise. In Lemma 4.5 of~\cite{BMR09}, it is shown that this $\delta$-type is well-defined, i.e., it is independent of the choice of $m$ and $\tilde{D}$. A \textbf{$\delta$-cell decomposition of $M^n$} is a finite collection $\cD$ of disjoint $\delta$-cells such that $\bigcup\cD = M^n$ and such that $\{\pi_{n-1}D:D \in \cD\}$ is a $\delta$-cell decomposition of $M^{n-1}$.

\begin{thm}[Brihaye, Michaux, Rivi\`{e}re, Theorem 4.9]\label{decomp}
For any $B\subseteq M$ with $\delta B \subseteq B$ and any $\Ld(B)$-definable sets $A_1,\ldots,A_p \subseteq M^n$ there is an $\Ld(B)$-definable $\delta$-cell decomposition $\cD$ of $M^n$ partitioning $A_1,\ldots,A_p$.
\end{thm}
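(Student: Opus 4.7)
The strategy is to import the argument of Theorem~4.9 in~\cite{BMR09} essentially verbatim, checking that its three essential inputs remain available in our setting: Corollary~\ref{setform} substitutes for the role played by quantifier elimination for $\CODF$ (representing every $\Ld(B)$-definable set as the $\jet^m$-pullback of an $L(B)$-definable set), o-minimal cell decomposition for $T$ replaces semialgebraic cell decomposition over $\RCF$, and Lemma~\ref{density} supplies the required density of $\jet^m(M^n)$ in $M^{n(m+1)}$.

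First I would apply Corollary~\ref{setform} to each $A_i$, obtaining integers $m_i$ and $L(B)$-definable source sets $\tilde{A}_i \subseteq M^{n(m_i+1)}$ with $A_i = \{\x \in M^n : \jet^{m_i}(\x) \in \tilde{A}_i\}$. Setting $m := \max_i m_i$ and padding the missing jet coordinates by factors of $M$, one may assume every $\tilde{A}_i$ lies in a common ambient space $M^{n(m+1)}$, whose coordinates I would order so that the jets of a single variable appear consecutively, namely $(x_1, \delta x_1, \ldots, \delta^m x_1;\; x_2, \ldots, \delta^m x_2;\; \ldots;\; x_n, \ldots, \delta^m x_n)$.

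Next I would apply o-minimal cell decomposition for $T$ over $B$ in this coordinate ordering, producing an $L(B)$-definable cell decomposition $\tilde{\cD}$ of $M^{n(m+1)}$ which partitions each $\tilde{A}_i$ and whose projections onto the first $k(m+1)$ coordinates remain cell decompositions for $k = 0, 1, \ldots, n-1$. Pulling back along $\jet^m$ yields the candidate
\[
\cD\ :=\ \big\{ D_{\tilde{D}} : \tilde{D} \in \tilde{\cD}\text{ and }D_{\tilde{D}}\neq \0\big\},\qquad D_{\tilde{D}}\ :=\ \big\{\x \in M^n : \jet^m(\x) \in \tilde{D}\big\}.
\]
Each $D_{\tilde{D}}$ is by definition a $\delta$-cell with source cell $\tilde{D}$, and is $\Ld(B)$-definable. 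The partition properties $\bigcup \cD = M^n$ and $\cD$ refining $\{A_1, \ldots, A_p\}$ transfer immediately from $\tilde{\cD}$, and the projection requirement that $\{\pi_{n-1}D : D \in \cD\}$ be a $\delta$-cell decomposition of $M^{n-1}$ holds because, under the chosen coordinate ordering, $\pi_{n-1}\colon M^n \to M^{n-1}$ corresponds via $\jet^m$ to $\pi_{(n-1)(m+1)}\colon M^{n(m+1)} \to M^{(n-1)(m+1)}$; one then iterates the construction inductively in $n$.

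The main obstacle is not the construction itself but the well-definedness of the $\delta$-type assigned to each $\delta$-cell (Lemma~4.5 of~\cite{BMR09}): one must check that two different source cells for the same $\delta$-cell yield the same binary sequence $(\bi_1; \ldots; \bi_n) \in \{0,1\}^n$, so that the notion of $\delta$-cell decomposition is coherent. This is exactly where density of the jet graph enters; Lemma~\ref{density} supplies precisely what is needed so that the argument of~\cite{BMR09} carries over unchanged, and once this coherence is established the cell decomposition is genuine.
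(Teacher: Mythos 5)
Your top-level strategy coincides with the paper's: the paper gives no internal proof of this theorem, but simply observes (as you do) that the argument of \cite{BMR09} transfers because its only inputs are the representation of $\Ld(B)$-definable sets as $\jet^m$-pullbacks of $L(B)$-definable sets (Corollary~\ref{setform}), o-minimal cell decomposition for $T$, and density of the jet map (Lemma~\ref{density}). Had you left it at that, the attempt would match the paper. However, your sketch of the internals contains a step that fails, and it mislocates the difficulty: the well-definedness of the $\delta$-type (Lemma~4.5 of \cite{BMR09}) is what Proposition~\ref{dimcorres} needs and does not even occur in the statement being proved; the real work in the decomposition theorem is in the projection clause, which your construction does not secure.

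Concretely, the pullbacks $D_{\tilde{D}} = \{\x : \jet^m(\x)\in\tilde{D}\}$ of a cell decomposition $\tilde{\cD}$ of $M^{n(m+1)}$ are indeed disjoint $\Ld(B)$-definable $\delta$-cells covering $M^n$ and partitioning the $A_i$, but $\pi_{n-1}(D_{\tilde{D}})$ is in general a \emph{proper} subset of $\{\y : \jet^m(\y)\in \pi_{(n-1)(m+1)}(\tilde{D})\}$: a point $\y$ of the latter set lies in the former only if the fiber cell $\tilde{D}_{\jet^m(\y)}\subseteq M^{m+1}$ meets the jet graph, and when that fiber is not open (nor a graph over an open cell in its initial coordinates) the derivation forces the higher jet coordinates of any candidate $x_n$, so the fiber may miss the jet graph over some base points and meet it over others. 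For instance, with $n=2$, $m=1$ and coordinates $(x_1,\delta x_1;x_2,\delta x_2)$, nothing prevents $\tilde{\cD}$ from containing the cell $\tilde{D}=\{(a,b,c,d): a,b\in(-1,1),\, c=0,\, d=a\}$; its pullback is the single point $(0,0)$ (since $x_2=0$ forces $\delta x_2=0$, hence $x_1=0$), whose projection $\{0\}$ overlaps, without equalling, the projection of the pullback of an open-fibered cell above the same base (which by genericity is all of $\{x_1: x_1,\delta x_1\in(-1,1)\}$). So the family $\{\pi_{n-1}D : D\in\cD\}$ need not consist of pairwise disjoint sets, let alone be a $\delta$-cell decomposition, and ``iterating inductively in $n$'' does not repair this: one must refine the decomposition upstairs so that projections of pullbacks are again pullbacks of cells, and it is exactly here that the genuine inductive work of \cite{BMR09}, and the essential use of density/genericity, takes place.
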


Brihaye, Michaux and Rivi\`ere use their cell decomposition theorem to define a dimension function (which they also call the $\delta$-dimension) on each $\Ld(M)$-definable subset $A$ of $(\cM,\delta)$. They go on to show that this dimension is equal to the maximum differential transcendence degree of a point contained in $A^\M$. Their argument can be adapted with virtually no change in proof to show that this dimension is equal to our $\delta$-dimension:

\begin{proposition}[Brihaye, Michaux, Rivi\`ere, Theorem 5.23]\label{dimcorres}
Let $A \subseteq M^n$ be an $\Ld(M)$-definable set and let $\dimd(A)$ be as in Definition~\ref{deltadim}. Then
\[
\dimd(A)\ =\ \max\big\{\bi_1+\cdots+\bi_n: A\text{ contains a }\delta\text{-cell of }\delta\text{-type }(\bi_1;\ldots;\bi_n)\big\}.
\]
\end{proposition}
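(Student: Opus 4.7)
The plan is to reduce via the $\delta$-cell decomposition of Theorem~\ref{decomp} to the case of a single $\delta$-cell, and then compute its $\delta$-dimension directly via the jet-rank formula in Corollary~\ref{practicalrk}. The argument parallels~\cite[Theorem 5.23]{BMR09} in the $\CODF$ setting, with $\dclL$-rank replacing transcendence degree and Lemma~\ref{basicTderivation} replacing compatibility with polynomials.

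First, I would apply Theorem~\ref{decomp} to partition $A$ into finitely many $\delta$-cells $D_1,\ldots,D_r$. Property (D2) of $\dimd$ gives $\dimd(A)=\max_i\dimd(D_i)$, while the right-hand side of the proposition is also the maximum of the corresponding quantities over the $D_i$. So it suffices to prove $\dimd(D)=\bi_1+\cdots+\bi_n$ for a single $\delta$-cell $D$ of $\delta$-type $(\bi_1;\ldots;\bi_n)$.

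Fix such a $D$ with source cell $\tilde D\subseteq M^{n(m+1)}$ of $L$-type $(i_{k,j})$; after refining $\tilde D$ to a $\cC^{m+1}$-cell (still a source cell for $D$), I may assume its defining functions are smooth enough to iterate Lemma~\ref{basicTderivation}. By saturation of $\M$ together with the $\TdG$ axioms, applied coordinate by coordinate as in the proof of Proposition~\ref{extending}, there is $\av\in D$ with $\rkL\!\big(\jet^m(\av)\,|\,M\big)=\dimL(\tilde D)=\sum_{k,j}i_{k,j}$. I would then compute $\rkd(\av|M)$ via Corollary~\ref{practicalrk}, splitting the limit into per-coordinate contributions. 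For each $k$ with $\bi_k=1$, genericity and the $\TdG$ axioms guarantee that every $\delta^\ell a_k$ is $\dclL$-independent over $M\cup\jet^\ell(a_1,\ldots,a_{k-1})$, contributing $1$ to the limit. For each $k$ with $\bi_k=0$, let $j_k$ be least with $i_{k,j_k}=0$; the cell relation $\delta^{j_k}a_k=f_k(\text{lex-earlier jet coordinates})$ propagates under iterated differentiation via Lemma~\ref{basicTderivation} to express every $\delta^{j_k+\ell}a_k$ as an $L(M)$-definable function of jets of earlier coordinates, forcing $\rkL\!\big(\jet^\ell(a_k)\,|\,M,\jet^\ell(a_1,\ldots,a_{k-1})\big)\leq j_k$ for all $\ell$ and contributing $0$. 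Summing yields $\rkd(\av|M)=\sum_k\bi_k$, which gives the lower bound $\dimd(D)\geq\sum_k\bi_k$. The matching upper bound $\dimd(D)\leq\sum_k\bi_k$ follows from the same propagation argument applied to an arbitrary $\bv\in D$, since the bound on $\rkL\!\big(\jet^\ell(b_k)\,|\,\cdots\big)$ holds without appeal to genericity.

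The main obstacle is this propagation of cell-defining relations to higher jets. Starting from the cell relation $\delta^{j_k}a_k=f_k(\cdots)$ on $\tilde D$, I would differentiate iteratively and appeal to Lemma~\ref{basicTderivation} and the identity $\delta f(\uv)=f^{[\delta]}(\uv)+\Jac_f(\uv)\delta\uv$ at each step, to conclude that every subsequent $\delta^{j_k+\ell}a_k$ is an $L(M)$-definable function of jets of $a_1,\ldots,a_{k-1}$ and of $a_k,\delta a_k,\ldots,\delta^{j_k-1}a_k$. Ensuring sufficient differentiability of the cell-defining functions (handled by the $\cC^{m+1}$-refinement of $\tilde D$) and tracking the dependencies carefully comprise the main technical work; once this is set up, the per-coordinate summation in Corollary~\ref{practicalrk} is routine.
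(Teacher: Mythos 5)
The reduction to a single $\delta$-cell and your upper bound are correct in outline (though the upper bound needs much less work than you propose: once one knows $\delta^{j_k}b_k\in\dclL\big(M\cup\jet^m(b_1,\ldots,b_{k-1})\cup\jet^{j_k-1}(b_k)\big)$, Lemma~\ref{dcleq}(3) immediately gives $b_k\in\cld(Mb_1\cdots b_{k-1})$, and additivity of $\rkd$ finishes; no differentiation of the cell-defining functions is required). The genuine gap is in the lower bound. You choose $\av\in D$ only so that $\rkL\big(\jet^m(\av)|M\big)=\dimL(\tilde D)$ and then assert that ``genericity and the $\TdG$ axioms guarantee that every $\delta^\ell a_k$ is $\dclL$-independent over $M\cup\jet^\ell(a_1,\ldots,a_{k-1})$'' when $\bi_k=1$. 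That inference is false: genericity is a property of the derivation, not of the particular point, and a point selected by a condition on its $m$-jet alone can have small $\delta$-rank. Concretely, take $n=1$, $m=0$, $\tilde D$ an interval, so $D$ is that interval with $\delta$-type $(1)$: by density of the constants and saturation there is $a\in D$ with $\delta a=0$ and $a\notin M$, and then $\rkL(a|M)=1=\dimL(\tilde D)$ while $\rkd(a|M)=0$. What is actually needed is a realization in $\M$ of an infinite partial type: $\jet^m(\x)\in\tilde D$ together with, for every jet order $\ell$ and every relevant $L(M)$-definable function $F$, the inequation $\delta^\ell x_k\neq F(\cdots)$. Showing that this type is finitely satisfiable is precisely where the $\TdG$ axioms must be invoked, applied to definable sets of higher and higher jet order (as in the proofs of Lemma~\ref{Tderembeddings} or Proposition~\ref{notstrong}); alternatively one builds a suitable extension of $(\cM,\delta)$ with Lemma~\ref{transext} and embeds it over $(\cM,\delta)$ into $\M$ using that $\TdG$ is the model completion. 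Your proposal never addresses this step, and it is the heart of the proposition.

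Two smaller points. You cannot in general ``refine $\tilde D$ to a $\cC^{m+1}$-cell'' with the same underlying set; you only get a finite $\cC^{m+1}$-cell decomposition, and you must then argue that the piece containing a given jet has coordinatewise smaller type (true, but it has to be said, since your per-point upper bound is applied to arbitrary points of $D$). Moreover, propagating $\delta^{j_k}a_k=f_k(\cdots)$ by repeated use of Lemma~\ref{basicTderivation} costs one degree of differentiability per step, so a fixed $\cC^{m+1}$ refinement controls only finitely many jets of $a_k$, not all of them as your limit computation via Corollary~\ref{practicalrk} requires; you would have to re-refine for each order, whereas Lemma~\ref{isquasi} together with Lemma~\ref{dcleq} gives the propagation with no smoothness hypotheses at all. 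For comparison, the paper does not reprove the statement: it observes (after Lemma~\ref{density}) that the argument of Brihaye--Michaux--Rivi\`ere for $\CODF$ carries over verbatim with $\rkd$ in place of differential transcendence degree, and that argument contains exactly the generic-point construction your sketch is missing.
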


As in the o-minimal case, this maximum is always realized in any $\delta$-cell decomposition partitioning $A$.
This correspondence gives us another way to compute the $\delta$-dimension of certain sets. For example, the constant field $C$ of $(\cM,\delta)$ is of the form
\[
C\ =\ \big\{x \in M:(x,\delta x) \in M \times \{0\}\}.
\]
Thus $C$ is a $\delta$-cell since $M \times \{0\}$ is a cell. The binary sequence associated to $M\times \{0\}$ is $(1,0)$, so the $\delta$-type of $C$ is $(0)$ and $\dimd(C) = 0$.

%----------------------------------------------------------------------------------%
\subsection{Open core}\label{subsec:opencore}
Using
%% Brihaye, Michaux and Rivi\`ere's cell decomposition and
a theorem of Dolich, Miller and Steinhorn~\cite{DMS10}, Point shows that $\CODF$ has o-minimal open core~\cite{Po11}. While Point's proof works in our case as well, we can gather more information about the definable open sets by using a criterion developed by Boxall and Hieronymi~\cite{BH12}.
To use this criterion, we note that for each open $U\subseteq \M^n$ and each $\uv \in U$, the set
\[
\Big\{(\av,\bv)\in \M^{2n}: a_i<b_i\text{ for each $i$ and }\uv\in (a_1,b_1)\times \cdots\times (a_n,b_n)\subseteq U\Big\}
\]
clearly has nonempty interior, so ``Assumption (I)'' in~\cite{BH12} is satisfied in our setting.
For the remainder of this subsection, let $\av \in \M^n$ and let $B \subseteq \M$ be a small set with $\delta B \subseteq B$. Set
\[
\Xi_L(\av|B)\ :=\ \big\{\bv\in \M^n: \tp_L(\bv|B)= \tp_L(\av|B)\big\},\qquad\Xi_{\Ld}(\av|B)\ :=\ \big\{\bv\in \M^n: \tp_{\Ld}(\bv|B)= \tp_{\Ld}(\av|B)\big\}.
\]

\begin{lemma}\label{celldensity}
Suppose that $\rkd(\av|B) = n$ and let $X\subseteq \M^n$ be an $\Ld(B)$-definable set containing $\av$. Then there is an $L(B)$-definable open set $A \subseteq \M^n$ such that $\av \in A$ and $X \cap A$ is dense in $A$.
\end{lemma}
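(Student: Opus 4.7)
The plan is to use Corollary~\ref{setform} to rewrite $X$ as the preimage under $\jet^m$ of an $L(B)$-definable ``source set'' $\tilde X \subseteq \M^{n(m+1)}$, locate $\jet^m(\av)$ in the interior of $\tilde X$, and then project that interior onto the non-derivative coordinates to produce the desired $A$. The density of $X \cap A$ in $A$ will follow from the density of jets, Lemma~\ref{density}.

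First I would observe that the hypothesis $\rkd(\av|B) = n$ forces $\jet^m(\av)$ to be $\dclL(B)$-independent for every $m$. Indeed, by Corollary~\ref{practicalrk}, the sequence $k \mapsto \rkL\bigl(\delta^k\av\mid\jet^{k-1}(\av)\,B\bigr)$ is bounded above by $n$, non-increasing (this uses only that $\delta$ is a quasi-endomorphism and that $\delta B \subseteq B$), and has limit $\rkd(\av|B) = n$; hence it is constantly $n$, and telescoping gives $\rkL(\jet^m(\av)|B) = n(m+1)$ for every $m$.

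Next, by Corollary~\ref{setform} pick $m$ and an $L(B)$-definable set $\tilde X \subseteq \M^{n(m+1)}$ such that $X = \{\x \in \M^n : \jet^m(\x) \in \tilde X\}$. Since $\av \in X$, we have $\jet^m(\av) \in \tilde X$; combined with the preceding paragraph and the rank--dimension relationship recalled in~\S\ref{sec:notation}, this forces $\dimL(\tilde X) = n(m+1)$. The frontier of $\tilde X$ then has strictly smaller $L$-dimension and cannot contain $\jet^m(\av)$, so the interior $\tilde A$ of $\tilde X$ is an $L(B)$-definable open subset of $\M^{n(m+1)}$ containing $\jet^m(\av)$. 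Let $\rho \colon \M^{n(m+1)} \to \M^n$ be the $L$-definable coordinate projection onto the slots of $\jet^m(\x)$ corresponding to $(x_1, \dots, x_n)$, and set $A := \rho(\tilde A)$. Then $A$ is an $L(B)$-definable open subset of $\M^n$, and $\av = \rho(\jet^m(\av)) \in A$.

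Finally, to check that $X \cap A$ is dense in $A$, let $U \subseteq A$ be a nonempty open set. Continuity of $\rho$ and openness of $\tilde A$ show that $W := \tilde A \cap \rho^{-1}(U)$ is open in $\M^{n(m+1)}$, and $W$ is nonempty because any $\bv \in U$ has a lift $\vv \in \tilde A$ with $\rho(\vv) = \bv$, which then lies in $W$. By Lemma~\ref{density} there is $\bv' \in \M^n$ with $\jet^m(\bv') \in W$; then $\bv' = \rho(\jet^m(\bv')) \in \rho(W) \subseteq U$, while $\jet^m(\bv') \in W \subseteq \tilde X$ yields $\bv' \in X$, so $U \cap X \neq \emptyset$. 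The only mildly delicate step is the first one, extracting full $\dclL(B)$-independence of each jet from the abstract hypothesis on $\rkd$; the rest is a direct assembly of Corollary~\ref{setform} and Lemma~\ref{density}.
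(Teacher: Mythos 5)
Your proof is correct and takes essentially the same route as the paper's: both rewrite $X$ via Corollary~\ref{setform}, use the hypothesis $\rkd(\av|B)=n$ to place $\jet^m(\av)$ inside an $L(B)$-definable open subset $\tilde A$ of $\tilde X$ (the paper takes an open cell from a cell decomposition, you take the interior of $\tilde X$, noting that the lower-dimensional set $\tilde X$ minus its interior cannot contain a point of full $\rkL$ over $B$), project onto the first coordinates, and deduce density from Lemma~\ref{density}. Your explicit verification via Corollary~\ref{practicalrk} that $\rkL\big(\jet^m(\av)|B\big)=n(m+1)$, which the paper leaves implicit in the phrase ``$\tilde A$ must be open, since $\rkd(\av|B)=n$,'' is a worthwhile addition.
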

\begin{proof} By Corollary~\ref{setform}, there is some $m$ and some $L(B)$-definable set $\tilde{X} \subseteq \M^{n(m+1)}$ such that
\[
X\ =\ \big\{\x \in \M^n:\jet^m(\x) \in \tilde{X}\big\}.
\]
Let $\tilde{A} \subseteq \tilde{X}$ be an $L(B)$-definable cell containing $\jet^m(\av)$. Then $\tilde{A}$ must be open, since $\rkd(\av|B) = n$. Let $\pi:\M^{n(m+1)}\to \M^n$ be the projection map which maps
\[
(x_{1,0},x_{1,1},\ldots,x_{1,m};x_{2,0},\ldots,x_{2,m};\ldots;x_{n,0},\ldots,x_{n,m})\ \mapsto \ (x_{1,0};x_{2,0};\ldots;x_{n,0}).
\]
Then $\pi\big(\jet^m(\x)\big) = \x$ for all $\x \in \M^n$, so
\[
X\ =\ \pi\big(\jet^m(\M^n)\cap \tilde{X})\ \supseteq\ \pi\big(\jet^m(\M^n)\cap \tilde{A}).
\]
By Lemma~\ref{density}, we have that $\jet^m(\M^n)$ is dense in $\M^{n(m+1)}$, so $\jet^m(\M^n)\cap \tilde{A}$ is dense in $\tilde{A}$. This gives us that $X \cap \pi(\tilde{A})$ is dense in $\pi(\tilde{A})$, so we may set $A:= \pi(\tilde{A})$.
\end{proof}

\begin{lemma}\label{realizedrank}
$\rkd(\av|B) <n$ if and only if $\av$ is contained in some $\Ld(B)$-definable set of of $\delta$-dimension $<n$.
\end{lemma}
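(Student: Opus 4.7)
The plan is to treat the two directions separately: the backward implication will follow from the cell decomposition theorem (Theorem~\ref{decomp}) and the correspondence between $\delta$-dimension and $\delta$-type (Proposition~\ref{dimcorres}), while the forward implication will use the uniform definability of $\cld$ that is already implicit in the proof of Proposition~\ref{existential}.

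For $(\Leftarrow)$, assume $\av \in X$ with $X$ $\Ld(B)$-definable and $\dimd(X) < n$. First apply the $\delta$-cell decomposition (Theorem~\ref{decomp}) to restrict attention to the $\delta$-cell $Y$ containing $\av$; by Proposition~\ref{dimcorres}, its $\delta$-type $(\bi_1;\ldots;\bi_n)$ satisfies $\sum_k \bi_k = \dimd(Y) \leq \dimd(X) < n$. The key observation is then coordinate-wise: for each $k$ with $\bi_k = 0$, the source-cell structure of $Y$ provides an $L(B)$-definable function expressing some $\delta^{j_k}a_k$ in terms of the preceding coordinates of $\jet^m(\av)$, and applying Lemma~\ref{dcleq}(3) (together with $\delta B \subseteq B$) yields $a_k \in \cld(Ba_1\cdots a_{k-1})$. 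Summing via additivity of rank in the finitary matroid $(\M,\cld)$ gives
\[
\rkd(\av|B)\ =\ \sum_{k=1}^n \rkd(a_k|Ba_1\cdots a_{k-1})\ \leq\ \sum_{k=1}^n \bi_k\ <\ n.
\]

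For $(\Rightarrow)$, assume $\rkd(\av|B) < n$. Using the exchange property of $(\M,\cld)$ (Proposition~\ref{ispregeom}), I would reorder $\av$ so that $a_n \in \cld(Ba_1\cdots a_{n-1})$, and then apply Lemma~\ref{dcleq}(3) together with $\delta B \subseteq B$ to obtain an $L(\0)$-definable function $f$, a tuple $\bv$ from $B$, and integers $N, M$ with $\delta^N a_n = f\bigl(\jet^{N-1}(a_n), \jet^M(a_1,\ldots,a_{n-1}), \bv\bigr)$. I would then take $X$ to be the $\Ld(B)$-definable subset of $\M^n$ cut out by the same equation in $(x_1,\ldots,x_n)$. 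Crucially, the equation is uniform in the parameters $(x_1,\ldots,x_{n-1})$: for any $\x \in X$ the defining equation immediately forces $x_n \in \cld(Bx_1\cdots x_{n-1})$ by Lemma~\ref{dcleq}(3), so $\rkd(\x|B) \leq n-1$. After replacing $\cM$ by a small elementary substructure containing $B$ (which leaves $\dimd$ unchanged) and using monotonicity $\rkd(\x|M) \leq \rkd(\x|B)$ for $B\subseteq M$, I conclude $\dimd(X) \leq n-1 < n$.

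The main technical point is this uniformity step in the forward direction: to produce a single $\Ld(B)$-formula whose truth at any $\x$ still forces $x_n$ into $\cld(Bx_1\cdots x_{n-1})$. It is the one-equation characterisation of $\cld$-closure in Lemma~\ref{dcleq}(3)---already exploited in the definability argument for Proposition~\ref{existential}---that makes this possible; without it, one would have only a pointwise statement and no definable witness for the drop in dimension.
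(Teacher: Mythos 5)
Your proof is correct, and your forward direction is essentially the paper's own argument: from $\rkd(\av|B)<n$ you extract, via Lemma~\ref{dcleq}(3) and $\delta B\subseteq B$, a single equation $\delta^N x_n=f\big(\jet^{N-1}(x_n),\jet^M(x_1,\ldots,x_{n-1}),\bv\big)$ with $f$ an $L(\0)$-definable function and $\bv$ from $B$, and take the $\Ld(B)$-definable set it cuts out; the paper does exactly this (without reordering, writing the dependence for some coordinate $a_k$ in triangular form), and your two extra remarks are welcome rather than problematic: permuting the coordinates back is harmless since $\dimd$ is invariant under coordinate permutations, and the passage from ``$\rkd(\x|B)\le n-1$ for every $\x\in X$'' to ``$\dimd(X)\le n-1$'' indeed needs the observation that $\dimd$ may be computed over any small elementary substructure containing $B$, a base-change step the paper leaves implicit in its closing line ``it remains to note that $\dimd(A)\le n-1$.'' Where you genuinely diverge is the backward direction: the paper dismisses it as immediate from Definition~\ref{deltadim}, since by the existential-matroid machinery of~\cite{Fo11} the $\delta$-dimension of an $\Ld(B)$-definable set is the maximum of $\rkd(\cdot|B)$ over its points (independence of the choice of base model), whereas you re-derive it from Theorem~\ref{decomp} and Proposition~\ref{dimcorres}: the $\delta$-cell $Y\subseteq X$ through $\av$ has type sum at most $\dimd(X)<n$, each coordinate with $\bi_k=0$ gives $a_k\in\cld(Ba_1\cdots a_{k-1})$ via the source cell and Lemma~\ref{dcleq}(3), and additivity of $\rkd$ yields $\rkd(\av|B)\le\sum_k\bi_k<n$. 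This is correct (it does require that the source cells of the $\Ld(B)$-definable decomposition be $L(B)$-definable, which is how Theorem~\ref{decomp} is intended and how the paper uses it elsewhere), but it invokes the Brihaye--Michaux--Rivi\`ere cell decomposition where the paper needs only the definition of $\dimd$; in exchange, your route is more hands-on and does not lean on the abstract dimension theory of~\cite{Fo11} for this inequality.
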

\begin{proof}
One direction follows immediately from our definition of $\delta$-dimension. For the other direction, suppose that $\rkd(\av|B) <n$. Then there is some $k\in \{1,\ldots,n\}$ and some $m$ such that 
\[
\delta^m a_k\ \in\ \dclL\!\big( \{\jet^m(a_1),\ldots,\jet^m(a_{k-1}),\jet^{m-1}(a_k)\}\cup B \big).
\]
Let $f: \M^{k(m+1)-1}\to \M$ be an $L(B)$-definable function such that
\[
f\big(\jet^m(a_1),\ldots,\jet^m(a_{k-1}),\jet^{m-1}(a_k)\big) \ =\ \delta^m(a_k).
\]
Then $\av$ is contained in the set 
\[
A\ :=\ \{\x \in \M^n:\jet^m(\x) \in \Gamma(f) \times \M^{(n-k)(m+1)}\big\}.
\]
It remains to note that $\dimd(A) \leq n-1$.
\end{proof}
\begin{lemma}\label{denseLtypes}
$\rkL(\av|B) = n$ if and only if $\Xi_L(\av|B)$ is open if and only if $\Xi_L(\av|B)$ is somewhere dense.
\end{lemma}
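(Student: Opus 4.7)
The plan is to establish the cycle $\rkL(\av|B)=n \Rightarrow \Xi_L(\av|B)$ is open $\Rightarrow \Xi_L(\av|B)$ is somewhere dense $\Rightarrow \rkL(\av|B)=n$. The middle implication is immediate: $\av \in \Xi_L(\av|B)$, so if the set is open then it is a nonempty open set, dense in itself, hence somewhere dense.

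For the forward implication, I would fix $\bv \in \Xi_L(\av|B)$ and show that $\bv$ is an interior point. Equality of $L$-types gives $\rkL(\bv|B)=n$. For any $L(B)$-formula $\varphi(\x)$ in $\tp_L(\bv|B)$, the rank-dimension correspondence recalled in Section~\ref{sec:notation} forces $\dimL\!\big(\varphi(\M)\big)=n$, and standard o-minimal facts give that the topological boundary $\partial\varphi(\M)=\overline{\varphi(\M)}\setminus \operatorname{int}\varphi(\M)$ is an $L(B)$-definable set of $L$-dimension strictly less than $n$. If $\bv$ lay on this boundary then $\rkL(\bv|B)$ would drop below $n$, a contradiction; hence $\bv \in \operatorname{int}\varphi(\M)$, and there exists some $\epsilon_\varphi>0$ in $\M$ with the open box $\{\x:\max_i|x_i-b_i|<\epsilon_\varphi\}\subseteq \varphi(\M)$. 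I then apply saturation of the monster to the partial type in a fresh variable $z$ over $B\cup\{\bv\}$ consisting of $z>0$ together with the formulas $\forall \x(\max_i|x_i-b_i|<z \to \varphi(\x))$ for each $\varphi\in \tp_L(\bv|B)$. Any finite subset is realized (take the minimum of finitely many $\epsilon_\varphi$'s, using that finite conjunctions of type-formulas remain in the type), so saturation produces a single $\delta>0$ in $\M$ giving an open box of radius $\delta$ around $\bv$ contained in $\bigcap_\varphi \varphi(\M)=\Xi_L(\av|B)$.

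For the remaining implication I argue contrapositively: if $\rkL(\av|B)<n$, then $\av$ lies in some $L(B)$-definable set $X$ with $\dimL(X)<n$. Since $X$ is $L(B)$-definable, every $L$-conjugate of $\av$ also lies in $X$, so $\Xi_L(\av|B)\subseteq \overline{X}$. But $\overline{X}$ is $L(B)$-definable of $L$-dimension $<n$, so it has empty interior in $\M^n$ and cannot contain a nonempty open set, contradicting somewhere-denseness of $\Xi_L(\av|B)$. The main obstacle is the saturation step in the forward implication: one must upgrade the pointwise interior information $\bv \in \operatorname{int}\varphi(\M)$ (one radius $\epsilon_\varphi$ per formula) into a single uniform open box around $\bv$, which without saturation of $\M$ could in principle fail if the $\epsilon_\varphi$ shrink to zero along a coinitial family of formulas from $\tp_L(\bv|B)$.
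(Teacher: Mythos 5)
Your proof is correct and takes essentially the same route as the paper: the rank--dimension correspondence shows each $L(B)$-definable set in $\tp_L(\bv|B)$ contains $\bv$ in its interior, saturation (your partial type in a fresh radius variable is just an explicit form of the paper's compactness step) uniformizes this into a single open box inside $\Xi_L(\av|B)$, and the remaining implication is the paper's argument that a set of dimension $<n$ containing $\Xi_L(\av|B)$ is nowhere dense.
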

\begin{proof}
If $\rkL(\av|B)= n$, then any $L(B)$-definable set $X$ containing $\av$ contains an open neighborhood of $\av$. Let $(X_i)_{i \in I}$ be a list of all $L(B)$-definable sets containing $\av$, so $\Xi_L(\av|B) = \bigcap_{i \in I}X_i$. Fix $\bv \in \Xi_L(\av|B)$. Since $I$ is small and since $\bigcap_{i \in I_0}X_i$ contains an open neighborhood of $\bv$ for each finite $I_0 \subseteq I$, we can use saturation to find an open neighborhood $U$ of $\bv$ contained in $\bigcap_{i \in I}X_i$. Thus, $\bv$ is in the interior of $\Xi_L(\av|B)$. This shows that $\Xi_L(\av|B)$ is open and this of course implies that $\Xi_L(\av|B)$ is somewhere dense. 

\medskip

Now suppose that $\rkL(\av|B) < n$ and take some $L(B)$-definable set $X$ containing $\av$ with $\dimL(X)<n$. Then $X$ is nowhere dense and $\Xi_L(\av|B) \subseteq X$, so $\Xi_L(\av|B)$ is nowhere dense.
\end{proof}

\begin{lemma}\label{denseindeltatype}
If $\rkd(\av|B) = n$, then $\Xi_{\Ld}(\av|B)$ is dense in $\Xi_{L}(\av|B)$.
\end{lemma}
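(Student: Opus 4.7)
The plan is to reduce the density statement to a finite-satisfiability question and then apply Lemma~\ref{celldensity}. First I would observe that $\dclL(B) \subseteq \cld(B)$ forces $\rkd \le \rkL$; combined with the hypothesis $\rkd(\av|B) = n$ and the trivial bound $\rkL(\av|B) \le n$, this gives $\rkL(\av|B) = n$. Lemma~\ref{denseLtypes} then implies that $\Xi_L(\av|B)$ is open in $\M^n$, so proving density of $\Xi_{\Ld}(\av|B)$ in $\Xi_L(\av|B)$ is the same as proving that every nonempty open $V \subseteq \M^n$ which meets $\Xi_L(\av|B)$ also meets $\Xi_{\Ld}(\av|B)$.

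So fix such a $V$. By the saturation of $(\M,\delta)$, it suffices to show that the partial $\Ld$-type $\tp_{\Ld}(\av|B) \cup \{\x\in V\}$ is finitely satisfiable. Accordingly, fix formulas $\varphi_1,\ldots,\varphi_k \in \tp_{\Ld}(\av|B)$ and let $X$ be the $\Ld(B)$-definable set cut out by their conjunction, so that $\av \in X$; I need to produce some $\bv \in X \cap V$. Apply Lemma~\ref{celldensity} (using $\rkd(\av|B)=n$) to obtain an $L(B)$-definable open set $A \ni \av$ with $X\cap A$ dense in $A$. Because $A$ is $L(B)$-definable and contains $\av$, every realization of $\tp_L(\av|B)$ lies in $A$, i.e.\ $\Xi_L(\av|B) \subseteq A$. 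Hence $V \cap A \supseteq V \cap \Xi_L(\av|B)$ is a nonempty open subset of $A$, and the density of $X\cap A$ in $A$ then yields the required $\bv \in X \cap V \cap A$.

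No serious obstacle remains once Lemmas~\ref{celldensity} and~\ref{denseLtypes} are available; the only point requiring any care is the inclusion $\Xi_L(\av|B)\subseteq A$, which is what forces the open set $V$---whose parameters may lie entirely outside $B$---to actually meet the $L(B)$-definable open set $A$ produced by Lemma~\ref{celldensity}.
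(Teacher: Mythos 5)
Your argument is correct and is essentially the paper's own proof: reduce by saturation to a single $\Ld(B)$-definable set $X$ containing $\av$, apply Lemma~\ref{celldensity} to get an $L(B)$-definable open $A$ with $X\cap A$ dense in $A$, and use $\Xi_L(\av|B)\subseteq A$ to see that the given open set meets $A$ and hence $X$. The only difference is your preliminary appeal to Lemma~\ref{denseLtypes} to note that $\Xi_L(\av|B)$ is open, which is harmless but not needed for the density statement.
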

\begin{proof}
Fix $\bv \in \Xi_L(\av|B)$. We need to show that if $U \subseteq \M^n$ is an open set containing $\bv$, then $\Xi_{\Ld}(\av|B)\cap U$ is nonempty.
By saturation, it suffices to show that $U \cap X \neq \0$ for any $\Ld(B)$-definable set $X \subseteq \M^n$ containing $\av$. By Lemma~\ref{celldensity}, there is an $L(B)$-definable open set $A \subseteq \M^n$ such that $\av \in A$ and $X \cap A$ is dense in $A$. Since $\bv \in \Xi_L(\av|B)\subseteq A$, the intersection $U \cap A$ is nonempty and open, so $U \cap X$ is nonempty by density of $X\cap A$ in $A$.
\end{proof}

\begin{proposition}\label{opencore}
$\TdG$ has $T$ as its open core. More precisely, any open $\Ld(B)$-definable set is $L(B)$-definable.
\end{proposition}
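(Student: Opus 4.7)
My plan is to follow the approach of Boxall and Hieronymi~\cite{BH12}, whose criterion (applicable in our setting by virtue of Assumption~(I) verified above) reduces Proposition~\ref{opencore} to the following density condition: for every $\av \in \M^n$ with $\Xi_L(\av|B)$ having nonempty interior and every $L^\delta(B)$-definable set $X$ containing $\av$, there exists an $L(B)$-definable open set $U \ni \av$ such that $X \cap U$ is dense in $U$. By Lemma~\ref{denseLtypes}, the interior hypothesis is equivalent to $\rk_L(\av|B) = n$.

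Lemma~\ref{celldensity} delivers this density under the stronger hypothesis $\rk^\delta(\av|B) = n$, so I would need to cover the remaining sub-generic case $\rk_L(\av|B) = n$ with $\rk^\delta(\av|B) < n$. Writing $X = \{\x : \jet^m(\x) \in \tilde X\}$ via Corollary~\ref{setform}, in this case $\jet^m(\av)$ lies in an $L(B)$-definable cell of $\tilde X$ of $L$-dimension strictly less than $n(m+1)$, so the argument of Lemma~\ref{celldensity} (which relies on $\jet^m(\av)$ sitting in an open cell of $\tilde X$) does not directly apply. Instead, I would pass to the $L(B)$-definable projection $\sigma(\tilde X) \subseteq \M^n$: since $\av \in \sigma(\tilde X)$ has $\rk_L(\av|B) = n$, the set $\sigma(\tilde X)$ has $L$-dimension $n$, and $\av$ lies in some top-dimensional $L(B)$-definable open cell $U \subseteq \sigma(\tilde X)$. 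For density, given any open $O \subseteq U$ one notes that $\tilde X \cap (O \times \M^{nm})$ has first-$n$-coordinate projection $O \cap \sigma(\tilde X) = O$, which is open, so an iterated form of axiom scheme~(G) provides some $\av' \in O$ with $\jet^m(\av') \in \tilde X$, that is, $\av' \in X \cap O$.

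The main obstacle is the iterated (jet-of-order-$m$) form of axiom~(G) used above: the scheme as stated handles only the single-derivative case $(\av, \delta\av) \in X \subseteq \M^{2n}$, while the sub-generic verification needs an analogue for arbitrary $m$. This iteration proceeds by induction on $m$ using saturation, in the spirit of the proof of Lemma~\ref{density}; some care is required to ensure that at each step the relevant projection onto the next jet coordinate still has nonempty interior so that (G) can be reapplied. Once this density condition is verified for all $\av$ with $\rk_L(\av|B) = n$, the Boxall--Hieronymi criterion concludes that every open $L^\delta(B)$-definable set is $L(B)$-definable, and so $T$ is the open core of $\TdG$.
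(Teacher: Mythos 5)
Your overall framework (the Boxall--Hieronymi criterion plus Lemma~\ref{celldensity} and Lemma~\ref{denseLtypes}) matches the paper's, but the step you use to cover the sub-generic case contains a genuine gap. The ``iterated form of axiom scheme (G)'' you invoke --- that whenever the projection of $\tilde X\cap\sigma^{-1}(O)$ onto the order-zero jet coordinates is open there is some $\av'\in O$ with $\jet^m(\av')\in\tilde X$ --- is simply false. Take $n=1$, $m=2$ and $Y=I\times\{0\}\times\{1\}$ for an open interval $I$: its order-zero projection is $I$, yet no $a$ satisfies $\jet^2(a)\in Y$, since $\delta a=0$ forces $\delta^2 a=\delta(\delta a)=0\neq 1$. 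The real content of (G) requires full dimension of the projection onto \emph{all but the top-order} derivative coordinates, and in your sub-generic case ($\rkL(\av|B)=n$ but $\rkd(\av|B)<n$) that is exactly what fails: the cell of $\tilde X$ containing $\jet^m(\av)$ has dimension $<n(m+1)$, so the intermediate jet coordinates may be confined to lower-dimensional sets (for instance to the graph of a function of the lower jets), and (G) cannot be ``reapplied at each step'' as you hope --- the caveat you yourself flag is precisely where the argument breaks. To make your route work one would have to analyse the definable dependencies among the $\delta^i a_j$ as in the proof of Lemma~\ref{Tderembeddings} (locate the minimal orders at which dependence occurs, express the higher derivatives as prolongations of those equations, and only then apply (G) to a graph over an open set); that is a substantially different argument and it is not supplied, so the density claim for sub-generic $\av$ remains unproved.

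Moreover, the sub-generic case need not be treated at all, and this is where your reading of the criterion diverges from the paper's. Boxall--Hieronymi (as quoted in the paper) only requires that the set $A_n'$ of tuples $\av$ for which $\Xi_L(\av|B)$ is somewhere dense and $\Xi_{\Ld}(\av|B)$ is dense in $\Xi_L(\av|B)$ be \emph{dense} in $\M^n$; it does not require this for every $\av$ with $\rkL(\av|B)=n$. The paper therefore sets $D:=\{\av\in\M^n:\rkd(\av|B)=n\}$, observes $D\subseteq A_n'$ by Lemmas~\ref{denseLtypes} and~\ref{denseindeltatype} (your generic case), and proves $D$ is dense: by Lemma~\ref{realizedrank} its complement is the union of all $\Ld(B)$-definable sets of $\delta$-dimension $<n$, every basic open box has $\delta$-dimension $n$, and saturation then produces a point of $D$ in any box. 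So the correct repair is not to fix your iterated (G), but to drop the sub-generic case entirely and use the $\delta$-dimension machinery of \S\ref{subsec:dim}.
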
 
\begin{proof}
Let $A_n$ be the set of all $\av \in \M^n$ such that $\Xi_L(\av|B)$ is somewhere dense and let $A'_n$ be the set of all $\av \in A_n$ such that $\Xi_{\Ld}(\av|B)$ is dense in $\Xi_{L}(\av|B)$. By~\cite[Theorem 2.2]{BH12}, the proposition follows if we can show that $A'_n$ is dense in $\M^n$. Set
\[
D\ :=\ \big\{\av \in \M^n: \rkd(\av|B) = n\big\}.
\]
If $\av \in D$, then $\rkL(\av|B) = n$ and so $D \subseteq A_n$ by Lemma~\ref{denseLtypes}. By Lemma~\ref{denseindeltatype} we even have that $D \subseteq A_n'$, so it is enough to show that $D$ is dense in $\M^n$. By Lemma~\ref{realizedrank}, we have that 
\[
D\ =\ \M^n \setminus \bigcup\big\{X \subseteq \M^n: \text{$X$ is $\Ld(B)$-definable and $\dimd(X)<n$}\big\}
\]
Let $U \subseteq \M^n$ be a basic open set. By saturation, it suffices to show that $U \setminus X \neq \0$ for an arbitrary $\Ld(B)$-definable set $X$ of $\delta$-dimension $<n$. However, this follows easily from the fact that $\dimd(U)= n$.
\end{proof}

We list below two standard consequences of having o-minimal open core. See~\cite{DMS10} for proofs:

\begin{corollary}
$\TdG$ eliminates $\exists^\infty$ and every model of $\TdG$ is definably complete.
\end{corollary}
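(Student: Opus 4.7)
The plan is to derive both assertions from Proposition~\ref{opencore} (the open core property), following the template of~\cite{DMS10}. Definable completeness will fall out directly from open core combined with the fact that $T$ is o-minimal, whereas elimination of $\exists^\infty$ will require invoking the deeper DMS10 argument. It is convenient to handle definable completeness first, as it is one of the hypotheses of the DMS10 result for elimination of $\exists^\infty$.

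I will first verify definable completeness. Given $(\cN,\delta)\models \TdG$ and a nonempty $\Ld(N)$-definable set $X \subseteq N$ bounded above, consider the set of strict lower bounds of elements of $X$,
\[
U\ :=\ \{y \in N : \exists x \in X,\ y < x\}\ =\ \bigcup_{x \in X}(-\infty,x).
\]
This set is $\Ld(N)$-definable and open, so by Proposition~\ref{opencore} it is $L(N)$-definable. Since $T$ extends $\RCF$ and is o-minimal, $T$ is definably complete, so $U$ has a supremum in $N$; this supremum coincides with $\sup X$, which therefore exists in $N$. Note that both nonemptiness and boundedness of $U$ are inherited from $X$ (using that $N$ is densely ordered for the former), so the hypotheses of definable completeness in $T$ are met.

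For elimination of $\exists^\infty$, my plan is to invoke the general result of~\cite{DMS10} that any definably complete expansion of an ordered field with o-minimal open core eliminates $\exists^\infty$; both hypotheses will have been established by the previous paragraph and Proposition~\ref{opencore}. The underlying mechanism is that for any $\Ld$-formula $\varphi(x,\bar y)$ and parameter $\bar b$ with $\varphi(\cN,\bar b)$ finite, the fiber $\varphi(\cN,\bar b)$ is closed and discrete, so its complement in $N$ is open and $\Ld(N)$-definable, hence $L(N)$-definable by Proposition~\ref{opencore}; uniform finiteness in the o-minimal theory $T$ then yields a bound on $|\varphi(\cN,\bar b)|$. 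The main obstacle, and what the argument in~\cite{DMS10} actually addresses, is extracting this bound \emph{uniformly} in $\bar b$: for each individual $\bar b$ the $L$-formula defining the complement of $\varphi(\cN,\bar b)$ depends on $\bar b$, and producing a single $L$-formula that witnesses the complement for all parameters making the fiber finite is the nontrivial step. Once uniformity is in hand, the bound provided by o-minimal uniform finiteness in $T$ transfers immediately.
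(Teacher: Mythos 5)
Your proposal is correct and takes essentially the paper's route: the paper records both facts as standard consequences of Proposition~\ref{opencore} and refers to~\cite{DMS10} for the proofs, which is exactly what you do for elimination of $\exists^\infty$ (rightly flagging uniformity in the parameters as the step that the cited result supplies), while your direct argument for definable completeness via the open downward closure $U=\bigcup_{x\in X}(-\infty,x)$ is the standard one and is sound. The only small point worth noting is that Proposition~\ref{opencore} is stated in the monster model with small $\delta$-closed parameter sets, so to get the statement for every model one should observe that definable completeness (and likewise elimination of $\exists^\infty$) is expressible by a first-order scheme and $\TdG$ is complete, after closing the parameters under $\delta$.
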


Moreover, we can use Proposition~\ref{opencore} to analyze the definable closure in models of $\TdG$:
\begin{corollary}\label{dclelem}
Let $A \subseteq M$. Then 
\[
\dcl_{\Ld}(A)\ =\ \dclL\big(\jet^\infty(A)\big).
\]
Thus $A$ is $\dcl_{\Ld}$-closed if and only if $(A,\delta|_A) \models \Td$.
\end{corollary}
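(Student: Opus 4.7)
\emph{Proof plan.} The inclusion $\dclL(\jet^\infty(A)) \subseteq \dcl_{\Ld}(A)$ is immediate: since $\delta$ is a function symbol of $\Ld$, $\jet^\infty(A) \subseteq \dcl_{\Ld}(A)$, and then $\dclL(\jet^\infty(A)) \subseteq \dcl_{\Ld}(\dcl_{\Ld}(A)) = \dcl_{\Ld}(A)$. For the reverse inclusion, fix $b \in \dcl_{\Ld}(A)$; by Lemma~\ref{formulaform} we may fix $m$, a tuple $\bv$ from $A$, and an $L$-formula $\tilde\varphi$ such that $b$ is the unique realization in $\M$ of $\tilde\varphi(\jet^m(x),\jet^m(\bv))$. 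I prove $b \in \dclL(\jet^\infty(A))$ by induction on $m$; the base case $m = 0$ is trivial because $\tilde\varphi(x,\bv)$ is itself an $L(\bv)$-formula with $\{b\}$ as its defining set.

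For the inductive step with $m \geq 1$, set $X := \{\y \in \M^{m+1} : \tilde\varphi(\y,\jet^m(\bv))\}$. I first check that $\dimL(\pi_m(X)) < m$: otherwise the $L(M)$-definable set $X \setminus \{\jet^m(b)\}$ still has $\pi_m$-image of dimension $m$ (removing one point from a set of dimension $m \geq 1$ preserves dimension), so axiom (G) applied inside $(\M,\delta)$ produces $c \in \M$ with $\jet^m(c) \in X$ and $\jet^m(c) \neq \jet^m(b)$, giving $c \neq b$ yet $c$ satisfies the defining formula---contradicting uniqueness.

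Next take a $\cC^k$-cell decomposition (for $k$ large enough, via Appendix~\ref{sec:Ck}) of $\pi_m(X)$, and let $D$ be the cell containing $\jet^{m-1}(b)$, of some type $(i_1,\ldots,i_m)$ with $\sum_k i_k < m$. If $i_1 = 0$, then $b$ equals the constant value of the first coordinate on $D$, which lies in $\dclL(\jet^m(\bv))$, and we are done. Otherwise let $j \geq 2$ be the smallest index with $i_j = 0$, so on $D$ we have $y_j = f_j(y_1,\ldots,y_{j-1})$ for a smooth $L(\jet^m(\bv))$-definable function $f_j$, yielding $\delta^{j-1}b = f_j(\jet^{j-2}(b))$. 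Iterated application of Lemma~\ref{basicTderivation} to this identity produces an $L(\jet^N(\bv))$-definable map $\bar F: \M^{j-1} \to \M^{m+1}$ (for some finite $N$) such that $\jet^m(c) = \bar F(\jet^{j-2}(c))$ whenever $\delta^{j-1}c = f_j(\jet^{j-2}(c))$. Setting $E := \{\z \in \M^j : z_j = f_j(z_1,\ldots,z_{j-1}),\ \bar F(z_1,\ldots,z_{j-1}) \in X\}$, one checks that $\jet^{j-1}(b) \in E$, and conversely any $c \in \M$ with $\jet^{j-1}(c) \in E$ must satisfy $\jet^m(c) = \bar F(\jet^{j-2}(c)) \in X$, forcing $c = b$. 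Thus $b$ is uniquely pinned down by an $L$-formula in the jet $\jet^{j-1}$ with parameters in $\jet^\infty(A)$, and since $j-1 < m$ the induction hypothesis yields $b \in \dclL(\jet^\infty(A))$.

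The ``Thus'' clause follows from the main equality: if $A = \dcl_{\Ld}(A)$ then $\jet^\infty(A) \subseteq A$ (so $\delta A \subseteq A$) and $A$ is $\dclL$-closed, so by definable Skolem functions $A$ is the universe of an elementary $L$-substructure of $\cM$, whence $(A,\delta|_A) \models \Td$; conversely $(A,\delta|_A) \models \Td$ gives $\jet^\infty(A) \subseteq A = \dclL(A)$, so $\dcl_{\Ld}(A) = \dclL(\jet^\infty(A)) = A$. The main obstacle is the inductive reduction: constructing $\bar F$ and verifying that the smaller defining set $E$ still isolates $b$. The crucial observation is that a cell-induced relation $\delta^{j-1}b = f_j(\jet^{j-2}(b))$ propagates via the $T$-derivation chain rule to express every $\delta^k b$ as an $L$-definable function of $\jet^{j-2}(b)$, collapsing the effective jet-order of the defining data from $m$ down to $j-1$.
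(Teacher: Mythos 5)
Your proof is correct, but it takes a genuinely different route from the paper. The paper deduces the hard inclusion in two lines from the open core theorem (Proposition~\ref{opencore}), which is proved just before this corollary: since $\{a\}$ is closed and $\Ld(A)$-definable, its complement is open, so $\{a\}$ is $L\big(\jet^\infty(A)\big)$-definable. You instead bypass the open core machinery (the type-density lemmas and the Boxall--Hieronymi criterion) entirely and argue directly from Lemma~\ref{formulaform}/Corollary~\ref{setform}, the genericity axiom, $\cC^k$-cell decomposition, and Lemma~\ref{basicTderivation}, by induction on the jet order of the defining formula: genericity (applied to $X\setminus\{\jet^m(b)\}$) forces $\dimL(\pi_m(X))<m$, a cell relation then gives $\delta^{j-1}b=f_j\big(\jet^{j-2}(b)\big)$, and iterating the relation $\delta f = f^{[\delta]}+\Jac_f\,\delta$ collapses the jet order, so the induction closes. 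This is a sound and self-contained argument; what the paper's route buys is brevity (the open core theorem is needed anyway, e.g.\ for elimination of imaginaries), while yours is more elementary and in effect re-proves, in the special case of a singleton, the phenomenon underlying the open core, making explicit where the differential constraints on $b$ come from. Three small points of bookkeeping you should make explicit: (i) state the induction hypothesis with arbitrary finite parameter tuples from $\jet^\infty(A)$ rather than literally $\jet^m(\bv)$, since your auxiliary map $\bar F$ needs higher jets of $\bv$ than the $x$-variable does; (ii) take the cell decomposition to be $\cC^k$ with $k\geq m$ (or at least $k\geq m-j+2$) so that each iterate $h_{j-1},h_j,\ldots,h_{m-1}$ remains $\cC^1$ and Lemma~\ref{basicTderivation} can be applied at every stage on the open cell $\pi_{j-1}(D)$; and (iii) build the domain condition $\jet^{j-2}(x)\in\pi_{j-1}(D)$ into the definition of $E$, since the propagation $\jet^m(c)=\bar F\big(\jet^{j-2}(c)\big)$ is only valid there. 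None of these affects the structure of the argument.
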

\begin{proof}
Since $\jet^\infty(A)\subseteq\dcl_{\Ld}(A)$ and since $\dcl_{\Ld}(A)$ is $\dclL$-closed, we have $ \dclL\big(\jet^\infty(A)\big)\subseteq\dcl_{\Ld}(A)$. For the other direction, fix $a \in \dcl_{\Ld}(A)$. Since $\{a\}$ is closed and $\Ld(A)$-definable, we have that $\{a\}$ is $L\big(\jet^\infty(A)\big)$-definable by Proposition~\ref{opencore}.
\end{proof}

As is usual, this allows us to understand the $\Ld(B)$-definable functions:

\begin{corollary}
For any $\Ld(B)$-definable function $f:\M^n\to \M$ there is $m,k$ and $L(B)$-definable functions $\tilde{f}_1,\ldots,\tilde{f}_k:\M^{n(m+1)}\to \M$ such that for each $\x \in \M^n$, there is some $i \in \{1,\ldots,k\}$ such that
\[
f(\x)\ =\ \tilde{f}_i\big(\jet^m(\x)\big).
\]
\end{corollary}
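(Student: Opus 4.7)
The plan is to combine Corollary~\ref{dclelem} with a compactness argument over the monster model. First, I would reduce to the case in which $B$ is $\delta$-closed. This is harmless because $\dcl_{\Ld}(B) = \dclL\big(\jet^\infty(B)\big)$, so enlarging $B$ to its $\Ld$-definable closure does not change the class of $\Ld(B)$-definable objects; we only need to arrange that the $L$-parameters extracted below actually lie in our chosen parameter set.

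With $B$ closed under $\delta$, fix $\x \in \M^n$. Since $f$ is $\Ld(B)$-definable, $f(\x) \in \dcl_{\Ld}(B \cup \{x_1,\ldots,x_n\})$, which by Corollary~\ref{dclelem} equals $\dclL\big(B \cup \jet^\infty(\x)\big)$. Therefore, there exist $m_\x \in \N$ and an $L(B)$-definable function $g_\x : \M^{n(m_\x+1)} \to \M$ defined at $\jet^{m_\x}(\x)$ such that $f(\x) = g_\x\big(\jet^{m_\x}(\x)\big)$.

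For each pair $(m,g)$ with $m \in \N$ and $g : \M^{n(m+1)} \to \M$ an $L(B)$-definable function, consider the $\Ld(B)$-definable set
\[
A_{m,g} \ :=\ \big\{\x \in \M^n : f(\x) = g\big(\jet^m(\x)\big)\big\}.
\]
By the previous paragraph, $\bigcup_{(m,g)} A_{m,g} = \M^n$. I would then argue by saturation: if no finite subcollection covered $\M^n$, the partial type $\big\{f(\x) \neq g(\jet^m(\x))\big\}_{(m,g)}$ would be finitely satisfiable over the small set $B$, so $\kappa$-saturation of $\M$ would produce a realization $\x^* \in \M^n$ with $f(\x^*) \notin \dclL\big(B \cup \jet^\infty(\x^*)\big)$, contradicting the second paragraph. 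Hence finitely many sets $A_{m_1,g_1},\ldots,A_{m_k,g_k}$ suffice.

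Finally, set $m := \max_i m_i$ and, for each $i$, let $\pi_i : \M^{n(m+1)} \to \M^{n(m_i+1)}$ be the evident $L(\0)$-definable coordinate projection that extracts $\jet^{m_i}(\x)$ from $\jet^m(\x)$; then $\tilde{f}_i := g_i \circ \pi_i$ is $L(B)$-definable and $f(\x) = \tilde{f}_i\big(\jet^m(\x)\big)$ for every $\x \in A_{m_i,g_i}$, so the $\tilde{f}_i$ satisfy the conclusion. The one subtle step is the initial reduction to $\delta$-closed $B$: without it the functions extracted from Corollary~\ref{dclelem} are only $L\big(\jet^\infty(B)\big)$-definable, and there is no way to absorb parameters of the form $\delta^k b$ into genuine $L(B)$-definability, so this hypothesis (implicit here, as in Corollary~\ref{setform}) is essential.
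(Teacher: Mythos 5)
Your argument is correct and is exactly the routine derivation the paper intends (it gives no explicit proof beyond ``as is usual''): combine Corollary~\ref{dclelem} with a saturation/compactness argument over the small parameter set, then uniformize the jet length by composing with coordinate projections. Note only that your opening ``reduction'' is superfluous, since the standing assumption of that subsection already requires $B$ to be small with $\delta B \subseteq B$ --- which, as you correctly observe at the end, is what makes the extracted functions genuinely $L(B)$-definable.
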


%----------------------------------------------------------------------------------%
\subsection{Elimination of imaginaries}
In this subsection, use the fact that $\TdG$ has $T$ as its open core and the fact that $T$ eliminates imaginaries to show that $\TdG$ eliminates imaginaries. This proof was communicated to us by Marcus Tressl. In~\cite{Po11}, Point used that $\CODF$ has o-minimal open core to prove that $\CODF$ eliminates imaginaries. Our method differs slightly, but her method also works in our case. 
Yet another proof of elimination of imaginaries for $\CODF$ can be found in~\cite{BCP19}.

\medskip

Fix an $\Ld(\M)$-definable set $A\subseteq \M^n$. We need to find a \textbf{canonical base} for $A$, i.e. a tuple $\av$ such that each $\Ld$-automorphism $\sigma: (\M,\delta) \to (\M,\delta)$ fixes $A$ setwise if and only if $\sigma$ fixes $\av$ componentwise. Let $\sigma$ be an arbitrary $\Ld$-automorphism of $(\M,\delta)$. By Corollary~\ref{setform}, there is some $m$ and some $L(M)$-definable set $B \subseteq M^{n(m+1)}$ such that 
\[
A\ = \ \big\{\x \in M^n: \jet^m(\x) \in B\big\}.
\]
By Proposition~\ref{opencore}, we have that $\overline{\jet^m(A)}$ is $L(\M)$-definable, where $\overline{\jet^m(A)}$ denotes the topological closure of $\jet^m(A)$. By replacing $B$ with $B\cap \overline{\jet^m(A)}$, we arrange that that $\jet^m(A)\subseteq B\subseteq \overline{\jet^m(A)}$. 
We associate to $A$ two other $\Ld(\M)$-definable sets:
\[
A^{\tcl}\ := \ \big\{\x \in M^n: \jet^m(\x) \in \overline{\jet^m(A)}\big\},\qquad A^{\fr}\ := \big\{\x \in M^n: \jet^m(\x) \in \overline{\jet^m(A)}\setminus B\big\}.
\]
Note that $A\cup A^{\fr} = A^{\tcl}$ and that $A\cap A^{\fr} = \0$. Note also that $\jet^m\big(\sigma(\bv)\big) = \sigma\big(\jet^m(\bv)\big)$ for all $\bv \in \M^n$.
\begin{lemma}\label{smaller}
$\dimL\big(\overline{\jet^m(A^{\fr})}\big)< \dimL( \overline{\jet^m(A)})$.
\end{lemma}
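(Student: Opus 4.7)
The plan is to exploit the key fact arranged in the paragraph preceding the lemma, namely that $\jet^m(A) \subseteq B \subseteq \overline{\jet^m(A)}$. Setting $X := \overline{\jet^m(A)}$, I would first observe that since $\jet^m(A)$ is (by definition of closure) dense in $X$ and $\jet^m(A) \subseteq B \subseteq X$, the $L(\M)$-definable set $B$ is itself dense in $X$, so $\overline{B} = X$. In particular $\dimL(B) = \dimL(X)$: the inequality $\dimL(B) \leq \dimL(X)$ is immediate from $B \subseteq X$, and the reverse follows since $X \subseteq \overline{B}$ forces $\dimL(X) \leq \dimL(\overline{B}) = \dimL(B)$ by the standard o-minimal fact that a definable set and its closure have the same dimension.

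Next, I would invoke the standard o-minimal ``frontier dimension'' inequality: for any $L(\M)$-definable set $S$, one has $\dimL(\overline{S} \setminus S) < \dimL(S)$. Applied to $S = B$, this gives
\[
\dimL(X \setminus B)\ =\ \dimL(\overline{B} \setminus B)\ <\ \dimL(B)\ =\ \dimL(X).
\]

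Finally, by construction $A^{\fr} = \{\x \in M^n : \jet^m(\x) \in X \setminus B\}$, so $\jet^m(A^{\fr}) \subseteq X \setminus B$. Taking closures and using again that closure preserves $L$-dimension in o-minimal structures,
\[
\dimL\!\big(\overline{\jet^m(A^{\fr})}\big)\ \leq\ \dimL\!\big(\overline{X \setminus B}\big)\ =\ \dimL(X \setminus B)\ <\ \dimL(X)\ =\ \dimL\!\big(\overline{\jet^m(A)}\big),
\]
which is the desired strict inequality. There is no substantive obstacle here; the whole argument rests on the setup $\jet^m(A) \subseteq B \subseteq \overline{\jet^m(A)}$ (which was precisely why $B$ was shrunk to $B \cap \overline{\jet^m(A)}$ just before the lemma) plus two standard facts from o-minimal dimension theory, so what looks like a statement about $\delta$-definable sets reduces cleanly to a statement about the $L(\M)$-definable sets $B$ and $\overline{\jet^m(A)}$.
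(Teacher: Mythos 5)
Your proof is correct and follows essentially the same route as the paper: both observe that $\overline{B}=\overline{\jet^m(A)}$, apply the o-minimal frontier dimension inequality to $B$ to get $\dimL\big(\overline{\jet^m(A)}\setminus B\big)<\dimL\big(\overline{\jet^m(A)}\big)$, and then use $\jet^m(A^{\fr})\subseteq \overline{\jet^m(A)}\setminus B$ together with the fact that taking closures does not increase dimension. The only difference is that you spell out the intermediate step $\dimL(B)=\dimL\big(\overline{\jet^m(A)}\big)$, which the paper leaves implicit.
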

\begin{proof}
Set $B_0:= \overline{\jet^m(A)}\setminus B$, so $\jet^m(A^{\fr}) \subseteq B_0$. Since $\overline{B} = \overline{\jet^m(A)}$, we have that $\dimL(B_0)<\dimL( \overline{\jet^m(A)})$. Since the dimension of an $L(M)$-definable set doesn't increase when we take its closure, we get that 
\[
\dimL\big(\overline{\jet^m(A^{\fr})}\big)\ \leq\ \dimL(\overline{B_0})\ =\ \dimL(B_0)\ <\ \dimL( \overline{\jet^m(A)}).\qedhere
\]
\end{proof}

\begin{lemma}\label{twobases}
$\sigma(A) = A$ if and only if $\sigma(A^{\tcl}) = A^{\tcl}$ and $\sigma(A^{\fr}) = A^{\fr}$.
\end{lemma}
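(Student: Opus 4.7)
The plan is to reduce the lemma to the simple set-theoretic identity $A^{\tcl} = A \sqcup A^{\fr}$ together with the topological fact that any $\Ld$-automorphism of $(\M,\delta)$ is a homeomorphism that commutes with $\jet^m$.

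First I would verify the disjoint-union decomposition directly from the definitions. Recall $B$ was chosen so that $\jet^m(A) \subseteq B \subseteq \overline{\jet^m(A)}$ and $A = \{\x : \jet^m(\x) \in B\}$. Then
\[
A \cup A^{\fr}\ =\ \{\x : \jet^m(\x) \in B \cup (\overline{\jet^m(A)} \setminus B)\}\ =\ \{\x : \jet^m(\x) \in \overline{\jet^m(A)}\}\ =\ A^{\tcl},
\]
and $A \cap A^{\fr} = \emptyset$ because the two membership conditions on $\jet^m(\x)$ are contradictory. So $A^{\tcl}$ is the disjoint union of $A$ and $A^{\fr}$.

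For the forward direction, assume $\sigma(A) = A$. Since $\sigma$ restricts to an $L$-automorphism of $\M$, it preserves the order of $\M$ and therefore acts as a homeomorphism on each $\M^k$ in the product order topology. Moreover $\sigma$ commutes with $\delta$, hence with each $\delta^i$ and with $\jet^m$ (coordinatewise). Thus $\sigma(\jet^m(A)) = \jet^m(\sigma(A)) = \jet^m(A)$, and applying the homeomorphism $\sigma$ to the topological closure gives $\sigma(\overline{\jet^m(A)}) = \overline{\jet^m(A)}$. Since $A^{\tcl}$ is defined in terms of $\jet^m$ and $\overline{\jet^m(A)}$, both of which $\sigma$ preserves, we conclude $\sigma(A^{\tcl}) = A^{\tcl}$. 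Combined with $\sigma(A) = A$ and the disjoint-union decomposition, $\sigma(A^{\fr}) = \sigma(A^{\tcl} \setminus A) = A^{\tcl} \setminus A = A^{\fr}$.

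For the backward direction, the decomposition immediately gives $A = A^{\tcl} \setminus A^{\fr}$, so if $\sigma$ fixes both $A^{\tcl}$ and $A^{\fr}$ setwise then it fixes $A$ setwise. There is no real obstacle here; the content of the lemma is that $A^{\tcl}$ and $A^{\fr}$ can be read off from $\sigma$-invariants, and the reason this works is the compatibility of $\sigma$ with both $\jet^m$ (structural) and topological closure (from $\sigma$ being order-preserving). The subtler statement is Lemma~\ref{smaller}, which is what will eventually let one iterate this construction to find a canonical base by descending on $\dimL\big(\overline{\jet^m(A)}\big)$.
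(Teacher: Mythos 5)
Your proof is correct and follows essentially the same route as the paper: show $\sigma$ commutes with $\jet^m$ and preserves topological closure, deduce $\sigma(A^{\tcl}) = A^{\tcl}$, and then use the disjoint decomposition $A^{\tcl} = A \cup A^{\fr}$, $A \cap A^{\fr} = \emptyset$ (which the paper records just before the lemma) to handle $A^{\fr}$ and the converse.
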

\begin{proof}
Suppose that $\sigma(A) = A$. Then $\sigma\big(\jet^m(A)\big) = \jet^m(A)$ and so $\sigma\big(\overline{\jet^m(A)}\big) =\overline{\jet^m(A)}$. We have
\[
\bv \in A^{\tcl}\ \Longleftrightarrow\ \jet^m(\bv) \in \overline{\jet^m(A)}\ \Longleftrightarrow\ \sigma\big(\jet^m(\bv)\big) \in \sigma\big(\overline{\jet^m(A)}\big) \ \Longleftrightarrow\ \sigma(\bv) \in A^{\tcl},
\]
since $\overline{\jet^m(A)}$ is $\sigma$-invariant.
Thus, $\sigma(A^{\tcl}) = A^{\tcl}$ and so $\sigma(A^{\fr}) = \sigma(A^{\tcl}\setminus A) = A^{\tcl}\setminus A = A^{\fr}$. For the other direction, we use that $\sigma(A) = \sigma(A^{\tcl}\setminus A^{\fr}) = \sigma(A^{\tcl})\setminus \sigma(A^{\fr})$.
\end{proof}

\begin{lemma}\label{closedbase}
If $A = A^{\tcl}$, then $A$ has a canonical base.
\end{lemma}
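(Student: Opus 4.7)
The plan is to reduce the $\Ld$-problem to an $L$-problem and then invoke elimination of imaginaries in $T$. Since $T$ is a complete o-minimal theory extending $\RCF$, it eliminates imaginaries by Pillay's theorem. I would begin by setting $C := \overline{\jet^m(A)} \subseteq \M^{n(m+1)}$, an $L(\M)$-definable closed set, and applying EI for $T$ to produce a finite tuple $\av$ from $\M$ that is an $L$-canonical base for $C$: for any $L$-automorphism $\tau$ of $\M$, $\tau(C)=C$ iff $\tau$ fixes $\av$ componentwise. The claim to verify is that this same $\av$ is an $\Ld$-canonical base for $A$.

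For the verification, let $\sigma$ be an $\Ld$-automorphism of $(\M,\delta)$. Two observations drive the argument: (i) $\sigma$ is in particular an $L$-automorphism, and hence a homeomorphism of $\M^{n(m+1)}$; (ii) $\sigma$ commutes with $\delta$, so $\sigma\circ\jet^m = \jet^m \circ \sigma$. Combining (i) and (ii) gives $\sigma(C) = \overline{\jet^m(\sigma(A))}$.

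For the forward direction, if $\sigma(A)=A$ then $\sigma(C) = C$, so $\sigma$ fixes $\av$ by the $L$-canonical-base property. For the reverse, suppose $\sigma$ fixes $\av$, so $\sigma(C)=C$; then the hypothesis $A = A^{\tcl} = \{\x : \jet^m(\x)\in C\}$ together with (ii) yields
\[
\x\in A\ \Longleftrightarrow\ \jet^m(\x)\in C\ \Longleftrightarrow\ \sigma(\jet^m(\x))\in C\ \Longleftrightarrow\ \jet^m(\sigma(\x))\in C\ \Longleftrightarrow\ \sigma(\x)\in A,
\]
so $\sigma(A)=A$, as required.

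I do not anticipate any real obstacle: the essential content is precisely the hypothesis $A = A^{\tcl}$, which collapses the $\Ld$-definability of $A$ to the $L$-definability of the closed set $C$. Once that reduction is in place, EI for $T$ applied to $C$ together with the equivariance of $\jet^m$ under $\delta$-automorphisms do all the work, and the argument is essentially bookkeeping. The only point worth flagging is the appeal to EI for $T$, which rests on $T$ extending $\RCF$ rather than some weaker o-minimal base theory.
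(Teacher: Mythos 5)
Your proof is correct and follows essentially the same route as the paper: apply elimination of imaginaries in $T$ to the $L(\M)$-definable closed set $\overline{\jet^m(A)}$ and verify, using the hypothesis $A=A^{\tcl}$ together with the equivariance of $\jet^m$ under $\Ld$-automorphisms, that $\sigma(A)=A$ if and only if $\sigma\big(\overline{\jet^m(A)}\big)=\overline{\jet^m(A)}$. The only difference is expository: you make explicit the facts that $\sigma$ commutes with $\jet^m$ and with topological closure, which the paper uses implicitly.
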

\begin{proof}
We first note that since $\overline{\jet^m(A)}$ is $L(M)$-definable and since $T$ eliminates imaginaries, there is a canonical base $\av$ for $\overline{\jet^m(A)}$. We claim that $\av$ is also a canonical base for $A$. We need to show that
\[
\sigma(A) = A\ \Longleftrightarrow \ \sigma\big(\overline{\jet^m(A)}\big) = \overline{\jet^m(A)}.
\]
First, if $\sigma(A) = A$ then $\sigma\big(\jet^m(A)\big) = \jet^m(A)$ and so $\sigma\big(\overline{\jet^m(A)}\big) = \overline{\jet^m(A)}$. Now, suppose that $\sigma\big(\overline{\jet^m(A)}\big) = \overline{\jet^m(A)}$ and fix $\bv \in A$. Then $\jet^m(\bv) \in \overline{\jet^m(A)}$ and so $\sigma\big(\jet^m(\bv)\big) \in \sigma\big(\overline{\jet^m(A)}\big) = \overline{\jet^m(A)}$, so $\sigma(\bv) \in A^{\tcl} = A$.
\end{proof}

\begin{thm}
$\TdG$ eliminates imaginaries.
\end{thm}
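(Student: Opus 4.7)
The plan is to argue by well-founded induction on the non-negative integer $d := \dimL\big(\overline{\jet^m(A)}\big)$, where $m$ and $B$ are chosen for $A$ as above (with $\jet^m(A) \subseteq B \subseteq \overline{\jet^m(A)}$). The three preceding lemmas are set up precisely to make this induction go through: Lemma~\ref{closedbase} provides canonical bases for the topologically closed pieces, Lemma~\ref{smaller} furnishes the strict drop in dimension needed for descent, and Lemma~\ref{twobases} glues the two pieces of information together.

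The base case is immediate: if $d = -\infty$, then $\overline{\jet^m(A)} = \emptyset$, hence $A = \emptyset$, and the empty tuple serves as a canonical base. For the inductive step, assume the result for every $\Ld(\M)$-definable set admitting a representation of dimension strictly less than $d$. Form $A^{\tcl}$ and $A^{\fr}$ as defined before Lemma~\ref{smaller}. Applying Lemma~\ref{closedbase} to $A^{\tcl}$ (which satisfies $A^{\tcl} = (A^{\tcl})^{\tcl}$ by construction) yields a canonical base $\av_1$ for $A^{\tcl}$. Meanwhile, $A^{\fr}$ is represented via the same $m$ and the $L(\M)$-definable set $\overline{\jet^m(A)} \setminus B$; Lemma~\ref{smaller} guarantees that the corresponding $L$-dimension is strictly smaller than $d$, so the induction hypothesis supplies a canonical base $\av_2$ for $A^{\fr}$.

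Finally, I would take $\av$ to be the concatenation of $\av_1$ and $\av_2$ and verify it is a canonical base for $A$. Indeed, for any $\Ld$-automorphism $\sigma$ of $(\M,\delta)$, fixing $\av$ componentwise is equivalent to fixing both $\av_1$ and $\av_2$, which by choice of canonical bases is in turn equivalent to $\sigma(A^{\tcl}) = A^{\tcl}$ and $\sigma(A^{\fr}) = A^{\fr}$; Lemma~\ref{twobases} shows this is equivalent to $\sigma(A) = A$.

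The main obstacle has effectively already been dispatched by the preparatory lemmas, so the only point that requires real care is making sure the induction parameter is well-defined and strictly decreasing: one must verify that $A^{\fr}$ is genuinely representable via the same $m$ (which it is, by construction) so that Lemma~\ref{smaller} applies verbatim and produces a strictly smaller dimension rather than merely a non-strictly smaller one. Once this is in place, the reduction from arbitrary $\Ld(\M)$-definable sets to topologically closed ones (where elimination of imaginaries for $T$ itself, via canonical bases for $\overline{\jet^m(A)}$, takes over) is a clean two-step induction.
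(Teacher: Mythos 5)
Your proposal is correct and follows essentially the same route as the paper: induct on $\dimL\big(\overline{\jet^m(A)}\big)$, obtain a canonical base for $A^{\tcl}$ from Lemma~\ref{closedbase}, a canonical base for $A^{\fr}$ from the induction hypothesis via Lemma~\ref{smaller}, and glue with Lemma~\ref{twobases}. Your version merely spells out the concatenation of the two bases and the base case, which the paper leaves implicit.
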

\begin{proof}
By Lemma~\ref{twobases}, it is enough to find canonical bases for $ A^{\tcl}$ and for $A^{\fr}$. By Lemma~\ref{closedbase}, there is a canonical base for $A^{\tcl}$. By Lemma~\ref{smaller}, we have $\dimL\big(\overline{\jet^m(A^{\fr})}\big)< \dimL( \overline{\jet^m(A)})$, so by induction on $\dimL( \overline{\jet^m(A)})$, we may assume that there is a canonical base for $A^{\fr}$ as well.
\end{proof}

%----------------------------------------------------------------------------------%
\section{Several commuting \texorpdfstring{$T$}{T}-derivations} \label{sec:multi}
%----------------------------------------------------------------------------------%
Let $\Delta = \{\delta_1, \ldots, \delta_p\}$ be a set of unary function symbols, let $\LD = L \cup \Delta$ and let $\TD$ be the $\LD$ theory extending $T$ by the following axiom schema:
\begin{enumerate}[(i)]
\item each $\delta_i$ is an $T$-derivation;
\item each $\delta_i$ and $\delta_j$ commute.
\end{enumerate}
The goal of this section is to show that $\TD$ has a model completion. When $T= \RCF$, this was shown by Rivi\`ere~\cite{Ri06}. Rivi\`ere's proof relies heavily on properties of differential polynomials, so we have to prove this another way. For the remainder of this section, we fix a model $(\cM,\Delta) \models \TD$.

%----------------------------------------------------------------------------------%
\subsection{The monoid of derivative operators}
We use the notation in~\cite{Ko73} and denote by $\Theta$ the free abelian
monoid generated by $\Delta$. That is 
\[
\Theta\ =\ \big\{\delta_1^{e_1}\cdots\delta_p^{e_p}:e_1,\ldots,e_p \geq 0\big\}.
\]
We denote the identity element $\delta_1^{0}\cdots\delta_p^{0}$ by $\id$.
We view each $\theta \in \Theta$ as a function $a \mapsto \theta a:M\to M$ in the obvious way and, for a tuple $\av \in M^n$, we let $\theta\av := (\theta a_1,\ldots,\theta a_n)$.
To each $\theta = \delta_1^{e_1}\cdots\delta_p^{e_p}\in \Theta$, we set $\ord(\theta) := \sum_{i=1}^p e_i$ and we associate to $\theta$ the tuple $(\ord(\theta),e_1,\ldots,e_p) \in \N^{1+p}$. We put a (total) ordering $<$ on $\Theta$ by setting $\theta < \phi$ if the tuple corresponding to $\theta$ is less than the tuple corresponding to $\phi$ in the lexicographic order on $\N^{1+p}$. We note that $(\Theta,<)$ has order type $\omega$.

\medskip

We put another (partial) ordering $\prec$ on $\Theta$ by setting $\theta \prec \phi$ if there is $\xi \in \Theta$ with $\xi\theta = \phi$. Note that if $\theta \prec \phi$ then $\theta < \phi$, but the reverse does not hold. Both $(\Theta,<)$ and $(\Theta,\prec)$ are (partially) ordered monoids. We use $\preceq$ to denote the non-strict version of $\prec$. We note that $(\Theta,\prec)$ is in fact a lattice and for $\theta,\phi \in \Theta$, we let $\theta \lor \phi$ and $\theta \land \phi$ denote the $\prec$-supremum and $\prec$-infimum of $\theta$ and $\phi$, respectively. For any finite subset $P\subseteq \Theta$, we let $\bigvee P$ denote the $\prec$-supremum all $\theta \in P$, respectively.
 We let $\pred(\theta)$ denote the set of immediate $\prec$-predecessors of $\theta$. Then $\pred(\theta)$ is finite and nonempty so long as $\theta \neq \id$.

\medskip

For each $\theta \in \Theta$, we introduce new variables $y_1^\theta,\ldots,y_n^\theta$ and $z^\theta$. We use $y_j$ and $z$ in place of $y_j^{\id}$ and $z^{\id}$.
Given $J \subseteq \Theta$, we denote by $y_i^J$ the (possibly infinite) tuple of multivariables $(y_i^\theta)_{ \theta \in J}$ and we define $z^J$ analogously. We also set $\y:= (y_1,\ldots,y_n)$ and set $\y^J := (y_1^J,\ldots,y_n^J)$. Given $a \in M$, we set $a^J:= (\theta a)_{\theta \in J}$ and given a tuple $\bv \in M^n$, we set $\bv^J := (b_1^J,\ldots,b_n^J)$. We let $J^\ast = J \setminus \{\id\}$ and for $\phi \in \Theta$, we let 
\[
\phi J\ :=\ \{\phi\theta:\theta \in J\},\qquad J^{<\phi}\ :=\ \{\theta \in J:\theta < \phi\},\qquad J^{\prec\phi}\ :=\ \{\theta \in J:\theta \prec \phi\}.
\]
We view each subset $J$ of $\Theta$ as an ordered subset with ordering $<$. For example if $J$ is finite, then $z^J\cdot z^{\phi J} = \sum_{\theta \in J} z^\theta z^{\phi \theta}$. We often write a definable function $f$ as a function of infinitely many variables, i.e. $f= f(\y^\Theta)$. Of course, this just means that there is some finite set $J \subseteq \Theta$ such that $f$ only depends on the variables $\y^J$.

\begin{lemma}\label{commutecrit}
Let $(\cN,\Delta)\supseteq (\cM,\Delta)$ and let $A \subseteq N$ be a $\dclL(M)$-independent set with $\cN = \cM\langle A \rangle$. Then $(\cN,\Delta)\models \TD$ if and only if $\delta\epsilon a = \epsilon\delta a$ for all $\delta,\epsilon \in \Delta$ and for all $a \in A$.
\end{lemma}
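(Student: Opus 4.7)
The forward direction is immediate: if $(\cN,\Delta)\models \TD$, then the $\delta_i$'s commute on all of $N$, hence in particular on $A$. So the content of the lemma is in the converse.

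For the converse, assume $\delta\epsilon a = \epsilon\delta a$ for all $\delta,\epsilon \in \Delta$ and all $a \in A$. Fix $\delta, \epsilon \in \Delta$ and set $\gamma := \delta\epsilon - \epsilon\delta$. The plan is to exhibit $\gamma$ as a $T$-derivation on $\cN$ that agrees with the zero derivation on both $M$ and $A$, and then invoke the uniqueness in Lemma~\ref{transext} to conclude $\gamma \equiv 0$ on $\cN$. Each $\delta_i$ is a $T$-derivation on $\cN$ (this is implicit in the setup $(\cN,\Delta)\supseteq(\cM,\Delta)$: in typical applications, such a $\delta_i$ is produced by applying Lemma~\ref{transext} with prescribed values on $A$, which automatically yields a $T$-derivation). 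Therefore, by Lemma~\ref{twoderivs}, the Lie bracket $\gamma$ is also a $T$-derivation on $\cN$. Its restriction to $\cM$ is $[\delta|_M,\epsilon|_M]$, which vanishes because $(\cM,\Delta)\models \TD$; and the standing hypothesis gives $\gamma a = 0$ for every $a \in A$.

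The identically zero map on $N$ is trivially a $T$-derivation on $\cN$ that extends the zero derivation on $\cM$ and takes the value $0$ on $A$. Since $A$ is $\dclL(M)$-independent and $\cN = \cM\langle A\rangle$, the uniqueness assertion in Lemma~\ref{transext}, applied to the zero $T$-derivation on $\cM$ and the zero map $s \colon A \to N$, forces $\gamma$ to coincide with this trivial extension throughout $\cN$. Hence $\gamma \equiv 0$ on $\cN$; as $\delta,\epsilon \in \Delta$ were arbitrary, the $\delta_i$'s commute on all of $N$, so $(\cN,\Delta)\models \TD$. The argument is essentially mechanical; the only conceptual step is realizing that the commutator of two $T$-derivations is itself a $T$-derivation, so that the uniqueness for $T$-derivation extensions can be brought to bear, and this is precisely the content of Lemma~\ref{twoderivs}.
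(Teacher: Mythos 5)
Your argument is correct and takes essentially the same route as the paper: both identify $\gamma = [\delta,\epsilon]$ as a $T$-derivation on $\cN$ (via Lemma~\ref{twoderivs}) that vanishes on $M$ and on $A$, and conclude it must be identically zero. You finish by invoking the uniqueness clause of Lemma~\ref{transext}, whereas the paper computes $\gamma f(\av) = f^{[\gamma]}(\av) + \Jac_f(\av)\gamma\av = 0$ directly from Lemma~\ref{basicTderivation}; since the uniqueness in Lemma~\ref{transext} is established by exactly that formula, the two finishes are interchangeable.
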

\begin{proof}
One direction is trivial. For the other, fix $\delta,\epsilon \in \Delta$. Then $\gamma:=\delta\epsilon-\epsilon\delta$ is a $T$-derivation on $\cN$ by Lemma~\ref{twoderivs}, so we need to show that $\gamma$ is trivial. Any element in $N$ is of the form $f(\av)$ where $f$ is some $L(M)$-definable function and where $\av$ is a tuple from $A$. Since $\av$ is $\dclL(M)$-independent, there is some open set $U$ containing $\av$ such that $f$ is $\cC^1$ on $U$. We have that 
\[
\gamma f(\av)\ =\ f^{[\gamma]}(\av)+\Jac_f(\av)\gamma \av.
\]
By the assumption that $\delta$ and $\epsilon$ commute on $A$, we have that $\gamma\av = 0$ and since $M \subseteq \ker(\gamma)$, we also have that $f^{[\gamma]} = 0$ by Lemma~\ref{basicTderivation}. Therefore, $\gamma f(\av) = 0$.
\end{proof}

\begin{lemma}\label{cordelta}
Let $k\geq 1$. Given $\delta \in \Delta$ and an $L(M)$-definable $\cC^k$-function $f:U \to M$ with $U \subseteq M^n$ open, there is an $L(M)$-definable $\cC^{k-1}$-function function $f^\delta: U \times M^n \to M$ such that:
\begin{enumerate}[(1)]
\item If $(\cM,\delta) \supseteq (\cN,\delta)\models \Td$ and $\uv\in U^\cN$, then 
\[
\delta f(\uv)\ =\ f^\delta(\uv,\delta\uv).
\]
\item If $k \geq 2$, then 
\[
(f^\delta)^\epsilon(\y,\y^\delta,\y^\epsilon,\y^{\epsilon\delta})\ =\ (f^\epsilon)^\delta(\y,\y^\epsilon,\y^\delta,\y^{\epsilon\delta})
\]
for all $\epsilon \in \Delta$.
\item If $g :V\to U$ is an $L(M)$-definable $\cC^k$-map with $V \subseteq M^m$ open, then for $h := f\circ g$, we have
\[
h^{\delta}(\y,\y^\delta)\ =\ f^{\delta}\big(g(\y),g^\delta(\y,\y^\delta)\big)
\]
where $g^{\delta} = (g_1^{\delta},\ldots,g_n^{\delta})$.
\end{enumerate}
\end{lemma}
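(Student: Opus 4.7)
The plan is to take $f^\delta(\y,\y^\delta) := f^{[\delta]}(\y) + \Jac_f(\y)\y^\delta$, where $f^{[\delta]}$ is the $L(M)$-definable $\cC^{k-1}$-function supplied by Lemma~\ref{basicTderivation}. Since the entries of $\Jac_f$ are $\cC^{k-1}$ and the dependence on $\y^\delta$ is linear, $f^\delta$ is a $\cC^{k-1}$-function on $U\times M^n$, as required. Part (1) is then immediate from the defining identity for $f^{[\delta]}$ in Lemma~\ref{basicTderivation}.

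For (3), I would carry out the standard chain rule verification, mimicking the computation in the proof of Lemma~\ref{transext}. Applying the identity from Lemma~\ref{basicTderivation} to $h := f\circ g$, $f$, and $g$ yields
\[
h^{[\delta]}(\y)\ =\ f^{[\delta]}\big(g(\y)\big) + \Jac_f\!\big(g(\y)\big)\,g^{[\delta]}(\y),\qquad \Jac_h(\y)\ =\ \Jac_f\!\big(g(\y)\big)\Jac_g(\y).
\]
Plugging these into the definitions of $h^\delta$ and $f^\delta$ then gives $h^\delta(\y,\y^\delta) = f^\delta\big(g(\y),g^\delta(\y,\y^\delta)\big)$.

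For (2), I would first handle the case that $f$ is $L(\0)$-definable. Then $f^{[\delta]} = f^{[\epsilon]} = 0$ by the ``moreover'' clause of Lemma~\ref{basicTderivation}, so $f^\delta(\y,\y^\delta) = \Jac_f(\y)\y^\delta$ is itself $L(\0)$-definable, which in turn forces $(f^\delta)^{[\epsilon]} = 0$. Computing $(f^\delta)^\epsilon$ directly from the definition yields
\[
(f^\delta)^\epsilon(\y,\y^\delta,\y^\epsilon,\y^{\epsilon\delta})\ =\ \sum_{i,j}\frac{\partial^2 f}{\partial y_j\partial y_i}(\y)\,y_i^\delta y_j^\epsilon \;+\; \sum_j\frac{\partial f}{\partial y_j}(\y)\,y_j^{\epsilon\delta},
\]
and the analogous formula holds for $(f^\epsilon)^\delta(\y,\y^\epsilon,\y^\delta,\y^{\epsilon\delta})$. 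Swapping the summation indices $i\leftrightarrow j$ in the quadratic term and applying Clairaut's theorem (valid since $f$ is $\cC^2$ when $k\geq 2$) establishes the equality for $L(\0)$-definable $f$.

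For the general case, I would write $f(\y) = F(\av,\y)$ with $F$ an $L(\0)$-definable $\cC^k$-function and $\av \in M^m$, and let $h_\av(\y) := (\av,\y)$, so that $f = F\circ h_\av$. Two successive applications of (3) let me express $(f^\delta)^\epsilon$ in terms of $(F^\delta)^\epsilon$ evaluated at a point whose final block is $(\epsilon\delta\av,\y^{\epsilon\delta})$, and symmetrically express $(f^\epsilon)^\delta$ in terms of $(F^\epsilon)^\delta$ at a point with final block $(\delta\epsilon\av,\y^{\epsilon\delta})$. Combining the $L(\0)$-case of (2) applied to $F$ with the commutation $\epsilon\delta\av = \delta\epsilon\av$, which holds since $(\cM,\Delta)\models\TD$, closes the argument. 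The main obstacle I anticipate is the careful bookkeeping of the variable orderings under these two nested applications of (3); the rest of the computation is mechanical.
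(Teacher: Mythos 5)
Your proposal is correct and uses the same definition of $f^\delta := f^{[\delta]}(\y)+\Jac_f(\y)\y^\delta$, with the same immediate proof of (1), but parts (2) and (3) are organized differently from the paper.

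For (3), you derive the composition identity $h^{[\delta]}(\y) = f^{[\delta]}(g(\y)) + \Jac_f(g(\y))\,g^{[\delta]}(\y)$ from the uniqueness clause of Lemma~\ref{basicTderivation} (expanding $\delta h(\uv)=\delta f(g(\uv))$ in two ways at points $\uv \in V^\cM$, cancelling the $\Jac\cdot\delta\uv$ terms via the chain rule $\Jac_h=(\Jac_f\circ g)\Jac_g$, and invoking definability to extend the identity off $V^\cM$). The paper instead picks $L(\0)$-definable $F,G$ and parameters $\av,\bv$ with $f = F(\av,\cdot)$, $g = G(\bv,\cdot)$ and computes the chain rule directly on $F(\av,G(\bv,\cdot))$; both routes are a one-step verification, and yours is marginally cleaner in that it avoids re-introducing $F$ and $G$.

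For (2), you prove (3) first and then reduce the general case to the $L(\0)$-definable case by writing $f = F\circ h_{\av}$ with $h_{\av}(\y)=(\av,\y)$, applying (3) twice, and using $\epsilon\delta\av = \delta\epsilon\av$; the $L(\0)$ case you settle by an explicit Hessian symmetry computation (which is where the hypothesis $k\ge 2$ enters). The paper does essentially the same thing but folds the two steps together: it writes $f = F(\av,\cdot)$ with $F$ $L(\0)$-definable and cites the computation in the proof of Lemma~\ref{twoderivs} to obtain $(f^\delta)^\epsilon - (f^\epsilon)^\delta = \Jac_F(\av,\y)\,(\delta\epsilon\av-\epsilon\delta\av,\ \zero) = \zero$, which packages the Hessian symmetry and the commutation of $\delta,\epsilon$ on $\av$ into one line. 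Your modular organization (prove (3), then the $L(\0)$ case of (2), then the general case) is somewhat more transparent, at the cost of the variable bookkeeping you flag; the paper's version is terser but leans on the reader to unpack the reference to Lemma~\ref{twoderivs}. Both are correct. The one place I'd ask you to be a little more careful is the passage from pointwise agreement on $V^\cM$ to a genuine identity of $L(M)$-definable functions in the proof of (3) — you need this so that the identity transfers to elementary extensions — but you have all the pieces.
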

\begin{proof}
We define $f^\delta$ by 
\[
f^{\delta}(\y,\y^\delta)\ :=\ f^{[\delta]}(\y)+\Jac_f(\y) \y^\delta.
\]
Then (1) follows immediately from Lemma~\ref{basicTderivation}. For (2), fix $\epsilon \in \Delta$ and suppose that $f$ is $\cC^k$ for some $k \geq 2$. By the proof of Lemma~\ref{basicTderivation}, there is an $L(\0)$-definable $\cC^k$-function $F(\x,\y)$ with open domain and a tuple $\av$ such that $F(\av,\uv) = f(\uv)$ for all $\uv \in U$. By the proof of Lemma~\ref{twoderivs}, we have that
\[
(f^\delta)^\epsilon(\y,\y^\delta,\y^\epsilon,\y^{\epsilon\delta}) -(f^\epsilon)^\delta(\y,\y^\epsilon,\y^\delta,\y^{\delta\epsilon})\ =\ \Jac_F(\av,\y)(\delta\epsilon\av-\epsilon\delta \av, \y^{\delta\epsilon}-\y^{\delta\epsilon})\ =\ 0.
\]
As for (3), let $F$ and $\av$ be as above and take an $L(\0)$-definable $\cC^k$-map $G$ with open domain and a tuple $\bv$ such that $G(\bv,\uv) = g(\uv)$ for all $\uv \in V$. By shrinking the domain of $G$, we may assume that the range of $G$ is contained in the domain of $F$. 
Then $F\big(\av,G(\bv,\uv)\big) = h(\uv)$ for all $\uv \in V$. We have
\begin{align*}
h^{\delta}(\y,\y^\delta)\ &=\ \Jac_F\big(\av,G(\bv,\y)\big) \big(\delta \av,\Jac_{G}(\bv,\y) (\delta\bv,\y^\delta)\big)\\
&=\ \frac{\partial F}{\partial \x}\big(\av,g(\y)\big)\delta\av+\frac{\partial F}{\partial \y}\big(\av,g(\y)\big)\left(\frac{\partial G}{\partial \x}(\bv,\y)\delta\bv+\frac{\partial G}{\partial \y}\y^\delta\right)\\ 
&=\ f^{[\delta]}\big(g(\y)\big)+ \Jac_f\big(g(\y)\big) \big(g^{[\delta]}(\y)+ \Jac_g(\y)\y^\delta\big)\ =\ f^{\delta}\big(g(\y),g^\delta(\y,\y^\delta)\big).\qedhere
\end{align*}
\end{proof}

%----------------------------------------------------------------------------------%
\subsection{Coherent conditions}\label{subsec:coherent}
Let $P \subseteq \Theta^*$ be a (possibly empty) set of pairwise $\prec$-incomparable elements. We set 
\[
B\ :=\ \big\{\theta \in \Theta: \beta \preceq \theta\text{ for some }\beta \in P\big\}.
\]
Then $P$ is precisely the set of $\prec$-minimal elements of $B$, hence finite by Dickson's Lemma.
We set $I:= \Theta \setminus B$.
 A \textbf{condition (on $\cM$)} is a tuple $\cC = \big(P,U, (f_\beta)_{\beta \in P}\big)$ where $P$ is as above such that:
\begin{enumerate}[(i)]
\item $U \subseteq M^I$ is a nonempty, open, $L(M)$-definable set and
\item each $f_\beta:U \to M$ is an $L(M)$-definable continuous function which only depends on the variable $z^\theta$ if $\theta< \beta$.
\end{enumerate}

Given $a \in M$, we say that \textbf{$a$ satisfies $\cC$} if $a^I \in U$ and if $\beta a = f_\beta(a^I)$ for all $\beta \in P$. We note that all but finitely coordinate projections of $U$ are not all of $M$, so this is a finitary statement even though $I$ may be infinite.
We think of a condition as describing the algebraic dependencies among components of the tuple $a^\Theta$: the tuple $a^I$ is seen as being \emph{independent} and $a^B$ is seen as being \emph{bounded}. The dependencies of the components of $a^B$ are uniquely determined by requiring that $\beta a= f_\beta(a^I)$ whenever $\beta \in P$. Of course, most conditions simply can not be satisfied in a model of $\TD$, so we must put some extra compatibility requirements on our conditions.

\medskip 

Fix a condition $\cC$. We will assign to each $\theta \in \Theta$ an $L(M)$-definable open set $U_\theta \subseteq M^I$, a set $\Omega_\theta$ of $L(M)$-definable continuous functions on $U_\theta$ and a distinguished function $g_\theta \in \Omega_\theta$. We require that the following properties are satisfied:
\begin{itemize}
\item $U_\theta$ is a dense open subset of $U$ and $U_\theta \subseteq U_{\phi}$ whenever $\theta < \phi$,
\item each $h \in \Omega_\theta$ only depends on the variable $z^\phi$ if $\phi \leq \theta$.
\item each $\Omega_\theta$ is finite.
\end{itemize}
We define $\Omega_\theta$, $U_\theta$ and $g_\theta$ inductively. For $J \subseteq \Theta$, we let $g_J = (g_\phi)_{\phi \in J}$, assuming that each $g_\phi$ has been defined.
Set $U_{\id} := U$ and set $\Omega_{\id} := \{z\}$, so $g_{\id} = z$. Suppose that $U_\phi$, $\Omega_\phi$ and $g_\phi$ are defined for each $\phi < \theta$ and let $\theta'$ be the immediate $<$-predecessor of $\theta$.
\begin{enumerate}[(1)]
\item If $\theta \in I$, set $U_\theta := U_{\theta'}$ and set $\Omega_\theta := \{z^\theta\}$, so $g_\theta = z^\theta$.
\item If $\theta \in P$, then we have a distinguished function $f_\theta$ given by the condition $\cC$. Set $U_\theta := U_{\theta'}$ and set $\Omega_\theta := \{ f_\theta|_{U_{\theta}}\}$, so $g_\theta := f_\theta|_{U_{\theta}}$.
\item If $\theta \in B \setminus P$, then $\pred(\theta) \cap B$ is nonempty and finite. Set
\[
U_\theta\ :=\ \big\{\uv \in U_{\theta'}: g_\phi\text{ is }\cC^1 \text{ at }\uv\text{ for all }\phi \in \pred(\theta) \cap B\big\}
\]
Then $U_\theta$ is a dense open subset of $U_{\theta'}$. Now fix $\phi \in \pred(\theta) \cap B$, so $\theta = \delta\phi $ for some $\delta \in \Delta$. 
Set $J:=I^{\leq \phi}= I^{<\phi}$, so $g_\phi$ only depends on $z^J$. Then $\delta J< \delta\phi = \theta$, so $g_{\delta J}$ has already been defined. We define $g_{\theta,\phi}:U_\theta\to M$ by 
\[
g_{\theta,\phi}(z^I)\ :=\ g_\phi^{\delta}\big(z^I,g_{\delta J}(z^I)\big).
\]
We set $\Omega_\theta:= \{g_{\theta,\phi}:\phi \in \pred(\theta) \cap B\}$ and we let $g_\theta$ be an arbitrary element of $\Omega_\theta$.
\end{enumerate}

\begin{definition}
We say that $\cC$ is \textbf{coherent} if $\Omega_\theta$ is a singleton for all $\theta\leq \bigvee P$.
\end{definition}

\begin{proposition}\label{strcoh}
If $\cC$ is coherent, then $\Omega_\theta$ is a singleton for all $\theta$.
\end{proposition}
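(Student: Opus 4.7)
I would proceed by induction on $\theta$ in the well-order $(\Theta,<)$. The cases $\theta\in I$ and $\theta\in P$ are immediate because $\Omega_\theta$ is a singleton by construction, and $\theta\le\bigvee P$ is exactly the coherence hypothesis. So I restrict to $\theta\in B\setminus P$ with $\theta>\bigvee P$, and by induction each $\Omega_\eta$ with $\eta<\theta$ is a singleton $\{g_\eta\}$. Fix distinct $\phi_1,\phi_2\in\pred(\theta)\cap B$, write $\theta=\delta_i\phi_1=\delta_j\phi_2$ (which forces $i\neq j$), and set $\psi:=\phi_1\wedge\phi_2=\theta/(\delta_i\delta_j)$, so $\phi_1=\delta_j\psi$ and $\phi_2=\delta_i\psi$. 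The aim is to show $g_{\theta,\phi_1}=g_{\theta,\phi_2}$; this splits into two cases according to whether $\psi\in B$.

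In the case $\psi\in B$, the pairwise $\prec$-incomparability of $P$ rules out $\phi_k\in P$ (else some $\beta\in P$ would satisfy $\beta\preceq\psi\prec\phi_k$ with both $\beta,\phi_k\in P$), so $\phi_1,\phi_2\in B\setminus P$ and $\psi\in\pred(\phi_1)\cap\pred(\phi_2)\cap B$. The inductive hypothesis identifies $g_{\phi_1}=g_{\phi_1,\psi}=g_\psi^{\delta_j}(z^I,g_{\delta_j J_\psi}(z^I))$, with $J_\psi:=I^{<\psi}$, and analogously for $g_{\phi_2}$. Substituting these into the definitions of $g_{\theta,\phi_k}$ and repeatedly applying the chain rule (Lemma~\ref{cordelta}(3)) expresses $g_{\theta,\phi_1}$ and $g_{\theta,\phi_2}$ as formal expressions built from $(g_\psi^{\delta_j})^{\delta_i}$ and $(g_\psi^{\delta_i})^{\delta_j}$ respectively, with the same substituted tuples $(g_{\delta_i\delta_j\eta})_{\eta\in J_\psi}$ (well-defined by induction, since $\delta_i\delta_j\eta<\theta$). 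These two formal expressions agree by the commutativity clause of Lemma~\ref{cordelta}(2), so $g_{\theta,\phi_1}=g_{\theta,\phi_2}$.

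In the case $\psi\notin B$, pick witnesses $\beta^{(k)}\in P$ with $\beta^{(k)}\preceq\phi_k$; since $\beta^{(k)}\not\preceq\psi$, comparing exponents forces $\beta^{(1)}=\delta_j\gamma_1$ and $\beta^{(2)}=\delta_i\gamma_2$ with $\gamma_1,\gamma_2\preceq\psi$, $e_j(\gamma_1)=e_j(\psi)$, and $e_i(\gamma_2)=e_i(\psi)$. If some $\phi_k\in P$ then $\gamma_k=\psi$, and a direct computation gives $\beta^{(1)}\vee\beta^{(2)}=\theta$; combined with $\beta^{(1)}\vee\beta^{(2)}\preceq\bigvee P$, this forces $\theta\preceq\bigvee P$, contradicting $\theta>\bigvee P$. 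Hence $\phi_1,\phi_2\in B\setminus P$ and $\gamma_1,\gamma_2\prec\psi$ strictly. Setting $\tilde\theta:=\beta^{(1)}\vee\beta^{(2)}$, the same exponent calculation gives $e_i(\tilde\theta)=e_i(\theta)$ and $e_j(\tilde\theta)=e_j(\theta)$, while $\tilde\theta\preceq\bigvee P\prec\theta$, so there is $k\neq i,j$ with $e_k(\psi)>\max(e_k(\gamma_1),e_k(\gamma_2))$. Put $\phi_3:=\theta/\delta_k$; the strict inequality at $k$ yields $\beta^{(1)}\preceq\phi_1\wedge\phi_3$ and $\beta^{(2)}\preceq\phi_2\wedge\phi_3$, so $\phi_3\in\pred(\theta)\cap B$ and both meets lie in $B$. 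Applying the previous case to the pairs $(\phi_1,\phi_3)$ and $(\phi_2,\phi_3)$ then gives $g_{\theta,\phi_1}=g_{\theta,\phi_3}=g_{\theta,\phi_2}$.

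The principal obstacle is the chain-rule bookkeeping in the first case: one must carefully track which variables each $g_\eta$ depends on while unfolding enough of the definitions to bring Lemma~\ref{cordelta}(2) into position, the essential content being just that formal differentiation commutes. The exponent-tracking argument that produces $\phi_3$ in the second case is combinatorially routine once one notices that $\tilde\theta\prec\theta$.
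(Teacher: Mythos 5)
Your argument is essentially the paper's proof: the paper runs a minimal-counterexample version of your induction, handles a pair $\phi_1,\phi_2\in\pred(\theta)\cap B$ with $\phi_1\land\phi_2\in B$ exactly as in your first case (unfold $g_{\phi_k}=g_{\phi_k,\psi}$ and compare via Lemma~\ref{cordelta}(3) and then (2)), and treats a general pair by interposing an auxiliary immediate predecessor (your $\phi_3$, the paper's $\phi_0$, obtained there simply as any element of $\pred(\theta)$ lying above $\beta^{(1)}\lor\beta^{(2)}$, which exists because $\beta^{(1)}\lor\beta^{(2)}\preceq\theta$ and $\beta^{(1)}\lor\beta^{(2)}\neq\theta$ since $\theta\not\preceq\bigvee P$). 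Two small repairs are needed in your write-up. First, the intermediate assertion $\bigvee P\prec\theta$ is unjustified: nothing forces $\bigvee P$ to divide $\theta$ (e.g.\ $P=\{\delta_1^2,\delta_2^2\}$, $\theta=\delta_1^2\delta_3^{10}$); what you actually use, and what is true, is $\tilde\theta\preceq\theta$ together with $\tilde\theta\neq\theta$, the latter because $\tilde\theta\preceq\bigvee P$ while $\theta\not\preceq\bigvee P$ (as $\theta>\bigvee P$ in the total order $<$). Second, in your first case the functions $g_\eta$ are only guaranteed continuous on $U_\theta$, so the iterated chain rule you invoke (which requires $g_\psi$ to be $\cC^2$ and the first-order functions $g_{\delta_i J_\psi},g_{\delta_j J_\psi}$ to be $\cC^1$ before Lemma~\ref{cordelta}(2),(3) even apply) is licensed only on a dense open subset $V\subseteq U_\theta$; since $g_{\theta,\phi_1}$ and $g_{\theta,\phi_2}$ are continuous on $U_\theta$, equality on $V$ then propagates to all of $U_\theta$. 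The paper makes this density-plus-continuity step explicit, and your proof should as well; with these two adjustments it matches the published argument.
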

\begin{proof}
Suppose towards contradiction that there is $\theta \in \Theta$ such that $\Omega_\theta$ is not a singleton. Let $\theta$ be $<$-minimal with this property. Then $\theta$ is in $B \setminus P$ and there are distinct $\phi_1,\phi_2 \in \pred(\theta) \cap B$ such that $g_{\theta,\phi_1} \neq g_{\theta,\phi_2}$.
We first claim that there is $\phi_0 \in \pred(\theta)\cap B$ such that $\phi_0\land \phi_i \in B$ for $i = 1,2$.
Since $\phi_1$ and $\phi_2$ are elements of $B$, there are $\beta_1,\beta_2 \in P$ such that $\beta_i \preceq \phi_i$ for $i = 1,2$. If $\beta_1 = \beta_2$, then $\phi_1 \land \phi_2 \succeq \beta_1$ so we are done (let $\phi_0:= \phi_1)$. Thus, we assume that $\beta_1$ and $\beta_2$ are distinct. Since $\beta_1,\beta_2 \prec \theta$ and since $\theta > \bigvee P$, we have that $\beta_1\lor\beta_2 \prec \theta$. Therefore, there is $\phi_0 \in \pred(\theta)$ with $\beta_1\lor\beta_2 \preceq \phi_0$. It remains to observe that $\phi_0 \land \phi_i \succeq \beta_i$, so $\phi_0 \land \phi_i \in B$ for $i = 1,2$.

\medskip

We will now show that $g_{\theta,\phi_0} = g_{\theta,\phi_1}$. Fix $\delta,\epsilon \in \Delta$ such that $\theta = \delta\phi_0 = \epsilon\phi_1$ and set $\gamma := \phi_0\land \phi_1$. Then $\phi_0 = \epsilon\gamma$ and $\phi_1 = \delta\gamma$. Set $J := I^{<\gamma}$, so $\epsilon\delta J<\theta$ and, by minimality of $\theta$, we have that
 $\Omega_\alpha$ is a singleton whenever $\alpha$ is in in $\delta J$, $\epsilon J$ or $\epsilon\delta J$.
We set 
\[
V\ :=\ \big\{\uv \in U_\theta: g_\gamma \text{ is } \cC^2\text{ at }\uv\text{ and }g_{\delta J},g_{\epsilon J}\text{ are }\cC^1\text{ at }\uv\big\}.
\]
Then $V$ is an open dense subset of $U_\theta$ and since both $g_{\theta,\phi_0}$ and $g_{\theta,\phi_1}$ are continuous, it suffices to show that they are equal on $V$. We work in $V$ for the remainder of the proof. We have
\[
g_{\phi_0}\ =\ g_{\phi_0,\gamma}\ =\ g_\gamma^\epsilon\big(z^I,g_{\epsilon J}(z^I)\big)
\]
and so, by Lemma~\ref{cordelta} (3), we have $g_{\phi_0}^\delta(z^I,z^{\delta I}) = (g_\gamma^\epsilon)^\delta\big(z^I,g_{\epsilon J}(z^I),z^{\delta I},g_{\epsilon J}^\delta(z^I,z^{\delta I})\big)$.
Thus, 
\[
g_{\theta,\phi_0}(z^I)\ =\ (g_\gamma^\epsilon)^\delta\big(z^I,g_{\epsilon J}(z^I),g_{\delta J}(z^I),g^\delta_{\epsilon J}(z^I,g_{\delta J}(z^I))\big)\ =\ (g_\gamma^\epsilon)^\delta\big(z^I,g_{\epsilon J}(z^I),g_{\delta J}(z^I),g_{\epsilon\delta J}(z^I)\big).
\]
Likewise, we have 
\[
g_{\theta,\phi_1}(z^I)\ =\ (g_\gamma^\delta)^\epsilon\big(z^I,g_{\delta J}(z^I),g_{\epsilon J}(z^I),g_{\epsilon\delta J}(z^I)\big),
\]
and so $g_{\theta,\phi_0} = g_{\theta,\phi_1}$ by Lemma~\ref{cordelta} (2). The same argument shows that $g_{\theta,\phi_0} = g_{\theta,\phi_2}$, a contradiction.
\end{proof}

\begin{lemma}\label{trivfact}
If $\cC$ is coherent, then $g_{\delta\theta}(z^I) = g^\delta_{\theta}(z^I,g_{\delta I}(z^I))$ for all $\theta \in \Theta$ and all $\delta \in \Delta$.
\end{lemma}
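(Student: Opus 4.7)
The plan is to proceed by a case split on whether $\theta \in I$ or $\theta \in B$, exploiting that by Proposition~\ref{strcoh} coherence forces $\Omega_\phi$ to be a singleton for \emph{every} $\phi \in \Theta$. This means that whenever $\delta\theta$ is built via case (3) of the inductive construction, $g_{\delta\theta}$ coincides with $g_{\delta\theta,\phi}$ for any choice of $\phi \in \pred(\delta\theta) \cap B$, and one only needs to verify the identity for one convenient such $\phi$.

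If $\theta \in I$, then by case (1) of the construction $g_\theta = z^\theta$, so from the formula $g_\theta^\delta(z^I, z^{\delta I}) = g_\theta^{[\delta]}(z^I) + \Jac_{g_\theta}(z^I)\, z^{\delta I}$ of Lemma~\ref{cordelta} we get $g_\theta^\delta(z^I, z^{\delta I}) = z^{\delta\theta}$. Substituting the tuple $g_{\delta I}(z^I) = (g_{\delta\alpha}(z^I))_{\alpha \in I}$ for $z^{\delta I}$ picks out its $\delta\theta$-coordinate, which is $g_{\delta\theta}(z^I)$ by definition. If instead $\theta \in B$, choose any $\beta \in P$ with $\beta \preceq \theta$; then $\beta \prec \delta\theta$, so $\delta\theta \in B$ is not $\prec$-minimal and hence lies in $B\setminus P$. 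Since $\theta$ is an immediate $\prec$-predecessor of $\delta\theta$ belonging to $B$, the function $g_{\delta\theta,\theta}$ is defined by case (3) with $J := I^{<\theta}$, namely
\[
g_{\delta\theta,\theta}(z^I)\ =\ g_\theta^\delta\bigl(z^I, g_{\delta J}(z^I)\bigr),
\]
and coherence gives $g_{\delta\theta} = g_{\delta\theta,\theta}$.

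To close the case $\theta \in B$, it remains to show that replacing $g_{\delta J}(z^I)$ by $g_{\delta I}(z^I)$ in the displayed formula does not alter the value. This is forced by the structural invariant maintained throughout the construction: each element of $\Omega_\theta$ depends only on the variables $z^\phi$ with $\phi \leq \theta$, so $g_\theta$ involves only the $z^\alpha$ with $\alpha \in I^{\leq \theta} = J$ (using $\theta \notin I$). The explicit formula for $g_\theta^\delta$ then shows that the corresponding Jacobian has vanishing columns outside $J$ and that $g_\theta^{[\delta]}$ is independent of $z^{\delta(I\setminus J)}$, so the extra coordinates in $g_{\delta I}(z^I)$ are simply unused. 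I do not anticipate a genuine obstacle; the whole lemma is bookkeeping, with the only delicate point being the invariant on the variables on which $g_\theta$ actually depends, which follows directly from the inductive definition.
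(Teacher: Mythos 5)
Your proof is correct and follows essentially the same route as the paper's: a case split on $\theta\in I$ versus $\theta\in B$, handling the first by the direct computation $g_\theta^\delta(z^I,z^{\delta I})=z^{\delta\theta}$ and the second via Proposition~\ref{strcoh} applied to $g_{\delta\theta,\theta}$. The only difference is that you make explicit the bookkeeping (that $g_\theta$ depends only on $z^{I^{<\theta}}$, so substituting $g_{\delta I}$ for $g_{\delta J}$ changes nothing), which the paper leaves implicit.
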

\begin{proof}
This follows from Proposition~\ref{strcoh} if $\theta \in B$. If $\theta \in I$, then $g_\theta(z^I) = z^\theta$ so $g_\theta^\delta(z^I,z^{\delta I}) = z^{\delta\theta}$. Thus $g^\delta_{\theta}(z^I,g_{\delta I}(z^I)) = g_{\delta\theta}(z^I)$.
\end{proof}

%----------------------------------------------------------------------------------%
\subsection{The model completion of $\TD$}
We say that $\Delta$ is a \textbf{set of generic commuting derivations} if every coherent condition on $\cM$ is satisfied by some $a \in M$. Let $\TDG$ be the $\LD$-theory extending $\TD$ by the axiom scheme which asserts that $\Delta$ is a generic set of commuting derivations.
This subsection is dedicated to showing that $\TDG$ is the model completion of $\TD$. We need two lemmas.

\begin{lemma}
Any model of $\TD$ can be extended to a model of $\TDG$.
\end{lemma}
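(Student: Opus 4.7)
The proof parallels the single-derivation case: the analog of Corollary~\ref{Tderextensions} for $\TD$ follows, after a bookkeeping argument in the style of Proposition~\ref{extending}, from the analog of Lemma~\ref{Tderextensions2} asserting that any coherent condition on $(\cM,\Delta)$ can be realized in some extension which is still a model of $\TD$. I would therefore fix a coherent condition $\cC = (P, U, (f_\beta)_{\beta\in P})$ on $\cM$ with associated partition $\Theta = I \sqcup B$ and distinguished functions $(g_\theta)_{\theta\in\Theta}$ from Proposition~\ref{strcoh}, and construct an $\LD$-extension $(\cN_0, \Delta)\supseteq (\cM,\Delta)$ in $\TD$ containing an element that satisfies $\cC$.

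For the single-condition step, pass to an elementary $L$-extension $\cN \succeq_L \cM$ containing a $\dclL(M)$-independent tuple $(c_\theta)_{\theta\in I}$ with $c^I \in U^\cN$. Since $U$ depends on only finitely many coordinates, such an $\cN$ exists by the genericity construction from the first paragraph of Lemma~\ref{Tderextensions2} together with arbitrary choice of $c_\theta$ for the remaining coordinates. By genericity, $c^I$ lies in every $L(M)$-definable dense open subset of $U$, in particular in $U_\theta^\cN$ for each $\theta$, so each $g_\theta(c^I)$ is well-defined. Set $\cN_0 := \cM\langle (c_\theta)_{\theta\in I}\rangle$ and invoke Lemma~\ref{transext} once for each $i$ to extend $\delta_i$ to a $T$-derivation on $\cN_0$ via the prescription
\[
\delta_i c_\theta \ :=\ \begin{cases} c_{\delta_i \theta} & \text{if }\delta_i\theta \in I,\\ g_{\delta_i\theta}(c^I) & \text{if }\delta_i\theta \in B.\end{cases}
\]
An easy induction along the $<$-ordering of $\Theta$, using the chain-rule identity from Lemma~\ref{cordelta}(3) together with Lemma~\ref{trivfact}, then yields $\theta c_{\id} = g_\theta(c^I)$ for every $\theta\in \Theta$. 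Setting $a := c_{\id}$, the identity $\beta a = g_\beta(a^I) = f_\beta(a^I)$ for $\beta \in P$ confirms that $a$ satisfies $\cC$.

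The main obstacle is checking that the extended $\delta_i$ still pairwise commute. By Lemma~\ref{commutecrit}, it suffices to verify $\delta_i\delta_j c_\theta = \delta_j\delta_i c_\theta$ for each $\theta \in I$, and the same induction unwinds both sides to $g_{\delta_i\delta_j\theta}(c^I)$; the key identity $g_\phi^{\delta_i}\!\bigl(c^I, g_{\delta_i I}(c^I)\bigr) = g_{\delta_i\phi}(c^I)$ that drives this collapse is exactly Lemma~\ref{trivfact}, which in turn depends essentially on the coherence of $\cC$. Without coherence, two distinct $\prec$-paths from $\id$ to a common $\theta$ could dictate conflicting values for $\theta c_{\id}$, and $\TD$ would fail in the extension. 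Granted the single-condition step, one then iterates along a sufficiently long chain of elementary $\LD$-extensions of $(\cM,\Delta)$, realizing a fresh coherent condition at each successor stage and taking unions at limits exactly as in Proposition~\ref{extending}, to produce the required model of $\TDG$.
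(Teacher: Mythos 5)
Your proposal is correct and takes essentially the same approach as the paper's proof: build an elementary $L$-extension containing a $\dclL(M)$-independent tuple indexed by $I$ lying in $U$, extend each $\delta_i$ via Lemma~\ref{transext} using the prescription $\delta_i a_\theta := g_{\delta_i\theta}(a_I)$, verify commutativity by Lemma~\ref{commutecrit} together with Lemma~\ref{trivfact}, and then iterate as in Proposition~\ref{extending}. (Minor nits: the inductive step $\theta c_{\id} = g_\theta(c^I)$ relies on Lemma~\ref{cordelta}(1) rather than (3), and your use of $c^I$ for the abstract independent tuple clashes with the paper's convention $a^J = (\theta a)_{\theta\in J}$ until after the induction identifies the two.)
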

\begin{proof}
Let $\cC = \big(P,U,(f_\beta)_{\beta\in P}\big)$ be a coherent condition on $\cM$ and let $I,B \subseteq \Theta$ and $(U_\theta)_{\theta \in \Theta}$, $(g_\theta)_{\theta \in \Theta}$ be as in the previous subsection. We will construct a model $(\cN,\Delta)\models \TD$ extending $(\cM,\Delta)$ such that there is $a \in N$ satisfying $\cC$. First, let $\cN \succeq_L \cM$ be an elementary extension which contains a $\dclL(M)$-independent tuple $a_I :=(a_\theta)_{\theta \in I}$ with $a_I \in U^\cN$. We may assume that $\cN = \cM \langle a_I\rangle$. Using Lemma~\ref{transext}, we extend each $\delta \in \Delta$ to a $T$-derivation on $\cN$ such that 
\[
\delta a_\theta\ :=\ g_{\delta\theta}(a_I)
\]
for all $\theta \in I$.
Since $a := a_{\id}$ satisfies $\cC$, it remains to show that our extended $T$-derivations commute. 
Let $\delta,\epsilon \in \Delta$ and $\theta \in I$ be arbitrary. By Lemma~\ref{commutecrit}, it suffices to show that $\delta\epsilon a_\theta = \epsilon\delta a_\theta$. We have $\epsilon a_\theta = g_{\epsilon\theta}(a_I)$ and so $\delta\epsilon a_\theta = g_{\epsilon\theta}^\delta(a_I,\delta a_I)$ Since $\delta a_\phi = g_{\delta\phi}(a_I)$ for each $\phi \in I$, we have
\[
\delta\epsilon a_\theta\ =\ g_{\epsilon\theta}^\delta\big(a_I, g_{\delta I}(a_I)\big)\ =\ g_{\delta\epsilon\theta}(a_I)
\]
by Lemma~\ref{trivfact}. Likewise, $\epsilon\delta a_\theta = g_{\delta\epsilon\theta}(a_I)$.
\end{proof}

\begin{lemma}
Let $(\cM,\Delta)\subseteq (\cN,\Delta)\models \TD$, let $ (\cM,\Delta)\subseteq (\cM^*,\Delta)\models \TDG$ and suppose that $ (\cM^*,\Delta)$ is $|N|^+$-saturated. Then there is an $\LD$-embedding $\iota:(\cN,\Delta) \to (\cM^*,\Delta)$ over $(\cM,\Delta)$.
\end{lemma}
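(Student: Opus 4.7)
The plan is to adapt the proof of Lemma~\ref{Tderembeddings} to the multi-derivation setting, using the coherent conditions of~\S\ref{subsec:coherent} in place of the single algebraic relation $\delta^n a = f(\jet^{n-1}(a))$ used there. By transfinite induction over a $\dclL(M)$-basis of $N$, it suffices to handle the case where $\cN = \cM\langle a^\Theta\rangle$ for a single $a \in N\setminus M$; note that $\cM\langle a^\Theta\rangle$ is closed under every $\delta_i$ by (an iterated application of) Lemma~\ref{basicTderivation}, and is thus a submodel of $\TD$.

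I first associate a coherent condition to $a$. Define $I \subseteq \Theta$ by recursion along $<$ by placing $\theta \in I$ iff $\theta a \notin \dclL\!\big(M \cup \{\phi a : \phi \in I,\ \phi < \theta\}\big)$, and set $B := \Theta \setminus I$. A standard exchange argument shows that $a^I$ is $\dclL(M)$-independent and a basis of $\dclL(M \cup a^\Theta)$ over $\dclL(M)$. Let $P \subseteq B$ be the set of $\prec$-minimal elements of $B$; it is finite by Dickson's Lemma, and pairwise $\prec$-incomparable. For each $\beta \in P$, fix an $L(M)$-definable continuous function $f_\beta$, depending only on the variables $z^\theta$ with $\theta \in I^{<\beta}$, such that $\beta a = f_\beta(a^I)$ on an $L(M)$-definable open neighborhood of $a^I$; intersecting these neighborhoods produces a candidate condition $\cC = \big(P, U, (f_\beta)_{\beta \in P}\big)$. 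By Proposition~\ref{strcoh} coherence need only be checked for the finitely many $\theta \leq \bigvee P$. For each such $\theta \in B\setminus P$ and each pair $\phi_1, \phi_2 \in \pred(\theta) \cap B$, the commutativity of $\Delta$ in $(\cN,\Delta) \models \TD$ together with Lemma~\ref{cordelta} gives $g_{\theta,\phi_1}(a^I) = \theta a = g_{\theta,\phi_2}(a^I)$, so the $L(M)$-definable equalizer of these two continuous functions contains the $\dclL(M)$-generic point $a^I$; by o-minimal cell decomposition, it contains an open $L(M)$-definable cell around $a^I$. Shrinking $U$ to the intersection of these finitely many cells yields a coherent $\cC$ still satisfied by $a$.

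It is now enough to find $b \in M^*$ with $\tp_L(b^\Theta | M) = \tp_L(a^\Theta | M)$: the assignment $\theta a \mapsto \theta b$ then extends uniquely to the required $\LD$-embedding over $(\cM,\Delta)$. By saturation, given any single $L(M)$-formula $\psi(z^J)$ with $J \subseteq \Theta$ finite and $\cN \models \psi(a^J)$, it suffices to produce $b \in M^*$ with $\cM^* \models \psi(b^J)$. Let $(g_\theta)_{\theta \in \Theta}$ be the unique functions determined by the coherent $\cC$, and shrink $U$ once more so that $g_\theta$ is continuous on $U$ for every $\theta \in J$. Define $G : U \to M^J$ by $G(u)_\theta := g_\theta(u)$, an $L(M)$-definable continuous map with $G(a^I) = a^J$. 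Then $Y := G^{-1}\!\big(\psi(\cM^*)\big) \subseteq U$ is $L(M)$-definable, contains the $\dclL(M)$-generic point $a^I$, and hence has full $L$-dimension, so its interior $U'$ is a nonempty open $L(M)$-definable subset of $U$. The refined condition $\cC' := \big(P, U', (f_\beta|_{U'})\big)$ remains coherent (the $g_\theta$ merely get restricted), so by the $\TDG$ axiom there exists $b \in M^*$ satisfying $\cC'$. The same induction used to verify coherence shows that $\theta b = g_\theta(b^I)$ for all $\theta \in \Theta$, whence $b^J = G(b^I)$; since $b^I \in U' \subseteq Y$, this gives $\cM^* \models \psi(b^J)$. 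The main technical obstacle is securing coherence of $\cC$: the commutativity of $\Delta$ only forces the competing candidates $g_{\theta,\phi_1}, g_{\theta,\phi_2}$ to agree at the single generic point $a^I$, and it takes o-minimal cell decomposition to upgrade this pointwise agreement to agreement on an open neighborhood.
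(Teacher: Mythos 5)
Your proposal is correct in substance and follows the paper's strategy closely: the same reduction to $\cN=\cM\langle a^\Theta\rangle$, the same recursive construction of $I$, $B$ and the finite set $P$, and the same device of using the $\dclL(M)$-genericity of $a^I$ to upgrade agreement of the competing functions in $\Omega_\theta$ at the single point $a^I$ to agreement on a definable open set, yielding a coherent condition after shrinking. Where you genuinely differ is the endgame. The paper realizes, by saturation, the $L$-type of $a^I$ over $M$ together with the finitely many equations $\beta x = f_\beta(x^I)$; its witness $b^I$ then has the same $L$-type as $a^I$, hence is $\dclL(M)$-independent, and all higher derivatives $\theta b$ are automatically computed by the same definable functions, so genericity of the witness comes for free. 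You instead realize the full $L$-type of $a^\Theta$ over $M$ formula by formula, pulling each $\psi(z^J)$ back along $G=(g_\theta)_{\theta\in J}$ and shrinking the condition's domain into the pullback; this is a legitimate variant, but it forces you to evaluate $\theta b=g_\theta(b^I)$ at a witness $b$ about which the genericity axiom guarantees nothing beyond satisfaction of the condition, and in particular $b^I$ need not be $\dclL(M)$-independent.

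Two points should accordingly be tightened. First, for $\big(P,U,(f_\beta)_{\beta\in P}\big)$ to be a condition in the sense of \S\ref{subsec:coherent} you need $B=\Theta\setminus I$ to be $\prec$-upward closed (equivalently, $I$ to be $\prec$-downward closed), so that $B$ is exactly the upward closure of $P$; this is true --- if $\theta a\in\dclL\big(Ma^{I^{<\theta}}\big)$ then $\delta\theta a\in\dclL\big(Ma^{I^{<\delta\theta}}\big)$, as one sees by differentiating a defining function via Lemma~\ref{basicTderivation} --- but you never verify it, and without it the condition formalism does not apply to your $I$. Second, your claim that $\theta b=g_\theta(b^I)$ holds \emph{for all} $\theta\in\Theta$ for an arbitrary $b$ satisfying $\cC'$ is too strong: $g_\theta$ is only defined on the dense open set $U_\theta$, and the inductive computation of $\theta b$ uses Lemma~\ref{cordelta}(1), which requires the lower-level $g_\phi$ to be $\cC^1$ at $b^I$; for a non-generic witness this can fail. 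What rescues the argument is that your shrink places $U'$ inside $\bigcap_{\theta\in J}U_\theta=U_{\theta^*}$ (with $\theta^*$ the $<$-maximum of $J$), and by construction $U_{\theta^*}\subseteq U_\theta$ for all $\theta\le\theta^*$, where exactly the needed $\cC^1$-smoothness holds; so the identity holds for all $\theta\le\theta^*$, which covers $J$ and is all you use. The justification should invoke this smoothness encoded in the sets $U_\theta$, not merely continuity of $g_\theta$ for $\theta\in J$; the paper's choice of saturation target sidesteps the issue entirely.
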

\begin{proof}
We may assume that $\cN = \cM\langle a^\Theta\rangle$ for some $a \in N \setminus M$. Let $\theta_0,\theta_1,\ldots,\theta_n,\ldots$ be the enumeration of $\Theta$ with respect to $<$. We build an increasing chain of subsets $I_n\subseteq \Theta$ as follows:
\begin{itemize}
\item Set $I_0 = \{\theta_0\} = \{\id\}$.
\item If $I_n$ has already been defined and if $\theta_{n+1}a \not\in \dclL(M a^{I_n})$, then set $I_{n+1}:= I_n\cup \{\theta_n\}$. If $\theta_{n+1}a \in \dclL(a^{I_n})$, then set $I_{n+1}:= I_n$.
\end{itemize}
Set $I := \bigcup_n I_n$. By construction, we have that $a^I$ is a maximal $\dclL(M)$-independent subtuple of $a^\Theta$. If $\theta \not\in I$ then $\delta\theta \not\in I$ for all $\delta \in \Delta$, so $I$ is $\prec$-downward closed. Set $B := \Theta\setminus I$ and let $P$ be the (finite) set of $\prec$-minimal elements of $B$. 
If $\theta_n \in P$ for some $n$, then $\theta_n \not\in I_n$, so we have $\theta_n a \in \dclL(Ma^{I_n})$. We let $f_{\theta_n}:M^{I_n} \to M$ be an $L(M)$-definable function such that $\theta_n a = f_{\theta_n}(a^{I_n})$ and we view $f_{\theta_n}$ as a function on all of $M^I$. Note that the quantifier-free $\LD$-type of $a$ over $(\cM,\Delta)$ is completely characterized by the $L(M)$-definable sets which contain $a^I$ and by the fact that $\beta a = f_\beta(a^I)$ for $\beta \in P$. 

\medskip

Let $U\subseteq M^I$ be an $L(M)$-definable set with $a^I\in U^{\cN}$. Then $U$ has nonempty interior, so by shrinking $U$ we may assume that $U$ is open and that $f_\beta$ is continuous on $U$ for all $\beta \in P$. Thus, $\big(P,U,(f_\beta)_{\beta \in P}\big)$ is a condition on $\cM$ which is satisfied by $a$, but this condition may not be coherent. We resolve this issue as follows: let $(\Omega_\theta)_{\theta \in \Theta}$ be as in the previous subsection. A quick inductive argument shows that $\theta a = h(a^I)$ for any $\theta \in \Theta$ and any $h \in \Omega_\theta$. Thus, all of the functions on $\Omega_\theta$ agree at $a^I$ and, by the $\dcl_L(M)$-independence of $a^I$, they all agree on some $L(M)$-definable open set $U_\theta \subseteq U$. Set 
\[
V\ :=\ \bigcap_{\theta < \bigvee P} U_\theta.
\]
Then $\big(P,V,(f_\beta)_{\beta \in P}\big)$ is a coherent condition on $\cM$ which is satisfied by $a$ and, as $(\cM^*,\Delta) \models \TDG$, it is also satisfied by some element of $M^*$. Since $U$ was arbitrary, we may use the saturation of $(\cM^*,\Delta)$ to find some $b \in M^*$ such that $b^I$ is contained in exactly the same $L(M)$-definable sets as $a^I$ (in their respective models) and such that $\beta b = f_\beta(b^I)$ for $\beta \in P$.
\end{proof} 

Arguing as in the proof of Theorem~\ref{modelcompletion}, we have the following:
\begin{thm}\label{multmodelcompletion}
$\TDG$ is the model completion of $\TD$.
If $T$ has quantifier elimination and a universal axiomatization, then $\TDG$ has quantifier elimination.
\end{thm}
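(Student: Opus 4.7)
The plan is to mirror the proof of Theorem~\ref{modelcompletion} essentially verbatim. The two lemmas immediately preceding the theorem statement provide precisely the two ingredients needed: the first shows that every model of $\TD$ extends to a model of $\TDG$, and the second shows that $\TDG$ enjoys the stronger embedding property over models of $\TD$ into sufficiently saturated models of $\TDG$. All of the genuine conceptual difficulty has already been absorbed into the machinery of coherent conditions developed in \S\ref{subsec:coherent}; without that framework, producing those two lemmas would be considerably harder than in the single-derivation case, since one cannot simply transfer $\dcl_L(M)$-independent tuples across extensions without worrying about compatibility of the multiple derivations on the relevant jets.

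With the extension and embedding lemmas in hand, I would invoke Blum's criterion, as presented in the proof of Theorem~17.2 in~\cite{Sa10}, to conclude that $\TDG$ is the model completion of $\TD$. Just as in the proof of Theorem~\ref{modelcompletion}, the strong form of the embedding lemma (embedding over an arbitrary common submodel, not merely over a substructure) lets us bypass the hypothesis that $\TD$ itself is universal.

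For the quantifier elimination assertion, assume that $T$ has quantifier elimination and a universal axiomatization. Then every $L(\0)$-definable function is given piecewise by $L$-terms. The statement that each $\delta_i$ is compatible with a given $L$-term is universal in $\LD$ (the compatibility identity is a universally quantified polynomial equation in the values of the term, its partial derivatives, and the $\delta_i$'s applied to the variables, with each of these being itself given piecewise by $L$-terms), and the commutation axiom $\delta_i\delta_j x = \delta_j\delta_i x$ is manifestly universal. Hence $\TD$ admits a universal axiomatization, so \cite[Theorem~13.2]{Sa10} yields quantifier elimination for $\TDG$. I do not anticipate any hidden obstacle at this stage, since the preparatory work of \S\ref{subsec:coherent} has been specifically arranged so that the argument from the single-derivation case carries over without further modification.
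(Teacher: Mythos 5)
Your proposal is correct and takes essentially the same route as the paper, which simply states that the result follows ``arguing as in the proof of Theorem~\ref{modelcompletion}'' after establishing the extension and embedding lemmas; you correctly identify these two lemmas as supplying the hypotheses for Blum's criterion, and you correctly add the observation (implicit in the paper) that the commutation axioms $\delta_i\delta_j x = \delta_j\delta_i x$ are universal, so $\TD$ still admits a universal axiomatization when $T$ does.
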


We can immediately reprove some of our previous results in this more general setting. The proof of the following is virtually the same as the proof of Lemma~\ref{formulaform}:

\begin{lemma}\label{formulaformD}
For every $\LD$-formula $\varphi$ there is some finite $J \subseteq \Theta$ and some $L$-formula $\tilde{\varphi}$ such that
\[
\TDG\ \vdash\ \forall \y\big( \varphi(\y) \leftrightarrow \tilde{\varphi}\big(\y^J \big)\big).
\]
\end{lemma}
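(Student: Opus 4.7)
The plan is to follow the proof of Lemma~\ref{formulaform} almost verbatim, using Lemma~\ref{cordelta}(1) in place of the one-derivation computation. First, by extending $L$ by function symbols for all $L(\0)$-definable functions and extending $T$ accordingly, we may assume that $T$ has quantifier elimination and a universal axiomatization. Then by Theorem~\ref{multmodelcompletion}, $\TDG$ has quantifier elimination, so we may assume $\varphi$ is quantifier-free.

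Next, define a complexity measure $e(\varphi)$ to be the number of occurrences in $\varphi$ of some $\delta\in\Delta$ applied to a term that is \emph{not} of the form $\theta y_i$ for some $\theta\in\Theta$, and induct on $e(\varphi)$. If $e(\varphi)=0$, every occurrence of a derivation in $\varphi$ sits inside a pure ``monomial'' $\theta y_i$; letting $J\subseteq\Theta$ be the finite set of such $\theta$'s appearing in $\varphi$, we may read $\varphi$ directly as an $L$-formula $\tilde\varphi(\y^J)$.

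If $e(\varphi)>0$, choose an innermost offending subterm, which has the form $\delta f(\y^J)$ where $\delta\in\Delta$, $J\subseteq\Theta$ is finite, and $f$ is an $L(\0)$-definable function (no further $\delta'\in\Delta$ being applied non-trivially inside $f$). Apply Corollary~\ref{Ckfiber} to obtain an $L(\0)$-definable $\cC^1$-cell decomposition $\cD$ of the domain of $f$: on each cell $D\in\cD$ there is an $L(\0)$-definable $\cC^1$-function $f_D$, defined on an open neighborhood of $D$, with $f_D=f$ on $D$. By Lemma~\ref{cordelta}(1), there is for each $D$ an $L(\0)$-definable function $f_D^\delta(\y^J,\y^{\delta J})$ such that $\delta f_D(\uv)=f_D^\delta(\uv,\delta\uv)$ in every model of $\TD$. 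Set
\[
\tilde f(\y^J,\y^{\delta J})\ :=\ f_D^\delta(\y^J,\y^{\delta J})\quad\text{whenever }\y^J\in D;
\]
this is $L(\0)$-definable, and $\delta f(\y^J)=\tilde f(\y^J,\y^{\delta J})$ holds identically in every model of $\TD$. Form $\varphi'$ from $\varphi$ by replacing the chosen subterm $\delta f(\y^J)$ by $\tilde f(\y^J,\y^{\delta J})$; then $\TD\vdash\forall\y(\varphi\leftrightarrow\varphi')$ and $e(\varphi')<e(\varphi)$, so the inductive hypothesis applied to $\varphi'$ yields the desired $J$ and $\tilde\varphi$.

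There is no real obstacle: the only substantive ingredient beyond the one-derivation case is the bookkeeping needed to track the index set $\delta J\subseteq\Theta$ as we rewrite each offending subterm, and this is taken care of uniformly by Lemma~\ref{cordelta}(1) together with the $\cC^1$-cell decomposition from Corollary~\ref{Ckfiber}.
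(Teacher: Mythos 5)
Your proposal is correct and takes essentially the same route as the paper, which proves this lemma simply by repeating the argument of Lemma~\ref{formulaform}: reduce to quantifier-free formulas via quantifier elimination in a definitional expansion, then induct on the number of derivation symbols applied to non-monomial terms, eliminating an innermost offending subterm $\delta f(\y^J)$ through an $L(\0)$-definable $\cC^1$-cell decomposition and the resulting identity $\delta f(\y^J)=\tilde f\big(\y^J,\y^{\delta J}\big)$ valid in all models of $\TD$. The only slip is a citation: the $\cC^1$-cell decomposition is furnished by the $\cC^k$-cell decomposition theorem of Appendix~\ref{sec:Ck}, not by Corollary~\ref{Ckfiber}, but this does not affect the argument.
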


We may substitute the above lemma in place of Lemma~\ref{formulaform} in the proof of Theorem~\ref{distal} to show:

\begin{proposition}\label{distalD}
$\TDG$ is distal.
\end{proposition}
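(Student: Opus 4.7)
The plan is to mirror the proof of Theorem~\ref{distal} almost verbatim, substituting Lemma~\ref{formulaformD} for Lemma~\ref{formulaform}. Fix an infinite linear order $I = I_1 + (c) + I_2$ with $I_1, I_2$ infinite without endpoints, an $\LD(\0)$-indiscernible sequence $(\av_i)_{i\in I}$ from $\M^m$ (where $\M$ is a monster model of $\TDG$), and a tuple $\bv \in \M^n$ such that $(\av_i)_{i \in I_1+I_2}$ is $\LD(\bv)$-indiscernible. Given an $\LD$-formula $\varphi(\x_1,\ldots,\x_n,\y)$, it suffices to show
\[
\M\ \models\ \varphi(\av_{i_1},\ldots,\av_{i_n},\bv) \leftrightarrow \varphi(\av_{j_1},\ldots,\av_{j_n},\bv)
\]
for all tuples of indices $i_1<\ldots<i_n$ and $j_1<\ldots<j_n$ in $I$.

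First, I would apply Lemma~\ref{formulaformD} to obtain a finite set $J \subseteq \Theta$ and an $L$-formula $\tilde{\varphi}$ with
\[
\TDG\ \vdash\ \forall \x_1 \ldots \forall \x_n \forall \y\big(\varphi(\x_1,\ldots,\x_n,\y) \leftrightarrow \tilde{\varphi}(\x_1^J,\ldots,\x_n^J,\y^J)\big).
\]
Next, I would observe that since $(\av_i)_{i \in I}$ is $\LD(\0)$-indiscernible, the sequence $(\av_i^J)_{i \in I}$ is also $\LD(\0)$-indiscernible (each $\av_i^J$ is an $\LD(\0)$-definable function of $\av_i$), and hence is $L(\0)$-indiscernible. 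Likewise, the $\LD(\bv)$-indiscernibility of $(\av_i)_{i \in I_1+I_2}$ yields $L(\bv^J)$-indiscernibility of $(\av_i^J)_{i \in I_1+I_2}$.

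Finally, since $T$ is o-minimal and hence distal, we may apply distality of $T$ to the sequence $(\av_i^J)_{i \in I}$ with parameter $\bv^J$ to conclude
\[
\M\ \models\ \tilde{\varphi}(\av_{i_1}^J,\ldots,\av_{i_n}^J,\bv^J) \leftrightarrow \tilde{\varphi}(\av_{j_1}^J,\ldots,\av_{j_n}^J,\bv^J),
\]
which, by our choice of $\tilde{\varphi}$, gives the desired equivalence for $\varphi$. There is no real obstacle here beyond verifying the formula-reduction step, which is handled entirely by Lemma~\ref{formulaformD}; the rest is a transparent adaptation of the single-derivation argument.
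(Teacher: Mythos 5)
Your proposal is correct and is exactly the paper's argument: the paper proves Proposition~\ref{distalD} by substituting Lemma~\ref{formulaformD} for Lemma~\ref{formulaform} in the proof of Theorem~\ref{distal}, which is precisely the adaptation you carried out (reducing $\varphi$ to an $L$-formula in the tuples $\x_i^J$, $\y^J$, transferring indiscernibility to $(\av_i^J)$, and invoking distality of the o-minimal theory $T$).
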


One potentially interesting structure is the reduct $(\cM,P_1,\ldots,P_p)$ of a model $(\cM,\Delta)\models \TDG$, where $P_i$ is interpreted to pick out the constant field $\ker(\delta_i)$. We have $(\cM,P_i) \models T^{\dense}$ for each $i$, but to our knowledge, no work has been done on $(\cM,P_1,\ldots,P_p)$.
%%
%----------------------------------------------------------------------------------%
\subsection{Dimension in models of $\TDG$}
In this subsection, we define and examine the $\Delta$-closure in analogy with the $\delta$-closure in \S\ref{sec:dims}. Given $B \subseteq M$ and $J \subseteq \Theta$, we set $B^J:= \{b^J:b \in B\}$.

\begin{definition}
Given $a \in M$ and $B\subseteq M$, we say that \textbf{$a$ is in the $\Delta$-closure of $B$}, written $a \in \clD(B)$, if $a^\Theta$ is \emph{not} $\dclL(B^\Theta)$-independent.
\end{definition}

The next fact follows from the finitary nature of $\dclL$:
\begin{fact}\label{clDalt}
$a \in \cl^\Delta(B)$ if and only if there is some finite $J \subseteq \Theta$ such that
\[
\rkL(a^J|B^{\Theta})\ <\ |J|.
\]
\end{fact}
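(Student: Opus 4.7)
The plan is to unpack the definition of $\dclL(B^\Theta)$-independence and use the fact that $\dclL$ is finitary. Recall that a (possibly infinite) tuple is $\dclL(B^\Theta)$-independent precisely when every finite subtuple has $\dclL$-rank over $B^\Theta$ equal to its length; equivalently, no component lies in the $\dclL$-closure of $B^\Theta$ together with the remaining components.

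For the forward direction, suppose $a\in\clD(B)$, so $a^\Theta$ is not $\dclL(B^\Theta)$-independent. Then some component $\theta a$ lies in $\dclL\!\big(B^\Theta\cup\{\phi a:\phi\in\Theta\setminus\{\theta\}\}\big)$. Since $\dclL$ is finitary, there is a finite set $J_0\subseteq\Theta\setminus\{\theta\}$ with $\theta a\in\dclL(B^\Theta\cup\{\phi a:\phi\in J_0\})$. Taking $J:=J_0\cup\{\theta\}$ gives $\rkL(a^J|B^\Theta)\le|J_0|<|J|$, as required.

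For the reverse direction, if $J\subseteq\Theta$ is finite with $\rkL(a^J|B^\Theta)<|J|$, then the finite tuple $a^J$ is already not $\dclL(B^\Theta)$-independent (since the rank of an independent tuple equals its cardinality), and hence neither is the longer tuple $a^\Theta$. So $a\in\clD(B)$.

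There is no real obstacle here: the statement is essentially a restatement of the definition combined with finitariness of $\dclL$. The only thing to be mildly careful about is that $\Theta$ is infinite, so one must quote finitariness to reduce the ``some component depends on the others'' condition to a finite dependency witnessed by a finite $J$.
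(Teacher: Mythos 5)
Your proof is correct and is precisely the argument the paper is alluding to when it says the fact ``follows from the finitary nature of $\dclL$''; you simply spell out the finitariness step that the paper leaves implicit. One small point you handle silently but correctly: if $a^\Theta$ has repeated components (say $\theta a=\phi a$ with $\theta\neq\phi$), the tuple is not $\dclL(B^\Theta)$-independent by fiat, but this case is still subsumed under your dependency step, since then $\theta a\in\dclL(\{\phi a\})$, giving $J_0=\{\phi\}$.
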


We can examine the $\Delta$-closure by induction on $|\Delta|$. The following lemma serves as an induction step:

\begin{lemma}\label{inductionstep}
Let $\delta\in \Delta$ and set $\Delta_0:= \Delta \setminus \{\delta\}$. Then $\delta$ is a quasi-endomorphism of $(\cM,\clDz)$.
\end{lemma}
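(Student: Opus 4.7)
The approach will closely mirror the proof of Lemma~\ref{isquasi}, which is the special case $\Delta_0 = \0$. First I pick $A' \subseteq A$ which is $\clDz(B)$-independent and satisfies $A \subseteq \clDz(A'B)$, so $|A'| = \rk^{\Delta_0}(A \mid B)$. If I can show that $\delta A \subseteq \clDz(A'B\,\delta A'\,\delta B)$, then since $A' \subseteq A$, I obtain
\[
\rk^{\Delta_0}(\delta A \mid AB\,\delta B)\ \leq\ |\delta A'|\ \leq\ |A'|\ =\ \rk^{\Delta_0}(A \mid B),
\]
which is the desired inequality. Replacing $B$ by $A'B$, the problem reduces to proving $\delta a \in \clDz(B\,\delta B)$ for every $a \in \clDz(B)$.

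To establish this, I unfold the definition of $\clDz$: the tuple $a^{\Theta_0}$ is $\dclL(B^{\Theta_0})$-dependent, so there exist $\theta_0 \in \Theta_0$, a finite tuple $\uv$ with entries from $B^{\Theta_0} \cup \{\theta a : \theta \in \Theta_0,\ \theta \neq \theta_0\}$, and an $L(\0)$-definable function $f$ satisfying $\theta_0 a = f(\uv)$. By passing to a maximal $\dclL(\0)$-independent subtuple of $\uv$ and composing $f$ with an $L(\0)$-definable map expressing the omitted coordinates in terms of the retained ones, I may assume $\uv$ is $\dclL(\0)$-independent. Standard o-minimal cell decomposition then forces $f$ to be $\cC^1$ on an $L(\0)$-definable open neighborhood of $\uv$, and since $\delta$ is a $T$-derivation compatible with $f$ that commutes with every $\theta \in \Theta_0$, I obtain
\[
\theta_0\delta a\ =\ \delta(\theta_0 a)\ =\ \Jac_f(\uv)\,\delta\uv.
\]
Each entry of $\delta\uv$ lies in $\{\theta\,\delta a : \theta \in \Theta_0,\ \theta \neq \theta_0\} \cup (\delta B)^{\Theta_0}$, so $\theta_0\delta a$ is in the $\dclL$-closure of the remaining entries of $(\delta a)^{\Theta_0}$ together with $(\delta B)^{\Theta_0} \subseteq (B \cup \delta B)^{\Theta_0}$. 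This is exactly the statement $\delta a \in \clDz(B\,\delta B)$.

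The main subtlety is bookkeeping: one must use the commutation $\delta\theta = \theta\delta$ for $\theta \in \Theta_0$ (axiom scheme (ii) in $\TD$) to identify $(\delta a)^{\Theta_0}$ with $\delta(a^{\Theta_0})$ coordinatewise, so that the dependence produced by the chain rule is genuinely a dependence among entries of the jet tuple $(\delta a)^{\Theta_0}$ rather than something weaker. The basis-reduction step at the start also implicitly requires that $(\cM, \clDz)$ is already a finitary matroid; this is presumably established by induction on $|\Delta|$ via Proposition~\ref{ispregeom}, with the present lemma functioning as the inductive step.
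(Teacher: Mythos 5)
Your argument follows the paper's route (the reduction from Lemma~\ref{isquasi}, then chain rule plus the commutation $\delta\theta=\theta\delta$ for $\theta\in\Theta_0$, with Lemma~\ref{isquasi} unpacked into an explicit computation), but the final inference has a gap. From $\theta_0\delta a=\delta(\theta_0 a)=\Jac_f(\uv)\,\delta\uv$ you conclude that $\theta_0\delta a$ lies in the $\dclL$-closure of the remaining entries of $(\delta a)^{\Theta_0}$ together with $(B\delta B)^{\Theta_0}$. This forgets the coefficients: $\Jac_f$ is evaluated at $\uv$, and the entries of $\uv$ include the elements $\theta a$ with $\theta\in\Theta_0\setminus\{\theta_0\}$, which need not lie in $(B\delta B)^{\Theta_0}$ and are not entries of $(\delta a)^{\Theta_0}$. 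What your computation actually yields is
\[
\theta_0\delta a\ \in\ \dclL\Big((\delta a)^{\Theta_0\setminus\{\theta_0\}}\cup a^{\Theta_0}\cup(B\delta B)^{\Theta_0}\Big),
\]
that is, $\delta a\in\clDz(aB\delta B)$, which is strictly weaker than the statement $\delta a\in\clDz(B\delta B)$ that your reduction asks for.

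The gap is local and repairable within your own setup, because the quasi-endomorphism inequality conditions on $A$ on the left-hand side: the goal is $\rk^{\Delta_0}(\delta A\mid AB\delta B)\le\rk^{\Delta_0}(A\mid B)$, so in the reduction it suffices to prove $\delta A\subseteq\clDz(AB\,\delta A'\,\delta B)$ (rather than $\delta A\subseteq\clDz(A'B\,\delta A'\,\delta B)$); one still gets $\rk^{\Delta_0}(\delta A\mid AB\delta B)\le\rk^{\Delta_0}(\delta A'\mid AB\delta B)\le|A'|=\rk^{\Delta_0}(A\mid B)$, and for each $a\in A$ the membership $\delta a\in\clDz(aA'B\,\delta A'\,\delta B)$ that your chain-rule computation does establish is exactly what is needed, since $a\in A$. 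If instead you keep your reduction and claim $\delta a\in\clDz(B\delta B)$ after replacing $B$ by $A'B$, you owe a separate argument for removing the jets $a^{\Theta_0}$ from the conditioning set; the chain rule alone does not provide it. (The paper's proof runs the same computation at the level of $\rkL$, citing Lemma~\ref{isquasi} instead of redoing the chain rule, and it likewise passes through a rank computed over $a^J(B\delta B)^{\Theta_0}$ before discarding $a^J$; that removal is the delicate point in either presentation, and your write-up should address it explicitly rather than absorb it into ``this is exactly the statement.'')
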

\begin{proof}
Fix $A,B \subseteq M$. Making the same reduction as in Lemma~\ref{isquasi}, it suffices to show that if $A \subseteq \clDz(B)$ then $\delta A \subseteq \clDz(B\delta B)$. Fix $a \in A$ and let $\Theta_0 \subseteq \Theta$ be the submonoid of $\Theta$ generated by $\Delta_0$. Since $a \in \clDz(B)$, there is some finite $J \subseteq \Theta_0$ such that
\[
\rkL(a^J|B^{\Theta_0})\ <\ |J|.
\]
Since $\delta$ is a quasi-endomorphism of $(\cM,\dclL)$ by Lemma~\ref{isquasi}, we have that
\[
\rkL\big(\delta(a^{ J})|a^{J}B^{\Theta_0}\delta(B^{ \Theta_0})\big)\ <\ |J|.
\]
Since $\delta$ commutes with all $\theta \in \Theta_0$, we have $\delta(a^{ J}) = (\delta a)^{J}$. Likewise, $\delta(B^{ \Theta_0}) = (\delta B)^{\Theta_0}$, so 
\[
\rkL\big((\delta a)^{ J}|a^J(B\delta B)^{\Theta_0}\big)\ =\ \rkL\big((\delta a)^{ J}|(B\delta B)^{\Theta_0}\big)\ <\ |J|.
\]
Thus, $\delta a \in\clDz(B\delta B)$.
\end{proof}

\begin{proposition}
$(\cM,\clD)$ is a finitary matroid.
\end{proposition}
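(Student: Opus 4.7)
The plan is to induct on $p = |\Delta|$, with each step reducing to the single-derivation framework of \S\ref{subsec:quasi-endo}. The base case $p = 0$ is trivial since then $\Theta = \{\id\}$ and $\clD = \dclL$. For the inductive step, pick $\delta \in \Delta$, set $\Delta_0 := \Delta \setminus \{\delta\}$, and assume by induction that $(\cM, \clDz)$ is a finitary matroid. Lemma~\ref{inductionstep} says $\delta$ is a quasi-endomorphism of $(\cM, \clDz)$, so applying Proposition~\ref{ispregeom} to this quasi-endomorphism yields a finitary matroid $(\cM, \cl')$ in which
\[
a \in \cl'(B) \;:\Longleftrightarrow\; \jet^\infty(a) \text{ has finite rank over } \jet^\infty(B) \text{ in } (\cM, \clDz),
\]
where $\jet^n(a) := (a, \delta a, \ldots, \delta^n a)$ refers to the chosen $\delta$ (as in \S\ref{sec:dims}). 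The proposition will follow once $\cl' = \clD$ is verified.

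To carry out that identification, let $\Theta_0 \subseteq \Theta$ denote the submonoid generated by $\Delta_0$. Then $\Theta = \bigsqcup_{k \geq 0} \delta^k \Theta_0$, and for every $C \subseteq M$ one has $C^\Theta = (\jet^\infty(C))^{\Theta_0}$. For $\cl' \subseteq \clD$: if $a \in \cl'(B)$, Lemma~\ref{dcleq}(3) provides $n$ with $\delta^n a \in \clDz(\jet^{n-1}(a) \cup \jet^\infty(B))$; unfolding $\clDz$ via $\Theta_0$-jets produces some $\theta_0 \delta^n a$ lying in $\dclL\bigl(B^\Theta \cup (a^\Theta \setminus \{\theta_0 \delta^n a\})\bigr)$, so $a \in \clD(B)$. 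For $\clD \subseteq \cl'$: if $a \in \clD(B)$, Fact~\ref{clDalt} yields a finite $J \subseteq \Theta$ with $\rkL(a^J \mid B^\Theta) < |J|$. Stratify $J = \bigsqcup_k J_k$ with $J_k \subseteq \delta^k \Theta_0$. The chain rule for $\rkL$ then gives
\[
\sum_k \rkL\!\bigl(a^{J_k} \,\big|\, B^\Theta \cup \bigcup_{j < k} a^{J_j}\bigr) \;=\; \rkL(a^J \mid B^\Theta) \;<\; \sum_k |J_k|,
\]
so some $k^*$ satisfies $\rkL(a^{J_{k^*}} \mid B^\Theta \cup \bigcup_{j < k^*} a^{J_j}) < |J_{k^*}|$. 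Rewriting $a^{J_{k^*}} = (\delta^{k^*} a)^{J'}$ for the corresponding finite $J' \subseteq \Theta_0$ and observing that $B^\Theta \cup \bigcup_{j < k^*} a^{J_j} \subseteq C^{\Theta_0}$ for $C := \jet^{k^* - 1}(a) \cup \jet^\infty(B)$, the $\Delta_0$-analogue of Fact~\ref{clDalt} delivers $\delta^{k^*} a \in \clDz(C)$, and Lemma~\ref{dcleq}(3) then gives $a \in \cl'(B)$.

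The crux is the direction $\clD \subseteq \cl'$: an arbitrary $\dclL(B^\Theta)$-dependence among the components of $a^\Theta$ typically interleaves terms of many different $\delta$-orders, and the real work is to stratify by $\delta$-order and use the chain rule for $\rkL$ to isolate a single $\delta$-level at which a dependence already holds modulo the strictly lower levels. Once $\cl' = \clD$ is in hand, the matroid axioms for $\clD$ are inherited immediately from those of $\cl'$, closing the induction.
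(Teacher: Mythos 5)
Your proposal is correct and takes essentially the same route as the paper: induction on $|\Delta|$, Lemma~\ref{inductionstep} to make $\delta$ a quasi-endomorphism of $(\cM,\clDz)$, Proposition~\ref{ispregeom} to produce a finitary matroid, and then an identification of the resulting closure with $\clD$. The paper performs that last identification by directly unwinding the independence notions (via $\jet^\infty(a)^{\Theta_0}=a^\Theta$), while you verify the two inclusions by stratifying a witnessing dependence by $\delta$-order and using additivity of $\rkL$; both arguments are sound.
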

\begin{proof}
By induction on $|\Delta|$. Fix $\delta\in \Delta$ and set $\Delta_0:= \Delta \setminus \{\delta\}$. Then by our induction hypothesis, $(\cM,\clDz)$ is a finitary matroid and by Lemma~\ref{inductionstep}, $\delta$ is a quasi-endomorphism of $(\cM,\clDz)$. Fix $a \in M$ and $B\subseteq M$. We claim that $a \not\in \clD(B)$ if and only if $\jet^\infty(a)$ is $\clDz\!\big(\jet^\infty(B)\big)$-independent. To see this, let $\Theta_0$ be as in the proof of Lemma~\ref{inductionstep}. Then $\jet^\infty(a)$ is $\clDz\!\big(\jet^\infty(B)\big)$-independent if and only if $\jet^\infty(a)^{\Theta_0}$ is $\dclL\big(\jet^\infty(B)^{\Theta_0}\big)$-independent if and only if $a^\Theta$ is $\dclL(B^\Theta)$-independent, as $\jet^\infty(a)^{\Theta_0} = a^\Theta$. Thus, we may apply Proposition~\ref{ispregeom} to $(X,\cl) = (\cM,\clDz)$ to deduce that $(\cM,\clD)$ is a finitary matroid.
\end{proof}

We can leverage this to define a dimension function as in \S\ref{subsec:dim}. Let $(\M,\Delta)$ be a monster model of $\TDG$ and suppose that $(\cM,\Delta)$ is a small elementary substructure of $(\M,\Delta)$.

\begin{lemma}\label{cldelemD}
Let $B \subseteq \M$. If $\clD(B) = B$ then $(B,\Delta|_B) \models \TDG$, where $\Delta|_B = \{\delta|_B:\delta \in \Delta\}$.
\end{lemma}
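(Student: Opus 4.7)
The plan is to closely follow the proof of Lemma~\ref{cldelem}, but adjust for the more intricate axiomatization of $\TDG$ in terms of coherent conditions. First I would establish $(B, \Delta|_B) \models \TD$: since $\dclL(B) \subseteq \clD(B) = B$, the set $B$ is the universe of an elementary $L$-substructure of $\M$, and since $\delta B \subseteq \clD(B) = B$ for each $\delta \in \Delta$, each $\delta$ restricts to a self-map of $B$. The commutativity and the $T$-derivation property restrict to $B$ because $B \preceq_L \M$.

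Next I would verify that $\Delta|_B$ is a set of generic commuting derivations. Fix a coherent condition $\cC = (P, U, (f_\beta)_{\beta\in P})$ on $B$. Since $\cC$ is also coherent on $\M$ and $\M \models \TDG$, there is $a \in \M$ satisfying $\cC$; the task is to find such an $a$ in $B$. First suppose $P \neq \emptyset$ and pick any $\beta \in P$. Because $\beta a = f_\beta(a^I)$ and $f_\beta$ is $L(B)$-definable and depends only on the variables $z^\theta$ with $\theta < \beta$, setting $J := \{\theta \in I : \theta < \beta\} \cup \{\beta\}$ yields a finite subset of $\Theta$ (finite because $(\Theta,<)$ has order type $\omega$) satisfying $a^\beta \in \dclL\!\big(B^\Theta \cup \{a^\theta : \theta \in J \setminus \{\beta\}\}\big)$, hence $\rkL(a^J \mid B^\Theta) \leq |J|-1 < |J|$. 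Fact~\ref{clDalt} then places $a$ in $\clD(B) = B$.

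The main obstacle is the case $P = \emptyset$, because then $\cC$ imposes no algebraic dependencies on its realizations and the argument above breaks down. My plan is to reduce this case to the $P \neq \emptyset$ case by using $L$-definable choice to introduce an artificial dependency. Writing $U = V \times \M^{\Theta \setminus J_0}$ for some finite $J_0 \subseteq \Theta$ and open $L(B)$-definable $V \subseteq \M^{J_0}$, I would pick $\beta \in J_0$ that is $<$-maximal (which makes $\beta$ also $\prec$-maximal in $J_0$, and forces every other $\theta \in J_0$ to satisfy $\theta < \beta$). Applying $L(B)$-definable cell decomposition to $V$, I would pass to a top-dimensional open cell $C \subseteq V$ and extract an $L(B)$-definable continuous function $f : \pi(C) \to \M$ with $\Gamma(f) \subseteq C$, where $\pi : \M^{J_0} \to \M^{J_0 \setminus \{\beta\}}$ is the projection omitting the $\beta$-coordinate. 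Setting $I' := \Theta \setminus \{\gamma : \beta \preceq \gamma\}$ (so $J_0 \setminus \{\beta\} \subseteq I'$) and $U' := \pi(C) \times \M^{I' \setminus (J_0 \setminus \{\beta\})}$, the tuple $\cC' := (\{\beta\}, U', (f))$ is then a coherent condition on $B$ (coherence is immediate because $\Omega_\theta$ is a singleton for every $\theta \preceq \beta$), and every realization of $\cC'$ realizes $\cC$. Applying the already proven case $P \neq \emptyset$ to $\cC'$ produces the desired $a \in B$.
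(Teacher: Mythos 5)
Your proof is correct in outline, and for the case $P \neq \emptyset$ it is essentially the same as the paper's (both produce, from any realization $a$ in $\M$, a finite algebraic dependence among the coordinates of $a^\Theta$ over $B^\Theta$, then invoke $\clD(B) = B$). For the case $P = \emptyset$, however, you take a different and somewhat heavier route than the paper. You pick $\beta$ to be the $<$-maximal element of the active coordinate set $J_0$, then use an $L(B)$-definable $\cC^0$-cell decomposition of $V$ and definable choice to extract a continuous section $f$ over $\pi(C)$ with $\Gamma(f) \subseteq C$, and feed that $f$ in as the new constraint $f_\beta$. This works, and your verification of coherence (that $\Omega_\theta$ is a singleton for all $\theta \leq \beta$ because $\theta < \beta$ forces $\theta \in I'$) is sound. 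The paper does something simpler: it writes $U = V \times \M^{\Theta\setminus J}$ with $J$ a finite $<$-closed set, takes $\beta$ to be the $<$-\emph{minimal} element of $\Theta \setminus J$, and sets $f_\beta := 0$. Because $\beta$ lies \emph{outside} the coordinates on which $U$ depends, this new constraint is vacuous as far as membership in $U$ is concerned, so no cell decomposition or choice of section is needed — a constant function suffices. What your approach buys is nothing the paper's doesn't already get more cheaply; what it costs is the extra machinery plus a small unaddressed degeneracy: if $U$ depends only on $z^{\id}$ (or on nothing), then $J_0 = \{\id\}$ (or $J_0 = \emptyset$) and the only available $<$-maximal element is $\id$, which is not an admissible member of $P$. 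You would need to say explicitly that you enlarge $J_0$ to include some $\delta \in \Delta$ before choosing $\beta$, replacing $V$ by $V \times \M$. (The paper's version has a parallel but even more minor issue when $J = \emptyset$, avoided by always taking $\id \in J$.) With that caveat noted, your argument goes through.
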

\begin{proof}
Since $\dclL(B) \subseteq \clD(B) = B$, we have that $B \preceq_L \M$. Since $\delta B \subseteq \clD(B) = B$ for each $\delta \in \Delta$, we have that $(B,\Delta|_B)\models \Td$. Let $\cC = \big(P,U,(f_\beta)_{\beta \in P}\big)$ be a coherent condition on $B$. 
We first consider the case that $P \neq\0$. Since $(\M,\Delta) \models \TDG$, there is some $a \in \M$ which satisfies $\cC$. If $\beta \in P$, then $a^\beta = f_\beta(a^I)$ where $I$ is as in \S\ref{subsec:coherent}. Thus, $a^\Theta$ is not $\dclL(B)$-independent so $a \in \clD(B) = B$. 

\medskip

Now consider the case that $P = \0$. Since $U$ is an open $L(B)$-definable subset of $\M^\Theta$ there is some $<$-closed subset $J \subseteq \Theta$ and some open $L(B)$-definable subset $V \subseteq \M^J$ such that $U = V \times \M^{\Theta\setminus J}$. Let $\beta$ be the $<$-minimal element of $\Theta\setminus J$, set $f_\beta := 0$ and set $\cC' := \big(\{\beta\},V,f_\beta\big)$. Since $\bigvee\{\beta\} = \beta$, we have that $\cC'$ is coherent. By the previous case, $\cC'$ is satisfied by some $a \in B$. Then $a^\Theta \in U$, so $a$ satisfies $\cC$.
\end{proof}

Let $\rkD$ be the rank function associated to $\clD$. By the proof of Proposition~\ref{existential} (with Lemma~\ref{cldelemD} in place of Lemma~\ref{cldelem}) and by the remarks after Definition~\ref{deltadim}, we have the following.
\begin{proposition}
$(\M,\clD)$ is an existential matroid in the sense of~\cite{Fo11}. Thus, we have a dimension function on the algebra of $\LD(M)$-definable sets given by
\[
\dimD(A)\ :=\ \max\big\{\rkD(\av|M):\av \in A\big\}
\]
for each nonempty $\LD(M)$-definable set $A\subseteq \M^n$. This dimension function satisfies the axioms in~\cite{vdD89}.
\end{proposition}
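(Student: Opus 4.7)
The plan is to follow the proof of Proposition~\ref{existential} line by line, with Lemma~\ref{cldelemD} in place of Lemma~\ref{cldelem}. The preceding proposition has already established that $(\M,\clD)$ is a finitary matroid, so it remains to verify the three further properties of an existential matroid in the sense of~\cite{Fo11}: existence, nontriviality, and definability. Once these are in hand, the dimension function $\dimD$ will automatically satisfy the axioms of~\cite{vdD89} by~\cite[Theorem 4.3]{Fo11}, exactly mirroring the remarks following Definition~\ref{deltadim} in the single-derivation case.

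\emph{Existence} will follow from~\cite[Lemma 3.23]{Fo11} combined with Lemma~\ref{cldelemD}, since the latter exhibits every $\clD$-closed subset of $\M$ as the underlying set of an elementary $\LD$-substructure of $\M$ satisfying $\TDG$. For \emph{nontriviality}, saturation of $(\M,\Delta)$ yields some $a \in \M$ such that $a^\Theta$ is $\dclL(\0)$-independent: each finite piece of this requirement corresponds to realizing a coherent condition with $P = \0$ (and $U$ an appropriate $L(\0)$-definable open set), which is guaranteed by the axioms of $\TDG$. By Fact~\ref{clDalt}, this $a$ is not in $\clD(\0)$.

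For \emph{definability}, given $a \in \clD(B)$, Fact~\ref{clDalt} provides a finite $J \subseteq \Theta$ with $\rkL(a^J|B^\Theta) < |J|$. Choose $\theta_0 \in J$ witnessing the dependence, so that $\theta_0 a \in \dclL\bigl(a^{J\setminus\{\theta_0\}} \cup \bv^K\bigr)$ for some finite $K \subseteq \Theta$ and some tuple $\bv$ from $B$, and pick an $L(\0)$-definable function $f$ with $\theta_0 a = f(a^{J\setminus\{\theta_0\}}, \bv^K)$. Then the $\LD(\bv)$-definable set
\[
A\ :=\ \bigl\{x \in \M : \theta_0 x = f(x^{J\setminus\{\theta_0\}}, \bv^K)\bigr\}
\]
contains $a$ and is contained in $\clD(B)$, verifying definability.

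The subtlest step is nontriviality, where one must verify that the partial $\LD$-type asserting ``$a^\Theta$ is $\dclL(\0)$-independent'' is finitely approximated by coherent conditions, so that it is realized in $\M$ by saturation. The other pieces are direct transcriptions of the one-derivation case, and the promotion from existential matroid to a bona fide dimension function via~\cite[Theorem 4.3]{Fo11} is automatic.
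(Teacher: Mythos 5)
Your proposal is correct and follows exactly the route the paper takes: the paper's proof is simply "by the proof of Proposition~\ref{existential}, with Lemma~\ref{cldelemD} in place of Lemma~\ref{cldelem}, and the remarks after Definition~\ref{deltadim}," and your write-up is precisely that adaptation (existence via~\cite[Lemma 3.23]{Fo11} and Lemma~\ref{cldelemD}, nontriviality via saturation and coherent conditions with $P=\0$, definability via Fact~\ref{clDalt} and a single $\LD(\bv)$-definable dependence equation, then~\cite[Theorem 4.3]{Fo11} for the dimension axioms). Your extra care in checking finite satisfiability of the independence type for nontriviality is a faithful elaboration of the same step in the one-derivation proof, not a deviation.
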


Using this dimension function, one can make the obvious changes in \S\ref{subsec:opencore} to show the following:

\begin{proposition}
$\TDG$ has $T$ as its open core. More precisely, for $B\subseteq \M$, any open $\LD(B)$-definable set is $L(B^\Theta)$-definable.
\end{proposition}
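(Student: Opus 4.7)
The plan is to adapt the proof of Proposition~\ref{opencore} (the open core result for $\TdG$) to the multi-derivation setting, replacing $\jet^m$-notation by $\y^J$-notation for finite subsets $J \subseteq \Theta$ and replacing $\rkd,\dimd,\Ld$ by $\rkD,\dimD,\LD$. First I would reduce to the case that $B$ is $\Delta$-closed, i.e.\ $\delta B \subseteq B$ for each $\delta \in \Delta$: replacing $B$ by $B^\Theta$, the $\LD(B)$-definable open set remains $\LD(B^\Theta)$-definable, so it suffices to prove that any open $\LD(B)$-definable set is $L(B)$-definable when $B = B^\Theta$. Next I would establish the density analogue of Lemma~\ref{density}: for any finite $J \subseteq \Theta$ the set $\{\av^J : \av \in \M^n\}$ is dense in $(\M^J)^n$. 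Given a nonempty open $L(\M)$-definable $V \subseteq \M^J$, the condition $(\emptyset,\, V \times \M^{\Theta \setminus J},\, ())$ with empty $P$ is vacuously coherent (there is nothing to check), so by the axioms of $\TDG$ it is realized by some $a$ with $a^J \in V$. The multivariable statement follows coordinatewise inside a basic open box.

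Using Lemma~\ref{formulaformD} and $\Delta$-closedness of $B$, I would next show an analogue of Corollary~\ref{setform}: every $\LD(B)$-definable set $A \subseteq \M^n$ has the form $\{\x \in \M^n : \x^J \in \tilde{A}\}$ for some finite $J \subseteq \Theta$ and some $L(B)$-definable $\tilde{A} \subseteq (\M^J)^n$. The four key lemmas preceding Proposition~\ref{opencore} then carry over essentially verbatim. The most important is the analogue of Lemma~\ref{celldensity}: assuming $\rkD(\av|B) = n$, one checks inductively (using the exchange property of $\clD$) that $\av^\Theta$ is $\dclL(B)$-independent, so $\av^J$ lies in an open $L(B)$-definable cell $\tilde{A} \subseteq \tilde{X}$; projecting $\tilde{A}$ onto the $\id$-coordinates and invoking the density lemma yields an $L(B)$-definable open $A \ni \av$ with $X \cap A$ dense in $A$. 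The analogue of Lemma~\ref{realizedrank} uses the definition of $\clD$ (via Fact~\ref{clDalt}) to exhibit an explicit $\LD(B)$-definable set of $\dimD < n$ containing $\av$; the analogue of Lemma~\ref{denseLtypes} is purely about the o-minimal reduct and needs no change; the analogue of Lemma~\ref{denseindeltatype} then follows by the same saturation argument.

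With these ingredients in hand, I would apply the Boxall-Hieronymi criterion~\cite{BH12} exactly as in Proposition~\ref{opencore}: the set $D := \{\av \in \M^n : \rkD(\av|B) = n\}$ lies inside the set $A_n'$ of tuples whose $\LD(B)$-type is dense in their $L(B)$-type, and $D$ is dense in $\M^n$ by the dimension analogue of Lemma~\ref{realizedrank} combined with saturation; this forces every open $\LD(B)$-definable set to be $L(B)$-definable. The main obstacle is the density lemma: one has to recognize that the ``empty'' coherent condition $(\emptyset, U, ())$ is genuinely available in the framework of \S\ref{subsec:coherent} and produces points with arbitrary prescribed $J$-jets, since without this the argument that pulls $L(B)$-definable cells back to $\LD(B)$-definable sets loses control. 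Once the density of $J$-jets is in place, the remainder is a routine transcription of \S\ref{subsec:opencore}.
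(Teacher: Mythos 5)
Your proposal is correct and is exactly the argument the paper intends: the paper itself supplies no details for this proposition, stating only that ``one can make the obvious changes in \S4.3,'' and your write-up is a faithful and careful transcription of those changes. The key observations you single out --- that a condition with $P = \emptyset$ is automatically coherent (indeed $\bigvee\emptyset = \id$ and $\Omega_{\id} = \{z\}$ is a singleton by construction), so that the axioms of $\TDG$ yield density of the $J$-jets $\{\av^J : \av \in \M^n\}$ in $(\M^J)^n$ for any finite $J$; and that $\rkD(\av|B) = n$ with $B = B^\Theta$ forces the full tuple $\av^\Theta$ to be $\dclL(B)$-independent, by chaining the definition of $\clD$ through the components $a_1,\ldots,a_n$ --- are precisely the points where the single-derivation proof needs to be adapted, and you handle them correctly. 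One small remark: the paper's proof of Lemma~\ref{cldelemD} reduces the $P = \emptyset$ case to a nonempty-$P$ condition by inserting an artificial constraint $f_\beta := 0$, whereas you appeal directly to the empty condition; both are valid readings of the framework of \S\ref{subsec:coherent}, and your route is the more direct one for the density lemma since you do not need the realizing element to land back in a prescribed $\clD$-closed set.
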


%----------------------------------------------------------------------------------%
\appendix
\section{\texorpdfstring{$\cC^k$}{Ck}-cells and \texorpdfstring{$\cC^k$}{Ck}-functions} \label{sec:Ck}
%----------------------------------------------------------------------------------%
In this section, we establish a fiberwise result about definable families of $\cC^k$-functions, which generalizes Corollary 6.2.4 in~\cite{vdD98}. Given a $\cC^1$-manifold $X$ and a point $\cv \in X$, we let $T_{\cv} X$ denote the tangent space of $X$ at $\cv$.

\medskip 

A $\cC^k$-cell is a special type of definable $\cC^k$-submanifold of $M^n$ with an associated binary sequence $(i_1,\ldots,i_n) \in \{0,1\}^n$. The cells and their sequences are defined by induction on $n$:
\begin{enumerate}[(i)]
\item A $(1)$-cell in $M$ is an open interval and a $(0)$-cell is a singleton.
\item Given an $(i_1,\ldots,i_n)$-cell $D \subseteq M^n$ and an $L(M)$-definable $\cC^k$-function $f:D \to M$, $\Gamma(f)$ is an $(i_1,\ldots,i_n,0)$-cell and the following are $(i_1,\ldots,i_n,1)$-cells:
\begin{itemize}
\item $\big\{(\x,y) \in D \times M:y<f(\x)\big\}$;
\item $\big\{(\x,y) \in D \times M:y>f(\x)\big\}$;
\item $D \times M$;
\end{itemize}
Given an $L(M)$-definable $\cC^k$-function $g:D \to M$ with $f(\x)<g(\x)$ on $D$, the set 
\[
\big\{(\x,y) \in D \times M:f(\x)<y<g(\x)\big\}
\]
is also an $(i_1,\ldots,i_n,1)$-cell. 
\end{enumerate}
Note that a $\cC^k$-cell is open if and only if it is a $(1,\ldots,1)$-cell. We call the binary sequence associated to a $\cC^k$-cell $D$ the \textbf{type} of $D$. We refer to $\cC^0$-cells just as cells.

\medskip 

The inductive construction of $\cC^k$-cells makes them very easy to work with. For example, the next lemma fails for $\cC^1$-submanifolds in general, but it holds for $\cC^1$-cells.
\begin{lemma}\label{tanspace}
Let $D \subseteq M^{m+n}$ be a $\cC^1$-cell. Then 
\[
\pi_m(T_{\av,\bv} D)\ =\ T_\av(\pi_m D)
\]
for all $(\av,\bv) \in D$.
\end{lemma}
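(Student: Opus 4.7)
The plan is to proceed by induction on $n$, exploiting the inductive definition of a $\cC^1$-cell. The base case $n=0$ is immediate since then $D = \pi_m D$ and the equation is trivial. For the inductive step, assume the statement for cells in $M^{m+n-1}$ and let $D \subseteq M^{m+n}$ be a $\cC^1$-cell built over a $\cC^1$-cell $E := \pi_{m+n-1}(D) \subseteq M^{m+n-1}$. Write a typical point of $D$ as $(\av,\bv,c)$ with $(\av,\bv) \in E$ and observe that $\pi_m(D) = \pi_m(E)$, so by induction $\pi_m(T_{(\av,\bv)} E) = T_{\av}(\pi_m E) = T_{\av}(\pi_m D)$. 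It therefore suffices to show $\pi_m(T_{(\av,\bv,c)} D) = \pi_m(T_{(\av,\bv)} E)$.

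I split into the two shapes allowed by the inductive cell construction. If $D$ is a graph, $D = \Gamma(f)$ for some $\cC^1$-function $f : E \to M$, then
\[
T_{(\av,\bv,c)} D \ =\ \bigl\{(\uv,\vv,w) \in T_{(\av,\bv)} E \times M : w = \Jac_f(\av,\bv)(\uv,\vv)\bigr\},
\]
so projecting to the first $m$ coordinates kills the $w$ component and yields $\pi_m(T_{(\av,\bv)} E)$ exactly, as required. If instead $D$ is one of the ``band'' type cells — $E \times M$, $\{y < f\}$, $\{y > f\}$, or $\{f < y < g\}$ — then $D$ is open in $E \times M$ along the last coordinate direction, so $T_{(\av,\bv,c)} D = T_{(\av,\bv)} E \times M$, and again the projection to the first $m$ coordinates gives $\pi_m(T_{(\av,\bv)} E)$. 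Either way, combining with the inductive hypothesis finishes the step.

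I do not anticipate a real obstacle here: the whole content is the inductive decomposition, and the potentially subtle issue — that taking the graph of a $\cC^1$-function could a priori shrink the projection of the tangent space — is handled by the explicit description of $T_{(\av,\bv,c)} \Gamma(f)$, whose projection to the first $m+n-1$ coordinates is already all of $T_{(\av,\bv)} E$. The only thing worth checking carefully in writing is that $\pi_m D = \pi_m E$ in every case of the cell construction, which follows directly from the definitions, and that the projection in the tangent space really is simply the linear map dropping coordinates (which is immediate since $\pi_m$ is linear).
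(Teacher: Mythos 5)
Your proof is correct and follows essentially the same route as the paper: the paper handles the one-coordinate extension explicitly (band cells give $T_{(\av,\bv)}E\times M$, graph cells give the graph of the differential over $T_{(\av,\bv)}E$, whose projection is all of $T_{(\av,\bv)}E$) and then inducts on $n$, exactly as you do. The only difference is cosmetic organization, so nothing further is needed.
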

\begin{proof}
We first handle the case $n =1$. Fix $\av \in M^m$ and $b \in M$ with $(\av,b)\in D$ and set $D':= \pi_m D $. Let $(i_1,\ldots,i_m,i_{m+1})$ be the type of $D$. If $i_{m+1} = 1$, then $T_{\av,b}D = T_{\av} D' \times M$, proving the lemma. If $i_{m+1} = 0$, then $D = \Gamma(g)$ for some $L(M)$-definable $\cC^1$-function $g:D' \to M$. Take an $L(M)$-definable $\cC^1$-function $G:U\to M$ with $U \supseteq D'$ open and with $G|_{D'} = g$. Let $P\subseteq M^{m+1}$ be hyperplane 
\[
\big\{(\x,y) \in M^{m+1}:y = \Jac_G(\av) \x\big\}.
\]
Then $T_{\av,\bv}D = (T_{\av} D' \times M) \cap P$ and $\pi_m(T_{\av,\bv} D) =T_\av D'$ as desired. The general case follows easily by induction on $n$.
\end{proof}

One of the most useful tools in the study of o-minimal fields is the $\cC^k$-cell decomposition theorem below. See~\cite{vdD98} for the cases $k = 0$ or 1. A \textbf{$\cC^k$-cell decomposition of $M^n$} is a finite collection $\cD$ of disjoint $\cC^k$-cells such that $\bigcup\cD = M^n$ and such that $\{\pi_{n-1}D:D \in \cD\}$ is a $\cC^k$-cell decomposition of $M^{n-1}$.

\begin{proposition}[$\cC^k$-cell decomposition]\
\begin{enumerate}[(i)]
\item For any $L(M)$-definable sets $A_1,\ldots,A_p \subseteq M^n$ there is a $\cC^k$-cell decomposition $\cD$ of $M^n$ partitioning $A_1,\ldots,A_p$, i.e. each $D\in \cD$ is disjoint from or contained in each $A_i$.
\item For every $L(M)$-definable map $f:A\to M^m$ with $A\subseteq M^n$, there is a $\cC^k$-cell decomposition of $M^n$ partitioning $A$ such that the restriction $f|_D$ is $\cC^1$ for each cell $D \in \cD$ contained in $A$.
\item Given an $L(M)$-definable map $f:A\to M^m$ with $A\subseteq M^n$, let $A' := \big\{\x \in A: \Jac_{f} \text{ is defined at }\x\big\}$. Then $A \setminus A'$ has empty interior.
\end{enumerate}
\end{proposition}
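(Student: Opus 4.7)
The plan is to prove (i), (ii), and (iii) together by induction on $n$, building on van den Dries's classical $\cC^1$-cell decomposition theorem from~\cite{vdD98} and invoking Loi's theorem~\cite{Lo98}: every $L(M)$-definable function $g:U\to M$ on an open $L(M)$-definable set $U\subseteq M^n$ is $\cC^k$ off an $L(M)$-definable subset of $U$ of strictly smaller dimension.

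For the base case $n=1$, every $L(M)$-definable subset of $M$ is a finite union of points and open intervals, which immediately gives (i). For (ii), on each open interval any component of $f$ is $\cC^k$ except at finitely many points by Loi's theorem in dimension one; refine by adding those points. Statement (iii) in dimension one is then trivial.

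For the inductive step from $n$ to $n+1$, first apply the classical $\cC^1$-cell decomposition theorem to obtain a $\cC^1$-cell decomposition $\cD'$ of $M^{n+1}$ partitioning $A_1,\dotsc,A_p$. Each non-open cell $D\in\cD'$ is either a graph $\Gamma(h_D)$ or a region above, below, or between $\cC^1$-functions $h_{D,j}:\pi_n(D)\to M$. By Loi's theorem, each such $h_{D,j}$ fails to be $\cC^k$ only on an $L(M)$-definable set $E_{D,j}\subseteq \pi_n(D)$ of strictly smaller dimension. Now apply the inductive hypothesis to $M^n$ together with the collection of base cells $\{\pi_n(D)\}$ and exceptional sets $\{E_{D,j}\}$, obtaining a $\cC^k$-cell decomposition $\cD''$ of $M^n$ on each cell of which the relevant $h_{D,j}$ is $\cC^k$. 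Reassembling via graphs of $h_{D,j}|_C$ and the corresponding above/below/between regions produces a $\cC^k$-cell decomposition of $M^{n+1}$ partitioning $A_1,\dotsc,A_p$. For (ii), apply the same mechanism with the component functions $f_1,\dotsc,f_m$ of $f$: include their graphs among the sets to be partitioned and incorporate their respective exceptional sets from Loi's theorem, yielding a $\cC^k$-decomposition on each cell of which $f$ is $\cC^k$ (hence $\cC^1$).

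For (iii), use (ii) to obtain a $\cC^k$-cell decomposition $\cD$ of $M^n$ partitioning $A$ such that $f|_D$ is $\cC^1$ on every cell $D\subseteq A$. Each open cell $D\subseteq A$ has type $(1,\dotsc,1)$, so $f|_D$ extends to a $\cC^1$-map on an open neighborhood and $\Jac_f$ is defined throughout $D$; thus $D\subseteq A'$. Every non-open cell has empty interior in $M^n$ (this is immediate from the recursive construction of cells, or follows from Lemma~\ref{tanspace} since its tangent space has dimension $<n$), so $A\setminus A'$ lies in a finite union of nowhere-dense sets and has empty interior. The main obstacle is the parametric upgrade from $\cC^1$ to $\cC^k$ smoothness in a manner compatible with the cell structure; this is precisely what Loi's theorem (and its fibered strengthening in Corollary~\ref{Ckfiber}) provides, and it is the essential ingredient beyond the standard $\cC^1$ decomposition of~\cite{vdD98}.
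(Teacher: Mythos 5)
The paper itself does not prove this proposition: it is quoted as a standard tool, with \cite{vdD98} cited for the cases $k=0,1$, so the only benchmark is the standard inductive argument, which is indeed the one you are following. Your overall plan (induction on $n$; upgrade a $\cC^1$-cell decomposition using generic $\cC^k$-smoothness of definable functions; obtain (ii) by partitioning graphs; deduce (iii) from (ii) together with the fact that non-open cells are nowhere dense) is the right one, and your argument for (iii), granted (ii), is correct.

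There is, however, a genuine gap in the inductive step for (i). After writing the cells of $\cD'$ as graphs and bands of functions $h_{D,j}$ on the base cells, you introduce the exceptional sets $E_{D,j}$ of smaller dimension where the $h_{D,j}$ fail to be $\cC^k$, and then assert that applying the inductive hypothesis in $M^n$ to the base cells together with the $E_{D,j}$ yields a decomposition ``on each cell of which the relevant $h_{D,j}$ is $\cC^k$.'' Partitioning $E_{D,j}$ does not make $h_{D,j}$ smooth there: on the cells of $\cD''$ contained in $E_{D,j}$ you have no control on $h_{D,j}$ at all, yet your reassembled decomposition of $M^{n+1}$ is built from graphs and bands of precisely these restrictions, so it need not consist of $\cC^k$-cells over those base cells (and the analogous assertion for (ii) is likewise unjustified there). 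The missing ingredient is a further recursion on dimension: on $E_{D,j}$ one must again extract the generically $\cC^k$ locus of $h_{D,j}$ relative to $E_{D,j}$ --- note that the generic-smoothness statement you quote is for functions on \emph{open} domains, and ``$\cC^k$ on a non-open cell'' means admitting a $\cC^k$ extension to an open neighborhood, so one transfers via the coordinate projection homeomorphism onto an open cell in a lower-dimensional space --- peel off a yet smaller exceptional set, and repeat until the dimension drops; equivalently, strengthen (ii) to assert $\cC^k$ restrictions and prove it by a secondary induction on $\dimL(A)$ before running the reassembly (for (ii) itself, the cleanest route is to apply (i) one dimension up to a decomposition partitioning the graphs $\Gamma(f_i)$ and project). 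Two smaller points: the generic $\cC^k$-smoothness you use is not Loi's theorem (\cite{Lo98} concerns Verdier stratifications and is used by the paper only in Lemma~\ref{lem:C1-generic}); it is the standard o-minimal fact obtained by iterating the $\cC^1$ results of \cite{vdD98}, using definability of partial derivatives. And you cannot invoke Corollary~\ref{Ckfiber} here, even parenthetically: its proof in the appendix uses this very proposition, so that appeal is circular.
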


If $\cD$ is a cell decomposition as in (i), we say that \textbf{$\cD$ partitions $A_1,\ldots,A_p$} and if $\cD$ is a cell decomposition as in (ii), we say that \textbf{$\cD$ is a $\cC^k$-cell decomposition for $f$}. By taking refinements, we can always find a $\cC^k$-cell decomposition for $f$ which partitions $A_1,\ldots,A_p$.
Suppose that $f$ and each $A_i$ are $L(B)$-definable for some $B \subseteq M$. Then by passing to the elementary substructure with universe $\dclL(B)$, we see that we can take an $L(B)$-definable $\cC^k$-cell decomposition $\cD$ for $f$ which partitions $A_1,\ldots,A_p$ (i.e. each cell is $L(B)$-definable).

%----------------------------------------------------------------------------------%
\subsection{Definable families of $\cC^k$-functions} In this subsection, fix $B \subseteq M$ and an $L(B)$-definable function $F:U \to M$ where $U \subseteq M^{m+n}$. Set $U' := \pi_m (U)$ and suppose that $U_{\av}$ is open in $M^n$ for each $\av \in M^m$.

\begin{lemma}\label{lem:C1-generic}
Suppose that there is a $\dclL(B)$-independent tuple $\av \in U'$ such that $F_{\av}:U_{\av}\to M$ is $\cC^1$. Then there is an $L(B)$-definable open cell $D \subseteq U'$ containing $\av$ such that $F|_{U \cap (D\times M^n)}$ is $\cC^1$.
\end{lemma}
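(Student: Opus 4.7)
The plan is to apply the $\cC^1$-cell decomposition theorem to $F$, use the $\dclL(B)$-independence of $\av$ to isolate the unique open cell $D\subseteq U'$ of the induced decomposition of $M^m$ that contains $\av$, and then exploit the fibrewise $\cC^1$-assumption to match $\cC^1$-extensions across the cells of $\cD$ lying above $D$ into a single joint $\cC^1$-function on an open neighborhood of $U\cap(D\times M^n)$.

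First I would apply the $\cC^1$-cell decomposition theorem to $F$ to obtain an $L(B)$-definable $\cC^1$-cell decomposition $\cD$ of $M^{m+n}$ partitioning $U$ such that $F|_C$ is $\cC^1$ for every cell $C\in\cD$ with $C\subseteq U$. The projection $\pi_m(\cD)$ is an $L(B)$-definable cell decomposition of $M^m$, and since $\av$ is $\dclL(B)$-independent it avoids every $L(B)$-definable subset of $M^m$ of dimension less than $m$; hence, after passing to a common refinement with a cell decomposition partitioning $U'$, one finds $\av$ in an open cell $D\subseteq U'$. Moreover the set $V:=\{\bv\in U':F_{\bv}\text{ is }\cC^1\text{ on }U_{\bv}\}$ is $L(B)$-definable, using the first-order expressibility of $\cC^1$-regularity for definable functions in o-minimal structures, contains $\av$, and by the same genericity contains an open neighborhood of $\av$; shrinking $D$ to lie inside $V$, I may assume $F_{\bv}$ is $\cC^1$ on $U_{\bv}$ for every $\bv\in D$.

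The crux of the proof is to show that every cell $C\in\cD$ with $\pi_m(C)=D$ and $C\subseteq U$ is in fact open in $M^{m+n}$. If some such $C$ had type $(1,\ldots,1,j_{m+1},\ldots,j_{m+n})$ with $j_{m+k}=0$ for some $k$, then for every $\bv\in D$ the fibre $C_{\bv}$ would be a proper subcell of $U_{\bv}$ of dimension less than $n$, and the adjacent open cells of $\cD$ above $D$ contained in $U$ would produce $\cC^1$-extensions of $F$ on either side of $C$. The fibrewise $\cC^1$-hypothesis on $F_{\bv}$ would then force these extensions to match in value and all $\y$-partial derivatives along $C_{\bv}$, uniformly in $\bv\in D$; differentiating these identities in $\x$ via the chain rule on the graph description of $C$ would propagate the matching to the remaining partial derivatives as well, so $F$ would already be jointly $\cC^1$ across $C$, and $C$ could be absorbed into its neighboring open cells. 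Once all cells above $D$ contained in $U$ are open, $U\cap(D\times M^n)$ is a finite disjoint union of $L(B)$-definable open subsets of $M^{m+n}$ on each of which $F$ is $\cC^1$, and hence $F$ is $\cC^1$ on $U\cap(D\times M^n)$.

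The main obstacle will be making the derivative-matching step rigorous: the agreement of values and $\y$-partials must be established uniformly in the base parameter $\bv\in D$, and then promoted to agreement of all partials via the chain rule for arbitrary graph-type cells. This is where the Loi-based $\cC^k$-machinery referenced in the introduction of the appendix enters the picture, as it supplies the definable identities over $D$ that support the differentiation in $\x$ and the eventual absorption of non-open cells into their open neighbors.
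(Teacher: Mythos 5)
Your reduction at the top level (a $\cC^1$-cell decomposition for $F$, genericity of $\av$ to land in an open base cell $D$ with the fibrewise condition holding over all of $D$, and then the problem of crossing the lower-dimensional cells above $D$) matches the paper's setup, but the crux of your argument assumes exactly what has to be proved. Your ``absorption'' step presupposes that the restriction of $F$ to an open cell adjacent to a lower-dimensional cell $C$ admits a $\cC^1$-extension up to $C$, i.e.\ that the one-sided limits of $\frac{\partial F}{\partial \x}$ exist along $C$. Nothing supplies this: cell decomposition gives smoothness of $F$ on each open cell with no control of the derivatives at the cell's frontier, and the fibrewise hypothesis controls only $\frac{\partial F}{\partial \y}$, not $\frac{\partial F}{\partial \x}$ (compare $\sqrt{y}$ on $\{y>0\}$: cellwise $\cC^1$, derivative blowing up at the boundary; here the danger is the same phenomenon in the $\x$-directions). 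Once existence of those limits is granted, your value/$\y$-partial matching plus chain rule is the easy part; establishing it is the entire analytic content of the lemma. The paper does this by first applying Corollary 6.2.4 of \cite{vdD98} in the base variables to get \emph{joint} continuity of $F$ and $\frac{\partial F}{\partial \y}$ on $U\cap(A'\times M^n)$ for a definable $A'\ni\av$ (your ``uniformly in $\bv\in D$'' matching also needs this -- fibrewise $\cC^1$ only gives continuity along each fiber), and then, for each small cell $C$, reducing to showing $\frac{\partial F}{\partial \x}\to\zero$ at $C$ along definable curves and proving this via a $\cC^2$ Verdier stratification compatible with $\Gamma(F)$ and $C\times M$ \cite{Lo98}: the Verdier condition on limits of tangent spaces of the stratum containing the graph over the open cell, combined with the known behaviour of the $\y$-derivatives, forces the $\x$-derivatives to converge. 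Your closing sentence gestures at ``Loi-based machinery'' but assigns it a different and inadequate role (supplying ``definable identities'' to differentiate), so the essential step is missing.

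A secondary, more minor point: the cells of $\cD$ lying over $D$ inside $U$ include cells of every intermediate fibre-codimension, not just one small cell flanked by two open ones, so even with the matching in hand the gluing must be organized as an induction on codimension (as in the paper, via the sets $U_d$); ``either side of $C$'' does not make sense when the fibre-codimension of $C$ is at least $2$.
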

\begin{proof}
We view $F$ as a function of the variables $\x = (x_1,\ldots,x_m)$ and $\y = (y_1,\ldots,y_n)$.
Set
\[
A\ :=\ \{\x \in U':F_{\x}\text{ is } \cC^1 \text{ on }U_{\x}\}.
\]
Then $\av \in A$, so by~\cite[Corollary 6.2.4]{vdD98},
there is a definable set $A' \subseteq A$ containing $\av$ such that the function $F$ and the map $\frac{\partial F}{\partial \y}$ are continuous on $U\cap(A'\times M^n)$.
Take an $L(B)$-definable $\cC^1$-cell decomposition $\cD$ for $F$ partitioning $A' \times M^n$.
Let $\cD' \subseteq \cD$ be the cells in $\cD$ which are contained in $U$ and which intersect $\{\av\} \times M^n$. We let $D$ be the common projection of these cells onto $M^m$ and we claim that $D$ satisfies the conditions in the statement of the lemma. Since $D$ contains the independent tuple $\av$, it must be open and contained in $A'$, so both $F$ and $\frac{\partial F}{\partial \y}$ are continuous on $U \cap (D\times M^n)$. 
It remains to show that the map $\frac{\partial F}{\partial \x}$ is continuous on $U\cap(D\times M^n)$.

\medskip

For each $d \in \{0,1,\ldots,n\}$, we let $\cD'_d$ be the set of all cells in $\cD$ which have
codimension at least $d$ in the ambient space $M^{m+n}$. We set $U_d:=\bigcup
\cD'_d$. We remark that $U_d$ is open for each $d$ and that $U_n = \bigcup \cD' = U\cap(D \times
M^n)$. We will show by induction on $d$ that $\frac{\partial F}{\partial \x}$ is continuous
on $U_d$.
The $d = 0$ case follows by our choice of cell decomposition. Fix $d > 0$ and
suppose that $\frac{\partial F}{\partial \x}$ is continuous on $U_{d-1}$.
Let $C \in \cD'$ be a cell of codimension $d$.
After a permutation of variables, we may assume that $C' :=\pi_{m+n-d}(C)$ is open and that $C$ is of the form
\[
C\ =\ \big\{(\x,\y): (\x,\y') \in C' \text{ and } \y''= G(\x,\y')\big\}
\]
where $\y' = (y_1,\ldots,y_{n-d})$, $\y'' = (y_{n-d+1},\ldots,y_n)$ and where $G:C' \to
M^{d}$ is an $L(B)$-definable $\cC^1$-map. We will show that $\frac{\partial F}{\partial \x}$ is
continuous at each point in $C$. 
Note that any point in $C$ is contained in a small open ball which only intersects $C$ and cells of codimension larger than $d$.

\medskip

Define the function $\tilde{F}$ by
\[
\tilde{F}(\x,\y)\ :=\ F\big(\x,\y',\y''+G(\x,\y')\big)-F\big(\x,\y',G(\x,\y')\big).
\]
This function is defined on $C' \times (-\epsilon,\epsilon)^d$ for some sufficiently small positive
$\epsilon \in M$. By replacing $F$ by $\tilde{F}$, we may assume that 
\[
C\ =\ \big\{(\x,\y): (\x,\y') \in C' \text{ and } \y''= \zero\big\}
\]
and that the restrictions of $F$, $\frac{\partial F}{\partial \x}$ and $\frac{\partial F}{\partial \y'}$ to $C$ are all identically zero.
Thus, it remains to show that 
\[
\lim\limits_{\y'' \to \zero}\frac{\partial F}{\partial \x}(\x,\y',\y'')\ =\ \zero
\]
for all $(\x,\y') \in C'$. By~\cite[Lemma 6.4.2]{vdD98}, it suffices to show that 
\[
\lim\limits_{t \to 0}\frac{\partial F}{\partial \x}\big(\x,\y',\gamma(t)\big)\ =\ \zero
\]
for an arbitrary $L(B)$-definable curve $\gamma:(0,1) \to (-\epsilon,\epsilon)^d$ with limit $\gamma(t) \to \zero$ as $t \to 0$. Fix $(\av, \bv') \in C'$ and for each $t$, set
\[
f_t\ :=\ F\big(\av,\bv',\gamma(t)\big).
\]
By~\cite{Lo98} there exists an $L(B)$-definable $\cC^2$ Verdier stratification $\cV$ of $M^{m+n+1}$ which is compatible with both $\Gamma(F)$ and with $C \times M$
(Loi works in an o-minimal expansion of $\R$, but his proof generalizes with minor changes to any o-minimal structure expanding an ordered field).
Let $X \in \cV$ be the submanifold containing $(\av, \bv', \zero, 0)$ and
$X' \in \cV$ be the submanifold such that $\big(\av, \bv', \gamma(t), f_t\big) \in X'$ for $t$ sufficiently small.
Note that $X \subseteq \overline{X'} \setminus X'$ and that
\[
T_{a,b', \gamma(t),f_t} X'\ \subseteq\ \Gamma(\D_t)
\]
where $\D_t:M^{m+n}\to M$ is the linear function given by 
\[
\D_t(\uv,\vv)\ =\ \frac{\partial F}{\partial \x}\big(\av,\bv',\gamma(t)\big) \uv+\frac{\partial F}{\partial \y}\big(\av,\bv',\gamma(t)\big) \vv.
\]

\medskip

Since the projection $\pi_m X$ contains $\av$, it must be open, so $T_{\av} ( \pi_{m}X) = M^m$. By Lemma~\ref{tanspace}, we have that $\pi_{m}(T_{\av, \bv', \zero,0} X) =T_{\av} ( \pi_{m}X)$, so $(T_{\av, \bv', \zero,0} X)_\uv \neq \0$ for each $\uv \in M^m$.
Let $\uv_0 \in M^m$ be arbitrary and take $\lambda \in M^{\neq}$ and $\vv_0' \in M^{n-d}$ such that
$(\lambda \uv_0, \vv_0', \zero,0) \in T_{\av, \bv', \zero,0} X$ and such that $\big\Vert (\lambda \uv_0, \vv_0', \zero,0)\big\Vert = 1$.
By the Verdier condition, we have that
\[
\lim\limits_{t\to 0}\ \delta(T_{\av, \bv', \zero, 0} X, T_{\av,\bv', \gamma(t),f_t} X')\ =\ 0,
\]
where 
\[
\delta(V,V')\ =\ \sup_{\vv\in V,\Vert \vv\Vert = 1}d(\vv,V)
\]
is the distance between vector subspaces $V,V'\subseteq M^n$.
Thus, for every sufficiently small $t$, we can find $\uv_t \in M^m$ and $\vv_t \in M^n$ such that $\big\Vert \big(\uv_t,\vv_t,\D_t(\uv_t,\vv_t)\big)\big\Vert = 1$ and such that as $t \to 0$, we have
\[
\uv_t \to \lambda \uv_0,\qquad \vv_t \to (\vv_0',\zero),\qquad \D_t(\uv_t,\vv_t) \to 0.
\]
Note that 
\[
\lim\limits_{t \to 0}\frac{\partial F}{\partial \y}\big(\av,\bv',\gamma(t)\big) \vv_t\ =\ \frac{\partial F}{\partial \y}(\av,\bv',\zero) (\vv_0',\zero)\ =\ \frac{\partial F}{\partial \y'}(\av,\bv',\zero) \vv_0\ =\ \zero,
\]
so we have
\[
\lim\limits_{t\to 0}\D_t(\uv_t,\vv_t)\ =\ \lim\limits_{t\to 0} \frac{\partial F}{\partial \x}\big(\av,\bv',\gamma(t)\big) \uv_t\ =\ \left(\lim\limits_{t\to 0} \frac{\partial F}{\partial \x}\big(\av,\bv',\gamma(t)\big)\right) \lambda \uv_0\ =\ \zero.
\]
Since $\uv_0$ is arbitrary, this shows that $ \frac{\partial F}{\partial \x}\big(\av,\bv',\gamma(t)\big) \to \zero$ as $t \to 0$.
\end{proof}

\begin{corollary}\label{Ckfiber}
Suppose that $F_{\av}$ is $\cC^k$ on $U_{\av}$ for all $\av \in U'$. Then there exists an $L(B)$-definable $\cC^k$-cell decomposition
$\cD$ of $M^m$ such that $F|_{U\cap(D\times M)}$ is $\cC^k$ for each $D \in \cD$.
\end{corollary}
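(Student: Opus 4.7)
The plan is to deduce Corollary~\ref{Ckfiber} from Lemma~\ref{lem:C1-generic} by induction on the parameter dimension $m$, after first upgrading the lemma to the $\cC^k$-setting. Call the following statement $(\star)$: if some $\dclL(B)$-independent tuple $\av \in U'$ has $F_\av$ being $\cC^k$, then there is an $L(B)$-definable open cell $D \subseteq U'$ containing $\av$ with $F|_{U \cap (D \times M^n)}$ being $\cC^k$. The proof of Lemma~\ref{lem:C1-generic} yields $(\star)$ word-for-word with every ``$\cC^1$'' replaced by ``$\cC^k$'' and the $\cC^2$ Verdier stratification of Loi replaced by a $\cC^{k+1}$ one.

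The base case $m=0$ is immediate, since $M^0$ is a single point and $F$ is $\cC^k$ on the open set $U \subseteq M^n$ by hypothesis. For the inductive step, I would introduce the $L(B)$-definable set
\[
\text{Good}\ :=\ \{\x \in U' : F\text{ is jointly }\cC^k\text{ on an }L(B)\text{-definable open neighborhood of }\{\x\}\times U_\x\text{ in }M^{m+n}\}.
\]
By $(\star)$, $\text{Good}$ contains every $\dclL(B)$-independent tuple of $U'$. Hence $\text{Bad}:=U'\setminus\text{Good}$ has empty interior in $M^m$: otherwise, passing to a sufficiently saturated elementary $L$-extension of $\cM$ would produce a $\dclL(B)$-independent tuple inside $\text{Bad}$, contradicting $(\star)$. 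Since having empty interior is a first-order property of an $L(B)$-definable set, it descends back to $\cM$, so $\dimL(\text{Bad}) < m$.

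Now apply the classical $\cC^k$-cell decomposition theorem to obtain an $L(B)$-definable $\cC^k$-cell decomposition $\cD_0$ of $M^m$ partitioning $\text{Good}$ and $\text{Bad}$. For each cell $D\in\cD_0$ contained in $\text{Good}$, the definition of $\text{Good}$ directly supplies a $\cC^k$-extension of $F|_{U\cap(D\times M^n)}$ to an open subset of $M^{m+n}$. For each cell $D\in\cD_0$ contained in $\text{Bad}$, we have $\dim D<m$, and after a coordinate permutation $D = \Gamma(g)$ for some $L(B)$-definable $\cC^k$-map $g:D'\to M^{m-\dim D}$ on an open cell $D'\subseteq M^{\dim D}$. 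I would then define the pulled-back family
\[
\tilde F(\x'',\y)\ :=\ F(\x'',g(\x''),\y),\qquad \tilde U\ :=\ \{(\x'',\y)\in M^{\dim D+n}:(\x'',g(\x''),\y)\in U\}.
\]
Each fiber $\tilde U_{\x''}$ is open and each $\tilde F_{\x''}=F_{(\x'',g(\x''))}$ is $\cC^k$, so the inductive hypothesis (applied to $\tilde F$, legal since $\dim D<m$) yields an $L(B)$-definable $\cC^k$-cell decomposition $\tilde\cD$ of $M^{\dim D}$ on which $\tilde F$ is jointly $\cC^k$. For each $\tilde D\in\tilde\cD$ with $\tilde D\subseteq D'$, the graph $\Gamma(g|_{\tilde D})$ is a $\cC^k$-sub-cell of $D$, and the trivial extension $\tilde G(\x'',\x',\y):=\tilde F_{\mathrm{ext}}(\x'',\y)$ (constant in the normal coordinates $\x'$, with $\tilde F_{\mathrm{ext}}$ a $\cC^k$-extension of $\tilde F$ to an open set in $M^{\dim D+n}$) is $\cC^k$ on an open subset of $M^{m+n}$ containing $U\cap(\Gamma(g|_{\tilde D})\times M^n)$ and agrees there with $F$.

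The main obstacle is the inductive descent to lower-dimensional cells, because $\cC^k$-smoothness of $F$ on such a cell requires a $\cC^k$-extension to an open subset of the full $M^{m+n}$ rather than just of $M^{\dim D+n}$. Extending trivially in the normal coordinates $\x'$ sidesteps this cleanly, since constancy in $\x'$ preserves $\cC^k$-smoothness; the remaining bookkeeping is to verify that the refined sub-cells of each $D\in\cD_0$ assemble into a single $L(B)$-definable $\cC^k$-cell decomposition of $M^m$, which follows from a standard refinement procedure in o-minimal cell decomposition.
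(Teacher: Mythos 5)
Your overall architecture coincides with the paper's proof of Corollary~\ref{Ckfiber}: induction on $m$; the definable set of good parameters (your $\text{Good}$ is the paper's $A$); the passage to an elementary extension to see that the bad set has dimension $<m$; a $\cC^k$-cell decomposition partitioning the good set; descent to the lower-dimensional cells by parametrizing them over an open cell of smaller dimension and invoking the induction hypothesis (the paper uses a definable $\cC^k$-diffeomorphism $g:M^d\to D$, which is the same device as your graph presentation); and the constant extension in the normal coordinates, which the paper leaves implicit. The one genuine gap is your claim $(\star)$, namely that the proof of Lemma~\ref{lem:C1-generic} gives the $\cC^k$-statement ``word-for-word'' once Loi's $\cC^2$ Verdier stratification is replaced by a $\cC^{k+1}$ one. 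The Verdier condition compares tangent spaces of the strata of $\Gamma(F)$, and tangent spaces of $\Gamma(F)$ encode only the first-order partials of $F$; accordingly, the stratification argument inside the lemma yields exactly the continuity of $\frac{\partial F}{\partial \x}$ across the lower-dimensional cells and says nothing about existence or continuity of derivatives of order $\geq 2$ in the $\x$-directions. Raising the smoothness class of the stratification does not change what the Verdier condition controls. Nor can you simply rerun the lemma on $\frac{\partial F}{\partial x_i}$: the lemma's hypothesis is fiberwise, and the hypothesis of the corollary gives no information about the fiber restrictions of the $\x$-partials being $\cC^{k-1}$, so the needed input is not available by substitution alone.

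The paper takes a different route at exactly this point: it keeps Lemma~\ref{lem:C1-generic} as a $\cC^1$-statement and applies it iteratively to $F$ and to its partial derivatives of order $\leq k-1$, shrinking the open cell around the independent point at each stage. The iteration is legitimate because, once the earlier applications have produced joint $\cC^1$-smoothness of $F$ and of its pure $\y$-partials near the fiber over an independent point, the existence and continuity of the resulting mixed partials (together with the symmetry of mixed partials, available for definable functions via the mean value theorem) show that the fiber restrictions of the $\x$-partials are themselves $\cC^1$ there, so the lemma can then be applied to them as well; repeating this exhausts all derivatives of order $\leq k-1$ and places every $\dclL(B)$-independent point in $A$. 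If you replace your ``word-for-word'' claim by this iteration, or supply an actual argument for the higher-order $\x$-derivatives across the strata, the remainder of your proof goes through and is essentially the paper's.
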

\begin{proof}
This follows from~\cite[Corollary 6.2.4]{vdD98} if $k =0$, so we assume $k \ge 1$.
We proceed by induction on $m$.
If $m = 0$, the result is clear.
Assume now that $m > 0$ and that we have already proved the result for every
$m' < m$.
Define
\[
A\ :=\ \{\x \in U': F|_{U\cap (V \times M^n)} \text{ is } \cC^k \text{ for some open neighborhood }V \text{ of }\x\}.
\]
By Lemma~\ref{lem:C1-generic} applied
to $F$ and all its derivatives of order $\leq k-1$, we see that $A$ is $L(B)$-definable and that $\dimL(U'\setminus A) < m$.
Let $\tilde{\cD}$ be a $\cC^k$-cell
decomposition for $F$ partitioning $A$. If $D\in \tilde{\cD}$ is contained in $A$, then $F|_{U\cap(D\times M)}$ is $\cC^k$ by definition. If $D \in \tilde{\cD}$ is disjoint from $A$, then set $d := \dimL(D)$ and fix an $L(M)$-definable $\cC^k$-diffeomorphism $g:M^d \to D$. Set $U_D:=\big\{(\x,\y)\in M^{d+n}:\y \in U_{g(\x)}\big\}$ and define $H: U_D\to M$ by
\[
H(\x, \y)\ :=\ F\big(g(\x), \y\big).
\]
Since $d < m$, we may apply our induction hypothesis to $H$ and take an $L(B)$-definable $\cC^k$-cell decomposition $\{D_1,\ldots,D_p\}$ of $M^d$ such that $H|_{U_D\cap(D_i\times M^n)} = F|_{U\cap(g(D_i) \times M^n)}$ is $\cC^k$. We refine $\tilde{\cD}$ by replacing $D$ with $\cC^k$-cells refining $g(D_1),\ldots,g(D_p)$. Repeating this process for each cell $D \in \tilde{\cD}$ which is not contained in $U$, we arrive at the promised decomposition $\cD$.
\end{proof}

%------------------------
%\bibliography{GenericDerivations}{}
%\bibliographystyle{abbrv}
%------------------------

%------------------------
\end{document}